\let\etoolboxforlistloop\forlistloop % save the good meaning of \forlistloop
\let\forlistloop\etoolboxforlistloop % restore the good meaning of \forlistloop
\DeclarePairedDelimiter\norm{\lVert}{\rVert}%
\DeclareMathOperator*{\esssup}{ess\,sup}
\let\oldc\c % store the old meaning of \c since we need to recover it later for the bibliography
\renewcommand{\c}[1]{\mathcal{#1}}
\renewcommand{\b}[1]{\mathbb{#1}}
\newcommand{\N}{\mathbb{N}}
\newcommand{\R}{\mathbb{R}}
\renewcommand{\P}{\mathbb{P}}
\DeclareMathOperator{\E}{\mathbb{E}}
\newcommand{\Q}{\mathbb{Q}}
\newcommand{\1}{\mathbbm{1}}
\newtheorem{theorem}{Theorem}[section]
\newtheorem{lemma}[theorem]{Lemma}
\newtheorem{remark}[theorem]{Remark}
\newtheorem{example}[theorem]{Example}
\newtheorem{corollary}[theorem]{Corollary}
\newtheorem{proposition}[theorem]{Proposition}
\newtheorem{definition}[theorem]{Definition}
\newtheorem{assumption}{Assumption}
\crefname{assumption}{Assumption}{Assumptions}
\renewcommand{\theassumption}{\Alph{assumption}}
\newlist{assumptionenum}{enumerate}{1} % also creates a counter called 'assumptionenumi'
\setlist[assumptionenum]{label=(\theassumption\arabic*)}%, ref=(\theassumption\arabic*)}
\let\eqref\labelcref
\begin{document}

\title{A randomisation method for mean-field control problems with common noise}
\author{Robert Denkert%
\thanks{Humboldt University of Berlin, Department of Mathematics, rob at denkert.eu; This author gratefully acknowledges financial support by the Deutsche Forschungsgemeinschaft (DFG, German Research Foundation) -- Project-ID 410208580 -- IRTG2544 (``Stochastic Analysis in Interaction'').}
\quad Idris Kharroubi%
\thanks{
LPSM, UMR CNRS 8001, Sorbonne Universit\'e
  and Universit\'e Paris Cit\'e, idris.kharroubi at sorbonne-universite.fr. Research of the author partially supported by Agence Nationale de la Recherche (ReLISCoP grant ANR-21-CE40-0001).
}
\quad Huy\^en Pham%
\thanks{Ecole Polytechnique, CMAP, huyen.pham at polytechnique.edu; This author  is supported by  the BNP-PAR Chair ``Futures of Quantitative Finance", and by FiME, Laboratoire de Finance des March\'es de l'Energie, and the ``Finance and Sustainable Development'' EDF - CACIB Chair}
}
\maketitle

\begin{abstract}
    We study mean-field control (MFC) problems with common noise using the control randomisation framework, where we substitute the control process with an independent Poisson point process, controlling its intensity instead. To address the challenges posed by the mean-field interactions in this randomisation approach, we reformulate the admissible control as $L^0$-valued processes adapted only to the common noise. We then construct the randomised control problem from this reformulated control process, and show its equivalence to the original MFC problem. Thanks to this equivalence, we can represent the value function as the minimal solution to a backward stochastic differential equation (BSDE) with constrained jumps. Finally, using this probabilistic representation, we derive a randomised dynamic programming principle (DPP) for the value function, expressed as a supremum over equivalent probability measures.
\end{abstract}

\vspace{5mm}

\noindent \textbf{ MSC Classification}: 60H30; 60K35; 60K37;  93E20.

\vspace{3mm}

\noindent     \textbf{ Keywords:}{  Mean-field control with common noise; control randomisation;  decomposition of processes;
    randomised dynamic programming principle; backward stochastic differential equations.

% Note: \cite[Corollary A.4]{kurtz_martingale_1998} could be used to simplify some definitions or arguments

%%%%%%%%%%%%%%%%%%%%%%%%%%%%%%%%%%%%%%%%%%%%%
%%%%%%%%%%%%%%%%%%%%%%%%%%%%%%%%%%%%%%%%%%%%%

\section{Introduction}

We will consider mean-field control problems with the following state dynamics
\begin{align}
    dX_s^{t,\xi,\alpha}
    &= b(s, \P_{X^{t,\xi,\alpha}_s}^{\c F^{B,\P}_s},X^{t,\xi,\alpha}_s,\alpha_s)ds + \sigma(s,\P_{X^{t,\xi,\alpha}_s}^{\c F^{B,\P}_s},X^{t,\xi,\alpha}_s,\alpha_s) dW_s\\
    &\quad + \sigma^0(s,\P_{X^{t,\xi,\alpha}_s}^{\c F^{B,\P}_s},X^{t,\xi,\alpha}_s,\alpha_s) dB_s,\qquad X^{t,\xi,\alpha}_t = \xi,
\end{align}
with two independent Brownian motions $W,B$ on some probability space $(\Omega,\c F,\P)$ and an independent given starting value $\xi$ and a control process $\alpha$ in some action space $A$.
Here $(\P^{\c F^{B,\P}_s}_{X^{t,\xi,\alpha}_s})_{s\in [t,T]}$ denotes the $\b F^{B,\P}$-predictable and $\P$-a.s.\@ continuous version of $(\c L(X^{t,\xi,\alpha}_s|\c F^{B,\P}_s))_{s\in [t,T]}$.%, that is $\P^{\c F^{B,\P}_s}_{X^{t,\xi,\alpha}_s} = \c L(X^{t,\xi,\alpha}_s|\c F^{B,\P}_s)$, $\P$-a.s., for all $s\in [t,T]$. 
Our goal is to maximise the reward functional
\begin{align}
    J(t,\xi,\alpha) \coloneqq \E\Big[ g(X^{t,\xi,\alpha}_T, \P_{X^{t,\xi,\alpha}_T}^{{\c F}^{B,\P}_T}) + \int_t^T f(s,X^{t,\xi,\alpha}_s,\P_{X^{t,\xi,\alpha}_s}^{{\c F}^{B,\P}_s},\alpha_s) ds \Big].
\end{align}
There are generally two main approaches to solving mean-field (or McKean-Vlasov) control problems in the literature. The first is the stochastic maximum principle, which characterises the optimal control by an adjoint backward stochastic differential equation (BSDE). In the context of mean-field control problems, the stochastic maximum principle was first studied by \cite{buckdahn_general_2011,andersson_maximum_2011} for cases where the coefficients only depend on the law through its moments. Building on the notion of differentiability introduced by \cite{lions2007theorie} in his course at Coll\`ege de France, \cite{carmona_forwardbackward_2015} extended this approach to general MFC problems; see also the standard work \cite{carmona_probabilistic_1_2018} for a broader overview of the related literature.
The second approach revolves around the dynamic programming principle (DPP), which breaks the original optimisation problem down into several local optimisation problems. Combined with an Itô formula for McKean-Vlasov diffusions, this method leads to a Hamilton-Jacobi-Bellman (HJB) equation on the Wasserstein space that characterises the value function. \cite{pham_bellman_2018} were the first to develop such a DPP in the general setting without common noise, while \cite{pham_dynamic_2017} derived a DPP for MFC problems with common noise, where the admissible controls are solely adapted to the common noise. To our knowledge, the most comprehensive results on mean-field control problems involving common noise can be found in works of \cite{djete_mckeanvlasov_2022,djete_mckeanvlasov_2022-1}.

In this paper, we establish a randomised dynamic programming principle based on a randomised formulation of the control problem. The idea is to replace the control process by an independent Poisson point process, and then optimising over a set of equivalent probability measures, effectively making the intensity of the Poisson point process the new control.
The randomisation method was introduced by \cite{bouchard_stochastic_2009} to deal with optimal switching problems, with further works for optimal switching problems found in \cite{elie_bsde_2014,elie_adding_2014,elie_probabilistic_2010} and with \cite{fuhrman_optimal_2020} addressing the infinite dimensional case. Additionally, it has been applied to impulse control problems in \cite{kharroubi_backward_2010} and to optimal stopping in non-Markovian settings in \cite{fuhrman_representation_2016}. \cite{fuhrman_randomized_2015} applied the randomisation method to non-Markovian regular control problems. 
In addition, \cite{kharroubi_feynmankac_2015} studied the constrained BSDE for the randomised value function across a wide range of control problems, establishing a general connection between a class of constrained BSDEs and HJB equations.
The randomisation framework has also found applications beyond establishing a randomised DPP. Specifically, 
\cite{kharroubi_discrete_2015,kharroubi_numerical_2014} proposed a numerical scheme for fully non-linear HJB equations based on the constrained BSDEs arising from the control randomisation framework, leading to various applications including portfolio optimisation \cite{zhang_dynamic_2019} and algorithmic trading \cite{abergel_algorithmic_2020}.
Additionally, \cite{denkert_control_2024} introduced a policy gradient method for continuous-time reinforcement learning based on the equivalence between the original problem and its randomised formulation.

In a mean-field setting, the randomisation method as a tool has previously only been considered by \cite{bayraktar_randomized_2018} based on a decoupled MKV representation introduced by \cite{buckdahn_mean-field_2017} in the uncontrolled case.
\cite{bayraktar_randomized_2018} provided a randomisation method for mean-field control problems without common noise and used this to obtain a Feynman-Kac formulation and also a randomised dynamic programming principle. 
Their results apply to MFC problems with \emph{deterministic} initial laws of the form $\pi = \delta_x$, $x\in\R^d$. This result does not extend to general initial laws as the following example illustrates.

\begin{example}
We consider the following controlled, decoupled dynamics
\begin{align}
    dX^{\xi,\alpha}_s &= (\alpha_s + \E[X^{\xi,\alpha}_s]) ds,\qquad X^{\xi,\alpha}_0 = \xi \sim \frac 1 2 \delta_{-1} + \frac 1 2 \delta_1,\\
    dX^{\xi,x,\alpha}_s &= (\alpha_s + \E[X^{\xi,\alpha}_s]) ds,\qquad X^{\xi,x,\alpha}_0 = x,
\end{align}
where the control process $\alpha$ should be $A=[-1,1]$-valued, $x\in\R$ and $\xi$ a square-integrable random variable with distribution $\xi\sim \frac 1 2 \delta_{-1} + \frac 1 2 \delta_1$. Further we assume that $T=1$ and that the reward functional is given by
\[
J(\xi,\alpha) = \E\big[(X^{\xi,\alpha}_1 - 1)_+ + (X^{\xi,\alpha}_1 - \tfrac 5 2)_-\big].
\]
Then the value function is given by
\[
V(\xi) = \sup_\alpha J(\xi,\alpha),
\]
and solving for the optimal control, we find $\alpha^*_s \equiv 1$ for all $s\in [0,1]$, which results in $V(\xi) = J(\xi,\alpha^*) % = \frac 1 2 \cdot (e-1) + \frac 1 2 \cdot 0
= \frac 1 2 (e-1)$.

At the same time, the decoupled value function in \cite{bayraktar_randomized_2018} is defined by
\[
V(\xi,x) = \sup_\alpha \E\big[(X^{\xi,x,\alpha}_1 - 1)_+ + (X^{\xi,x,\alpha}_1 - \tfrac 5 2)_-\big].
\]
For the starting value $x=-1$, the optimal control is $\alpha^*_s \equiv -1$ for $s\in [0,1]$, resulting in $V(\xi,-1) = e - \frac 5 2$. For $x=1$, the optimal control is $\alpha^*_s \equiv 1$ for all $s\in [0,1]$, leading to $V(\xi,1) = e-1$.

According to \cite[Proposition 2.2]{bayraktar_randomized_2018}, we would have $V(\xi) = \E_{\xi'\sim\xi}[V(\xi,\xi')] = \frac 1 2 V(\xi,-1) + \frac 1 2 V(\xi,1)$, which is not the case.
\end{example}

In this paper, we improve on the randomisation approach in \cite{bayraktar_randomized_2018} to handle general mean-field control problems and to allow for common noise.
Unlike the previous approach that relies on the decoupled SDEs, we directly work with the mean-field SDE. This approach presents a challenge because we must ensure that the mean-field interaction is preserved during the control randomisation.

Our approach is based on a novel reformulation of the admissible control set as a set of $L^0$-valued processes, adapted solely to the common noise, where we require that at each time point $s\in [t,T]$, the reformulated admissible control %$\hat\alpha_s$
takes values in $L^0(\Omega,\c G\lor\c F^W_s,\P;A)$. This decomposition of the original %$\c G\lor\b F^{W,B}$-adapted 
controls %$\alpha\in\c A$
is motivated by the observation that for the McKean-Vlasov dynamics, by viewing not each particle but instead the whole current conditional distribution as the state, the idiosyncratic noise will become part of the "deterministic" state dynamics, and only the common noise remains as "stochastic noise" for this system. This viewpoint has the advantage that it leads to a very natural way to apply the control randomisation method, as randomising over such $L^0$-valued processes adapted solely to the common noise keeps the mean-field interaction, in form of the conditional distribution of state process given the common noise, intact.

We establish an isomorphism between the original and the reformulated admissible control set, up to modification of processes. A key challenge lies in the way back from a reformulated control to an original control, as it presents a subtle measurability issue. Specifically, given two filtrations $\b F = (\c F_t)_t$, $\b G = (\c G_t)_t$ and an $\b F$-progressive process $\hat\alpha:[0,T]\times\Omega\to L^0(\Omega',\c G_{T},\P;A)$ satisfying $\hat\alpha_t(\omega) \in L^0(\Omega',\c G_t,\P;A)$ for all $(t,\omega)\in [0,T]\times\Omega$, such as the reformulated control, we need to find an $\b F\otimes\b G$-progressive process $\alpha:[0,T]\times\Omega\times\Omega'\to A$ that is a modification of $\hat\alpha$, meaning that
\[
\alpha_t(\omega,\cdot) = \hat\alpha_t(\omega)\text{ in }L^0(\Omega',\c G_{T},\P;A)\qquad\text{for all }(t,\omega)\in [0,T]\times\Omega,
\]
as detailed in \cref{proposition selecting a measurable process}.

We introduce a randomised control problem based on the reformulated admissible control set, by replacing the reformulated control by an independent Poisson process taking values in $L^0$. By showing that we can again find a corresponding progressive $A$-valued control process, we are able to define the randomised state dynamics. The optimisation takes now place over a set of equivalent probability measures that vary the intensity of the Poisson point process. Crucially, the prior decomposition of the control now ensures that the conditional law of state given the common noise remains unchanged under different randomised controls. This allows us to apply the standard tools from the randomisation framework, although adapted to account for the additional layer of abstraction between the original and reformulated control. Our first main result shows that the randomised value function coincides with the value function of the original problem.

Subsequently we represent the value function as the minimal solution to a backward stochastic differential equation (BSDE) with constrained jumps. This result is obtained by studying a class of penalised BSDEs corresponding to value functions for a restricted set of randomised controls.

Finally, with this probabilistic representation of the value function, we directly derive a randomised dynamic programming principle (DPP), expressed as a supremum over equivalent probability measures. In the benchmark case where the value function is sufficiently regular, we recover the standard (non-randomised) DPP for mean-field control problems, which has been established in the comprehensive works of \cite{djete_mckeanvlasov_2022,djete_mckeanvlasov_2022-1}.

The remainder of this paper is organised as follows. In \cref{section original problem formulation} we formulate the mean-field control problem. In \cref{section problem l2 formulation}, we introduce the reformulated admissible control set and shows its equivalence to the original problem. \cref{section randomised problem} constructs the randomised control problem and gives our first main result \cref{theorem equivalence original and randomised value function} on the equivalence of the non-randomised and the randomised problem. Finally, \cref{section bsde characterisation and randomised dpp} characterises the value function as the minimal solution to a constrained BSDE, enabling us to subsequently derive the randomised DPP.

\paragraph{Notation} Given a random variable $X$, we will denote the (non-augmented) filtration generated by $X$ by $\b F^X$. Further, for a filtration $\b F$, we will denote the $\P$-augmented filtration by $\b F^{\P}$ and the predictable $\sigma$-algebra w.r.t. $\b F$ by $Pred(\b F)$. For a path $w:[0,T]\to A$, we denote its stopped path by $[w]_t \coloneqq (w_{s\land t})_{s\in [0,T]}$.

For some integer $\ell$, we denote by $|\cdot|$, and $\cdot$ the Euclidean norm and the inner (or scalar)  product on $\R^\ell$. 
We denote by $\c{P}(\R^d)$ the space of Borel probability measures on $\R^d$ and by $\c{P}_2(\R^d)$ the set of elements of $\c{P}(\R^d)$ having a finite second order moment. We equip $\c{P}_2(\R^d)$ with the 2-Wasserstein distance $\c{W}_2$ so that it is a metric space itself and even a Polish space. We recall that the 2-Wasserstein distance on $\c{P}_2(\R^d)$ is defined by 
\begin{equation}\label{eq definition w_2 distance}
    \c{W}_2(\mu, \nu) = \Big( \inf_{X\sim\mu, Y\sim\nu} \E[|X-Y|^2] \Big)^{\frac 1 2} 
\end{equation}
for $\mu, \nu \in \c{P}_2(\R^d)$. We also denote by $C^\ell$ the set of continuous functions from $[0,T]$ to $\R^\ell$ and by $\mathbf{C}^\ell=(\c C ^\ell_t)_{t \in [0,T]}$ its canonical filtration. Further we denote the Wiener measure on $C^\ell$ by $\mu^\ell_W$. Finally, given a Polish space $E$, we write $D([0,T];E)$ for the space of $E$-valued càdlàg functions on $[0,T]$ equipped with the usual Skorokhod topology.

%%%%%%%%%%%%%%%%%%%%%%%%%%%%%%%%%%%%%%%%%%%%%
\section{Setting}\label{section original problem formulation}

Let $(\Omega,\c F,\P)$ be a complete probability space supporting an $m$-dimensional Brownian motion $W$ and an independent $n$-dimensional Brownian motion $B$. Further we suppose that there exists a separable % equivalent to cg0+ and more general than only countable generated (cg)
$\sigma$-algebra $\c G\subseteq \c F$ independent of $\b F^{W,B}$ sufficiently large such that $\{\P_\xi \,|\, \xi \in L^2(\Omega,\c G,\P;\R^d) \} = \c P_2(\R^d)$.
We define the set of admissible controls $\c A$ as all $\b F^{W,B}\lor \c G$-predictable%
\footnote{We could also allow all $(\b F^{W,B,\P}\lor\c G)$-predictable processes, since by \cite[Lemma 2.17(b)]{jacod_limit_2003},
%\cite[Remark IV.74]{dellacherie_probabilities_1978}
every $(\b F^{W,B,\P}\lor\c G)$-predictable process is indistinguishable from an $(\b F^{W,B}\lor \c G)$-predictable process. We choose $(\b F^{W,B}\lor\c G)$-predictable processes only to slightly simplify the presentation in \cref{section problem l2 formulation}.
}
processes taking values in some Polish space $A$ equipped with some metric $\rho < 1$ (otherwise we replace it by $\frac{\rho}{1+\rho}$). Let $b,\sigma,\sigma^0:~[0,T]\times \R^d\times \c{P}_2(\R^d)\times {A}\rightarrow \R^d,\R^{d\times m},\R^{d\times n}$ be three measurable functions that we suppose to satisfy the following assumption.
\begin{assumption}\label{assumptions sde coefficients}
\begin{assumptionenum}
\item The functions $b,\sigma$ and $\sigma^0$ are continuous.
\item There exists a constant $L$ such that
\begin{align}
&|b(t,x,\mu,a)-b(t,y,\nu,a)|+|\sigma(t,x,\mu,a)-\sigma(t,y,\nu,a)|+|\sigma^0(t,x,\mu,a)-\sigma^0(t,y,\nu,a)|\\
&\leq L\Big(|x-y|+\c{W}_2(\mu,\nu)\Big),
\end{align}
for all $x,y\in\R^d$, $\mu,\nu\in \c{P}_2(\R^d)$, $t\in[0,T]$ and $a\in A$.
\item There exists a constant $M$ such that
\begin{align}
|b(t,0,\delta_0,a)|+|\sigma(t,0,\delta_0,a)|+|\sigma^0(t,0,\delta_0,a)| & \leq M,
\end{align}
for all $t\in[0,T]$ and $a\in A$.
\end{assumptionenum}
\end{assumption}
Given an initial time $t\in[0,T]$, an initial condition $\xi\in L^2(\Omega,\c G\lor \c F^W_t,\P;\R^d)$ and an admissible control $\alpha\in \c A$, we consider the $\b F^{W,B,\P}\lor\c G$-progressive state process $(X^{t,\xi,\alpha}_s)_{s\in [t,T]}$   defined by
\begin{align}
    X_r^{t,\xi,\alpha}
    &= \xi+\int_t^rb(s, X^{t,\xi,\alpha}_s,\P_{X^{t,\xi,\alpha}_s}^{\c F^{B,\P}_s},\alpha_s)ds + \int_t^r \sigma(s,X^{t,\xi,\alpha}_s,\P_{X^{t,\xi,\alpha}_s}^{\c F^{B,\P}_s},\alpha_s) dW_s\\
    &\quad +\int_t^r \sigma^0(s,X^{t,\xi,\alpha}_s,\P_{X^{t,\xi,\alpha}_s}^{\c F^{B,\P}_s},\alpha_s) dB_s,\qquad r\in[t,T]\;.
    \label{eq mfc state dynamics}
\end{align}
Here we denote by $(\P^{\c F^{B,\P}_s}_{X^{t,\xi,\alpha}_s})_{s\in [t,T]}$ the $\b F^{B,\P}$-predictable and $\P$-a.s.\@ continuous version of $(\c L(X^{t,\xi,\alpha}_s|\c F^{B,\P}_s))_{s\in [t,T]}$, that is $\P^{\c F^{B,\P}_s}_{X^{t,\xi,\alpha}_s} = \c L(X^{t,\xi,\alpha}_s|\c F^{B,\P}_s)$, $\P$-a.s., for all $s\in [t,T]$. The existence of such a version is ensured by \cite[Lemma A.1]{djete_mckeanvlasov_2022-1}, since the sets of $\b F^{B,\P}$-optional and $\b F^{B,\P}$-predictable processes coincide by \cite[Chapter V, Corollary 3.3]{revuz_continuous_1999}.
We note that under \cref{assumptions sde coefficients}, we have existence and uniqueness of $(X^{t,\xi,\alpha}_s)_{s\in [t,T]}$ and
\begin{align}
\sup_{\alpha\in \c A}\E\Big[\sup_{s\in[t,T]}|X^{t,\xi,\alpha}_s|^2\Big] & < \infty\;.
\end{align} 

Let $f:~[0,T]\times \R^d\times \c{P}_2(\R^d)\times {A}\rightarrow \R$ and $g:~ \R^d\times \c{P}_2(\R^d)\rightarrow \R$ be two Borel-measurable functions on which we make the following assumption.
\begin{assumption}\label{assumptions reward functional} There exists a constant $M>0$ such that
\begin{align}
|f(t,x,\mu,a)|+|g(x,\mu)| &\leq M\Big( 1+|x|+ \Big(\int_{\R^d}|y|^2\mu(dy)\Big)^{\frac 1 2} \Big)
\end{align}
for all $(t,x,\mu,a)\in[0,T]\times \R^d\times \c{P}_2(\R^d)\times A$.
\end{assumption}
\begin{remark}
In certain results, such as the equivalence between the original and the randomised problem in \cref{theorem equivalence original and randomised value function}, the linear growth assumption on $f$ and $g$ can be relax to quadratic growth. Nevertheless, for the law invariance result of the value function by \cite{djete_mckeanvlasov_2022-1}, and to establish the (randomised) DPP in \cref{section randomised dpp} and the BSDE characterisation of the value function in \cref{section bsde characterisation}, we need the more restrictive linear growth assumption.
\end{remark}
We next define the function $J$ by
\begin{align}
    J(t,\xi,\alpha) \coloneqq \E\Big[ g(X^{t,\xi,\alpha}_T, \P_{X^{t,\xi,\alpha}_T}^{{\c F}^{B,\P}_T}) + \int_t^T f(s,X^{t,\xi,\alpha}_s,\P_{X^{t,\xi,\alpha}_s}^{{\c F}^{B,\P}_s},\alpha_s) ds \Big]
\end{align}
for $t\in[0,T]$, $\xi\in L^2(\Omega,\c G\lor \c F^W_t,\P;\R^d)$ and $\alpha\in \c A$. Under \cref{assumptions sde coefficients,assumptions reward functional}, the function $J$ is well defined.  We consider the optimal value of $J$ 
over the set of admissible controls $\c A$. Consequently, we define the value function as
\[
V(t,\xi) \coloneqq \sup_{\alpha\in \c A} J(t,\xi,\alpha) < \infty.
\]

We note that \cite{djete_mckeanvlasov_2022,djete_mckeanvlasov_2022-1} show that the value function does not depend on the choice of the probability space, and that it depends on $\xi$ only through its law $\P_\xi$.%
\footnote{They furthermore show that the strong and the weak formulation of the problem both lead to the same value function.}

\begin{proposition}[{\cite[Proposition 2.4]{djete_mckeanvlasov_2022-1}}]
\label{proposition value function law invariant and independent of probabilistic setting}
The value function is \emph{law invariant}, that is, it depends on $\xi$ only through its law $\P_\xi$. 
Furthermore, the value function does not depend on the specific choice of $(\Omega,\c F,\P)$ and $\c G$, as long as $\c G$ is rich enough.
\end{proposition}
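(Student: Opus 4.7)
The plan is to prove both assertions by constructing, for any admissible control in one setup, an admissible control in the target setup that yields the same cost functional, relying on the richness of $\c G$ to realise the transfer. I split this into two steps: first, law invariance within a fixed probabilistic setting, and second, the transfer between different probabilistic settings.

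For the law invariance, fix $t\in[0,T]$ and take $\xi,\xi'\in L^2(\Omega,\c G\lor\c F^W_t,\P;\R^d)$ with $\P_\xi=\P_{\xi'}$. Since $\c G$ is independent of $\b F^{W,B}$ and rich enough to realise every law on $\R^d$, the quotient space $(\Omega,\c G\lor\c F^W_t,\P)$ is atomless standard Borel. I would construct a measurable $\P$-preserving map $\Theta:\Omega\to\Omega$ which (i) acts as the identity on $\sigma(B)\lor\sigma(W_r-W_t: r\geq t)$ and (ii) satisfies $\xi'=\xi\circ\Theta$ almost surely; this is done by extracting a uniform random variable $U$ from $\c G$ independent of $(W,B)$ and implementing an inverse-CDF coupling between the pairs $(\xi,U)$ and $(\xi',U)$ while leaving the remaining coordinates untouched. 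For a given $\alpha\in\c A$, set $\tilde\alpha_s(\omega) := \alpha_s(\Theta(\omega))$. Then $\tilde\alpha$ is again $\b F^{W,B}\lor\c G$-predictable, and the joint law of $(\xi',(W_s)_{s\geq t},(B_s)_{s\geq t},\tilde\alpha)$ equals that of $(\xi,(W_s)_{s\geq t},(B_s)_{s\geq t},\alpha)$. By strong well-posedness of \eqref{eq mfc state dynamics} combined with the fact that the McKean--Vlasov coefficient depends only on the $\c F^{B,\P}_s$-conditional law of the state, the joint distributions of $(X^{t,\xi,\alpha},B)$ and $(X^{t,\xi',\tilde\alpha},B)$ coincide, so $J(t,\xi,\alpha)=J(t,\xi',\tilde\alpha)$. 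Taking the supremum in $\alpha$ and exchanging the roles of $\xi,\xi'$ yields $V(t,\xi)=V(t,\xi')$.

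For independence of $(\Omega,\c F,\P,\c G)$, consider two sufficiently rich setups $(\Omega^i,\c F^i,\P^i,\c G^i,W^i,B^i)$, $i=1,2$, and $\xi^i$ with $\P^1_{\xi^1}=\P^2_{\xi^2}$. Given $\alpha^1\in\c A^1$, I would transfer the joint law of $(\xi^1,W^1,B^1,\alpha^1)$ onto setup~$2$: using again the richness of $\c G^2$ and an inverse-CDF coupling on the Polish path-space of $\alpha^1$, one produces $\alpha^2\in\c A^2$ whose joint distribution with $(\xi^2,W^2,B^2)$ coincides with that of $(\xi^1,W^1,B^1,\alpha^1)$. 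Weak uniqueness of the McKean--Vlasov SDE, which follows from the Lipschitz assumption \cref{assumptions sde coefficients} via a standard Yamada--Watanabe argument in the conditional setting, then gives $J^1(t,\xi^1,\alpha^1)=J^2(t,\xi^2,\alpha^2)$; taking suprema and symmetrising yields equality of the value functions.

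The principal obstacle is the construction of the transfer map $\Theta$ (and, in the second step, the cross-setup realisation of $\alpha^1$) in a way that is simultaneously measure-preserving, compatible with the progressive structure of $\alpha$, and leaves $B$ intact so that the $\c F^{B,\P}_s$-conditional law appearing in the coefficients is faithfully transported. This requires combining the standard isomorphism of atomless standard spaces with $([0,1],\c B([0,1]),\lambda)$, a disintegration of $\P$ along $\sigma(B)\lor\sigma(W_r-W_t:r\geq t)$, and a careful verification that the predictability of $\tilde\alpha$ survives under composition with $\Theta$; once these technicalities are handled, the invariance of the reward and hence of the value function follows immediately from the law identity of the controlled state process.
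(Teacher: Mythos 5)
The paper does not prove this proposition; it cites it directly from \cite{djete_mckeanvlasov_2022-1}, whose argument goes through the equivalence of strong and weak formulations (where the probability space is part of the control) rather than through an in-place coupling on a fixed space. So there is no internal proof for you to match, and the comparison is really with the cited reference, which takes a fundamentally different route.

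Your coupling approach has a genuine gap at its central step. You want a $\P$-preserving map $\Theta:\Omega\to\Omega$ fixing $\sigma(B)\lor\sigma(W_r-W_t:r\geq t)$ with $\xi\circ\Theta=\xi'$. By independence this reduces to finding a measure-preserving automorphism of the factor $(\Omega,\c G\lor\c F^W_t,\P)$ intertwining $\xi$ and $\xi'$. Such a map exists precisely when the disintegrations $\P(\cdot\,|\,\xi=x)$ and $\P(\cdot\,|\,\xi'=x)$ are isomorphic as measure spaces for a.e.\ $x$, which is \emph{not} a consequence of $\P_\xi=\P_{\xi'}$. A concrete obstruction: if $\xi'$ is a Borel isomorphism onto its image (so that $\sigma(\xi')=\c G\lor\c F^W_t$ and the conditionals $\P(\cdot\,|\,\xi'=x)$ are Dirac) while $\xi$ factors through a proper sub-$\sigma$-algebra (so the conditionals $\P(\cdot\,|\,\xi=x)$ are atomless), then no such $\Theta$ exists. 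The ``extract a uniform $U$ from $\c G$ independent of $(W,B)$'' device does not rescue this, because $U$ is not assumed independent of $\xi$ and $\xi'$ (both live in $\c G\lor\c F^W_t$ and may well generate all of $\sigma(U)$); indeed the richness hypothesis $\{\P_\zeta:\zeta\in L^2(\Omega,\c G,\P;\R^d)\}=\c P_2(\R^d)$ is already satisfied by $\c G=\sigma(U)$ for a single uniform $U$, so it guarantees no ``spare'' independent randomness beyond what $\xi,\xi'$ may consume. The same issue reappears, amplified, in your cross-setup step, where you need to realise on $(\Omega^2,\P^2)$ a process $\alpha^2$ with a prescribed joint law given $(\xi^2,W^2,B^2)$ together with the right adaptedness; this is a genuine measurable-realisation problem that the stated richness of $\c G^2$ does not settle by itself.

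The natural repairs --- first extending $(\Omega,\c F,\P)$ by an independent uniform to make all conditionals atomless, then constructing $\Theta$ --- require knowing \emph{a priori} that the value function is unchanged under such extensions, which is essentially the second half of the proposition and makes the argument circular unless one proves extension-invariance separately (itself nontrivial in a McKean--Vlasov setting, where the amount of available randomness can matter). This is exactly why \cite{djete_mckeanvlasov_2022-1} proceeds via the weak formulation, where law invariance and setup-invariance hold by construction and the nontrivial content is the strong/weak equivalence. Your intuition that composing with a measure-preserving transfer preserves $J$ and that weak uniqueness of the McKean--Vlasov SDE transports the conditional-law process along $B$ is correct and is a useful building block; what is missing is a sound reason that the required transfer exists under the stated assumptions.
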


Therefore, with slight abuse of notation, we will also write
\[
V(t,\P_\xi) = V(t,\xi).
\]

%%%%%%%%%%%%%%%%%%%%%%%%%%%%%%%%%%%%%%%%%%%%%
%%%%%%%%%%%%%%%%%%%%%%%%%%%%%%%%%%%%%%%%%%%%%
\section{Reformulating the set of admissible controls}\label{section problem l2 formulation}

In this section, we derive an equivalent reformulation of the original mean-field control problem, that will form the basis for developing the randomised formulation in \cref{section randomised problem}. The key idea behind this reformulation is that in McKean-Vlasov dynamics, by treating the entire current distribution as the state space rather than focusing on individual particles, the idiosyncratic noise becomes part of the deterministic state dynamics, leaving the common noise as the only "true noise" affecting the system. This leads us to reframe the control set $\c A$ of $\b F^{W,B}\lor\c G$-predictable processes by connecting it to a suitable set $\hat{\c A}$ of only $\b F^B$-predictable processes. This approach aligns the control problem thus aligning the control set more closely with the traditional (non-mean-field) framework, where the control is adapted to the external noise.

In this spirit, let us consider the sets
\[
\c A_s \coloneq L^0(\Omega,\c G\lor\c F^W_s,\P;A),\qquad s\in [0,T],
\]
of $\P$-equivalence classes of $\c G\lor\c F^W_s$-$\c B(A)$-measurable functions $\phi:\Omega\to A$ equipped with the metric
\[
\hat\rho(\phi^1,\phi^2) \coloneqq \E^\P[\rho(\phi^1,\phi^2)] 
< 1
,\qquad\text{for all } \phi^1,\phi^2\in \c A_s.
\]
They will now take the role of the action space, each at its corresponding time $s\in [0,T]$. We remark that $\c A_u\subseteq\c A_v$, for $u\leq v$. Further, since $A$ is a Polish space and $\c G$ and $\c F^W_s$ are separable, the spaces $\c A_s$ are also Polish, $s\in [0,T]$.
Finally, we define the set $\hat{\c A}$ of all $\b F^B$-predictable processes $\hat\alpha:[0,T]\times\Omega\to \c A_T$, such that additionally $\hat\alpha_s\in\c A_s$ for $s\in [0,T]$. 

We will spend the rest of this section on the relation between the sets $\c A$ and $\hat{\c A}$. More precisely, in \cref{subsection connecting A and L0 A processes} we will introduce in which sense we can identify such process $\hat\alpha:[0,T]\times\Omega\to \c A_T$ with the original control processes of the form $\alpha:[0,T]\times\Omega\to A$ in a slightly more general setting, which we will need in \cref{section randomised problem}. Afterwards, in \cref{subsection equivalent action set}, we will use this concept to establish an equivalence between the action sets $\c A$ and $\hat{\c A}$, which will aid in the upcoming construction of the randomised control problem in \cref{section randomised problem}.

\subsection{Identifying decomposed controls with original controls}\label{subsection connecting A and L0 A processes}

Since in \cref{section randomised problem} we are going to extend our probability space for the randomised formulation to allow for an additional Poisson point process $\mu$, we will in this section consider more generally processes defined on an extension $(\hat\Omega,\hat{\c F},\hat\P)$ of our original probability space $(\Omega,\c F,\P)$. This means that $\hat \Omega = \Omega\times \tilde \Omega$, $\tilde \Omega \times \c F\subseteq \hat{\c F} $ and $\hat \P(\tilde\Omega\times \cdot)=\P$. We denote by $\pi:\hat\Omega\to\Omega$ the canonical projection, which by definition is probability preserving. We next identify $\sigma$-algebras on $\Omega$ as $\sigma$-algebras on $\hat\Omega$ by replacing them with their reciprocal images by $\pi$. We also extend the definition of random variables on $\Omega$ to $\hat \Omega$  by composing by $\pi$.

For a suitably general class of processes $\hat\alpha:[0,T]\times\hat\Omega\to \c A_T$ and $\alpha:[0,T]\times\hat\Omega\to A$, we will define the identification relation as follows.

\begin{definition}
Let $(\hat\Omega,\hat{\c F},\hat\P)$ be an extension of $(\Omega,\c F,\P)$ and $\b F$ a filtration on the extension independent of $\c G,W$. Then we say that an $\b F^W\lor\c G\lor \b F$-progressive process $\alpha:[0,T]\times\hat\Omega\to A$ $\b F$-identifies to a $\b F$-progressive process $\hat\alpha:[0,T]\times\hat\Omega\to \c A_T = L^0(\Omega,\c G\lor\c F^W_T,\P;A)$ if there exists an $(\c G\lor\b F^W)\otimes\b F$-progressive process $\Phi:[0,T]\times\Omega\times\hat\Omega\to A$ such that
\[
\alpha_s(\hat\omega) = \Phi_s(\pi(\hat\omega),\hat\omega),\qquad\text{for all }(s,\hat\omega)\in [0,T]\times\hat\Omega,
\]
and, viewed as equality in $L^0(\Omega,\c G\lor\c F^W_T,\P;A)$,
\[
\hat\alpha_s(\hat\omega) = \Phi_s(\cdot,\hat\omega),\qquad\text{for all }(s,\hat\omega)\in [0,T]\times\hat\Omega.
\]
\end{definition}

We remark that just by the definition it is not clear whether given an $\alpha$ also an $\hat\alpha$ identifying with $\alpha$ exists (and vice versa). However, we can show that, if it then exists, $\hat\alpha$ given $\alpha$ (and similarly $\alpha$ given $\hat\alpha$) is unique up to modification, as the following \cref{lemma alpha bar alpha uniqueness} shows.

\begin{lemma}\label{lemma alpha bar alpha uniqueness}
    Let $(\hat\Omega,\hat{\c F},\hat\P)$ be an extension of $(\Omega,\c F,\P)$ and for $i\in\{1,2\}$ let $\alpha^i$ be a $\b F^W\lor\c G\lor\b F^i$-progressive process $\b F^i$-identifying to a $\b F^i$-progressive process $\hat\alpha^i$. Then for all $s\in [0,T]$,
    \[
    \E^{\hat\P}[\rho(\alpha^1_s,\alpha^2_s)] = \E^{\hat\P}[\hat\rho(\hat\alpha^1_s,\hat\alpha^2_s)].
    \]
    In particular, $\alpha^1$ is a modification of $\alpha^2$ iff $\hat\alpha^1$ is a modification of $\hat\alpha^2$.
\end{lemma}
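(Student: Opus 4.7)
The plan is to reduce both expectations in the statement to integrals of a single function on $\Omega\times\hat\Omega$ and then exploit the hypothesised independence between the filtrations. By the identification hypothesis I would fix the $(\c G\lor\b F^W)\otimes\b F^i$-progressive selectors $\Phi^i:[0,T]\times\Omega\times\hat\Omega\to A$ associated with $(\alpha^i,\hat\alpha^i)$, so that $\alpha^i_s(\hat\omega)=\Phi^i_s(\pi(\hat\omega),\hat\omega)$ and $\hat\alpha^i_s(\hat\omega)=\Phi^i_s(\cdot,\hat\omega)$ in $L^0(\Omega,\c G\lor\c F^W_T,\P;A)$. For fixed $s\in[0,T]$ I would then set
\[
\Psi(\omega,\hat\omega):=\rho\bigl(\Phi^1_s(\omega,\hat\omega),\Phi^2_s(\omega,\hat\omega)\bigr),
\]
which is bounded by $1$ and measurable with respect to $(\c G\lor\c F^W_s)\otimes(\c F^1_s\lor\c F^2_s)$, where $\c F^i_s$ is the time-$s$ $\sigma$-algebra of $\b F^i$. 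Unwinding the definition of $\hat\rho$ gives $\hat\rho(\hat\alpha^1_s(\hat\omega),\hat\alpha^2_s(\hat\omega)) = \int_\Omega \Psi(\omega,\hat\omega)\,d\P(\omega)$ and $\rho(\alpha^1_s(\hat\omega),\alpha^2_s(\hat\omega)) = \Psi(\pi(\hat\omega),\hat\omega)$.

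Integrating against $\hat\P$ and using Fubini, the claimed identity reduces to
\[
\int_{\hat\Omega} \Psi(\pi(\hat\omega),\hat\omega)\,d\hat\P(\hat\omega) = \int_{\Omega\times\hat\Omega}\Psi\,d(\P\otimes\hat\P),
\]
i.e.\ the pushforward of $\hat\P$ under the map $T:\hat\omega\mapsto(\pi(\hat\omega),\hat\omega)$ must agree with the product measure $\P\otimes\hat\P$ on the $\sigma$-algebra that makes $\Psi$ measurable. I would verify this by a monotone class argument on rectangles $R\times S$ with $R\in\c G\lor\c F^W_s$ and $S\in\c F^1_s\lor\c F^2_s$, for which the desired equality $\hat\P(\pi^{-1}(R)\cap S)=\P(R)\hat\P(S)$ follows from $\pi_*\hat\P=\P$ together with independence of $\pi^{-1}(\c G\lor\c F^W_s)$ from $S$.

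The "in particular" claim then falls out cleanly: since $\rho$ and $\hat\rho$ are metrics, $\alpha^1$ is a modification of $\alpha^2$ iff $\E^{\hat\P}[\rho(\alpha^1_s,\alpha^2_s)]=0$ for every $s\in[0,T]$, and likewise $\hat\alpha^1$ is a modification of $\hat\alpha^2$ iff $\E^{\hat\P}[\hat\rho(\hat\alpha^1_s,\hat\alpha^2_s)]=0$ for every $s$, so by the identity just proved the two conditions coincide. The main technical subtlety I anticipate is ensuring joint independence of $\c F^1_s\lor\c F^2_s$ from $\c G\lor\c F^W_s$: this does not follow automatically from the separate independence assumptions on $\b F^1$ and $\b F^2$, but it is guaranteed in the paper's framework, where both $\b F^i$ sit inside a single filtration (namely the Poisson noise filtration used for the randomisation) that is independent of $\c G$ and $W$ by construction.
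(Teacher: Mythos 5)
Your argument is essentially the one the paper gives: fix the selectors $\Phi^i$, write both sides as integrals of $\rho(\Phi^1_s,\Phi^2_s)$, and trade the diagonal integral for the product integral via the measure-preserving property of $\pi$ together with independence. Your monotone class argument on rectangles is an unpacking of the one-line Fubini step in the paper's displayed computation, and your handling of the ``in particular'' part is standard and correct.

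The subtlety you flag at the end is genuine, and the paper's proof does not address it either: after introducing $\b F := \b F^1\lor\b F^2$, the paper simply asserts that $\b F$ is also independent of $\c G,W$, but, as you correctly note, this does not follow from the separate independence of $\b F^1$ and $\b F^2$ from $\c G,W$. Two sub-$\sigma$-algebras each independent of a third need not be jointly independent of it: if $X,Y$ are independent Rademacher variables, both $\sigma(Y)$ and $\sigma(XY)$ are independent of $\sigma(X)$, yet $\sigma(Y)\lor\sigma(XY)=\sigma(X,Y)$ is not. In every instance where the lemma is actually invoked, $\b F^1$ and $\b F^2$ are subfiltrations of a single filtration already known to be independent of $\c G,W$ (such as $\b F^B$ or the filtration generated by the randomising Poisson noise), so the conclusions drawn from it are safe; nevertheless, the independence of $\b F^1\lor\b F^2$ from $\c G,W$ ought to be stated as an explicit hypothesis of the lemma rather than asserted in its proof.
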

\begin{proof}
    For $i\in\{1,2\}$, by definition there exists $(\c G\lor\b F^W)\otimes\b F^i$-progressive processes $\Phi^i:[0,T]\times\Omega\times\hat\Omega\to A$ such that
    \[
    \alpha^i_s(\hat\omega) = \Phi^i_s(\pi(\hat\omega),\hat\omega),\qquad\text{for all }(s,\hat\omega)\in [0,T]\times\hat\Omega,
    \]
    and, viewed as equality in $L^0(\Omega,\c G\lor\c F^W_T,\P;A)$,
    \[
    \hat\alpha^i_s(\hat\omega) = \Phi^i_s(\cdot,\hat\omega),\qquad\text{for all }(s,\hat\omega)\in [0,T]\times\hat\Omega.
    \]
    Defining the filtration $\b F\coloneqq \b F^1\lor\b F^2$, we note that $\b F$ is also independent of $\c G,W$ and $\Phi^1,\Phi^2$ are also $(\c G\lor\b F^W)\otimes\b F$-progressive. Thus, for all $s\in [0,T]$,
    \begin{align}
    \E^{\hat\P}[\rho(\alpha^1_s,\alpha^2_s)] &= \int_{\hat\Omega}  \rho(\Phi^1_s(\pi(\hat\omega),\hat\omega),\Phi^2_s(\pi(\hat\omega),\hat\omega)) \hat\P(d\hat\omega)\\
    &= \int_{\hat\Omega} \int_{\hat\Omega} \rho(\Phi^1_s(\pi(\hat\omega^1),\hat\omega^2),\Phi^2_s(\pi(\hat\omega^1),\hat\omega^2)) \hat\P(d\hat\omega^1)\hat\P(d\hat\omega^2)
    = \E^{\hat\P}[\hat\rho(\hat\alpha^1_s,\hat\alpha^2_s)],
    \end{align}
    where the second equality comes from the independence of $\c G$, $W$ and $\b F$.
\end{proof}

Moreover, in some cases, e.g. if the filtration $\b F$ is generated by a càdlàg process, we can even show that uniqueness in joint law holds for this the identification, see the following \cref{lemma joint law N bar N map}, which will be useful in the case of Poisson processes in \cref{section randomised problem}. % Note that for clarity, and just for this lemma, we will mark for each canonical extension of each process which probability space it belongs to.

\begin{lemma}\label{lemma joint law N bar N map}
    Let $(\hat\Omega^i,\hat{\c F}^i,\hat\P^i)$, $i=\{1,2\}$ be two extensions of $(\Omega,\c F,\P)$. Denotes by $\c G^i,W^i$ be the canonical extensions of $\c G,W$, and by $\pi^i:\hat\Omega^i\to\Omega$ be the canonical projection, $i=\{1,2\}$. Further let on each $(\hat\Omega^i,\hat{\c F}^i,\hat\P^i)$  be a càdlàg,
    $\b F^{N^i}$-progressive process $N^i$ independent of $\c G^i$ and $W^i$ taking values in a Polish space $E$, an $\b F^{W^i,N^i}\lor\c G^i$-progressive map $\alpha^i:[0,T]\times\hat\Omega^i\to A$ and an $\b F^{N^i}$-progressive $\hat\alpha^i:[0,T]\times\hat\Omega^i\to\c A_T$ which $\b F^{N^i}$-identifies to $\alpha^i$,  $i=\{1,2\}$. Then the following statements (of equality in finite dimensional distributions) are equivalent
    \begin{enumerate}[label=(\roman*)]
        \item\label{lemma 4.2 assumption i} $\c L^{\hat\P^1}(\pi^1,N^1,\alpha^1) = \c L^{\hat\P^2}(\pi^2,N^2,\alpha^2)$,
        \item\label{lemma 4.2 assumption ii} $\c L^{\hat\P^1}(\pi^1,N^1,\hat\alpha^1) = \c L^{\hat\P^2}(\pi^2,N^2,\hat\alpha^2)$,
        \item\label{lemma 4.2 assumption iii} $\c L^{\hat\P^1}(\pi^1,N^1,\alpha^1,\hat\alpha^1) = \c L^{\hat\P^2}(\pi^2,N^2,\alpha^2,\hat\alpha^2)$.
    \end{enumerate}
\end{lemma}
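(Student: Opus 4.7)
The implications (iii) $\Rightarrow$ (i) and (iii) $\Rightarrow$ (ii) are immediate by projecting to marginals, so the substance lies in the two converses. The plan is to exploit the identification relation directly: by definition there exist $(\c G \lor \b F^W) \otimes \b F^{N^i}$-progressive maps $\Phi^i: [0,T] \times \Omega \times \hat\Omega^i \to A$ satisfying $\alpha^i_s(\hat\omega) = \Phi^i_s(\pi^i(\hat\omega), \hat\omega)$ and $\hat\alpha^i_s(\hat\omega) = [\Phi^i_s(\cdot, \hat\omega)] \in \c A_T$. By progressivity, $\Phi^i_s$ depends on its first argument only through $\c G \lor \c F^W_s$ and on its second only through $N^i|_{[0,s]}$, so using Doob--Dynkin I would write
\begin{align*}
\alpha^i_s(\hat\omega) &= G^i_s\big(\pi^i(\hat\omega),\, N^i|_{[0,s]}(\hat\omega)\big),\\
\hat\alpha^i_s(\hat\omega) &= \big[\,G^i_s(\cdot,\, N^i|_{[0,s]}(\hat\omega))\,\big] \in \c A_T,
\end{align*}
for a jointly measurable $G^i_s: \Omega \times D([0,s]; E) \to A$.

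The structural observation that drives both directions is that, because $N^i$ is independent of $\c G^i \lor \b F^{W^i}$, the restriction $\pi^i|_{\c G \lor \c F^W_s}$ is independent of $N^i|_{[0,s]}$ under $\hat\P^i$. Consequently the joint law of $(\pi^i|_{\c G \lor \c F^W_s}, N^i|_{[0,s]})$ is the product $\P|_{\c G \lor \c F^W_s} \otimes \c L^{\hat\P^i}(N^i|_{[0,s]})$, and both $\alpha^i_s$ and $\hat\alpha^i_s$ are deterministic functions of $(\pi^i, N^i)$ via the common datum $G^i_s$ (up to a $(\P|_{\c G \lor \c F^W_s}) \otimes \c L^{\hat\P^i}(N^i|_{[0,s]})$-null set). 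Each converse then reduces to recovering $G^i_s$ modulo such null sets from the respective given joint law.

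For (ii) $\Rightarrow$ (iii), the marginal law $\c L^{\hat\P^i}(N^i|_{[0,s]}, \hat\alpha^i_s)$ is supported on the graph of $n \mapsto [\,G^i_s(\cdot, n)\,]$, which identifies this $\c A_T$-valued map for $\c L(N^i|_{[0,s]})$-a.e.\ $n$, and hence $G^i_s$ itself a.s.; combined with the equality of $\c L^{\hat\P^i}(\pi^i, N^i)$ (a marginal of the given law), this determines $\c L^{\hat\P^i}(\pi^i, N^i, \alpha^i, \hat\alpha^i)$. For (i) $\Rightarrow$ (iii), the joint law of $(\pi^i|_{\c G \lor \c F^W_s}, N^i|_{[0,s]}, \alpha^i_s)$ is concentrated on the graph of $G^i_s$ with independent first two marginals $\P|_{\c G \lor \c F^W_s} \otimes \c L(N^i|_{[0,s]})$, so $G^i_s$ is again determined a.s.; then $\hat\alpha^i_s = [\,G^i_s(\cdot, N^i|_{[0,s]})\,]$ is reconstructed. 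The extension to finite-dimensional marginals in the time variable is immediate by carrying out this reasoning simultaneously at finitely many times $s_1 < \cdots < s_k$.

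The main obstacle I foresee is the careful measurability and selection bookkeeping needed to translate the a.s.\ identification of $G^i_s$ from joint-law information into genuine equality of laws involving either the $A$-valued pointwise process $\alpha^i$ or the $\c A_T$-valued process $\hat\alpha^i$. This is unlocked by the independence $\pi^i|_{\c G \lor \c F^W_s} \perp N^i|_{[0,s]}$ inherited from the assumptions, which via Fubini promotes $\P$-a.s.\ agreement of representatives in $\c A_T$ to $\hat\P^i$-a.s.\ agreement of the induced $A$-valued controls.
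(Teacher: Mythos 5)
Your plan is correct, and it takes a genuinely different route from the paper's proof of \cref{lemma joint law N bar N map}. The paper also begins with the Doob--Dynkin factorisation $\Phi^i_s(\omega,\hat\omega^i) = \Psi^i_s(\omega, N^i(\hat\omega^i))$, but then it does not attempt to identify $\Psi^i$ from the joint law. Instead it \emph{transports} $\Psi^1$ to $\hat\Omega^2$, defining ${\alpha^2_s}'(\hat\omega^2) := \Psi^1_s(\pi^2(\hat\omega^2),N^2(\hat\omega^2))$ and ${\hat\alpha^2_s}' := \Psi^1_s(\cdot,N^2(\hat\omega^2))$, which by construction satisfy $\c L^{\hat\P^1}(\pi^1,N^1,\alpha^1,\alpha^1,\hat\alpha^1) = \c L^{\hat\P^2}(\pi^2,N^2,\alpha^2,{\alpha^2}',{\hat\alpha^2}')$. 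Under assumption (i) the trivial diagonal equality $\alpha^1 = \alpha^1$ on $\hat\Omega^1$ then forces $\alpha^2_s = {\alpha^2_s}'$ $\hat\P^2$-a.s., and the independence of $\c G^2\lor\c F^{W^2}_T$ and $N^2$ (the same Fubini mechanism you invoke) promotes this to $\hat\alpha^2_s = {\hat\alpha^2_s}'$ $\hat\P^2$-a.s., yielding (iii). Your version instead runs the disintegration explicitly on each space separately: the product structure of $\c L(\pi^i|_{\c G\lor\c F^W_s}, N^i|_{[0,s]})$ makes the joint law determine $G^i_s$ up to a product-measure null set, and since the marginal laws agree you conclude $G^1_s = G^2_s$ a.e.\@ and push forward. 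Both routes hinge on the same two ingredients---Doob--Dynkin and the independence $\c G\lor\c F^W\perp N$---but the paper's concrete transport sidesteps the need to argue \enquote{the joint law with independent inputs determines the kernel}, which in your version has to be spelled out (your acknowledged \enquote{bookkeeping} step). Your symmetric treatment is conceptually cleaner and makes it immediate that (i) and (ii) enter on equal footing, whereas the paper's argument fixes $\Psi^1$ as the reference and hands the second implication off as \enquote{proven similarly}.
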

\begin{proof}
By definition there exists $(\c G\lor\b F^W)\otimes\b F^{N^i}$-progressive processes $\Phi^i:[0,T]\times\Omega\times\hat\Omega^i\to A$ such that
\[
\alpha^i_s(\hat\omega^i) = \Phi^i_s(\pi^i(\hat\omega^i),\hat\omega^i),\qquad\text{for all }(s,\hat\omega^i)\in [0,T]\times\hat\Omega^i,
\]
and, viewed as equality in $L^0(\Omega,\c G\lor\c F^W_T,\P;A)$,
\[
\hat\alpha^i_s(\hat\omega^i) = \Phi^i_s(\cdot,\hat\omega^i),\qquad\text{for all }(s,\hat\omega^i)\in [0,T]\times\hat\Omega^i.
\]
Then the Doob-Dynkin Lemma implies that there exists a $\c B([0,T])\otimes(\c G\lor\c F^W_T)\otimes\c B(D([0,T];E))$-measurable $\Psi^i:[0,T]\times\Omega\times D([0,T];E)\to A$ such that
\[
\Phi^i_s(\omega,\hat\omega^i) = \Psi^i_s(\omega,N^i(\hat\omega^i)),\qquad\text{for all }(s,\omega,\hat\omega^i)\in[0,T]\times\Omega\times\hat\Omega^i.
\]
Now we define ${\alpha^2}':[0,T]\times\hat\Omega^2\to A$ and ${\hat\alpha^2}':[0,T]\times\hat\Omega^2\to\c A_T$ via
\[
{\alpha^2_s}'(\hat\omega^2) \coloneqq \Psi^1_s(\pi^2(\hat\omega^2),N^2(\hat\omega^2)),\quad\text{for all }(s,\hat\omega^2)\in [0,T]\times\hat\Omega^2,
\]
and
\[
{\hat\alpha^2_s}'(\hat\omega^2) = \Psi^1_s(\cdot,N^2(\hat\omega^2)),\qquad\text{for all }(s,\hat\omega^2)\in [0,T]\times\hat\Omega^2.
\]
By construction, ${\hat\alpha^2}'$ is $\c B([0,T])\otimes\c F^{N^2}_T$-$\c B(A)$-measurable and ${\alpha^2}'$ is $\c B([0,T]\otimes(\c F^{W^2,N^2}_T\lor\c G)$-measurable.

Now let us assume that \labelcref{lemma 4.2 assumption i} holds, which means that $\c L^{\hat\P^1}(\pi^1,N^1,\alpha^1) = \c L^{\hat\P^2}(\pi^2,N^2,\alpha^2)$. Then by construction
\begin{align}
\c L^{\hat\P^1}(\pi^1,N^1,\alpha^1,\alpha^1,\hat\alpha^1) = \c L^{\hat\P^2}(\pi^2,N^2,\alpha^2,{\alpha^2}',{\hat\alpha^2}'),
\label{eq lemma 4.2 joint law equality}
\end{align}
which shows that $\alpha^2_s = {\alpha^2_s}'$, $\hat\P^2$-a.s., for all $s\in [0,T]$, which means that $\alpha^2$ is a modification of ${\alpha^2_s}'$. Therefore, using that $\Psi^2(\pi^2(\cdot),N^2(\cdot))$ is $\c B([0,T])\otimes (\c G^2\lor\c F^{W^2}_T)\otimes \c F^{N^2}_T$-measurable and that $\c G^2\lor\c F^{W^2}_T$ and $N^2$ are independent, we obtain
\begin{align}
\E^{\hat\P^2}[\rho(\alpha^2_s,{\alpha^2_s}')] = \E^{\hat\P^2}[\rho(\Psi^2_s(\pi^2,N^2),\Psi^1_s(\pi^2,N^2))] = \E^{\hat\P^2}[\hat\rho(\hat\alpha^2_s,{\hat\alpha^2_s}')] = 0,
\end{align}
and hence $\hat\alpha^2_s = {\hat\alpha^2_s}'$, $\hat\P^2$-a.s., for all $s\in [0,T]$, which together with \eqref{eq lemma 4.2 joint law equality} implies \labelcref{lemma 4.2 assumption iii}. The direction \labelcref{lemma 4.2 assumption ii} $\Rightarrow$ \labelcref{lemma 4.2 assumption iii} can be proven similarly.
\end{proof}

\begin{remark}
    It is not strictly necessary for there to be a càdlàg process $N$ generating the filtration $\b F=\b F^N$. The result \cref{lemma joint law N bar N map} holds more generally as long as the filtration $\b F$ is countably generated. However, in this paper, we focus on filtrations $\b F=\b F^N$ generated by càdlàg, piece-wise constant processes $N$, such as Poisson processes, as these will be relevant to the randomisation framework outlined in \cref{section randomised problem}.
\end{remark}

\subsection{Equivalence of the action sets $\c A$ and $\hat{\c A}$}\label{subsection equivalent action set}

In this section we will show that for each $\alpha\in\c A$ there exists an $\hat\alpha\in\hat{\c A}$ such that $\alpha$ identifies with $\hat\alpha$ and vice versa. We equip $\c A$ and $\hat{\c A}$ with the following pseudometrics
\[
d_{\c A}^{\hat\P}(\alpha^1,\alpha^2) = \E^{\hat\P}\Big[\int_0^T \rho(\alpha^1_s,\alpha^2_s)ds \Big],\qquad d_{\hat{\c A}}^{\hat\P}(\hat\alpha^1,\hat\alpha^2) \coloneqq \E^{\hat\P}\Big[\int_0^T \hat\rho(\hat\alpha^1_s,\hat\alpha^2_s) ds\Big].
\]
By \cref{lemma alpha bar alpha uniqueness}, this will then define an isomorphism between the induced quotient spaces $\c A/_\sim$ and $\hat{\c A}/_\sim$, where $\alpha^1\sim\alpha^1$ (resp. $\hat\alpha^1\sim\hat\alpha^2$) iff $d_{\c A}^{\hat\P}(\alpha^1,\alpha^2) = 0$ (resp. $d_{\hat{\c A}}^{\hat\P}(\hat\alpha^1,\hat\alpha^2) = 0$).

In particular, this justifies considering the optimal control problem over the action set $\hat{\c A}$ rather than $\c A$, which will be crucial for \cref{section randomised problem}, as it provides a more natural starting point for the construction of the randomised control problem.

\subsubsection{From $\c A$ to $\hat{\c A}$}
For the construction of an $\hat\alpha\in\hat{\c A}$, which $\b F^B$-identifies to a given $\alpha\in\c A$, we first prove the following general results about decomposing predictable processes.

\begin{proposition}\label{proposition decompose filtrations of predictable processes}
    Let $(\Omega,\c F)$ be a measurable space with two filtrations $\b F$ and $\b G$ and let $A$ be a Polish space.
    Then a process $X:[0,T]\times\Omega\to A$ is $\b F\lor\b G$-predictable if and only if there exists a $\b F\otimes\b G$-predictable process $Y:[0,T]\times\Omega\times\Omega\to A$ defined on the space $(\Omega\times \Omega,\c F\otimes \c F)$ such that
    \[
    X_t(\omega) = Y(t,\omega,\omega),\qquad\text{for all }(t,\omega)\in [0,T]\times\Omega.
    \]
\end{proposition}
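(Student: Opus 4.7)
The ``if'' direction is immediate: the diagonal map $\Delta:\omega\mapsto(\omega,\omega)$ is measurable from $(\Omega,\mathcal{F}\lor\mathcal{G})$ to $(\Omega\times\Omega,\mathcal{F}\otimes\mathcal{G})$, since $\Delta^{-1}(F\times G) = F\cap G$, and this measurability holds at each filtration level, so composing an $\mathbf{F}\otimes\mathbf{G}$-predictable $Y$ with $(\mathrm{id}_{[0,T]},\Delta)$ yields an $\mathbf{F}\lor\mathbf{G}$-predictable process.

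For the converse, the plan is a two-stage argument: first reduce to the real-valued case via a Borel isomorphism, and then run a monotone class argument on the predictable $\sigma$-algebra. Concretely, by Kuratowski's theorem a Polish space $A$ is Borel-isomorphic to a Borel subset $A'$ of $[0,1]$; let $\iota:A\to A'$ denote such an isomorphism. If one can produce an $\mathbf{F}\otimes\mathbf{G}$-predictable real-valued $Y'$ with $Y'(t,\omega,\omega)=\iota(X_t(\omega))$, then setting $Y(t,\omega_1,\omega_2) = \iota^{-1}(Y'(t,\omega_1,\omega_2))$ on $\{Y'\in A'\}$ and $Y(t,\omega_1,\omega_2) = a_0$ for some fixed $a_0\in A$ otherwise defines an $A$-valued, $\mathbf{F}\otimes\mathbf{G}$-predictable $Y$ that agrees with $X$ on the diagonal, since $\{Y'\in A'\}$ is $\mathbf{F}\otimes\mathbf{G}$-predictable as $A'$ is Borel.

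For the real-valued case, let $\mathcal{D}$ be the collection of $C\in\mathrm{Pred}(\mathbf{F}\lor\mathbf{G})$ such that $\mathbf{1}_C$ admits a decomposition as in the statement. Then $\mathcal{D}$ is stable under complementation (use $1-Y$), under finite intersections (use products of the corresponding $Y$'s), and under countable disjoint unions (use pointwise sums), hence is a $\sigma$-algebra. The $\pi$-system
\[
\mathcal{P} \coloneqq \bigl\{(s,t]\times(F\cap G): 0\leq s<t\leq T,\ F\in\mathcal{F}_s,\ G\in\mathcal{G}_s\bigr\} \cup \bigl\{\{0\}\times(F_0\cap G_0):F_0\in\mathcal{F}_0, G_0\in\mathcal{G}_0\bigr\}
\]
lies in $\mathcal{D}$ via the explicit decomposition $Y(r,\omega_1,\omega_2)=\mathbf{1}_{(s,t]}(r)\mathbf{1}_F(\omega_1)\mathbf{1}_G(\omega_2)$, which is plainly $\mathbf{F}\otimes\mathbf{G}$-predictable. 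Since the collection $\{F\cap G:F\in\mathcal{F}_s,G\in\mathcal{G}_s\}$ generates $\mathcal{F}_s\lor\mathcal{G}_s$, Dynkin's $\pi$-$\lambda$ theorem gives $\mathcal{D}=\mathrm{Pred}(\mathbf{F}\lor\mathbf{G})$. A standard functional monotone class argument then extends the decomposition to all bounded $\mathbf{F}\lor\mathbf{G}$-predictable real-valued processes, and a final $\limsup_n$ of the decompositions of the truncations $(X\wedge n)\lor(-n)$ handles the unbounded case (while preserving $\mathbf{F}\otimes\mathbf{G}$-predictability and the diagonal identity).

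The main obstacle I anticipate is not conceptual but notational: carefully verifying that $\mathcal{P}$ really generates $\mathrm{Pred}(\mathbf{F}\lor\mathbf{G})$ (rather than just a subalgebra), and ensuring that the chosen $Y$ in each step is $\mathbf{F}\otimes\mathbf{G}$-predictable rather than merely product-measurable—both requiring that $F\in\mathcal{F}_s$ and $G\in\mathcal{G}_s$ are taken at the \emph{same} time $s$ so that the decomposition lives in $\mathcal{F}_s\otimes\mathcal{G}_s$. The Polish-space reduction via Kuratowski is a clean one-line step, but it is essential that $A'$ is Borel in $[0,1]$ in order to keep the redefinition of $Y$ outside $A'$ predictable.
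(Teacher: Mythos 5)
Your proposal is correct and takes essentially the same route as the paper: reduce to the real-valued case via a Borel embedding, then run a monotone class argument on the rectangle generators $(t_1,t_2]\times(B\cap C)$ of $\mathrm{Pred}(\mathbf F\lor\mathbf G)$, assigning the obvious split $\1_{(t_1,t_2]\times B\times C}$. The only differences are cosmetic: you split the monotone class step into an indicator $\pi$--$\lambda$ pass followed by a functional pass, and you use the Kuratowski embedding into $[0,1]$ (which cleanly handles countable Polish $A$ and makes everything bounded), whereas the paper simply declares $A$ isomorphic to $\R$ — your version is actually a touch more careful on that point.
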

\begin{proof}
    Given $Y$, the $\b F\lor\b G$-predictability of $X$ is obvious. So let us focus on the other direction.
    We assume that the Polish space $A$ is isomorphic $\R$, and thus w.l.o.g. assume that $A = \R$.
    
    We will prove this result using the functional monotone class theorem. To this end, we denote by $\c H$ the space of $\b F\lor\b G$-predictable functions $X:[0,T]\times\Omega\to\R$ such that a corresponding $Y^X$ exists. We note that if $X^1,X^2\in\c H$ and $c\in\R$, then also $X^1 X^2,X^1+c X^2\in\c H$ since we can choose $Y^{X^1 X^2} \coloneqq Y^{X^1}Y^{X^2}$ and $Y^{X^1+c X^2} \coloneqq Y^{X^1}+c Y^{X^2}$. Furthermore, if $(X^n)_n\subseteq \c H$ and $0\leq X^n\uparrow X$ to a bounded process $X$, then we can choose $Y^X\coloneqq \limsup_n Y^{X^n}$ to conclude that also $X\in\c H$.

    Thus, recalling the generator of $Pred(\b F\lor\b G)$, we only need to show that (i) $\1_{(t_1,t_2]\times B\cap C}\in\c H$ for all $0\leq t_1<t_2\leq T$ and $B\in \c F_{t_1},C\in\c G_{t_1}$ (ii) $\1_{\{0\}\times B\cap C}\in\c H$ for all $B\in \c F_{0-},C\in\c G_{0-}$. By defining $Y^{\1_{(t_1,t_2]\times B\cap C}}\coloneqq \1_{(t_1,t_2]\times B\times C}$ (respectively $Y^{\1_{\{0\}\times B\cap C}}\coloneqq \1_{\{0\}\times B\times C}$) this is clear and the result follows.
\end{proof}

\begin{proposition}\label{proposition l0 control is predictable}
Let $(\Omega,\c F,\b F)$ a filtered space, $(\Omega',\c G,\P)$ a probability space and $X:[0,T]\times\Omega\times\Omega'\to A$ an $\b F\otimes\c G$-predictable process taking values in some Polish space $A$. Then the process $Y:[0,T]\times\Omega\to L^0(\Omega',\c G,\P;A)$, where $L^0$ is meant w.r.t. $\P$-equivalence classes, defined by
\[
Y_t(\omega) \coloneqq X_t(\omega,\cdot) \in L^0(\Omega',\c G,\P;A),\qquad (t,\omega)\in [0,T]\times\Omega,
\]
is $\b F$-predictable.
\end{proposition}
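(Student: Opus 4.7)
The plan is to adapt the functional monotone class argument from \cref{proposition decompose filtrations of predictable processes}. First I note that $Pred(\b F\otimes\c G)=Pred(\b F)\otimes\c G$, since both are generated by the rectangles $B\times C$ with $B\in Pred(\b F)$ and $C\in\c G$, so that $X$ is jointly $Pred(\b F)\otimes\c G$-measurable. To reduce to a real-valued codomain, I fix a countable dense subset $\{a_k\}_{k\in\N}$ of $A$. Since $\rho<1$, the map $a\mapsto(\rho(a,a_k))_k$ is a homeomorphic embedding of $A$ into the Polish cube $[0,1]^{\N}$, and this lifts to a homeomorphic embedding of $L^0(\Omega',\c G,\P;A)$ into $\prod_k L^0(\Omega',\c G,\P;[0,1])$. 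Consequently $Y$ is $\b F$-predictable iff each coordinate $(t,\omega)\mapsto\rho(X(t,\omega,\cdot),a_k)\in L^0(\Omega',\c G,\P;[0,1])$ is $\b F$-predictable, so it suffices to treat bounded real-valued $X$.

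For the real-valued case, I would let $\c H$ denote the class of bounded $Pred(\b F)\otimes\c G$-measurable $X:[0,T]\times\Omega\times\Omega'\to\R$ for which $Y^X:(t,\omega)\mapsto X(t,\omega,\cdot)\in L^0(\Omega',\c G,\P;\R)$ is $\b F$-predictable. Then $\c H$ is a vector space (using $Y^{X^1+cX^2}=Y^{X^1}+cY^{X^2}$ and continuity of the vector operations on $L^0$) and is closed under bounded monotone convergence (if $0\leq X^n\uparrow X\leq M$, then dominated convergence gives $Y^{X^n}\to Y^X$ pointwise in the metric space $L^0$, and pointwise limits of measurable maps into a metric space remain measurable). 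For the generators $\1_{B\times C}$ with $B\in Pred(\b F)$ and $C\in\c G$, the map $Y^{\1_{B\times C}}(t,\omega)=\1_B(t,\omega)\,\1_C$ takes only the two values $0$ and $\1_C\in L^0$, with preimages $B^c,B\in Pred(\b F)$, so $\1_{B\times C}\in\c H$. The functional monotone class theorem then gives $\c H=\{\text{bounded }Pred(\b F)\otimes\c G\text{-measurable functions}\}$, which combined with the coordinate reduction finishes the proof.

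I expect the main subtlety to be the coordinate-reduction step, namely verifying that the embedding $L^0(\Omega',\c G,\P;A)\hookrightarrow\prod_k L^0(\Omega',\c G,\P;[0,1])$ is a homeomorphism onto its image. The forward inclusion follows immediately from $|\rho(\phi,a_k)-\rho(\psi,a_k)|\leq\rho(\phi,\psi)$; for the inverse one uses the sub-subsequence trick, passing from convergence in probability of each $\rho(\cdot,a_k)$ along a common sub-subsequence to $\P$-a.s.\ convergence of all coordinates simultaneously, whereupon the density of $\{a_k\}$ reconstructs $A$-convergence $\omega'$-pointwise and hence convergence in $L^0(A)$.
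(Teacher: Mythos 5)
Your proof is correct and reaches the same conclusion by a functional monotone class argument, but the reduction to the real-valued case is genuinely different from the paper's. The paper reduces to $A=\R$ by invoking the Borel-isomorphism theorem (every Polish space is Borel-isomorphic to a Borel subset of $\R$) and then runs the monotone class argument directly with the generators $\1_{(t_1,t_2]\times B\times C}$ and $\1_{\{0\}\times B\times C}$ of $Pred(\b F\otimes\c G)$. You instead embed $A$ \emph{homeomorphically} into $[0,1]^{\N}$ via $a\mapsto(\rho(a,a_k))_k$, lift this to a homeomorphic embedding $L^0(\Omega',\c G,\P;A)\hookrightarrow\prod_k L^0(\Omega',\c G,\P;[0,1])$, and reduce predictability of $Y$ to predictability of its $[0,1]$-valued coordinates. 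This route buys something concrete: because the embedding preserves the topology and not merely the Borel structure, the induced map on $L^0$ spaces is visibly a homeomorphism onto its image (continuity of the inverse being your sub-subsequence argument), so $\c B(L^0(\Omega',\c G,\P;A))$ is generated by the coordinate maps with no extra work. The paper's Borel-isomorphism $\phi\colon A\to I\subseteq\R$ is in general not continuous, so the induced map $\Phi\colon L^0(\Omega',\c G,\P;A)\to L^0(\Omega',\c G,\P;\R)$ is not obviously even Borel, let alone a Borel isomorphism onto a Borel image; that this holds (via a Lusin--Suslin argument once measurability of $\Phi$ is established) is true but left entirely implicit in the paper. On the real-valued step your choice of the $\pi$-system $\{\1_{B\times C}: B\in Pred(\b F),\,C\in\c G\}$ is a small variant of the paper's explicit time-interval generators; it works, but the asserted identity $Pred(\b F\otimes\c G)=Pred(\b F)\otimes\c G$ is not quite immediate from both being ``generated by rectangles'' and deserves a short $\pi$-$\lambda$ argument (fix $(t_1,t_2]$ and show $\{D\in\c F_{t_1}\otimes\c G : (t_1,t_2]\times D\in Pred(\b F)\otimes\c G\}$ is a $\lambda$-system containing the rectangles).
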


\begin{proof}
Note that it suffices to show the claim for $A=\R$ since every Polish space is isomorphic to a Borel subset of $\R$. Therefore we will only consider the case $A=\R$, and carry out the proof using the functional monotone class theorem. To this end, let $\c H$ be the set of such processes $X$ for which $Y^X$ is indeed $\b F$-predictable. Note that if $X\equiv 1$, then $Y^X\equiv 1$ is trivially $\b F$-predictable. Similarly, if $X^1,X^2\in \c H$ and $c\in\R$, then also $X^1 X^2, X^1+c X^2\in\c H$ since $Y^{X^1 X^2} = Y^{X^1} Y^{X^2}$ and $Y^{X^1 + c X^2} = Y^{X^1} + c Y^{X^2}$. Furthermore, if $(X^n)_n\subseteq \c H$ and $0\leq X^n\uparrow X$ to a bounded process $X$, then $Y^X = \lim_n Y^{X^n}$ is also $\b F$-predictable, and thus $X\in\c H$.

Finally, we need to verify that $\c H$ contains a generating set for $Pred(\b F)$. For this we note that both (i) $\1_{(t_1,t_2]\times B\times C}$ for $0 \leq t_1 < t_2\leq T$ and $B\in \c F_{t_1},C\in\c G$ and (ii) $\1_{\{0\}\times B\times C}$ for $B\in\c F_{0-},C\in\c G$ are in $\c H$ since the corresponding processes $Y$ are clearly $\b F$-predictable and thus $\c H$ indeed contains all $\b F\otimes\c H$-predictable processes $X$.
\end{proof}

Putting these results together, we can now prove the following \cref{proposition bar alpha given alpha well defined}.

\begin{proposition}\label{proposition bar alpha given alpha well defined}
    For every $\alpha\in\c A$, there exists an $\hat\alpha\in\hat{\c A}$ that $\b F^B$-identifies to $\hat\alpha$.
\end{proposition}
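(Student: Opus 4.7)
The plan is to combine the two decomposition propositions just established. First, noting that any $\alpha\in\c A$ is $(\b F^W\lor\c G)\lor\b F^B$-predictable, I would apply \cref{proposition decompose filtrations of predictable processes} with $\b F = \b F^W\lor\c G$ (treating $\c G$ as a constant filtration) and $\b G = \b F^B$. This should produce an $(\b F^W\lor\c G)\otimes\b F^B$-predictable process $\Phi:[0,T]\times\Omega\times\Omega\to A$ satisfying $\alpha_t(\omega) = \Phi_t(\omega,\omega)$ for all $(t,\omega)$.

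Next, I would define the candidate $\hat\alpha_t(\omega) \coloneqq \Phi_t(\cdot,\omega) \in L^0(\Omega,\c G\lor\c F^W_T,\P;A) = \c A_T$. After swapping the roles of the two $\Omega$-factors so that $\b F^B$ plays the role of $\b F$ and $\c G\lor\c F^W_T$ plays the role of $\c G$ in the statement, \cref{proposition l0 control is predictable} will immediately yield that $\hat\alpha$ is $\b F^B$-predictable as a $\c A_T$-valued process. The identification condition is then built into the construction: $\Phi$ is a fortiori $(\c G\lor\b F^W)\otimes\b F^B$-progressive, and $\b F^B$ is independent of $\c G$ and $W$ since $B$ is independent of $(W,\c G)$ by assumption.

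The delicate point, which I expect to be the main obstacle, is verifying the time-indexed constraint $\hat\alpha_s\in\c A_s$ — i.e., that $\Phi_s(\cdot,\omega)$ is $(\c G\lor\c F^W_s)$-measurable, not merely $(\c G\lor\c F^W_T)$-measurable. This is precisely where predictability (as opposed to mere progressive measurability) is essential: the $(\b F^W\lor\c G)\otimes\b F^B$-predictability of $\Phi$ forces $\Phi_s$ to be $(\c F^W_{s-}\lor\c G)\otimes\c F^B_{s-}$-measurable for $s>0$, so that, fibre-wise in the second coordinate, $\Phi_s(\cdot,\omega)$ is $(\c F^W_{s-}\lor\c G)$-measurable, which is contained in $\c G\lor\c F^W_s$. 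The case $s=0$ is handled analogously via the initial $\sigma$-algebras $\c F_{0-}$, which completes the argument that $\hat\alpha\in\hat{\c A}$.
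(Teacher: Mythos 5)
Your proposal is correct and takes essentially the same route as the paper: decompose $\alpha$ via \cref{proposition decompose filtrations of predictable processes} into an $(\c G\lor\b F^W)\otimes\b F^B$-predictable $\Phi$, then use \cref{proposition l0 control is predictable} (with roles swapped) to conclude $\b F^B$-predictability of $\hat\alpha_s(\omega)=\Phi_s(\cdot,\omega)$. Your explicit verification of the constraint $\hat\alpha_s\in\c A_s$ via the section of the predictable $\sigma$-algebra at time $s$ is a welcome elaboration of a point the paper states more tersely (and note that even adaptedness of $\Phi$ would already force $\Phi_s$ to be $(\c G\lor\c F^W_s)\otimes\c F^B_s$-measurable, so predictability is not strictly needed at this particular step).
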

\begin{proof}
We recall that $\alpha\in\c A$ is $\b F^{W,B}\lor\c G$-predictable, and thus by \cref{proposition decompose filtrations of predictable processes} there exists a $(\c G\lor\b F^W)\otimes\b F^B$-predictable process $\Psi:[0,T]\times\Omega\times\Omega \to A$ such that
\[
\Psi_s(\omega,\omega) = \alpha_s(\omega),\qquad\text{for all }(s,\omega)\in [0,T]\times\Omega.
\]
Next we note that we can canonically extend $\Psi$ to $\Phi:[0,T]\times\Omega\times\hat\Omega\to A$, that means it satisfies $\Phi = \Psi(\cdot,\pi(\cdot))$. 
In particular $\Phi:[0,T]\times\Omega\times\hat\Omega\to A$ is $(\c G\lor\b F^W)\otimes\b F^B$-predictable, and thus by \cref{proposition l0 control is predictable}, the process $\hat\alpha:[0,T]\times\hat\Omega\to L^0(\Omega,\c G\lor\c F^W_T,\P;A) = \c A_T$ defined by
\[
\hat\alpha_s(\hat\omega) \coloneqq \Phi_s(\cdot,\hat\omega),
\]
is $\b F^B$-predictable and satisfies $\hat\alpha_s(\hat\omega)\in L^0(\Omega,\c G\lor\c F^W_s,\P;A) = \c A_s$ for all $s\in [0,T]$, $\hat\omega\in\hat\Omega$. Therefore $\hat\alpha\in\hat{\c A}$ and $\alpha$ is $\b F^B$-identifying to $\hat\alpha$.
\end{proof}

\subsubsection{From $\hat{\c A}$ to $\c A$}

Our goal in this section is constructing an $\alpha\in\c A$ which $\b F^{\hat\alpha}$-identifies to a given $\hat\alpha\in\hat{\c A}$. For this, we will need several auxiliary result. The first key tool is the following measurable selection result.

\begin{proposition}\label{proposition selecting a measurable process}
    Let $(\Omega,\c F,\b F)$ be a filtered space and $(\Omega',\c G,\b G,\P)$ be a filtered probability space, such that $\c G$ is separable. Let $A$ be a Polish space and $X:[0,T]\times\Omega\to L^0(\Omega',\c G,\P;A)$ be a $(\c B([0,T])\otimes\c F)$-$\c B(L^0(\Omega',\c G,\P;A))$-measurable and $\b F$-adapted process. 
    Further assume that $X_t(\omega) \in L^0(\Omega',\c G_t,\P;A)\subseteq L^0(\Omega',\c G,\P;A)$ for all $(t,\omega)\in [0,T]\times\Omega$.

    Then there exists a $(\c B([0,T])\otimes\c F\otimes\c G)$-$\c B(A)$-measurable and $\b F\otimes\b G$-adapted process $Y:[0,T]\times\Omega\times\Omega'\to A$ such that
    \[
    Y_t(\omega,\cdot) = X_t(\omega)\text{ in }L^0(\Omega',\c G,\P;A),\qquad\text{for all }(t,\omega)\in [0,T]\times\Omega.
    \]
    Further, if $X$ is $\b F$-progressive, then $Y$ can also be chosen $\b F\otimes\b G$-progressive.
\end{proposition}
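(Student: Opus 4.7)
The plan is to construct $Y$ as $Y_t(\omega, \omega') := \sigma(X_t(\omega), \omega')$ for a jointly Borel measurable ``canonical representative'' map $\sigma : L^0(\Omega', \c G, \P; A) \times \Omega' \to A$ satisfying $\sigma(\xi, \cdot) = \xi$ in $L^0$ for every $\xi$. First I would reduce to $A = [0,1]$ via the Borel isomorphism of every Polish space with a Borel subset of $[0,1]$. Since $\c G$ is separable, fix countable generators $(G_n)_{n\geq 1}$ of $\c G$, set $\c G^n := \sigma(G_1, \ldots, G_n)$ with its finite atoms $(A^n_j)_j$, and define the jointly Borel measurable map
\[
\sigma_n(\xi, \omega') := \sum_j \1_{A^n_j}(\omega')\, \frac{1}{\P(A^n_j)} \int_{A^n_j} \xi \, d\P
\]
(with the convention $0/0 := 0$), which fibrewise coincides with the conditional expectation $\E[\xi \mid \c G^n]$. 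By the martingale convergence theorem, $\sigma_n(\xi, \omega') \to \xi(\omega')$ for $\P$-a.e.\@ $\omega'$, for each $\xi \in L^0$, so
\[
\sigma(\xi, \omega') := \limsup_{n \to \infty} \sigma_n(\xi, \omega')
\]
is jointly Borel measurable and satisfies $\sigma(\xi, \cdot) = \xi$ in $L^0$ for every $\xi$.

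The process $Y_t(\omega, \omega') := \sigma(X_t(\omega), \omega')$ is then $\c B([0,T]) \otimes \c F \otimes \c G$-measurable by composition, and delivers the required pointwise identification $Y_t(\omega, \cdot) = X_t(\omega)$ in $L^0$ at every $(t, \omega)$. For the $\b F \otimes \b G$-adaptedness at a fixed time $t$, the generators $(G_n)$ should be chosen compatibly with the filtration $\b G$, so that the $G_n$ lying in $\c G_t$ generate $\c G_t$; under such a choice, $\sigma_n(\xi, \cdot)$ is pointwise $\c G_t$-measurable whenever $\xi \in L^0(\c G_t)$, and the $\limsup$ preserves this, so $Y_t$ inherits $\c F_t \otimes \c G_t$-measurability from $X_t \colon \Omega \to L^0(\c G_t)$ being $\c F_t$-measurable. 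The progressive case follows by the same construction, since the formula $Y = \sigma(X, \cdot)$ respects progressivity whenever $X$ is progressive.

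The main obstacle is reconciling the pointwise identification for \emph{every} $(t, \omega)$ with the $t$-dependent adaptedness constraint: the naive canonical construction of $\sigma$ produces $\sigma(\xi, \cdot)$ equal to $\xi$ only $\P$-a.s.\@ as a $\c G$-measurable function, which is insufficient when $\xi \in L^0(\c G_t)$ requires a pointwise $\c G_t$-measurable representative. The resolution is to align the countable generators of $\c G$ with the filtration $\b G$, which is available whenever each $\c G_t$ is separable in a consistent way, as is the case for filtrations generated by càdlàg processes in the randomisation framework of \cref{section randomised problem}.
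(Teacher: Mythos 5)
Your plan hinges on constructing a single, time-independent, jointly Borel map $\sigma : L^0(\Omega',\mathcal G,\P;A)\times\Omega'\to A$ that simultaneously picks a pointwise $\mathcal G_t$-measurable representative whenever the input lies in $L^0(\mathcal G_t)$. The key step where this fails is the assertion that, after ``aligning'' the generators $(G_n)$ with the filtration, ``$\sigma_n(\xi,\cdot)$ is pointwise $\mathcal G_t$-measurable whenever $\xi\in L^0(\mathcal G_t)$.'' This is false: by construction $\sigma_n(\xi,\cdot)=\E[\xi\mid\mathcal G^n]$ is a step function over the atoms of $\mathcal G^n=\sigma(G_1,\dots,G_n)$, hence it is $\mathcal G^n$-measurable, and for it to be $\mathcal G_t$-measurable pointwise you would need $\mathcal G^n\subseteq\mathcal G_t$. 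Since $\bigvee_n\mathcal G^n=\mathcal G$, this cannot hold for all $n$ unless $\mathcal G_t=\mathcal G$; as soon as $\mathcal G^n$ contains a generator not in $\mathcal G_t$, the averages over the atoms of $\mathcal G^n$ separate points of $\Omega'$ in a way that a $\mathcal G_t$-measurable function cannot. (A four-point example with $\mathcal G_t=\sigma(\{1,2\},\{3,4\})$ and $\mathcal G^1=\sigma(\{1,3\},\{2\},\{4\})$ shows $\E[\1_{\{1,2\}}\mid\mathcal G^1]$ is not $\mathcal G_t$-measurable.) The a.s.\@ martingale convergence $\sigma_n(\xi,\cdot)\to\xi$ does not repair this, because $\limsup_n\sigma_n(\xi,\cdot)$ is only $\sigma(\bigcup_n\mathcal G^n)=\mathcal G$-measurable pointwise; it agrees with some $\mathcal G_t$-measurable representative only up to a $\P$-null set, which is precisely what is not good enough for pointwise $\b F\otimes\b G$-adaptedness. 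A universal $\sigma$ depending only on the equivalence class $\xi$ and on $\omega'$ has no mechanism to ``know'' the correct filtration level $t$.

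The paper avoids this obstruction by not trying to build a universal selector: it works with the specific process $X$ and discretises $[0,T]\times\Omega$ rather than $L^0$. For each $n$ it covers $L^1(\Omega',\mathcal G,\P;[0,1])$ by Borel sets $A^n_k$ of diameter $\le 2^{-n}$, sets $B^n_k:=X^{-1}(A^n_k)$, and crucially lets $t^n_k:=\inf\{t:(t,\omega)\in B^n_k\}$. Using that $X_{t^n_k}\in L^1(\mathcal G_{t^n_k})$ (or, when the infimum is not attained, a weak $\sigma(L^1,L^\infty)$ cluster point of $X_{s^n_{k,l}}$ along $s^n_{k,l}\downarrow t^n_k$, which lies in $L^1(\mathcal G_{t^n_k+})$), it selects a $\mathcal G_{t^n_k}$- (resp.\@ $\mathcal G_{t^n_k+}$-) measurable representative $\hat Z^n_k$ that is within $2^{-n}$ of the ``naive'' choice. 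The process $Y^n_s(\omega,\omega')=\sum_k\hat Z^n_k(\omega')\1_{B^n_k}(s,\omega)$ is then adapted precisely because for any $(s,\omega)\in B^n_k$ one has $s\ge t^n_k$ (resp.\@ $s>t^n_k$), and the uniform $2^{-n+1}$ bound makes $Y:=\liminf_n Y^n$ work. In short, the adaptedness is obtained by letting the representative for each piece $B^n_k$ depend on the earliest time $t^n_k$ at which it is needed, rather than by fixing a time-blind canonical representative; this time-localisation is the ingredient missing from your construction.
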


\begin{proof}
\begin{enumerate}[wide,label=(\roman*)]
    \item \label{proof proposition selecting a measurable process case A unit interval}
Let us first consider the case that $A = [0,1]$. 
The following construction and proof is based on \cite[Theorem IV.30]{dellacherie_probabilities_1978} and its detailed version in \cite{kaden_progressive_2004}. Since $[0,1]$ is bounded and thus the notions of convergence in probability and convergence in $L^1$ coincide (and therefore induce the same topology), we will in the following consider $L^1$ instead of $L^0$.
Let $X:[0,T]\times\Omega\to L^1(\Omega',\c G,\P;[0,1])$ be a $(\c B([0,T])\otimes\c F)$-$\c B(L^1(\Omega',\c G,\P;[0,1]))$-measurable and $\b F$-adapted process. Then, since $\c G$ and thus also $L^1(\Omega',\c G,\P;[0,1])$ is separable, we can approximate $X$ uniformly by countable sums of simple predictable processes as follows:

Let $n\in\N$, and let $(A^n_k)_k\subset\c B(L^1(\Omega',\c G,\P;[0,1]))$ be a Borel sets of diameter $d^n_k \coloneqq \sup_{x,y\in A^n_k} \norm{x-y}_{L^1(\Omega',\c G,\P;[0,1])} \leq 2^{-n}$ covering $L^1(\Omega',\c G,\P;[0,1])$, that is $\bigcup_k A^n_k = L^1(\Omega',\c G,\P;[0,1])$. Further let $(B^n_k)_k\subseteq \c B([0,T])\otimes \c F$ be the corresponding preimages under $X$, that is $B^n_k \coloneqq X^{-1}(A^n_k)$ for all $k\in\N$. We note that if $X$ is $\b F$-progressive, then $B^n_k\in Prog(\b F)$.
Now for each $k\in\N$, we choose a $\c G$-measurable $\alpha^n_k:\Omega'\to [0,1]$ such that $\alpha^n_k\in A^n_k$ (more precise: the $\P$-equivalence class of $\alpha^n_k$ is in $A^n_k$). Then we can define
\[
\Phi^n_s(\omega,\omega') \coloneqq \sum_{k=1}^\infty \alpha^n_k(\omega') \1_{B^n_k}(s,\omega),\qquad (s,\omega,\omega')\in [0,T]\times\Omega\times\Omega'.
\]
Thus we observe that $\norm{\Phi^n_s(\omega,\cdot) - X_s(\omega)}_{L^1(\Omega',\c G,\P;[0,1])}\leq 2^{-n}\to 0$ uniformly in $(s,\omega) \in [0,T]\times\Omega$. Moreover, the $\Phi^n$ are $\c B([0,T])\otimes \c F\otimes \c G$-measurable, but not yet adapted to $\b F\otimes\b G$, since we only have $\alpha^n_k\in L^1(\Omega',\c G,\P;[0,1])$.

To fix this, we define $t^n_k\coloneqq \inf \{t \geq 0 \,|\, (t,\omega)\in B^n_k\text{ for some }\omega\in\Omega\}$ and distinguish two cases: If the infimum is attained, i.e. $t^n_k \in B^n_k(\omega)$ for some $\omega\in\Omega$, we note that $Z^n_k \coloneqq X_{t^n_k} \in L^1(\Omega',\c G_{t^n_k},\P;[0,1])$. In particular there exists a $\c G_{t^n_k}$-measurable $\hat Z^n_k:\Omega'\to [0,1]$ such that $\hat Z^n_k(\cdot) = Z^n_k$ in $L^1(\Omega',\c G,\P;[0,1])$. Furthermore, by construction $\norm{\hat Z^n_k(\cdot) - \alpha^n_k(\cdot)}_{L^1(\Omega',\c G,\P;[0,1])}\leq 2^{-n}$.

In the case that $t^n_k \not\in B^n_k(\omega)$ for all $\omega\in\Omega$, then we can instead find $(s^n_{k,l},\omega^n_{k,l})_l\subseteq B^n_k$ such that $s^n_{k,l}\downarrow t^n_k$.
We now consider the sequence $(Z^n_{k,l})_l \coloneqq (X_{s^n_{k,l}}(\omega^n_{k,l}))_l\subseteq L^1(\Omega',\c G,\P;[0,1])$. Note that by construction $Z^n_{k,l} \in L^1(\Omega',\c G_{s^n_{k,l}},\P;[0,1])$ for all $l\in\N$.
Now since $(Z^n_{k,l})_l$ are uniformly bounded, by \cite[Theorem II.25]{dellacherie_probabilities_1978}, the sequence is relatively compact in the weak $\sigma(L^1,L^\infty)$ topology. Thus, \cite[Theorem II.24]{dellacherie_probabilities_1978} implies that the sequence $(Z^n_{k,l})_l$ converges (along a subsequence) in the weak $\sigma(L^1,L^\infty)$ topology to some $Z^n_k\in L^1(\Omega',\c G,\P;[0,1])$. Further, since $Z^n_{k,l}\in L^1(\Omega',\c G_{s^n_{k,l}},\P;[0,1])$ and $s^n_{k,l}\downarrow t^n_k$, we conclude that $Z^n_k \in L^1(\Omega',\c G_{t^n_k+},\P;[0,1])$,
% Proof: Assume to the contrary that $Z^n_k\not\in L^1(\Omega',\c G_{t^n_k+\varepsilon},\P;[0,1])$ for some $\varepsilon > 0$. At the same time, for all sufficiently large $l$, we know that $s^n_{k,l} < t^n_k+\varepsilon$ and thus $Z^n_{k,l}\in L^1(\Omega',\c G_{t^n_k+\varepsilon},\P;[0,1])$. This implies by \cite[Theorem II.24, II.25]{dellacherie_probabilities_1978} however that there exists a weak $\sigma(L^1,L^\infty)$ convergent subsequence to some limit in $L^1(\Omega',\c G_{t^n_k+\varepsilon},\P;[0,1])$. Together with the uniqueness of $\sigma(L^1,L^\infty)$ limits, this leads to the desired contradiction.
which means that we can find a $\c G_{t^n_k+}$-measurable $\hat Z^n_k:\Omega'\to [0,1]$ such that $\hat Z^n_k(\cdot) = Z^n_k$ in $L^1(\Omega',\c G,\P;[0,1])$.
At the same time, by construction $\norm{\hat Z^n_k(\cdot) -\alpha^n_k(\cdot)}_{L^1(\Omega',\c G,\P;[0,1])} \leq \liminf_{l\to\infty} \norm{Z^n_{k,l} - \alpha^n_k(\cdot)}_{L^1(\Omega',\c G,\P;[0,1])} \leq 2^{-n}$ since the $L^1$-norm is l.s.c.\@ w.r.t.\@ weak $\sigma(L^1,L^\infty)$ convergence.

Therefore, by defining
\[
Y^n_s(\omega,\omega') \coloneqq \sum_{k=1}^\infty \hat Z^n_k(\omega') \1_{B^n_k}(s,\omega),\qquad (s,\omega,\omega')\in [0,T]\times\Omega\times\Omega',
\]
we obtain $(\c B([0,T])\otimes\c F\otimes\c G)$-$\c B([0,1])$-measurable and $\b F\otimes\b G$-adapted processes $Y^n$ for $n\in\N$, which by construction satisfy for all $(s,\omega)\in [0,T]\times \Omega$,
\begin{align}
\norm{Y^n_s(\omega,\cdot) - \Phi^n_s(\omega,\cdot)}_{L^1(\Omega',\c G,\P;[0,1])} &\leq \sup_k \norm{\hat Z^n_k - \alpha^n_k}_{L^1(\Omega',\c G,\P;[0,1])}
= \sup_k \norm{Z^n_k - \alpha^n_k}_{L^1(\Omega',\c G,\P;[0,1])} \leq 2^{-n},
\end{align}
and thus
\begin{align}
&\norm{Y^n_s(\omega,\cdot) - X_s(\omega)}_{L^1(\Omega',\c G,\P;[0,1])}\\
&\leq \norm{Y^n_s(\omega,\cdot) - \Phi^n_s(\omega,\cdot)}_{L^1(\Omega',\c G,\P;[0,1])}
+ \norm{\Phi^n_s(\omega,\cdot) - X_s(\omega)}_{L^1(\Omega',\c G,\P;[0,1])}
\leq 2^{-n+1} \to 0,
\end{align}
uniformly in $(s,\omega)\in [0,T]\times\Omega$.
Finally, by defining the point-wise limit
\[
Y_s(\omega,\omega') \coloneqq \liminf_{n\to\infty} Y^n_s(\omega,\omega'),\qquad (s,\omega,\omega') \in [0,T]\times\Omega\times\Omega',
\]
we obtain the desired $(\c B([0,T])\otimes\c F\otimes\c G)$-$\c B([0,1])$-measurable and $\b F\otimes\b G$-adapted process, which satisfies by construction $X_s(\omega) = Y_s(\omega,\cdot)$ in $L^1(\Omega',\c G,\P;[0,1])$ for all $(s,\omega)\in [0,T]\times\Omega$.

%%%%

Furthermore, if $X$ is $\b F$-progressive, then $B^n_k\in Prog(\b F)$, for $n,k\in\N$ and thus, using a similar argument as \cite[Lemma 2.7]{kaden_progressive_2004}, we see that the constructed processes $\hat Z^n_k \1_{B^n_k}$ will be $\b F\otimes\b G$-progressive. This in turn implies that then also $Y^n = \sum_k \hat Z^n_k \1_{B^n_k}$ and $Y = \liminf Y^n$ are $\b F\otimes\b G$-progressive.

%%%%%%%%%%%%%%%%

    \item 
Let us now consider the case where $A$ is a general Polish space. By a standard result, there exists a Borel-subset $I \subseteq [0,1]$ and a Borel-isomorphism $\phi:A\to I\subseteq [0,1]$ between $A$ and $I$.%
\footnote{If $A$ is uncountable, one may choose $I=[0,1]$.}
We define a left-inverse $\psi:[0,1]\to A$ to $\phi$ by $\psi = \1_I \phi^{-1} + \1_{I^c} a_0$, where $a_0\in A$ is a fixed but arbitrary element of the Polish space $A$. Clearly, $\psi$ is Borel-measurable and $\psi\circ\phi = \operatorname{id}_A$.

Given this identification of $A$ with $I\subseteq [0,1]$, every process $X:[0,T]\times\Omega\to L^0(\Omega',\c G,\P;A)$ can be viewed as a process taking values in $L^0(\Omega',\c G,\P;[0,1])$. Specifically, we define $\bar X \coloneqq \phi(X):[0,T]\times\Omega\to L^0(\Omega',\c G,\P;I) \subseteq L^0(\Omega',\c G,\P;[0,1])$. Then $\bar X$ is  
%a $(\c B([0,T])\otimes \c F)$-$(\c B(L^0(\Omega',\c G,\P;I)))$-measurable and hence also 
a $(\c B([0,T])\otimes \c F)$-$(\c B(L^0(\Omega',\c G,\P;[0,1])))$-measurable and $\b F$-adapted process. Furthermore, it holds that 
%$\bar X_t(\omega) \in L^0(\Omega',\c G_t,\P;I)\subseteq L^0(\Omega',\c G,\P;I)$ and thus also 
$\bar X_t(\omega) \in L^0(\Omega',\c G_t,\P;[0,1])\subseteq L^0(\Omega',\c G,\P;[0,1])$ for all $(t,\omega)\in [0,T]\times\Omega$.

By \cref{proof proposition selecting a measurable process case A unit interval}, there exists a corresponding $(\c B([0,T])\otimes \c F\otimes\c G)$-$\c B([0,1])$-measurable and $\b F\otimes\b G$-adapted process $\bar Y:[0,T]\times\Omega\times\Omega'\to [0,1]$, such that
\[
    \bar Y_t(\omega,\cdot) = \bar X_t(\omega)\text{ in }L^0(\Omega',\c G,\P;[0,1]),\qquad\text{for all }(t,\omega)\in [0,T]\times\Omega.
\]
If $\bar X$ is $\b F$-progressive, then $\bar Y$ can also be chosen to be $\b F\otimes\b G$-progressive.

Next, we map this process back to $A$ by defining $Y\coloneqq \psi(\bar Y):[0,T]\times\Omega\times\Omega'\to A$. By construction, $Y$ is $(\c B([0,T])\otimes \c F\otimes\c G)$-$\c B(A)$-measurable and $\b F\otimes\b G$-adapted. Furthermore, if $X$, and thus $\bar X$, is $\b F$-progressive, then $\bar Y$, and thus $Y$, can also be chosen to be $\b F\otimes\b G$-progressive. Finally we have
\[
Y_t = \psi(\bar Y_t(\omega,\cdot)) = \psi(\bar X_t(\omega)) = \psi(\phi(X_t(\omega))) = X_t(\omega)\text{ in }L^0(\Omega',\c G,\P;A),\qquad\text{for all }(t,\omega)\in [0,T]\times\Omega,
\]
which shows that $Y$ is the desired process.

\end{enumerate}
\end{proof}

While the preceding measurable selection result \cref{proposition selecting a measurable process} allows us to select a progressive process for each $\hat\alpha\in\hat{\c A}$, the definition of the action set $\c A$ requires predictability. To obtain such a predictable process, we will leverage the fact that a large class of predictable processes can be mapped to the canonical space $(C^d, \c C^d_T, \mathbf C^d, \mu^d_W)$, see \cref{proposition canonical representation of predictable processes}. W.r.t.\@ this canonical filtration, the concepts of progressiveness and predictability then coincide, see also the upcoming \cref{lemma canonical space predictable adapted measurable equivalence}. Consequently, we will begin by proving these two auxiliary results, which are extensions of \cite[Propositions 10 and 9]{claisse_pseudo-markov_2016}.

\begin{proposition}%[{Generalisation of \cite[Proposition 10]{claisse_pseudo-markov_2016}}]
\label{proposition canonical representation of predictable processes}
Let $(\Omega,\c F,\b F)$ be a filtered space, let $(\Omega',\c G)$ be a measurable space with a continuous process $X:[0,T]\times\Omega'\to\R^d$ and its generated filtration $\b G^X$ and let $A$ be a Polish space.
Then a process $Y:[0,T]\times\Omega\times\Omega'\to A$ is $\b F\otimes \b G^X$-predictable if and only if there exists a $\b F\otimes \mathbf C^d$-predictable process $\phi:[0,T]\times\Omega\times C^d\to A$ defined on the space $(\Omega\times C^d,\c F\otimes \c C^d_T)$, where $\mathbf C^d$ is the canonical filtration on $C^d$, such that
\[
Y_t(\omega,\omega') = \phi(t,\omega,[X(\omega')]_t) = \phi(t,\omega,X(\omega')),\qquad\text{for all }(t,\omega,\omega')\in [0,T]\times\Omega\times\Omega',
\]
where $[w]_t \coloneqq (w_{s\land t})_{s\in [0,T]}$.
\end{proposition}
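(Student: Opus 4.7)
The plan is to establish both implications via the functional monotone class theorem, reducing the general Polish case to $A=\R$ through a Borel isomorphism $j:A\to I$ with $I\subseteq [0,1]$ Borel, extended to a Borel left-inverse $\psi:[0,1]\to A$ (arbitrary outside $I$), so that it suffices to represent bounded real-valued predictable processes.

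For the easy direction $(\Leftarrow)$, I would observe that $\omega'\mapsto X(\omega')\in C^d$ is $\c G^X_t$-$\c C^d_t$-measurable for each $t\in [0,T]$ directly from the definition of $\b G^X$, so the evaluation map $F:(t,\omega,\omega')\mapsto (t,\omega,X(\omega'))$ pulls $Pred(\b F\otimes\mathbf{C}^d)$ back to $Pred(\b F\otimes\b G^X)$, yielding predictability of $\phi\circ F$. The identity $\phi(t,\omega,X(\omega'))=\phi(t,\omega,[X(\omega')]_t)$ then follows from a monotone class argument showing that any $\c C^d_t$-measurable $h:C^d\to\R$ satisfies $h(w)=h([w]_t)$, starting from cylinder sets $\{w_{s_1}\in A_1,\ldots,w_{s_k}\in A_k\}$ with $s_i\leq t$; continuity of paths in $C^d$ ensures $\c C^d_t=\c C^d_{t-}$, so the argument applies at time $t$ even to the predictable $\phi$.

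For the hard direction $(\Rightarrow)$, let $\c H$ denote the class of bounded $\b F\otimes\b G^X$-predictable real-valued processes $Y$ admitting a representation $Y=\phi\circ F$ with bounded $\b F\otimes\mathbf{C}^d$-predictable $\phi$. Stability of $\c H$ under linear combinations and bounded monotone limits is immediate by performing the same operations on the associated $\phi$'s, so by the functional monotone class theorem it suffices to show that $\c H$ contains indicators of a $\pi$-system generating $Pred(\b F\otimes\b G^X)$. For this I would take sets of the form $(t_1,t_2]\times B\times C$ with $B\in\c F_{t_1}$, $C\in\c G^X_{t_1}$, together with $\{0\}\times B\times C$ with $B\in\c F_{0-}$, $C\in\c G^X_{0-}$, which form a $\pi$-system generating the predictable $\sigma$-algebra (since products $B\times C$ generate $\c F_{t_1}\otimes\c G^X_{t_1}$). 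The crucial point is the identity $\c G^X_{t_1}=X^{-1}(\c C^d_{t_1})$, which is immediate from the definition of $\b G^X$ as the filtration generated by $X$; it gives $C=X^{-1}(C')$ for some $C'\in\c C^d_{t_1}$, and one then sets $\phi(t,\omega,w)\coloneqq \1_{(t_1,t_2]}(t)\1_B(\omega)\1_{C'}(w)$, which is $\b F\otimes\mathbf{C}^d$-predictable and produces the desired indicator, with both displayed forms of $\phi$ coinciding because $C'\in\c C^d_{t_1}$ depends only on the path up to time $t_1\leq t$.

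The main technical obstacle will be the careful bookkeeping around the generator case, especially the identification $\c G^X_{t_1}=X^{-1}(\c C^d_{t_1})$ and the matching of the two displayed forms of $\phi$. Once the real-valued case is settled, the passage back to Polish $A$ is routine: given an $A$-valued predictable $Y$, apply the $A=\R$ case to the bounded $[0,1]$-valued process $j\circ Y$ to obtain a real-valued predictable $\bar\phi$, and set $\phi\coloneqq \psi\circ\bar\phi$, which is $\b F\otimes\mathbf{C}^d$-predictable and satisfies $\phi\circ F=\psi\circ j\circ Y=Y$ since $\psi\circ j=\operatorname{id}_A$. This argument closely parallels \cite[Proposition 10]{claisse_pseudo-markov_2016}, the main extension being the inclusion of the additional parameter space $(\Omega,\b F)$ alongside the canonical space.
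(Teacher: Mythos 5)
Your proof is correct and takes essentially the same route as the paper: both directions rest on the identity $\c G^X_t = X^{-1}(\c C^d_t)$ applied to the generators of the respective predictable $\sigma$-algebras, with the easy direction handled by composition. The only cosmetic difference is that you unpack the Doob--Dynkin application — which the paper invokes after showing that the stopped-path map $\psi:(t,\omega,\omega')\mapsto(t,\omega,[X(\omega')]_t)$ satisfies $\sigma(\psi)=Pred(\b F\otimes\b G^X)$ — into an explicit functional monotone class argument preceded by a reduction to $A=\R$.
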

\begin{proof}
We first prove that $\psi:(t,\omega,\omega')\mapsto (t,\omega,[X(\omega')]_t)$ is $Pred(\b F\otimes\b G^X)$-$Pred(\b F\otimes\mathbf C^d)$-measurable. To this end, we recall that $Pred(\b F\otimes\mathbf C^d)$ is generated by all sets of the form (i) $(t_1,t_2]\times C\times D$, where $0\leq t_1<t_2\leq T$ and $C\in\c F_{t_1}$, $D\in\c C^d_{t_1}$, and (ii) $\{0\}\times C\times D$ where $C\in \c F_{0-}$, $D\in\{\emptyset,C^d\}$. Thus, the measurability directly follows from noticing that $\psi^{-1}((t_1,t_2]\times C\times D) = (t_1,t_2]\times C\times X^{-1}(D)\in Pred(\b F\otimes\b G^X)$, since for $s\geq t_1$, $X^{-1}_{t_1}(D) = ([X]_s)^{-1}_{t_1}(D)$, respectively from $\psi(\{0\}\times C\times D) = \{0\}\times C\times D'\in Pred(\b F\otimes\b G^X)$, where $D' = \emptyset$ if $D=\emptyset$ and $D' = \Omega'$ if $D=C^d$.

At the same time, we observe that $\psi%:(t,\omega,\omega')\mapsto(t,\omega,[X(\omega')]_t)
$ also generates the $\sigma$-algebra $Pred(\b F\otimes\b G^X)$, that is $Pred(\b F\otimes\b G^X) = \sigma(\psi)$. Recall that the measurability of $\psi$ shows that $\sigma(\psi)\subseteq Pred(\b F\otimes\b G^X)$. For the other direction, we note that since $\c G^X_t = \sigma(X_s^{-1}(C)|s\in [0,t],C\in\c B(\R^d))$, the $\sigma$-algebra algebra $Pred(\b F\otimes\b G^X)$ is generated by all sets of the form (i) $(t_2,t_3]\times B\times X_{t_1}^{-1}(C)$, where $0\leq t_1\leq t_2 < t_3\leq T$ and $B\in\c F_{t_2}$, $C\in\c B(\R^d)$, and (ii) $\{0\}\times B\times D$, where $B\in\c F_{0-}$ and $D\in \{\emptyset,\Omega'\}$.
For (i), noting that for $s\geq t_1$, $X_{t_1}^{-1}(C) = ([X]_s)_{t_1}^{-1}(C) = \{w^d_s\in C\}\in\c C^d_s$, we see that $(t_2,t_3]\times B\times X_{t_1}^{-1}(C) = \psi^{-1}((t_2,t_3]\times B\times \{w_{t_1}\in C\})\in\psi^{-1}(Pred(\b F\otimes\mathbf C^d))$.
For (ii), we note that $\{0\}\times B\times D = \psi^{-1}(\{0\}\times B\times D')$, where $D' = \emptyset$ if $D=\emptyset$ and $D' = C^d$ if $D=\Omega'$, which implies that % $\{0\}\times B\times D'\in Pred(\b F\otimes\b C^d)$, and hence we obtain
$\{0\}\times B\times D\in \psi^{-1}(Pred(\b F\otimes\b C^d))$.
Therefore also $Pred(\b F\otimes\b G^X)\subseteq \sigma(\psi)$, and we conclude that $Pred(\b F\otimes\b G^X) = \sigma(\psi)$.

Using this auxiliary result, we can now prove both directions separately as follows.
\begin{enumerate}[wide,label=(\roman*)]
    \item Let $Y:[0,T]\times\Omega\times\Omega'\to A$ be $\b F\otimes\b G^X$-predictable. Using that $\psi%:(t,\omega,\omega')\mapsto (t,\omega,[X(\omega')]_t)
    $ is $Pred(\b F\otimes\mathbf C^d)$-$Pred(\b F\otimes\b G^X)$-measurable, and that $Pred(\b F\otimes\mathbf C^d) = \sigma(\psi)$, an application of the Doob-Dynkin Lemma gives us the desired $\b F\otimes\mathbf C^d$-predictable process $\phi:[0,T]\times\Omega\times C^d\to A$ with $Y(t,\omega,\omega') =(\phi\circ\psi)(t,\omega,\omega') = \phi(t,\omega,[X(\omega')]_t)$ for all $(t,\omega,\omega')\in [0,T]\times\Omega\times\Omega'$. Finally, since $[\cdot]_t\circ[\cdot]_t = [\cdot]_t$, we conclude that $\phi(t,\omega,X(\omega')) = Y_t(\omega,\omega') = \phi(t,\omega,[X(\omega')]_t)$ for all $(t,\omega,\omega')\in [0,T]\times\Omega\times\Omega'$.
    \item For the other direction, let $\phi:[0,T]\times\Omega\times C^d\to A$ be such an $\b F\otimes\mathbf C^d$-predictable process. Then since $\psi%:(t,\omega,\omega')\mapsto (t,\omega,[X(\omega')]_t)
    $ is $Pred(\b F\otimes\b G^X)$-$Pred(\b F\otimes\mathbf C^d)$-measurable, we directly obtain that $Y:[0,T]\times\Omega\times\Omega'\to A$, which satisfies $Y_t(\omega,\omega') = \phi(t,\omega,[X(\omega')]_t) = (\phi \circ \psi)(t,\omega,\omega')$ for all $(t,\omega,\omega')\in [0,T]\times\Omega\times\Omega'$, is $\b F\otimes\b G^X$-predictable.
\end{enumerate}
\end{proof}

The following generalisation of \cite[Proposition 9]{claisse_pseudo-markov_2016} is proven the same way as in \cite{claisse_pseudo-markov_2016}, but since the proof is quite short, we will include it for the reader's convenience.

\begin{lemma}%[{Generalisation of \cite[Proposition 9]{claisse_pseudo-markov_2016}}]
\label{lemma canonical space predictable adapted measurable equivalence}
Let $(\Omega,\c F)$ be a measurable space and $(C^d,\c C^d_T,\mathbf C^d,\mu^d_W)$ the canonical filtered probability space of continuous $\R^d$-valued paths. Let $E$ be a Polish space equipped with its Borel $\sigma$-algebra $\c B(E)$. Then for a process $X:[0,T]\times\Omega\times C^d\to E$ the following statements are equivalent:
\begin{enumerate}[label=(\roman*)]
    \item $X$ is $\c F\otimes \mathbf C^d$-predictable,
    \item $X$ is $\c F\otimes \mathbf C^d$-optional,
    \item $X$ is $\c F\otimes \mathbf C^d$-progressive,
    \item $X$ is $\c B([0,T])\otimes\c F\otimes\c C^d_T$-$\c B(E)$-measurable and $\c F\otimes \mathbf C^d$-adapted,
    \item $X$ is $\c B([0,T])\otimes\c F\otimes\c C^d_T$-$\c B(E)$-measurable and satisfies
    \[
    X_s(\omega,w^d) = X_s(\omega,[w^d]_s),\qquad\text{for all }(s,\omega,w^d) \in [0,T]\times\Omega\times C^d.
    \]
\end{enumerate}
\end{lemma}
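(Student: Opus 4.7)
The strategy is to establish the cycle of implications (i) $\Rightarrow$ (ii) $\Rightarrow$ (iii) $\Rightarrow$ (iv) $\Rightarrow$ (v) $\Rightarrow$ (i). The first three implications are standard facts valid on any filtered measurable space: predictable $\subseteq$ optional $\subseteq$ progressive, and every progressive process is automatically adapted and jointly measurable in $\c B([0,T])\otimes\c F\otimes\c C^d_T$. None of these exploit the canonical structure.

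For (iv) $\Rightarrow$ (v), I would use the explicit definition of $\c C^d_s$ as the $\sigma$-algebra generated by the coordinate maps $w\mapsto w_t$ for $t\in[0,s]$. By adaptedness, for every $s\in[0,T]$ the section $X_s(\cdot,\cdot)$ is $\c F\otimes\c C^d_s$-measurable in $(\omega,w)$, and hence factors through the coordinate projections on $[0,s]$. Since $w$ and $[w]_s$ coincide on $[0,s]$, any such $\c C^d_s$-measurable function returns the same value on $w$ and on $[w]_s$. This gives (v) pointwise in $(s,\omega,w)$, with no almost-sure qualifications.

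The main step, and the main obstacle, is (v) $\Rightarrow$ (i). Here I would introduce the ``stopping map'' $\Psi:[0,T]\times\Omega\times C^d\to[0,T]\times\Omega\times C^d$ defined by $\Psi(s,\omega,w)=(s,\omega,[w]_s)$. Under (v), $X=X\circ\Psi$, so it suffices to show that $\Psi$ is measurable as a map from $(\,[0,T]\times\Omega\times C^d,\,\mathrm{Pred}(\c F\otimes\mathbf C^d)\,)$ to $(\,[0,T]\times\Omega\times C^d,\,\c B([0,T])\otimes\c F\otimes\c C^d_T\,)$; then the composition $X\circ\Psi$ of a $\c B([0,T])\otimes\c F\otimes\c C^d_T$-measurable $X$ with such a $\Psi$ is automatically predictable. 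To check the measurability of $\Psi$, I would apply the monotone class theorem to the generating $\pi$-system of sets of the form $B\times F\times \{w:w_{t_1}\in C_1,\ldots,w_{t_k}\in C_k\}$ with $B\in\c B([0,T])$, $F\in\c F$, $t_i\in[0,T]$, $C_i\in\c B(\R^d)$. The preimage under $\Psi$ equals
\[
\{(s,\omega,w)\,:\,s\in B,\ \omega\in F,\ w_{t_i\wedge s}\in C_i\ \text{for all }i\},
\]
which is an intersection of three types of predictable sets: $B\times\Omega\times C^d$ (a deterministic time set, predictable since $\c B([0,T])\otimes\c F_0\otimes\c C^d_0\subseteq\mathrm{Pred}(\c F\otimes\mathbf C^d)$); $[0,T]\times F\times C^d$ (a constant-in-time $\c F$-measurable set, manifestly predictable); and $\{(s,w):w_{t_i\wedge s}\in C_i\}$, predictable because $(s,w)\mapsto w_{t_i\wedge s}$ is continuous in $s$ (as $w$ is continuous) and $\mathbf C^d$-adapted, hence predictable.

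The delicate point, and the reason monotone class is needed rather than a pointwise-in-$s$ argument, is that (v) $\Rightarrow$ (i) demands joint predictability in $(s,\omega,w)$, whereas a section-wise argument would only yield $\c F\otimes\c C^d_s$-measurability for each fixed $s$. The continuity of $w$ is what allows the stopping operation $(s,w)\mapsto w_{t_i\wedge s}$ to produce a \emph{predictable} process (continuous adapted), and this is the one place where the assumption that we work on the space $C^d$ of continuous paths (rather than càdlàg paths) is genuinely used.
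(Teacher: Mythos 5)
Your proposal is correct and follows essentially the same route as the paper's proof: the cycle (i)\,$\Rightarrow$\,$\cdots$\,$\Rightarrow$\,(v)\,$\Rightarrow$\,(i), with (iv)\,$\Rightarrow$\,(v) via the observation that $\c F\otimes\c C^d_s$-measurable functions factor through $[\cdot]_s$ (the paper invokes Doob--Dynkin explicitly, which is where Polishness of $E$ is used), and (v)\,$\Rightarrow$\,(i) via predictability of the stopping map $\psi(s,\omega,w)=(s,\omega,[w]_s)$, reduced by checking preimages of cylinder-set generators and using that $(s,w)\mapsto w_{t\wedge s}$ is continuous and adapted, hence predictable. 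Your closing remark correctly identifies the role of path continuity; the only cosmetic difference is that the paper checks preimages of single-coordinate cylinders directly (no monotone class needed), whereas you phrase it through a finite-coordinate $\pi$-system.
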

\begin{proof}
\begin{enumerate}[wide,label=(\roman*)]
    \item It is known that (i) $\Rightarrow$ (ii) $\Rightarrow$ (iii) $\Rightarrow$ (iv).
    \item Regarding (iv) $\Rightarrow$ (v), we first note that $\c F\otimes\c C^d_s = \sigma((\omega,w^d)\mapsto (\omega,[w^d]_s))$ for all $s\in [0,T]$. Thus using that $E$ is Polish together with $X_s$ being $\c F\otimes\c C^d_s$-measurable, we can apply the Doob-Dynkin Lemma to obtain an $\c F\otimes\c C^d_s$-measurable $Y_s:\Omega\times C^d\to E$ such that $X_s(\omega,w^d) = Y_s(\omega,[w^d]_s)$ for all $(\omega,w^d)\in \Omega\times C^d$. Now using that $[\cdot]_s\circ [\cdot]_s = [\cdot]_s$ for all $s\in [0,T]$, we conclude that $X_s(\omega,[w^d]_s) = Y_s(\omega,[w^d]_s) = X_s(\omega,w^d)$ for all $(s,\omega,w^d)\in [0,T]\times\Omega\times C^d$.
    \item Regarding (v) $\Rightarrow$ (i), we note that $\psi:(s,\omega,w^d)\mapsto (s,\omega,[w^d]_s)$ is $\c F\otimes\b C^d$-predictable. To this end, we recall since $\c C^d_T = \sigma(\{w^d_t\in B\} | t\in [0,T], B\in\c B(\R^d))$ that $\c B([0,T])\otimes\c F\otimes\c C^d_T$ is generated by all sets of the form $D = A\times B\times \{w^d_t\in C\}$, where $A\in\c B([0,T])$, $B\in\c F$ and $t\in [0,T]$, $C\in\c B(\R^d)$. For such sets $D$, we have $\psi^{-1}(D) = \{(s,\omega,w^d)\in A\times B\times C^d \,|\, ([w^d]_s)_t \in C\} = \{(s,\omega,w^d)\in A\times B\times C^d \,|\, w^d_{s\land t}\in C\} \in Pred(\c F\otimes\b C^d)$ since $(s,\omega,w^d)\mapsto w^d_{s\land t}$ is continuous and $\b C^d$-adapted and hence $\b C^d$-predictable. Therefore $\psi$ is $\c F\otimes\b C^d$-predictable, and we can conclude that $X = X\circ \psi$ is also $\c F\otimes\b C^d$-predictable.
\end{enumerate}
\end{proof}

With these tools in hand, we are now ready to go back to the action sets $\c A$ and $\hat{\c A}$ and prove the following \cref{corollary alpha given alpha bar well defined}. The key idea is noting that $\c A_s = L^0(\Omega,\c G\lor\c F^W_s,\P;A)$ and $L^0(\Omega\times C^m,\c G\otimes\c C^m_s,\P\otimes\mu^m_W;A)$ are isomorphic: by decomposing $\alpha\in\c A_s$ similar to the preceding \cref{proposition decompose filtrations of predictable processes} and together with the Doob-Dynkin Lemma, we can find for each $\alpha\in\c A_s$ an $\tilde\alpha\in L^0(\Omega\times C^m,\c G\otimes\c C^m_s,\P\otimes\mu^m_W;A)$ such that $\alpha(\omega) = \tilde\alpha(\omega,W(\omega))$ for all $\omega\in\Omega$, and vice versa. Furthermore, since $\c G$ and $W$ are independent, we also see that $\norm{\alpha}_{\c A_s} = \norm{\tilde\alpha}_{L^0(%\Omega\times C^m,\c G\otimes\c C^m_s,
\P\otimes\mu^m_W;A
)}$, that means that their respective norms also coincide. Using this canonical probability space has now the advantage that we can use \cref{lemma canonical space predictable adapted measurable equivalence} to show the predictability of the process, which allows us to prove the following \cref{corollary alpha given alpha bar well defined}.

\begin{proposition}\label{corollary alpha given alpha bar well defined}
    For every $\hat\alpha\in\hat{\c A}$, there exists an $\alpha\in\c A$ that $\b F^B$-identifies to $\hat\alpha$.
\end{proposition}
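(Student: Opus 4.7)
The plan is to follow the strategy outlined in the paragraph preceding the statement: transfer the problem to the canonical Wiener space for $W$, where progressivity and predictability coincide by \cref{lemma canonical space predictable adapted measurable equivalence}, and then transfer the resulting predictable process back to $\Omega$.

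First, I would set up the isometric isomorphism $\iota_s:L^0(\Omega\times C^m,\c G\otimes\c C^m_s,\P\otimes\mu^m_W;A)\to\c A_s$ defined by $\iota_s(\tilde\alpha)(\omega)=\tilde\alpha(\omega,W(\omega))$, whose well-definedness and bijectivity rest on the fact that, by independence of $\c G$ and $W$, the map $\omega\mapsto(\omega,W(\omega))$ pushes $\P$ forward to $\P\otimes\mu^m_W$ on $\c G\otimes\c C^m_T$ (with surjectivity supplied by the Doob-Dynkin Lemma applied through $W$). Applying $\iota_T^{-1}$ pointwise in $(s,\omega)$ converts the given $\hat\alpha\in\hat{\c A}$ into an $\b F^B$-predictable process $\tilde{\hat\alpha}:[0,T]\times\Omega\to L^0(\Omega\times C^m,\c G\otimes\c C^m_T,\P\otimes\mu^m_W;A)$, with $\tilde{\hat\alpha}_s(\omega)\in L^0(\Omega\times C^m,\c G\otimes\c C^m_s,\P\otimes\mu^m_W;A)$.

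Next, I would apply \cref{proposition selecting a measurable process} to $\tilde{\hat\alpha}$ with outer filtered space $(\Omega,\c F,\b F^B)$ and inner filtered probability space $(\Omega\times C^m,\c G\otimes\c C^m_T,\c G\otimes\mathbf{C}^m,\P\otimes\mu^m_W)$, separable since $\c G$ and $\c C^m_T$ are. Because $\tilde{\hat\alpha}$ is $\b F^B$-predictable, hence $\b F^B$-progressive, this produces an $\b F^B\otimes(\c G\otimes\mathbf{C}^m)$-progressive process $\tilde Y:[0,T]\times\Omega\times\Omega\times C^m\to A$ with $\tilde Y_s(\omega,\cdot,\cdot)=\tilde{\hat\alpha}_s(\omega)$ in $L^0$. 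Viewing $\tilde Y$ as a process on $(\Omega\times\Omega)\times C^m$ under the filtration $(\b F^B\otimes\c G)\otimes\mathbf{C}^m$, I would then invoke \cref{lemma canonical space predictable adapted measurable equivalence} — whose proof transcribes verbatim when the first factor carries a filtration rather than a constant $\sigma$-algebra, since the key identity $\c F_s\otimes\c C^m_s=\sigma((\omega,w^m)\mapsto(\omega,[w^m]_s))$ remains valid time-slicewise — to conclude that $\tilde Y$ is in fact $(\b F^B\otimes\c G)\otimes\mathbf{C}^m$-predictable.

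Finally, set $\Phi_s(\omega_G,\omega_B):=\tilde Y_s(\omega_B,\omega_G,W(\omega_G))$ and $\alpha_s(\omega):=\Phi_s(\omega,\omega)$. Predictability of $\Phi$ with respect to $(\c G\lor\b F^W)\otimes\b F^B$ follows by checking that the map $(s,\omega_G,\omega_B)\mapsto(s,\omega_B,\omega_G,W(\omega_G))$ pulls back generators $(t_1,t_2]\times E_B\times E_G\times E_W$ of $Pred((\b F^B\otimes\c G)\otimes\mathbf{C}^m)$ — with $E_W\in\c C^m_{t_1}$ — to the predictable rectangle $(t_1,t_2]\times(E_G\cap W^{-1}(E_W))\times E_B$, using $W^{-1}(E_W)\in\c F^W_{t_1}$. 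Then \cref{proposition decompose filtrations of predictable processes} shows that $\alpha$ is $(\c G\lor\b F^W)\lor\b F^B=\c G\lor\b F^{W,B}$-predictable, so $\alpha\in\c A$. The identification relations $\alpha_s(\omega)=\Phi_s(\omega,\omega)$ and $\hat\alpha_s(\omega)=\Phi_s(\cdot,\omega)$ in $\c A_T$ hold, the first by construction and the second by evaluating $\iota_T(\tilde Y_s(\omega,\cdot,\cdot))=\iota_T(\tilde{\hat\alpha}_s(\omega))=\hat\alpha_s(\omega)$ along the graph of $W$. The main obstacle is the progressive-to-predictable upgrade: \cref{lemma canonical space predictable adapted measurable equivalence} is stated only for a constant $\sigma$-algebra on the first factor, whereas here we need it with the filtration $\b F^B\otimes\c G$ in its place, and one must explicitly observe that the proof adapts unchanged; the remainder is a routine chase of generators through the machinery set up in \cref{subsection connecting A and L0 A processes,subsection equivalent action set}.
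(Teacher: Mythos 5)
There is a genuine gap in the step where you claim that \cref{lemma canonical space predictable adapted measurable equivalence} ``transcribes verbatim when the first factor carries a filtration rather than a constant $\sigma$-algebra.'' That generalisation is false: with a filtered first factor, a progressive process need not be predictable. Concretely, take any $\b F^B$-progressive but non-predictable $Z:[0,T]\times\Omega\to A$ and set $\tilde Y_s(\omega,\omega',w^m)\coloneqq Z_s(\omega)$; then $\tilde Y$ is $(\b F^B\otimes\c G)\otimes\mathbf C^m$-progressive, satisfies $\tilde Y_s(\cdot,\cdot,w^m)=\tilde Y_s(\cdot,\cdot,[w^m]_s)$, and is not $(\b F^B\otimes\c G)\otimes\mathbf C^m$-predictable. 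The identity $\c F_s\otimes\c C^m_s=\sigma((\omega,w^m)\mapsto(\omega,[w^m]_s))$ that you invoke indeed survives slice-by-slice, so (iv)$\Rightarrow$(v) in the lemma is unaffected; what breaks is (v)$\Rightarrow$(i). The predictability of $\psi:(s,\omega,w^m)\mapsto(s,\omega,[w^m]_s)$ there relies on sets $[0,T]\times B\times C^m$ with $B\in\c F$ being predictable, which holds precisely because $\c F$ is constant ($\c F=\c F_{0-}$). Once $\c F$ is replaced by the filtration $\b F^B\otimes\c G$, such sets for $B\in\c F^B_T\otimes\c G$ are no longer predictable, and the argument collapses.

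The root cause is the order of operations. You apply the selection result \cref{proposition selecting a measurable process} first (which only preserves progressiveness, not predictability), leaving the $B$-dependence on $\Omega$ as a genuine filtration; the predictability upgrade then requires the very Wiener-space structure your chosen arrangement has hidden. The paper instead \emph{first} transfers the $\b F^B$-predictable $\hat\alpha$ to the canonical space $C^n$ via \cref{proposition canonical representation of predictable processes} (i.e.\@ \cite[Proposition 9]{claisse_pseudo-markov_2016}), then applies the $\c A_s$--$L^0(\Omega\times C^m,\ldots)$ isomorphism, and only then invokes \cref{proposition selecting a measurable process}, yielding a progressive $\Psi$ on $[0,T]\times\Omega\times C^m\times C^n$ in which the only non-canonical factor $\Omega$ carries the constant $\sigma$-algebra $\c G$. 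This is exactly the hypothesis of \cref{lemma canonical space predictable adapted measurable equivalence}, so the progressive-to-predictable upgrade goes through. Note also that you cannot repair your ordering by applying \cref{proposition canonical representation of predictable processes} \emph{after} the selection: that proposition is stated for predictable processes, and at that stage $\tilde Y$ is only progressive, so the canonical transfer itself is unavailable. The remainder of your construction (the isomorphism $\iota_s$, the definition of $\Phi$ and $\alpha$, and the verification of the identification) matches the paper's and is fine.
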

\begin{proof}
Starting from $\hat\alpha\in\hat{\c A}$, recalling that $\hat\alpha$ is $\b F^B$-predictable, we can by \cite[Proposition 9]{claisse_pseudo-markov_2016} (or \cref{proposition canonical representation of predictable processes} with $\Omega$ being the trivial probability space) find a $\mathbf C^n$-predictable $\hat\Psi:[0,T]\times C^n \to \c A_T% = L^0(\Omega,\c G\lor\c F^W_T,\P;A)
$ with $\hat\Psi(s,w)\in \c A_s$ for all $(s,w)\in [0,T]\times C^n$ such that
\[
\hat\alpha_s(\hat\omega) = \hat\Psi_s([B(\hat\omega)]_s) = \hat\Psi_s(B(\hat\omega)),\qquad\text{for all }(s,\hat\omega)\in [0,T]\times\hat\Omega.
\]
Furthermore as indicated before, we use that $\c A_s$ is isomorphic to $L^0(\Omega\times C^m,\c G\otimes\c C^m_s,\P\otimes\mu^m_W;A)$ for all $s\in [0,T]$, and we therefore can find a $\mathbf C^n$-predictable map $\tilde\Psi:[0,T]\times C^n\to L^0(\Omega\times C^m,\c G\otimes\c C^m_T,\P\otimes\mu^m_W;A)$ such that (viewed as equality in $\c A_T$)
\[
\tilde\Psi_s(B(\hat\omega))(\cdot,W(\cdot)) = \hat\Psi_s(B(\hat\omega))(\cdot),\qquad\text{for all }(s,\hat\omega)\in [0,T]\times\hat\Omega\times\Omega.
\]
Note that since $\hat\Psi_s(\hat\omega)\in\c A_s$, also $\tilde\Psi_s(\hat\omega)\in L^0(\Omega\times C^m,\c G\otimes\c C^m_s,\P\otimes\mu^m_W;A)$.
Next we apply \cref{proposition selecting a measurable process}, which enables us to select a $\c G\otimes\mathbf C^m\otimes\mathbf C^n$-progressive process $\Psi:[0,T]\times\Omega\times C^m\times C^n\to A$ from the equivalence classes from $\tilde\Psi$ in the sense that (as equality in $L^0(\Omega\times C^m,\c G\otimes \c C^m_T,\P\otimes\mu^m_W;A)$)
\[
\tilde\Psi_s(w) = \Psi_s(\cdot,\cdot,w),\qquad\text{for all }(s,w)\in [0,T]\times C^n.
\]
Now $\Psi$ is defined on the canonical space and hence \cref{lemma canonical space predictable adapted measurable equivalence} shows that $\Psi$ is also $\c G\otimes\mathbf C^m\otimes\mathbf C^n$-predictable. % and in particular also
% \[
% \Psi_s(\omega,\omega^m,\omega^n) = \Psi_s(\omega,[\omega^m]_s,[\omega^n]_s),
% \]
% for all $(s,\omega,\omega^m,\omega^n)\in [0,T]\times\Omega\times C^m\times C^n$. 
Finally, we define the process $\Phi:[0,T]\times\Omega\times\hat\Omega\to A$ by
\[
\Phi_s(\omega,\hat\omega) \coloneqq \Psi_s(\omega,W(\omega),B(\hat\omega)) %= \Psi_s(\omega,[W(\omega)]_s,[B(\hat\omega)]_s)
,\qquad\text{for all }(s,\omega,\hat\omega)\in [0,T]\times\Omega\times\hat\Omega,
\]
which by \cref{proposition canonical representation of predictable processes,proposition decompose filtrations of predictable processes} is now $(\c G\lor\b F^W)\otimes\b F^B$-predictable, and satisfies by construction (viewed as equality in $\c A_T$)
\[
\hat\alpha_s(\hat\omega) = \Phi_s(\cdot,\hat\omega),\qquad\text{for all }(s,\hat\omega)\in[0,T]\times\hat\Omega.
\]
Now it only remains to define the process $\alpha:[0,T]\times\hat\Omega\to A$ by
\[
\alpha_s(\hat\omega) \coloneqq \Phi_s(\pi(\hat\omega),\hat\omega),\qquad (s,\hat\omega)\in [0,T]\times\hat\Omega,
\]
and we obtain the desired $\b F^{W,B}\lor\c G$-predictable process $\alpha\in\c A$ which $\b F^B$-identifies to $\hat\alpha$.
\end{proof}

Finally, we conclude this section with the following \cref{lemma existence N bar N}, which establishes the existence of an $\alpha$ identifying to a given $\hat\alpha$ for a more general class of $\hat\alpha$. This result is particularly important for  \cref{section randomised problem}, where we introduce the randomisation framework, as the Poisson process involved will neither be predictable nor adapted to $\b \c G\lor\b F^{W,B}$ and thus will not fit into the setting of \cref{corollary alpha given alpha bar well defined}.

\begin{corollary}\label{lemma existence N bar N}
    Let $(\hat\Omega,\hat{\c F},\hat\P)$ be an extension of $(\Omega,\c F,\P)$ and $\b F$ be a filtration independent of $\c G,W$. Further let $\hat\alpha:[0,T]\times\hat\Omega\to\c A_T$ be an $\b F$-progressive process such that $\hat\alpha_s(\hat\omega) \in \c A_s$ for all $(s,\hat\omega)\in [0,T]\times\hat\Omega$.
    Then there exists an $\b F^W\lor\c G\lor\b F$-progressive and $\b F^W\lor\c G\lor\c F_T$-predictable process $\alpha$ which $\b F$-identifies to $\hat\alpha$.
\end{corollary}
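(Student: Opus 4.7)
The plan is to adapt the strategy of the proof of \cref{corollary alpha given alpha bar well defined} to our more general setting. In that proof $\hat\alpha$ is $\b F^B$-predictable and one first canonically represents it on the path space of $B$; here $\b F$ is an arbitrary filtration and $\hat\alpha$ is only $\b F$-progressive, so we cannot canonically represent $\hat\alpha$ in the $\b F$-variable. Instead I would keep the $\b F$-factor abstract throughout and use the canonical space only in the $W$-variable. This asymmetric treatment is what will deliver simultaneously the $\b F^W\lor\c G\lor\b F$-progressivity and the $\b F^W\lor\c G\lor\c F_T$-predictability of $\alpha$.

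First, I would use the isomorphism $\c A_s \cong L^0(\Omega\times C^m,\c G\otimes \c C^m_s,\P\otimes\mu^m_W;A)$, already exploited in the proof of \cref{corollary alpha given alpha bar well defined}, to re-view $\hat\alpha$ as an $\b F$-progressive process $\tilde\alpha:[0,T]\times\hat\Omega\to L^0(\Omega\times C^m,\c G\otimes \c C^m_T,\P\otimes\mu^m_W;A)$ with $\tilde\alpha_s(\hat\omega)\in L^0(\Omega\times C^m,\c G\otimes\c C^m_s,\P\otimes\mu^m_W;A)$ for all $(s,\hat\omega)$. Then I would apply \cref{proposition selecting a measurable process} with $(\hat\Omega,\hat{\c F},\b F)$ playing the role of the filtered measurable space and $(\Omega\times C^m,\c G\otimes\c C^m_T,\c G\otimes\mathbf C^m,\P\otimes\mu^m_W)$ that of the filtered probability space --- which is separable since $\c G$ is separable by assumption and $\c C^m_T$ is countably generated. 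This produces a $\b F\otimes\c G\otimes\mathbf C^m$-progressive process $\Psi:[0,T]\times\hat\Omega\times\Omega\times C^m\to A$ with $\Psi_s(\hat\omega,\cdot,\cdot) = \tilde\alpha_s(\hat\omega)$ in $L^0(\Omega\times C^m,\c G\otimes\c C^m_T,\P\otimes\mu^m_W;A)$ for every $(s,\hat\omega)$.

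Next, to convert progressivity into predictability on the $W$-side, I would invoke \cref{lemma canonical space predictable adapted measurable equivalence} applied to the underlying measurable space $(\hat\Omega\times\Omega,\c F_T\otimes\c G)$ together with the canonical space $(C^m,\c C^m_T,\mathbf C^m,\mu^m_W)$: after forgetting the temporal refinement of $\b F\otimes\c G$, the process $\Psi$ is still $(\c F_T\otimes\c G)\otimes\mathbf C^m$-progressive, and the lemma upgrades this to $(\c F_T\otimes\c G)\otimes\mathbf C^m$-predictability. Pulling back via $W$, the process $\bar\Phi(s,\hat\omega,\omega)\coloneqq \Psi(s,\hat\omega,\omega,W(\omega))$ then inherits both the $\b F\otimes\c G\otimes\b F^W$-progressivity (from the construction of $\Psi$) and the $\c F_T\otimes\c G\otimes\b F^W$-predictability (from the canonical-space step). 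Finally, setting $\Phi_s(\omega,\hat\omega)\coloneqq \bar\Phi_s(\hat\omega,\omega)$ and $\alpha_s(\hat\omega)\coloneqq \Phi_s(\pi(\hat\omega),\hat\omega)$, an application of \cref{proposition decompose filtrations of predictable processes} together with the independence of $\b F$ from $\c G\lor\b F^W$ transfers the product-filtration measurability to the joined-filtration measurability, so that $\alpha$ is simultaneously $\b F^W\lor\c G\lor\b F$-progressive and $\b F^W\lor\c G\lor\c F_T$-predictable, while $\Phi$ by construction witnesses that $\alpha$ $\b F$-identifies to $\hat\alpha$.

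The main obstacle is reconciling the two distinct measurability requirements in a single construction: the identification definition forces joint progressivity with respect to the general filtration $\b F$, while the set $\c A$ demands a form of temporal predictability in $\b F^W\lor\c G$ that only a canonical-space argument can supply. Splitting them --- abstract progressivity on the $\b F$-factor and canonical-space predictability on the $W$-factor --- is precisely what resolves this tension without requiring any càdlàg structure on the $\b F$-side.
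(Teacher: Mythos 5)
Your proposal follows the paper's proof essentially step by step: pass to the isomorphic representation $\mathcal{A}_s\cong L^0(\Omega\times C^m,\mathcal{G}\otimes\mathcal{C}^m_s,\P\otimes\mu^m_W;A)$, apply \cref{proposition selecting a measurable process} to obtain a jointly progressive selection $\Psi$, upgrade to predictability on the canonical $W$-factor via \cref{lemma canonical space predictable adapted measurable equivalence}, and finally pull back along $W$ and take the diagonal. One small inaccuracy in the last step: the transfer of predictability from $\Psi$ on the canonical space to $\Phi$ on $\Omega\times\hat\Omega$ requires \cref{proposition canonical representation of predictable processes} (for the pull-back along $W$) in addition to \cref{proposition decompose filtrations of predictable processes} (for the diagonal); the independence of $\mathbb{F}$ from $\mathcal{G}\lor\mathbb{F}^W$ that you invoke plays no role in this purely measure-theoretic transfer.
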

\begin{proof}
    Similarly to the construction in the proof of \cref{corollary alpha given alpha bar well defined}, we start by using that $\c A_s$ is isomorphic to $L^0(\Omega\times C^m,\c G\otimes\c C^m_s,\P\otimes\mu^m_W;A)$ for all $s\in [0,T]$, and we therefore can find a $\b F$-progressive process $\tilde\alpha:[0,T]\times\hat\Omega\to L^0(\Omega\times C^m,\c G\otimes\c C^m_T,\P\otimes\mu^m_W;A)$ such that (viewed as equality in $\c A_T$)
    \[
    \tilde\alpha_s(\hat\omega)(\cdot,W(\cdot)) = \hat\alpha_s(\hat\omega)(\cdot),\qquad\text{for all }(s,\hat\omega)\in [0,T]\times\hat\Omega,
    \]
    and $\tilde\alpha_s(\hat\omega) \in L^0(\Omega\times C^m,\c G\otimes\c C^m_s,\P\otimes\mu^m_W;A)$ for all $(s,\hat\omega)\in [0,T]\times\hat\Omega$.
    Now using \cref{proposition selecting a measurable process}, we can select a $\c G\otimes\mathbf C^m\otimes\b F$-progressive process $\Psi:[0,T]\times\Omega\times C^m\times\hat\Omega\to A$ such that, viewed as equality in $\c A_T$,
    \[
    \Psi_s(\cdot,\cdot,\hat\omega) = \tilde\alpha_s(\hat\omega),\qquad\text{for all }(s,\hat\omega)\in [0,T]\times\hat\Omega.
    \]
    Since $\Psi$ is defined on the canonical space, \cref{lemma canonical space predictable adapted measurable equivalence} shows that $\Psi$ is also $\c G\otimes\mathbf C^m\otimes\c F_T$-predictable. Finally we define the process $\Phi:[0,T]\times\Omega\times\hat\Omega\to A$ by
    \[
    \Phi_s(\omega,\hat\omega) \coloneqq \Psi_s(\omega,W(\omega),\hat\omega)
    ,\qquad\text{for all }(s,\omega,\hat\omega)\in [0,T]\times\Omega\times\hat\Omega,
    \]
    which is then $(\c G\lor\b F^W)\otimes\b F$-progressive and by \cref{proposition canonical representation of predictable processes,proposition decompose filtrations of predictable processes} also $(\c G\lor\b F^W)\otimes\c F_T$-predictable. Now we only need to define the process $\alpha:[0,T]\times\hat\Omega\to A$ by
    \[
    \alpha_s(\hat\omega) \coloneqq \Phi_s(\pi(\hat\omega),\hat\omega),\qquad (s,\hat\omega)\in [0,T]\times\hat\Omega,
    \]
    which is then the desired $\b F^W\lor\c G\lor\b F$-progressive and $\b F^W\lor\c G\lor\c F_T$-predictable process which $\b F$-identifies to $\hat\alpha$.
\end{proof}

%%%%%%%%%%%%%%%%%%%%%%%%%%%%%%%%%%%%%%%%%%%%%
%%%%%%%%%%%%%%%%%%%%%%%%%%%%%%%%%%%%%%%%%%%%%
\section{Randomised problem}\label{section randomised problem}

In this section, we derive a randomised formulation for mean-field control problems with common noise.
The main idea of the randomisation method introduced in \cite{bouchard_stochastic_2009} is to replace the control process by an independent Poisson point process taking values in the action space, and then controlling its intensity instead. While in the non-mean-field case, the set $A$ would be the natural choice for the action space, in which the aforementioned Poisson process should take its values, in the mean-field case this choice runs into problem concerning the additional mean-field dependence of the state dynamics when changing the intensity of the Poisson point process. In fact, in \cite{bayraktar_randomized_2018} (without common noise), the authors instead introduced an additional space $\c A_{\text{step}}$ of all $\b F^W$-progressive step processes taking values in $A$, which they then took as action space for the randomising Poisson point process. The motivation behind this approach is that the conditional law process $(\P^{\c F^{B,\P}_s}_{X_s})_s$ is $\b F^{B,\P}$-progressive and as such well-behaved when tilting the Poisson point process $\mu$ w.r.t.\@ $\b F^{B,\mu}$-predictable intensities.

In this paper, we take a similar approach. On the one hand, we simplify the idea of \cite{bayraktar_randomized_2018} by considering the action spaces $\c A_s = L^0(\Omega,\c G\lor\c F^W_s,\P;A)$ defined in \cref{section problem l2 formulation}, which only represent the (conditional) distribution of $\alpha$ at time $s$, instead of considering a whole piece-wise constant process in $\c A_{\text{step}}$ as action at a given time point.%
\footnote{The idea behind $\c A_{\text{step}}$ could be interesting when further extending this setting to include path-dependence. However for now, while it seems to ease the construction of the randomising Poisson point process, it later complicates the proofs and the resulting BSDE formulation. Further it seems to be hard to get rid of the dependence on the specific sequences used to construct $\c A_{\text{step}}$.}
On the other hand, we extend the setting to additionally consider problems with common noise, which is aided by viewing $\hat{\c A}$ as the set of admissible controls instead of $\c A$.

Our first step will be defining an $\c A_s$-valued Poisson point process. Now defining a Poisson point process with marks taking values on different spaces $\c A_s$ depending on the current time $s\in [0,T]$ is difficult, but we are going to utilise that $\c A_0\subseteq\c A_s\subseteq\c A_T$ for all $s\in [0,T]$.

\begin{assumption}\label{assumptions lambda family}
We assume that $(\lambda_s)_{s\in [0,T]}$ is a family of finite measures on $\c A_T$ such that
\begin{assumptionenum}[label=(C\arabic*)]
    \item the topological support of each $\lambda_s$ is given by $\c A_s$ for all $s\in [0,T]$,
    \item $\lambda_s\ll \lambda_r$ for all $0\leq s\leq r\leq T$,
    \item $\sup_{s\in [0,T]} \lambda_s(\c A_s) < \infty$.
\end{assumptionenum}
\end{assumption}

Since it may not be obvious why such a family exists, we will provide a possible construction in \cref{remark construction lambda family}. Note that since $\lambda_s$ topological support is given by $\c A_s$, we will view $\lambda_s$, depending on the context, as measure on $\c A_T$ or $\c A_s$ respectively.

\begin{lemma}\label{remark construction lambda family}
Such a family $(\lambda_s)_{s\in [0,T]}$ satisfying \cref{assumptions lambda family} exists.
\end{lemma}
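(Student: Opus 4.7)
The plan is to build the family as atomic measures concentrated on countable dense subsets of the $\c A_s$, chosen compatibly so that $s\mapsto\lambda_s$ is increasing as a measure. Since $\c G$ and $\c F^W_s$ are separable and $A$ is Polish, each $\c A_s=L^0(\Omega,\c G\lor\c F^W_s,\P;A)$ is a Polish (hence separable) metric space. I would fix a dense sequence $(s_k)_{k\ge 1}$ in $[0,T]$ with $s_1=0$ and $s_2=T$, and, for each $k$, a countable dense sequence $(\phi^k_j)_{j\ge 1}$ in $\c A_{s_k}$. Then I would set
\[
\lambda_s \;:=\; \sum_{k\ge 1}\sum_{j\ge 1} 2^{-k-j}\,\1_{\{s_k\le s\}}\,\delta_{\phi^k_j},\qquad s\in[0,T].
\]
Conditions (C2) and (C3) are then immediate, since $s\mapsto\lambda_s$ is pointwise increasing as a positive measure on $\c A_T$ (yielding $\lambda_s\ll\lambda_r$ for $s\le r$), and the total mass is bounded by $\sum_{k,j} 2^{-k-j}=1$.

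The nontrivial property is (C1), namely $\mathrm{supp}(\lambda_s)=\c A_s$. The inclusion $\mathrm{supp}(\lambda_s)\subseteq\c A_s$ follows because every atom of $\lambda_s$ lies in $\c A_{s_k}\subseteq\c A_s$ for some $s_k\le s$, and $\c A_s$ is closed in $\c A_T$ (a $\P$-a.s.\@ limit of $\c G\lor\c F^W_s$-measurable $A$-valued functions admits a $\c G\lor\c F^W_s$-measurable representative, which survives the passage to $L^0$-equivalence classes). For the reverse inclusion I would approximate any $\phi\in\c A_s$ in the metric $\hat\rho$ by atoms $\phi^k_j$ with $s_k\le s$. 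When $s\in\{0,T\}$ this is immediate, since $s\in\{s_k\}$ and $(\phi^k_j)_j$ is dense in $\c A_s$. When $0<s<T$ and $s\notin\{s_k\}$, I would combine two approximations: first choose $\psi\in\c A_t$ for some $t<s$ with $\hat\rho(\phi,\psi)<\epsilon/2$; then, using the density of $(s_k)$, pick $k$ with $t\le s_k\le s$, so that $\psi\in\c A_t\subseteq\c A_{s_k}$, and finally select $\phi^k_j$ with $\hat\rho(\psi,\phi^k_j)<\epsilon/2$.

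The hard part is the first approximation, i.e., the density of $\bigcup_{t<s}\c A_t$ in $\c A_s$ for $s>0$. This reduces to the left-continuity identity $\c G\lor\c F^W_s=\bigvee_{t<s}(\c G\lor\c F^W_t)$ up to $\P$-null sets, which follows from the pathwise continuity $W_s=\lim_{t\uparrow s}W_t$ $\P$-a.s. Given this identity, any $\phi\in\c A_s$ is approximated in probability by $A$-valued simple functions $\phi^{(n)}=\sum_i a_i^{(n)}\1_{B_i^{(n)}}$ with $B_i^{(n)}\in\c G\lor\c F^W_s$ (density of simple functions in $L^0$), and each set $B_i^{(n)}$ is symmetric-difference approximated by some $B\in\c G\lor\c F^W_t$ with $t<s$ close enough to $s$, via the standard monotone-class fact that every element of $\bigvee_{t<s}(\c G\lor\c F^W_t)$ is approximable from the union. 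A diagonal extraction then gives the desired elements of $\bigcup_{t<s}\c A_t$ converging to $\phi$ in $\hat\rho$. The only delicate point is that $A$ is merely Polish and not linear, but this is bypassed by working at the level of indicators of level sets.
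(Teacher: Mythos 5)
Your proof is correct and takes essentially the same approach as the paper: both build $\lambda_s$ as a geometrically weighted sum of measures indexed by a countable dense set of times, with each summand supported in $\c A_{s_k}$, and both reduce condition (C1) to the density of $\bigcup_{t<s}\c A_t$ in $\c A_s$, established via the left-continuity of $\c G\lor\b F^W$ (which follows from the path-continuity of $W$). The only cosmetic difference is that you use Dirac masses on a dense sequence in each $\c A_{s_k}$ whereas the paper uses abstract probability measures of full support on $\c A_{s_k}$ (constructed via a Borel isomorphism with $[0,1]$); both are valid and yield the same argument.
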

\begin{proof}
    Since each $\c A_s$ is Polish,\footnote{Recall that every uncountable Polish space is Borel-isomorphic to $[0,1]$, which we can equip with the Lebesgue measure.} we can construct a family of probability measures $(\kappa_s)_{s\in [0,T]\cap\Q}$ on $\c A_T$ such that the topological support of each $\kappa_s$ is given by $\c A_s$ for all $s\in [0,T]\cap\Q$, and by normalisation, we can further assume that $\sup_{s\in [0,T]\cap\Q} \kappa_s(\c A_s) < \infty$.
    Note that this is a countable family and let $(s_n)_n = [0,T]\cap\Q$ be an enumeration of it. Now we define $\lambda_r \coloneqq \sum_{s_n\leq r} 2^{-n} \kappa_{s_n}$ for all $r\in [0,T]$. We see that by construction $\lambda_s\ll\lambda_r$ for all $t\leq s\leq r\leq T$. Further, we recall that the filtration $\c G\lor\b F^{W,\P}$ is left-continuous, see \cite[Chapter 2, Problem 7.6]{karatzas_brownian_1998}, and thus we can approximate every indicator function $\1_A \in\c A_r$, $A\in \c G\lor\c F^{W,\P}_r$ by a series of indicator functions $\1_{A_n}\in\c A_{q_n}$, $A_n\in\c G\lor\c F^{W,\P}_{q_n}$ with $q_n\leq r$,$q_n\in\Q$. Since such indicator functions generate $\c A_s = L^0(\Omega,\c G\lor\c F^W,\P;A)$, this shows that $\bigcup_{q\leq r, q\in\Q} \c A_q\subseteq\c A_r$ is dense in $\c A_r$, and thus the topological support of each $\lambda_r$ is really given by $\c A_r$.
\end{proof}

To construct a randomised setting, let us suppose we are given such a family $(\lambda_s)_{s\in [0,T]}$, and $(\hat\Omega,\hat{\c F},\hat\P)$ is (if needed) an extension of the original space $(\Omega,\c F,\P)$ such that there exists a Poisson random measure $\mu$ on $(0,T]\times\c A_T$, independent of $\c G,W,B$, with intensity $\frac{d\lambda_s}{d\lambda_T}(\alpha) \lambda_T(d\alpha)ds = \lambda_s(d\alpha) ds$. Note that with slight abuse of notation, we again denote the canonical extensions of the $\sigma$-algebra $\c G$, the (non-augmented) filtrations $\b F^W,\b F^{B}$, the random variable $\xi$ and the processes $W,B$ with the same symbols as their original versions. Further, we denote the canonical projection from $(\hat\Omega,\hat{\c F},\hat\P)$ to $(\Omega,\c F,\P)$ by $\pi:\hat\Omega\to\Omega$, and recall that by definition it is probability preserving, that is $\hat\P(\pi^{-1}(A)) = \P(A)$, for all $A\in\c F$.

Given an initial time $t\in [0,T]$, we define the processes $B^t,\mu^t$ via
\[
B^t\coloneqq (B_{\cdot\lor t} - B_t),\quad \mu^t\coloneqq \mu(\cdot\cap (t,T]\times\c A_T)).
\]
Note that by construction $B^t,\mu^t$ are $\hat\P$-independent of $\c F^{B,\mu,\hat\P}_t$.
%We now enumerate the arrival times and events of the Poisson random measure $\mu$ as $(\tau_k,\alpha_k)_k$ and note that by construction $\alpha_k\in\c A_{\tau_k}$.
Next, we construct the piece-wise constant $\b F^{\mu^t}$-progressive process $\hat I^{t,\alpha_t}:[t,T]\times\hat\Omega\to \c A_T$ as follows
\begin{align}
    \hat I^{t,\alpha_t}_s \coloneqq %\alpha_t + \sum_{t < t_k\leq s} (\alpha_k(\hat\omega) - \hat I^{t,\alpha_t}_{\tau_k(\hat\omega)-}) =
    \alpha_t + \int_{(t,s]}\int_{\c A_r} (\alpha - \hat{I}^{t,\alpha_t}_{r-}) \mu(dr,d\alpha),\qquad s\in [t,T],
     \label{eq randomised poisson point process}
\end{align}
where $\alpha_t\in\c A_t\subseteq\c A_T$ is a fixed chosen initial action.
Note that we will later show that both the particular choices of $\alpha_t\in\c A_t$ and the extension $(\hat\Omega,\hat{\c F},\hat\P)$ of $(\Omega,\c F,\P)$ have no influence on the resulting value function. Furthermore, we observe in the following \cref{corollary I given bar I is well defined} that there exists a $\b F^{W,\mu^t}\lor\c G$-progressive process $I^{t,\alpha_t}$ which $\b F^{\hat I^{t,\alpha_t}}$-identifies to $\hat I^{t,\alpha_t}$, and that this choice is even unique in joint law, independent of the chosen extension $(\hat\Omega,\hat{\c F},\hat\P)$ of $(\Omega,\c F,\P)$. In the following we will use the notation $\pi|_{\c G}:(\hat\Omega,\c G)\to (\Omega,\c G)$ to denote the $\c G$-measurable projection from $\hat\Omega$ to $\Omega$. 

\begin{corollary}\label{corollary I given bar I is well defined}
    Given $\hat I^{t,\alpha_t}$, we can choose a càdlàg, $\b F^W\lor\c G\lor\c F^{\hat I^{t,\alpha_t}}_T$-predictable and $\b F^{W,\hat I^{t,\alpha_t}}\lor\c G$-progressive $I^{t,\alpha_t}$ which $\b F^{\hat I^{t,\alpha_t}}$-identifies to $\hat I^{t,\alpha_t}$, unique up to indistinguishability.
    In particular, $I^{t,\alpha_t}$ is also $\b F^W\lor\c G\lor\c F^{\mu^t}_T$-predictable and $\b F^{W,\mu^t}\lor\c G$-progressive.

    Further, this choice is even unique in the joint law $(\pi|_{\c G},\xi,W,B,\mu,\hat I^{t,\alpha_t},I^{t,\alpha_t})$, by which we mean that if $(\hat\Omega',\hat{\c F}',\hat\P')$ is a another extension of $(\Omega,\c F,\P)$ on which suitable $\mu,\hat I^{t,\alpha_t},I^{t,\alpha_t}$ are defined, then also
    \[
    \c L^{\hat\P}(\pi|_{\c G},\xi,W,B,\mu,\hat I^{t,\alpha_t},I^{t,\alpha_t}) = \c L^{\hat\P'}(\pi|_{\c G},\xi,W,B,\mu,\hat I^{t,\alpha_t},I^{t,\alpha_t}).
    \]
\end{corollary}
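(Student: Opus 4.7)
The plan is to obtain existence by directly invoking \cref{lemma existence N bar N} and then refine the resulting process to be càdlàg using the Poisson-process structure of $\hat I^{t,\alpha_t}$. By construction \eqref{eq randomised poisson point process}, $\hat I^{t,\alpha_t}$ is $\b F^{\mu^t}$-adapted, hence its generated filtration $\b F := \b F^{\hat I^{t,\alpha_t}} \subseteq \b F^{\mu^t}$ is independent of $\c G$ and $W$. Moreover $\hat I^{t,\alpha_t}$ is $\b F$-progressive, and satisfies $\hat I^{t,\alpha_t}_s \in \c A_s$ because each atom of $\mu$ at time $r \leq s$ lies in $\operatorname{supp}(\lambda_r) = \c A_r \subseteq \c A_s$ by \cref{assumptions lambda family}. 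Applying \cref{lemma existence N bar N} then yields an $\b F^{W,\hat I^{t,\alpha_t}} \lor \c G$-progressive and $\b F^W \lor \c G \lor \c F^{\hat I^{t,\alpha_t}}_T$-predictable process $I'$ that $\b F^{\hat I^{t,\alpha_t}}$-identifies to $\hat I^{t,\alpha_t}$. To obtain a càdlàg version I use that $\hat I^{t,\alpha_t}$ is piecewise constant with jumps only at the atoms $t =: \tau_0 < \tau_1 < \dots$ of $\mu$: defining
\begin{align*}
    I^{t,\alpha_t}_s := \sum_{n \geq 0} I'_{\tau_n} \1_{[\tau_n, \tau_{n+1})}(s),
\end{align*}
gives a manifestly càdlàg process that, because $\hat I^{t,\alpha_t}$ is also constant on each interval $[\tau_n, \tau_{n+1})$, still $\b F^{\hat I^{t,\alpha_t}}$-identifies to $\hat I^{t,\alpha_t}$.

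For the measurability, the $\tau_n$ are $\b F^{\hat I^{t,\alpha_t}}$-stopping times, so $I'_{\tau_n}$ inherits the required measurability from the progressiveness of $I'$; in the enlarged filtration $\b F^W \lor \c G \lor \c F^{\hat I^{t,\alpha_t}}_T$ the indicators $\1_{[\tau_n, \tau_{n+1})}$ are in fact $\c B([0,T]) \otimes \c F^{\hat I^{t,\alpha_t}}_T$-measurable, so the piecewise-constant construction preserves the predictability of $I'$. Uniqueness up to indistinguishability then follows from \cref{lemma alpha bar alpha uniqueness}, which gives that any two such identifying processes are modifications of each other --- a property that for càdlàg processes promotes to indistinguishability. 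The ``in particular'' claims on $\b F^W \lor \c G \lor \c F^{\mu^t}_T$-predictability and $\b F^{W,\mu^t} \lor \c G$-progressiveness are then immediate from the inclusion $\c F^{\hat I^{t,\alpha_t}}_s \subseteq \c F^{\mu^t}_s$ for all $s\in [t,T]$.

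Finally, for uniqueness in joint law, the specification of $\mu$ as a Poisson point process with intensity $\lambda_s(d\alpha)\,ds$ independent of $(\xi, W, B)$ and $\c G$ fixes $\c L^{\hat\P}(\pi|_{\c G}, \xi, W, B, \mu)$ across any extension $(\hat\Omega, \hat{\c F}, \hat\P)$. Since $\hat I^{t,\alpha_t}$ is a deterministic measurable functional of $\mu$ and the initial $\alpha_t$ via \eqref{eq randomised poisson point process}, this joint law extends canonically to include $\hat I^{t,\alpha_t}$. An application of \cref{lemma joint law N bar N map} with $N := \hat I^{t,\alpha_t}$ (which is càdlàg, $\b F^N$-progressive, and independent of $\c G$ and $W$) and with the identifying $I^{t,\alpha_t}$ in the role of $\alpha$ then propagates the equality in joint law to $(\pi|_{\c G}, \xi, W, B, \mu, \hat I^{t,\alpha_t}, I^{t,\alpha_t})$. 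The main obstacle is arranging càdlàg regularity concurrently with predictability, since \cref{lemma existence N bar N} on its own only produces a process that is progressive but a priori neither càdlàg nor constant between jumps; evaluating at the Poisson jump times and freezing in between exploits the structure of $\hat I^{t,\alpha_t}$ to recover a càdlàg version without losing the measurability properties.
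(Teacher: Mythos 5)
Your proof is correct and follows essentially the same route as the paper's: obtain a first identifying process from \cref{lemma existence N bar N}, exploit the piecewise-constant structure of $\hat I^{t,\alpha_t}$ to produce a c\`adl\`ag refinement, then invoke \cref{lemma alpha bar alpha uniqueness} for uniqueness up to indistinguishability and \cref{lemma joint law N bar N map} for uniqueness in joint law. Your explicit formula $I^{t,\alpha_t}_s = \sum_n I'_{\tau_n}\1_{[\tau_n,\tau_{n+1})}(s)$ is a notational variant of the paper's formal jump-summation $I_t + \sum_{r\leq s}(I_r - I_{r-})$; yours has the slight advantage of avoiding the formal $+/-$ notation, which is awkward when $A$ is a general Polish space, and it makes the identification argument via $\hat I^{t,\alpha_t}_s = \hat I^{t,\alpha_t}_{\tau_n}$ on $[\tau_n,\tau_{n+1})$ transparent. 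One small remark: when you assert that the indicator construction preserves predictability, the real reason is that each $\tau_n$, being $\c F^{\hat I^{t,\alpha_t}}_T$-measurable and hence $\c H_0$-measurable for the filtration $\c H_s = \c F^W_s\lor\c G\lor\c F^{\hat I^{t,\alpha_t}}_T$, is a \emph{predictable} stopping time of $\b H$, so the stochastic intervals $\llbracket\tau_n,\tau_{n+1}\llbracket$ are predictable sets and $I'_{\tau_n}\1_{\llbracket\tau_n,\tau_{n+1}\llbracket}$ is predictable since $I'_{\tau_n}$ is $\c H_{\tau_n -}$-measurable; mere measurability w.r.t.\ $\c B([0,T])\otimes\c F^{\hat I^{t,\alpha_t}}_T$, as you wrote, is not by itself enough. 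Also worth noting, and implicit in your appeal to \cref{lemma joint law N bar N map}: the two candidate spaces must be viewed as extensions of $(\Omega,\c G\lor\c F^{W,B}_T,\P)$ so that $\pi|_{\c G},\xi,W,B$ can all be read off via the projection; the paper makes this explicit.
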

\begin{proof}
    From \cref{lemma existence N bar N}%
\footnote{To match the setting described in \cref{section problem l2 formulation}, we can define $\c G'\coloneqq\c G\lor\c F^W_t$ and consider $\hat I'\coloneqq \hat I^{t,\alpha_t}_{\cdot - t}:[0,T-t]\times\hat\Omega\to\c A_T$ and $W' \coloneqq W^t_{\cdot - t}$. Note that, for example, $\hat I'$ is then $\c G'\lor\b F^{W'}$-progressive since $\c G'\lor\b F^{W'} = (\c G\lor\c F^W_{t+s})_{s\in [0,T-t]}$.}
    we obtain an $\b F^W\lor\c G\lor\c F^{\mu^t}_T$-predictable and $\b F^{W,\mu^t}\lor\c G$-progressive process $I:[t,T]\times\Omega\to A$ that $\b F^{\hat I^{t,\alpha_t}}$-identifies to $\hat I^{t,\alpha_t}$. Next we define
    \begin{align}
    I^{t,\alpha_t}_s\coloneqq I_t + \sum_{r\in(t,s],\hat I^{t,\alpha_t}_{r-}\not= \hat I^{t,\alpha_t}_r} (I_r - I_{r-}),\qquad s\in [t,T],
    \label{eq corollary 4.2 construction I t alpha_t}
    \end{align}
    which by construction is piece-wise constant and in particular càdlàg. Furthermore, just like $I$, the process $I^{t,\alpha_t}$ is also $\b F^W\lor\c G\lor\c F^{\mu^t}_T$-predictable and $\b F^{W,\mu^t}\lor\c G$-progressive. Finally, since $I$ is $\b F^{\hat I^{t,\alpha_t}}$-identifying with $\hat I^{t,\alpha_t}$, there exists a $(\c G\lor\b F^W)\otimes\b F^{\hat I^{t,\alpha_t}}$-progressive process $\Psi:[t,T]\times\Omega\times\hat\Omega\to A$ such that
    \begin{align}
        I_s(\hat\omega) = \Psi_s(\pi(\hat\omega),\hat\omega),\qquad\text{for all }(s,\hat\omega)\in [0,T]\times\hat\Omega,
        \label{eq corollary 4.2 I Psi}
    \end{align}
    and, viewed as equality in $\c A_T$,
    \begin{align}
        \hat I^{t,\alpha_t}_s(\hat\omega) = \Psi_s(\cdot,\hat\omega),\qquad\text{for all }(s,\hat\omega)\in [0,T]\times\hat\Omega.
        \label{eq corollary 4.2 hat I Psi}
    \end{align}
    Now defining $\Phi:[t,T]\times\Omega\times\hat\Omega\to A$ via
    \[
    \Phi_s\coloneqq \Psi_t + \sum_{r\in(t,s],\hat I^{t,\alpha_t}_{r-}\not= \hat I^{t,\alpha_t}_r} (\Psi_r - \Psi_{r-}) ,\qquad s\in [t,T],
    \]
    we obtain an $(\c G\lor\c F^W)\otimes\b F^{\hat I^{t,\alpha_t}}$-progressive process, which due to \eqref{eq randomised poisson point process,eq corollary 4.2 construction I t alpha_t,eq corollary 4.2 hat I Psi,eq corollary 4.2 I Psi} now also satisfies
    \[
    I^{t,\alpha_t}_s(\hat\omega) = \Phi_s(\pi(\hat\omega),\hat\omega),\qquad\text{for all }(s,\hat\omega)\in [0,T]\times\hat\Omega,
    \]
    and, viewed as equality in $\c A_T$,
    \[
    \hat I^{t,\alpha_t}_s(\hat\omega) = \Phi_s(\cdot,\hat\omega),\qquad\text{for all }(s,\hat\omega)\in [0,T]\times\hat\Omega.
    \]
    Hence, $I^{t,\alpha_t}$ also $\b F^{\hat I^{t,\alpha_t}}$-identifies to $\hat I^{t,\alpha_t}$.

    Finally the uniqueness result follows from \cref{lemma existence N bar N,lemma joint law N bar N map}, noting that $(\hat\Omega,\hat{\c F},\hat\P)$ and $(\hat\Omega',\hat{\c F}',\hat\P')$ are both extensions of $(\Omega,\c G\lor\c F^{W,B}_T,\P)$.
\end{proof}

% Idea: the $\b F^{W,B^t,\mu^t,\hat\P}\lor\c G$-progressiveness of $X^{t,\xi,\alpha_t}$ can be proven via the Picard iteration. Afterwards we can define $\hat\P^{\c F^{B,\mu,\hat\P}_s}_{X^{t,\xi,\alpha_t}_s}$ as the continuous version of $L(X^{t,\xi,\alpha_t}_s|\c F^{B^t,\mu^t,\hat\P}_s)$ and then notice that $X^{t,\xi,\alpha_t}$ is $\hat\P$-independent of $\c F^{B,\mu,\hat\P}_t$ and thus $\hat\P^{\c F^{B,\mu,\hat\P}_s}_{X^{t,\xi,\alpha_t}_s} = L(X^{t,\xi,\alpha_t}_s|\c F^{B^t,\mu^t,\hat\P}_s) = \c L(X^{t,\xi,\alpha_t}_s|\c F^{B,\mu,\hat\P}_s)$, $\hat\P$-a.s.\@ for all $s\in [t,T]$.
Having a well-defined corresponding control process $I^{t,\alpha_t}$, we can now define the $\b F^{W,B^t,\mu^t,\hat\P}\lor\c G$-progressive state process $X^{t,\xi,\alpha_t}$ as the unique solution to the following uncontrolled dynamics
\begin{align}
    dX^{t,\xi,\alpha_t}_s &= b(s,\hat\P^{\c F^{B,\mu,\hat\P}_s}_{X^{t,\xi,\alpha_t}_s},X^{t,\xi,\alpha_t}_s,I^{t,\alpha_t}_s) ds + \sigma(s,\hat\P^{\c F^{B,\mu,\hat\P}_s}_{X^{t,\xi,\alpha_t}_s},X^{t,\xi,\alpha_t}_s,I^{t,\alpha_t}_s) dW_s\\
    &\qquad+ \sigma^0(s,\hat\P^{\c F^{B,\mu,\hat\P}_s}_{X^{t,\xi,\alpha_t}_s},X^{t,\xi,\alpha_t}_s,I^{t,\alpha_t}_s) dB_s,\qquad X^{t,\xi,\alpha_t}_t = \xi.
    \label{eq sde randomised problem}
\end{align}
Again $(\hat\P^{\c F^{B,\mu,\hat\P}_s}_{X^{t,\xi,\alpha_t}_s})_{s\in [t,T]}$ shall denote the $\b F^{B^t,\mu^t,\hat\P}$-optional and $\hat\P$-a.s.\@ continuous version of the conditional law $(\c L(X^{t,\xi,\alpha_t}_s|\c F^{B,\mu,\hat\P}_s))_{s\in [t,T]}$, that is $\hat\P^{\c F^{B,\mu,\hat\P}_s}_{X^{t,\xi,\alpha_t}_s} = \c L(X^{t,\xi,\alpha_t}_s|\c F^{B,\mu,\hat\P}_s)$, $\hat\P$-a.s., for all $s\in [t,T]$. We recall that the existence of such a version is ensured by \cite[Lemma A.1]{djete_mckeanvlasov_2022-1}. Further, \cref{assumptions sde coefficients} ensures by standard arguments the existence and uniqueness of $X^{t,\xi,\alpha_t}$.

As admissible controls to our randomised problem, we consider the set $\c V$ of all $Pred(\b F^{B,\mu})\otimes \c B(\c A_T)$-measurable processes which are strictly bounded away from $0$ and $\infty$, that is $0 < \inf_{[0,T]\times\hat\Omega\times \c A_T} \nu \leq \sup_{[0,T]\times\hat\Omega\times\c A_T} \nu < \infty$. Further, we will also introduce the subset $\c V_t\subseteq\c V$ of $Pred(\b F^{B^t,\mu^t})\otimes \c B(\c A_T)$-measurable processes.
Note that by Girsanov's Theorem, see \cite[Proposition 14.4.I]{daley_introduction_2008},
we can define for each admissible control $\nu\in \c V$ a probability measure using the Doléans-Dade exponential $L^\nu$ as follows
\begin{align}\label{eq randomised control girsanov formula}
    \frac{d\hat\P^\nu}{d\hat\P}\Big|_{\c F^{B,\mu,\hat\P}_s} \coloneqq L^\nu_s \coloneqq \exp\Big(\int_{(0,s]} \int_{\c A_r} \log \nu_r(\alpha) \mu(dr,d\alpha) - \int_0^s \int_{\c A_r} (\nu_r(\alpha) - 1) \lambda_r(d\alpha)dr \Big),\qquad s\in [0,T],
\end{align}
such that under $\hat\P^\nu$ the Poisson random measure $\mu$ has the intensity $\nu_s(\alpha) ds\lambda_s(d\alpha)$, (recall that $\mu((0,s],du)$ is supported on $\c A_s\subseteq\c A_T$). By the following \cref{lemma d P nu d P is square integrable}, which can be proved in the same way as \cite[Lemma 2.4]{kharroubi_feynmankac_2015}, we see that $\hat\P^\nu$ is indeed well-defined.

\begin{lemma}\label{lemma d P nu d P is square integrable}
    For every $\nu\in\c V$ the process $L^\nu$ defined in \eqref{eq randomised control girsanov formula} is a uniform integrable martingale and $L_T \in L^2(\hat\Omega,\c F^{B,\mu}_T,\hat\P;\R)$.
\end{lemma}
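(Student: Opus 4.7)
The plan is to establish a uniform-in-time $L^2$-bound on $L^\nu_t$, from which both the uniformly integrable martingale property and the claim $L^\nu_T \in L^2(\hat\Omega,\c F^{B,\mu}_T,\hat\P;\R)$ follow by standard reasoning. I would begin by noting that as the Doléans-Dade exponential appearing in \eqref{eq randomised control girsanov formula}, $L^\nu$ is the unique solution to
\[
dL^\nu_s = L^\nu_{s-} \int_{\c A_s} (\nu_s(\alpha) - 1)\big(\mu(ds,d\alpha) - \lambda_s(d\alpha)\,ds\big),\qquad L^\nu_0 = 1,
\]
and is hence a nonnegative local martingale. Since $\nu\in\c V$ is bounded away from $0$ and $\infty$, so is $\nu^2$, and therefore $L^{\nu^2}$ is equally well-defined and a nonnegative local martingale; in particular it is a supermartingale with $\E^{\hat\P}[L^{\nu^2}_t]\leq 1$ for all $t\in[0,T]$.

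The key ingredient is the elementary algebraic identity
\[
(L^\nu_t)^2 = L^{\nu^2}_t \cdot \exp\Big( \int_0^t \int_{\c A_r} (\nu_r(\alpha) - 1)^2 \lambda_r(d\alpha)\,dr \Big),
\]
which follows from \eqref{eq randomised control girsanov formula} by using $\log(\nu^2) = 2\log\nu$ together with the expansion $\nu^2 - 1 - 2(\nu - 1) = (\nu - 1)^2$. Because of the two-sided bounds on $\nu$ built into the definition of $\c V$ and the uniform bound $\sup_{s\in[0,T]}\lambda_s(\c A_s) < \infty$ from \cref{assumptions lambda family}, the exponential factor is dominated by a deterministic constant $C > 0$ independent of $t$. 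Combined with the supermartingale bound $\E^{\hat\P}[L^{\nu^2}_t] \leq 1$, this yields the uniform $L^2$-bound $\sup_{t\in[0,T]} \E^{\hat\P}[(L^\nu_t)^2] \leq C$.

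Evaluating this bound at $t = T$ immediately gives $L^\nu_T \in L^2(\hat\Omega,\c F^{B,\mu}_T,\hat\P;\R)$. To upgrade the nonnegative local martingale $L^\nu$ to a true uniformly integrable martingale, I would fix a localizing sequence $\tau_n \uparrow \infty$; the pathwise identity above applied to the stopped process together with the supermartingale property of $L^{\nu^2}$ yields $\sup_n \E^{\hat\P}[(L^\nu_{t\wedge\tau_n})^2] \leq C$, so that $\{L^\nu_{t\wedge\tau_n}\}_n$ is uniformly integrable. Passing to the limit in the martingale identity $\E^{\hat\P}[L^\nu_{t\wedge\tau_n}] = 1$ then gives $\E^{\hat\P}[L^\nu_t] = 1$, which combined with the supermartingale property from nonnegativity promotes $L^\nu$ to a true martingale; uniform integrability of the whole family $\{L^\nu_t\}_{t\in[0,T]}$ follows again from the uniform $L^2$-bound. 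The only real computation here is the algebraic identity relating $(L^\nu)^2$ and $L^{\nu^2}$, which is routine, and I do not foresee any substantive obstacle.
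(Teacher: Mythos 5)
Your proof is correct and follows the standard argument that the paper defers to by citing \cite[Lemma 2.4]{kharroubi_feynmankac_2015}: express $(L^\nu_t)^2$ as $L^{\nu^2}_t$ times a deterministically bounded exponential factor (using $\log\nu^2 = 2\log\nu$ and $\nu^2 - 1 - 2(\nu-1) = (\nu-1)^2$), use that $L^{\nu^2}$ is a nonnegative local martingale (hence supermartingale) to get the uniform $L^2$-bound, and then upgrade via localization. This is the same route as the reference the authors point to, so there is nothing to reconcile.
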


We note that by standard arguments, since $\hat\P_\xi = \hat\P^\nu_\xi$ for all $\nu\in\c V$, from \cref{assumptions sde coefficients} we obtain the following uniform estimate
\begin{align}\label{eq randomised state dynamics basic estimate}
    \sup_{\alpha_t\in\c A_t} \sup_{\nu\in\c V} \E^{\hat\P^\nu}\Big[\sup_{s\in [t,T]} |X^{t,\xi,\alpha_t}_s|^2 \Big] < \infty.
\end{align}
Our goal is now to maximise the following randomised reward functional
\begin{align}
    J^{\c R}(t,\xi,\alpha_t,\nu) \coloneqq \E^{\hat\P^\nu}\Big[ g(\hat\P_{X^{t,\xi,\alpha_t}_T}^{\c F^{B,\mu,\hat\P}_T},X^{t,\xi,\alpha_t}_T) + \int_t^T f(s,\hat\P_{X^{t,\xi,\alpha_t}_s}^{\c F^{B,\mu,\hat\P}_s},X^{t,\xi,\alpha_t}_s,I^{t,\alpha_t}_s) ds \Big],
\end{align}
where $\b E^{\hat\P^\nu}$ denotes the expectation under the probability measure $\hat\P^\nu$ corresponding to $\nu\in\c V$.
Finally, we also define the randomised value function via
\[
V^{\c R}(t,\xi,\alpha_t) \coloneqq \sup_{\nu\in\c V} J^{\c R}(t,\xi,\alpha_t,\nu) < \infty,
\]
which we know by \cref{assumptions reward functional} and \eqref{eq randomised state dynamics basic estimate} is well-defined. We note in the following \cref{proposition equivalence v v_t randomised value function}, that this is equivalent to taking only optimising over the controls in $\c V_t$.

\begin{proposition}\label{proposition equivalence v v_t randomised value function}
Let $t\in [0,T]$ and $\xi\in L^2(\Omega,\c G\lor\c F^W_t,\P;\R^d)$. Defining the randomised problem over the restricted control set $\c V_t\subseteq\c V$ leads to the same value function, that is
\[
V^{\c R}(t,\xi,\alpha_t) = \sup_{\nu\in\c V_t} J^{\c R}(t,\xi,\alpha_t),
\]
for all $\alpha_t\in\c A_t\subseteq\c A_T$.
\end{proposition}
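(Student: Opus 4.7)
The direction $\sup_{\nu \in \c V_t} J^{\c R}(t,\xi,\alpha_t,\nu) \leq V^{\c R}(t,\xi,\alpha_t)$ is immediate from $\c V_t \subseteq \c V$, so the content lies in the reverse inequality. The plan, given $\nu \in \c V$, is to construct a family $\{\nu'_{\omega^0}\}_{\omega^0 \in \hat\Omega} \subseteq \c V_t$ indexed by ``past'' observations $\omega^0$ and realise $J^{\c R}(t,\xi,\alpha_t,\nu)$ as a weighted average (with weights $L^\nu_t$, a probability density on $\c F^{B,\mu}_t$ under $\hat\P$) of values $J^{\c R}(t,\xi,\alpha_t,\nu'_{\omega^0})$, from which the claim follows by taking the supremum over $\omega^0$ and then over $\nu$.

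The crucial structural fact is that under $\hat\P$, the sigma-algebra $\c F^{B,\mu}_t$ is independent of $\c G \lor \c F^W_T \lor \c F^{B^t,\mu^t}_T$. Since $X^{t,\xi,\alpha_t}$ and $I^{t,\alpha_t}$ on $[t,T]$ are, by construction, functions of $(\xi, W, B^t, \mu^t, \alpha_t)$, this independence first implies that $\hat\P^{\c F^{B,\mu,\hat\P}_s}_{X^{t,\xi,\alpha_t}_s}$ coincides with $\hat\P^{\c F^{B^t,\mu^t,\hat\P}_s}_{X^{t,\xi,\alpha_t}_s}$ for $s \geq t$, so the integrand $F := g(\hat\P^{\c F^{B,\mu,\hat\P}_T}_{X^{t,\xi,\alpha_t}_T}, X^{t,\xi,\alpha_t}_T) + \int_t^T f(s,\cdots)\,ds$ is itself independent of $\c F^{B,\mu}_t$ under $\hat\P$. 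Factoring $L^\nu_T = L^\nu_t \cdot M^\nu$ with $M^\nu := L^\nu_T/L^\nu_t$, the explicit formula \eqref{eq randomised control girsanov formula} shows that $M^\nu$ depends on $\nu$ only through $\nu|_{(t,T]}$ and on $\mu$ only through $\mu^t$. Using the decomposition $\c F^{B,\mu}_s = \c F^{B,\mu}_t \lor \c F^{B^t,\mu^t}_s$ for $s \geq t$, one may view $M^\nu = M^\nu(\omega^0, \omega^1)$ with $\omega^1$ ranging over the $(B^t,\mu^t)$-coordinates.

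A monotone class argument then shows that for each $\omega^0 \in \hat\Omega$, the process $(r, \omega^1) \mapsto \nu_r(\omega^0, \omega^1)$ is $Pred(\b F^{B^t,\mu^t}) \otimes \c B(\c A_T)$-measurable on $(t,T]$ and inherits the uniform bounds of $\nu$; setting $\nu'_{\omega^0} := 1$ on $[0,t]$ and $\nu'_{\omega^0} := \nu(\omega^0, \cdot)$ on $(t,T]$ thus yields $\nu'_{\omega^0} \in \c V_t$, and $L^{\nu'_{\omega^0}}_T(\cdot) = M^\nu(\omega^0, \cdot)$ by direct inspection of the Girsanov formula. Applying the tower property together with Fubini and the independence noted above,
\begin{align}
J^{\c R}(t,\xi,\alpha_t,\nu) = \E^{\hat\P}\bigl[L^\nu_t \cdot \E^{\hat\P}[M^\nu F \mid \c F^{B,\mu}_t]\bigr] = \E^{\hat\P}\bigl[L^\nu_t \cdot J^{\c R}(t,\xi,\alpha_t,\nu'_{\omega^0})\bigr],
\end{align}
where the inner conditional expectation at $\omega^0$ evaluates to $\E^{\hat\P}[L^{\nu'_{\omega^0}}_T F] = J^{\c R}(t,\xi,\alpha_t,\nu'_{\omega^0})$ by the independence of $\c F^{B,\mu}_t$ from $(F,\omega^1)$. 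Since $L^\nu_t \geq 0$ and $\E^{\hat\P}[L^\nu_t] = 1$, the right-hand side is bounded above by $\sup_{\nu'' \in \c V_t} J^{\c R}(t,\xi,\alpha_t,\nu'')$, and the claim follows on taking the supremum over $\nu \in \c V$. The main technical obstacle is justifying the measurable disintegration $\nu_r(\omega^0, \omega^1)$ and the joint measurability of $\omega^0 \mapsto J^{\c R}(t,\xi,\alpha_t,\nu'_{\omega^0})$ required for Fubini; both reduce, via the monotone class theorem, to the predictable sigma-algebra decomposition $Pred(\b F^{B,\mu})|_{(t,T]\times\hat\Omega}$ induced by $\c F^{B,\mu}_s = \c F^{B,\mu}_t \lor \c F^{B^t,\mu^t}_s$ together with the independence structure under $\hat\P$.
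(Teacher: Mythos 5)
Your proof is correct and follows essentially the same route as the paper: decompose $\nu\in\c V$ along $\c F^{B,\mu}_t$ into a family of controls in $\c V_t$ (the paper does this via \cref{proposition decompose filtrations of predictable processes}, which is precisely the monotone-class disintegration you invoke), exploit the $\hat\P$-independence of $X^{t,\xi,\alpha_t},I^{t,\alpha_t}$ from $\c F^{B,\mu,\hat\P}_t$ together with the freezing lemma to factor the reward, and bound by the supremum using $\E^{\hat\P}[L^\nu_t]=1$. The only cosmetic difference is that you normalise $\nu'_{\omega^0}\equiv 1$ on $[0,t]$ so that $L^{\nu'_{\omega^0}}_t=1$, whereas the paper keeps the full $\Upsilon(\hat\omega,\cdot)$ and absorbs the extra factor $L^{\Upsilon(\hat\omega,\cdot)}_t$ by noting it has expectation one and is independent of everything else.
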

\begin{proof}
    We first note that $\sup_{\nu\in\c V_t} J^{\c R}(t,\xi,\alpha_t,\nu) \leq \sup_{\nu\in\c V} J^{\c R}(t,\xi,\alpha_t,\nu) \eqqcolon V^{\c R}(t,\xi,\alpha_t)$ follows directly from $\c V_t\subseteq\c V$.
    For the other direction, let us fix $\nu\in\c V$.
    Since $\nu$ is $Pred(\b F^{B,\mu})\otimes\c B(\c A_T)$-measurable, by \cref{proposition decompose filtrations of predictable processes},%
\footnote{Note that $\b F^{B,\mu} = \b F^{B,\mu}_{t\land \cdot}\lor \b F^{B^t,\mu^t} \subseteq \c F^{B,\mu}_t\lor\b F^{B^t,\mu^t}$.}
    we can decompose $\nu$ into an $\c F^{B,\mu}_t\otimes Pred(\b F^{B^t,\mu^t})\otimes\c B(\c A_T)$-measurable process $\Upsilon:[0,T]\times\hat\Omega\times\hat\Omega\to\c A_T$ with $0<\inf_{[0,T]\times\hat\Omega\times\hat\Omega\times\c A_T} \Upsilon\leq \sup_{[0,T]\times\hat\Omega\times\hat\Omega\times\c A_T} \Upsilon < \infty$ such that $\nu(\hat\omega) = \Upsilon(\hat\omega,\hat\omega)$, for all $\hat\omega\in\hat\Omega$. Note that in particular $\Upsilon(\hat\omega,\cdot)\in\c V_t$ for all $\hat\omega\in\hat\Omega$. Now noting that $X^{t,\xi,\alpha_t}$ and $I^{t,\alpha_t}$ are $\hat\P$-independent of $\c F^{B,\mu,\hat\P}_t$, we see that
    using freezing lemma, see e.g.\@ \cite[Lemma 4.1]{baldi_stochastic_2017}, % actually Lemma 8 in the Appendix of 10.48550/arXiv.2007.03937 fits even better here
    that
    \begin{align}
        J^{\c R}(t,\xi,\alpha_t,\nu)
        &= \E^{\hat\P}\Big[L^{\nu}_T g(\hat\P_{X^{t,\xi,\alpha_t}_T}^{\c F^{B,\mu,\hat\P}_T},X^{t,\xi,\alpha_t}_T) + \int_t^T f(s,\hat\P_{X^{t,\xi,\alpha_t}_s}^{\c F^{B,\mu,\hat\P}_s},X^{t,\xi,\alpha_t}_s,I^{t,\alpha_t}_s) ds \Big]\\
        &= \E^{\hat\P}\Big[L^\nu_t \E^{\hat\P}\Big[\frac{L^\nu_T}{L^\nu_t} \Big(g(\hat\P^{\c F^{B,\mu,\hat\P}_T}_{X^{t,\xi,\alpha_t}_T},X^{t,\xi,\alpha_t}_T) + \int_t^T f(s,\hat\P^{\c F^{B,\mu,\hat\P}_s}_{X^{t,\xi,\alpha_t}_s},X^{t,\xi,\alpha_t}_s,I^{t,\alpha_t}_s) ds\Big)\Big| \c F^{B,\mu,\hat\P}_t\Big]\Big]\\
        &= \int_{\hat\Omega} L^{\nu(\hat\omega)}_t \E^{\hat\P}\Big[\frac{L^{\Upsilon(\hat\omega,\cdot)}_T}{L^{\Upsilon(\hat\omega,\cdot)}_t} \Big(g(\hat\P^{\c F^{B,\mu,\hat\P}_T}_{X^{t,\xi,\alpha_t}_T},X^{t,\xi,\alpha_t}_T) + \int_t^T f(s,\hat\P^{\c F^{B,\mu,\hat\P}_s}_{X^{t,\xi,\alpha_t}_s},X^{t,\xi,\alpha_t}_s,I^{t,\alpha_t}_s) ds\Big)\Big] \hat\P(d\hat\omega)\\
        &= \int_{\hat\Omega} L^{\nu(\hat\omega)}_t \E^{\hat\P}\Big[L^{\Upsilon(\hat\omega,\cdot)}_T \Big(g(\hat\P^{\c F^{B,\mu,\hat\P}_T}_{X^{t,\xi,\alpha_t}_T},X^{t,\xi,\alpha_t}_T) + \int_t^T f(s,\hat\P^{\c F^{B,\mu,\hat\P}_s}_{X^{t,\xi,\alpha_t}_s},X^{t,\xi,\alpha_t}_s,I^{t,\alpha_t}_s) ds\Big)\Big] \hat\P(d\hat\omega)\\
        &= \int_{\hat\Omega} L^{\nu(\hat\omega)}_t J^{\c R}(t,\xi,\alpha_t,\Upsilon(\hat\omega,\cdot)) \hat\P(d\hat\omega)\\
        &\leq \int_{\hat\Omega} L^{\nu(\hat\omega)}_t \sup_{\upsilon\in\c V_t} J^{\c R}(t,\xi,\alpha_t,\upsilon) \ \hat\P(d\hat\omega) = \sup_{\upsilon\in\c V_t} J^{\c R}(t,\xi,\alpha_t,\upsilon),
    \end{align}
    using that for every $\hat\omega\in\hat\Omega$ we have $\Upsilon(\hat\omega,\cdot)\in\c V_t\subseteq\c V$ and thus $\frac{L^{\Upsilon(\hat\omega,\cdot)}_T}{L^{\Upsilon(\hat\omega,\cdot)}_t}$ is $\hat\P$-independent of $\c F^{B,\mu,\hat\P}_t$, together with $\E^{\hat\P}[L^{\upsilon}_t] = 1$ and $L^\upsilon_t$ being $\c F^{B,\mu,\hat\P}_t$-measurable, for all $\upsilon\in\c V$.
\end{proof}
% We even know that
% \begin{align}
%     \sup_{\alpha_t\in\c A_t} V^{\c R}(t,\xi,\alpha_t) = \sup_{\alpha_t\in\c A_t} \sup_{\nu\in\c V} J^{\c R}(t,\xi,\alpha_t,\nu) < \infty.
% \end{align}

\begin{remark}\label{remark V > 0 vs V geq 0}
    We could also relax the requirement $\inf_{[0,T]\times\hat\Omega\times \c A_T} \nu > 0$ to $\nu\geq 0$. This leads to the same value function $V^{\c R}$, as also noted in \cite[Remark 3.1]{bandini_backward_2018}. However, to simplify our arguments, especially later in the proof of \cref{theorem equivalence original and randomised value function}, we will consider only $\inf_{[0,T]\times\hat\Omega\times \c A_T}\nu > 0$ as this condition ensures that $\hat\P\sim\hat\P^\nu$.
\end{remark}

\begin{remark}\label{remark conditional distribution unchanged under measure change}
    Given $\nu\in\c V$, since $\nu$ is $\b F^{B,\mu}$-predictable and $\hat\P^\nu\sim\hat\P$, the process $\hat\P^{\c F^{B,\mu,\hat\P}_s}_{X^{t,\xi,\alpha_t}_s}$ is also $\b F^{B^t,\mu^t,\hat\P^\nu}$-optional and for all $s\in [t,T]$ satisfies $\hat\P^{\c F^{B,\mu,\hat\P}_s}_{X^{t,\xi,\alpha_t}_s} = \c L^{\hat\P^\nu}(X^{t,\xi,\alpha_t}_s | \c F^{B,\mu,\hat\P^\nu}_s)$, $\hat\P^\nu$-a.s., which hence justifies writing $\hat\P^{\c F^{B,\mu,\hat\P}_s}_{X^{t,\xi,\alpha_t}_s} = \hat\P^{\nu,\c F^{B,\mu,\hat\P^\nu}_s}_{X^{t,\xi,\alpha_t}_s}$.
    % Note that since $\hat I^{t,\alpha_t}$ is still independent of $\c G,W$ under $\hat\P^\nu$, by \cref{corollary I given bar I is well defined} the process $I^{t,\alpha_t}$ is also under $\hat\P^\nu$ still the unique (up to indistinguishability) càdlàg process identifying to $\hat I^{t,\alpha_t}$.
    % In particular, the process $X^{t,\xi,\alpha_t,\hat\alpha}$ is a solution to
    % \begin{align}
    % dX^{t,\xi,\alpha_t}_s &= b(s,\hat\P^{\nu,\c F^{B,\mu,\hat\P^\nu}_s}_{X^{t,\xi,\alpha_t}_s},X^{t,\xi,\alpha_t}_s,I^{t,\alpha_t}_s) ds + \sigma(s,\hat\P^{\nu,\c F^{B,\mu,\hat\P^\nu}_s}_{X^{t,\xi,\alpha_t}_s},X^{t,\xi,\alpha_t}_s,I^{t,\alpha_t}_s) dW_s\\
    % &\qquad+ \sigma^0(s,\hat\P^{\nu,\c F^{B,\mu,\hat\P^\nu}_s}_{X^{t,\xi,\alpha_t}_s},X^{t,\xi,\alpha_t}_s,I^{t,\alpha_t}_s) dB_s,\qquad X^{t,\xi,\alpha_t}_t = \xi,
    % \end{align}
    % under both $\hat\P^\nu$ and $\hat\P$, and moreover
    % \[
    % J^{\c R}(t,\xi,\alpha_t,\nu) = \E^{\hat\P^\nu}\Big[g(\hat\P^{\nu,\c F^{B,\mu,\hat\P^{\nu}}_T}_{X^{t,\xi,\alpha_t}_T},X^{t,\xi,\alpha_t}_T) + \int_t^T f(s,\hat\P^{\nu,\c F^{B,\mu,\hat\P^{\nu}}_s}_{X^{t,\xi,\alpha_t}_s},X^{t,\xi,\alpha_t}_s,I^{t,\alpha_t}_s) ds \Big].
    % \]
\end{remark}

\begin{lemma}\label{lemma J R continuous}
The reward functional $J^{\c R}(t,\cdot,\cdot,\cdot):L^2(\Omega,\c G\lor\c F^W_t,\P;\R^d)\times\c A_t\times\c V\to \R$ is continuous for each $t\in [0,T]$, where we equip $\c V$ with the $L^\infty$-norm.
\end{lemma}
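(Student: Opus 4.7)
The plan is to prove continuity by establishing $L^p$-convergence of each factor in the rewritten expression
\[
J^{\c R}(t,\xi,\alpha_t,\nu) \;=\; \E^{\hat\P}\big[L^\nu_T \cdot \Phi(\xi,\alpha_t)\big],
\]
where $\Phi(\xi,\alpha_t) \coloneqq g(\hat\P^{\c F^{B,\mu,\hat\P}_T}_{X^{t,\xi,\alpha_t}_T},X^{t,\xi,\alpha_t}_T) + \int_t^T f(s,\hat\P^{\c F^{B,\mu,\hat\P}_s}_{X^{t,\xi,\alpha_t}_s},X^{t,\xi,\alpha_t}_s,I^{t,\alpha_t}_s)\,ds$, along a convergent sequence $(\xi^n,\alpha_t^n,\nu^n) \to (\xi,\alpha_t,\nu)$ in $L^2 \times \c A_t \times L^\infty$, and then applying Cauchy--Schwarz.

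For the density factor, since $\nu^n \to \nu$ in $L^\infty$ with $0 < c \leq \nu^n, \nu \leq C < \infty$ uniformly, the integrands defining $L^{\nu^n}_T$ in \eqref{eq randomised control girsanov formula} converge in $L^\infty$ and the full exponent is uniformly bounded; dominated convergence then yields $L^{\nu^n}_T \to L^\nu_T$ in every $L^p(\hat\P)$, $p \geq 1$.

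For the state process, continuity in $\xi$ is the standard McKean--Vlasov $L^2$-stability via Grönwall and \cref{assumptions sde coefficients}(A2). For continuity in $\alpha_t$, the structural observation is that by \eqref{eq randomised poisson point process}, $\hat I^{t,\alpha_t}_s = \alpha_t$ on $[t,\tau_1)$ for $\tau_1$ the first jump time of $\mu^t$, and $\hat I^{t,\alpha_t}$ on $[\tau_1,T]$ depends only on $\mu$. Picking representatives $\phi^n \in \alpha_t^n$, $\phi \in \alpha_t$, one can select versions of $I^{t,\alpha_t^n}, I^{t,\alpha_t}$ which agree on $[\tau_1,T]$ and equal $\phi^n\circ\pi$, $\phi\circ\pi$ on $[t,\tau_1)$, so that
\[
\E^{\hat\P}\Big[\int_t^T \rho(I^{t,\alpha_t^n}_s, I^{t,\alpha_t}_s)\,ds\Big] \;\leq\; (T-t)\,\hat\rho(\alpha_t^n,\alpha_t) \;\to\; 0.
\]
Combining this with joint continuity of $b,\sigma,\sigma^0$ (\cref{assumptions sde coefficients}(A1)), their linear growth, and the uniform moment bound \eqref{eq randomised state dynamics basic estimate}, dominated convergence upgrades convergence of the controls in measure to $L^2$-convergence of the coefficient differences, and Grönwall closes the estimate to give $\E^{\hat\P}[\sup_{s\in[t,T]} |X^{t,\xi^n,\alpha_t^n}_s - X^{t,\xi,\alpha_t}_s|^2] \to 0$.

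From this $\mathbb S^2$-convergence, together with the contraction bound $\c W_2(\hat\P^{\c F^{B,\mu,\hat\P}_s}_{X^n_s}, \hat\P^{\c F^{B,\mu,\hat\P}_s}_{X_s})^2 \leq \E[|X^n_s - X_s|^2 \mid \c F^{B,\mu,\hat\P}_s]$, continuity of $f,g$ (a property that must be assumed to supplement \cref{assumptions reward functional}), and the linear growth bound for uniform integrability, we obtain $\Phi(\xi^n,\alpha_t^n) \to \Phi(\xi,\alpha_t)$ in $L^2(\hat\P)$. Cauchy--Schwarz then gives $J^{\c R}(t,\xi^n,\alpha_t^n,\nu^n) \to J^{\c R}(t,\xi,\alpha_t,\nu)$. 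The main obstacle is the continuity in $\alpha_t$: since $b,\sigma,\sigma^0$ are only continuous and not Lipschitz in the action variable, no quantitative stability is available; one must exploit the structural fact that the randomised control depends on $\alpha_t$ only on the random interval $[t,\tau_1)$ in order to transfer the abstract $\hat\rho$-convergence on $\c A_t$ into $L^1$-convergence of concrete $A$-valued representatives that then feed into a dominated convergence argument.
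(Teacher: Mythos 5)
The overall strategy is the same as the paper's (factor $J^{\c R}$ as a density times a functional of the state process, and pass to the limit in each factor), and your structural observation that $\hat I^{t,\alpha_t}$ depends on $\alpha_t$ only on the stochastic interval $[t,\tau_1)$ is exactly what makes $d_{\hat{\c A}}^{\hat\P}(\hat I^{t,\alpha_t^n},\hat I^{t,\alpha_t^\infty})\leq T\,\hat\rho(\alpha_t^n,\alpha_t^\infty)\to 0$ work; the paper routes this through \cref{lemma alpha bar alpha uniqueness} but it is the same mechanism. You are also right that the dominated-convergence step implicitly requires continuity of $f,g$, which \cref{assumptions reward functional} does not formally grant.

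The genuine gap is in your treatment of the density. You claim that because $\log\nu^n$ and $\nu^n-1$ converge uniformly and are uniformly bounded, ``the full exponent is uniformly bounded,'' so that $L^{\nu^n}_T\to L^{\nu^\infty}_T$ in every $L^p(\hat\P)$ by dominated convergence. This is false: the exponent in \eqref{eq randomised control girsanov formula} contains the Poisson stochastic integral $\int_{(0,T]}\int_{\c A_r}\log\nu^n_r(\alpha)\,\mu(dr,d\alpha)$, whose absolute value is at worst $\mu((0,T]\times\c A_T)\cdot\sup_n\lVert\log\nu^n\rVert_\infty$, and $\mu((0,T]\times\c A_T)$ is a Poisson random variable, hence a.s.\ finite but \emph{unbounded}. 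Dominated convergence therefore needs a genuine integrable dominating random variable, not a constant. The paper sidesteps this entirely by introducing $\bar\nu\coloneqq\sup_{k\in\N\cup\{\infty\}}\nu^k\in\c V$ and working under $\hat\P^{\bar\nu}$: the \emph{ratio} $L^{\nu^k}_T/L^{\bar\nu}_T$ is deterministically bounded by $\exp\big(\int_0^T\int_{\c A_s}\lambda_s(d\alpha)ds\big)^{\lVert\bar\nu\rVert_\infty}$ because $\nu^k\leq\bar\nu$ pointwise (so the stochastic integral contributes a nonpositive exponent), after which one applies dominated convergence under $\hat\P^{\bar\nu}$ together with \eqref{eq lemma J R continuous convergence of X}, \eqref{eq randomised state dynamics basic estimate}, and \cref{assumptions reward functional}. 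You should replace your unbounded-exponent claim with this ratio argument (or at least dominate $L^{\nu^n}_T$ by $L^{\bar\nu}_T\cdot\text{const}$ and invoke \cref{lemma d P nu d P is square integrable} for the integrability of the dominator) to make the density step rigorous.
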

\begin{proof}
Let $(\xi^n,\alpha_t^n,\nu^n)_n\subseteq L^2(\Omega,\c G\lor\c F^W_t,\P;\R^d)\times\c A_t\times\c V$ with $(\xi^n,\alpha_t^n,\nu^n) \to (\xi^\infty,\alpha_t^\infty,\nu^\infty) \in L^2(\Omega,\c G\lor\c F^W_t,\P;\R^d)\times\c A_t\times\c V$ in $L^2(\P)\times L^0(\hat\P)\times L^\infty(ds\times\hat\P)$. Further let us define $\bar\nu \coloneqq \sup_{k\in\N\cup\{\infty\}} \nu^k$. Note that since $\nu^n\to\nu^\infty$ in $L^\infty(ds\times\hat\P)$, by construction $\bar\nu\in\c V$ and thus $\hat\P^{\bar\nu} \sim \hat\P$. By \cref{lemma alpha bar alpha uniqueness}, and using that $\rho < 1$ we see that also $d_{\c A}^{\hat\P^{\bar\nu}}(I^{t,\alpha_t^n},I^{t,\alpha_t^\infty}) = d_{\hat{\c A}}^{\hat\P^{\bar\nu}}(\hat I^{t,\alpha_t^n},\hat I^{t,\alpha_t^\infty}) \to 0$. Furthermore, since $\hat\P|_{\c G\lor\c F^W_t} = \hat\P^{\bar\nu}|_{\c G\lor\c F^W_t}$, we also have $\xi^n\to\xi^\infty$ in $L^2(\hat\P^{\bar\nu})$.
Since $X^{t,\xi^n,\alpha_t^n}$ and $X^{t,\xi^\infty,\alpha_t^\infty}$ are $\c G\lor\b F^{W,B,\mu,\hat\P^{\bar\nu}}$-progressive, and since $\c G\lor\b F^W$ is independent of $\b F^{B,\mu}$ under $\hat\P^{\bar\nu}$, we see that, for all $s\in [t,T]$, $\hat\P^{\bar\nu}$-a.s.,
\[
\c W_2(\hat\P^{\bar\nu,\c F^{B,\mu,\hat\P^{\bar\nu}}_s}_{X^{t,\xi^n,\alpha_t^n}_s},\hat\P^{\bar\nu,\c F^{B,\mu,\hat\P^{\bar\nu}}_s}_{X^{t,\xi^\infty,\alpha_t^\infty}_s})^2
\leq \E^{\hat\P^{\bar\nu}}[|X^{t,\xi^n,\alpha_t^n}_s - X^{t,\xi^\infty,\alpha_t^\infty}_s|^2 | \c F^{B,\mu,\hat\P^{\bar\nu}}_s]
= \E^{\hat\P^{\bar\nu}}[|X^{t,\xi^n,\alpha_t^n}_s - X^{t,\xi^\infty,\alpha_t^\infty}_s|^2 | \c F^{B,\mu,\hat\P^{\bar\nu}}_T],
\]
which implies due to the $\hat\P^{\bar\nu}$-a.s.\@ continuity of $(\hat\P^{\bar\nu,\c F^{B,\mu,\hat\P^{\bar\nu}}_s}_{X^{t,\xi^n,\alpha_t^n}_s})_s$ and $(\hat\P^{\bar\nu,\c F^{B,\mu,\hat\P^{\bar\nu}}_s}_{X^{t,\xi^\infty,\alpha_t^\infty}_s})_s$ that
\[
\sup_{s\in [t,T]} \c W_2(\hat\P^{\bar\nu,\c F^{B,\mu,\hat\P^{\bar\nu}}_s}_{X^{t,\xi^n,\alpha_t^n}_s},\hat\P^{\bar\nu,\c F^{B,\mu,\hat\P^{\bar\nu}}_s}_{X^{t,\xi^\infty,\alpha_t^\infty}_s})^2
\leq \E^{\hat\P^{\bar\nu}}\Big[ \sup_{s\in [t,T]} |X^{t,\xi^n,\alpha_t^n}_s - X^{t,\xi^\infty,\alpha_t^\infty}_s|^2 \,\Big|\,\c F^{B,\mu,\hat\P^{\bar\nu}}_T\Big].
\]
Thus from \cref{assumptions sde coefficients}, standard arguments using the BDG-inequality and Gronwall's Lemma show that
\begin{align}
    \E^{\hat\P^{\bar\nu}}\Big[ \sup_{s\in [t,T]} \Big(|X^{t,\xi^n,\alpha_t^n}_s - X^{t,\xi^\infty,\alpha_t^\infty}_s|^2 + \c W_2(\hat\P^{\bar\nu,\c F^{B,\mu,\hat\P^{\bar\nu}}_s}_{X^{t,\xi^n,\alpha_t^n}_s},\hat\P^{\bar\nu,\c F^{B,\mu,\hat\P^{\bar\nu}}_s}_{X^{t,\xi^\infty,\alpha_t^\infty}_s})^2\Big)\Big] \to 0.
    \label{eq lemma J R continuous convergence of X}
\end{align}
Finally, we see that for all $n\in\N$,
\begin{align}
    &|J^{\c R}(t,\xi^n,\alpha_t^n,\nu^n) - J^{\c R}(t,\xi^\infty,\alpha_t^\infty,\nu^\infty)|\\
    &\leq \E^{\hat\P^{\bar\nu}}\Big[\Big| \frac{L^{\nu^n}_T}{L^{\bar\nu}_T} \Big(g(\hat\P^{\bar\nu,\c F^{B,\mu,\hat\P^{\bar\nu}}_T}_{X^{t,\xi^n,\alpha_t^n}_T},X^{t,\xi^n,\alpha_t^n}_T) + \int_t^T f(s,\hat\P^{\bar\nu,\c F^{B,\mu,\hat\P^{\bar\nu}}_s}_{X^{t,\xi^n,\alpha_t^n}_s},X^{t,\xi^n,\alpha_t^n}_s,I^{t,\alpha_t^n}_s) ds \Big)\\
    &\qquad - \frac{L^{\nu^\infty}_T}{L^{\bar\nu}_T} \Big(g(\hat\P^{\bar\nu,\c F^{B,\mu,\hat\P^{\bar\nu}}_T}_{X^{t,\xi^\infty,\alpha_t^\infty}_T},X^{t,\xi^\infty,\alpha_t^\infty}_T) + \int_t^T f(s,\hat\P^{\bar\nu,\c F^{B,\mu,\hat\P^{\bar\nu}}_s}_{X^{t,\xi^\infty,\alpha_t^\infty}_s},X^{t,\xi^\infty,\alpha_t^\infty}_s,I^{t,\alpha_t^\infty}_s) ds \Big) \Big| \Big].
\end{align}
Thus, together with, for all $k\in\N\cup\{\infty\}$,
\begin{align}
    0\leq \frac{L^{\nu^k}_T}{L^{\bar\nu}_T}
    &= \exp\Big(\int_{[0,T]}\int_{\c A_s} \log \frac{\nu^k_s(\alpha)}{\bar\nu_s(\alpha)} \mu(ds,d\alpha) - \int_0^T \int_{\c A_s} (\nu^k_s(\alpha) - \bar\nu_s(\alpha)) \lambda_s(d\alpha)ds \Big)\\
    &\leq \exp\Big(\int_0^T \int_{\c A_s} \lambda_s(d\alpha)ds\Big)^{\norm{\bar\nu}_\infty} < \infty,
\end{align}
and \eqref{eq lemma J R continuous convergence of X}, \eqref{eq randomised state dynamics basic estimate} and \cref{assumptions reward functional}, we obtain the desired result by dominated convergence.
\end{proof}

Our first main result is that this randomised problem is in some sense equivalent to the original problem. Note that this then also allows us to transfer the properties of $V$ from \cref{proposition value function law invariant and independent of probabilistic setting} to $V^{\c R}$.
We give the proof in \cref{section proof of equivalence randomised non randomised formulation}.

\begin{theorem}\label{theorem equivalence original and randomised value function}
    The randomised problem is equivalent to the original MFC problem in the sense that for all $t\in [0,T]$ and all $\xi\in L^2(\Omega,\c G\lor\c F^W_t,\P;\R^d)$ and $\alpha_t\in\c A_t\subseteq\c A_T$,
    \[
    V(t,\xi) = V^{\c R}(t,\xi,\alpha_t).
    \]
    In particular, $V^{\c R}$ depends neither on the initial action $\alpha_t\in\c A_t$, the family $(\lambda_s)_{s\in [0,T]}$ nor the extension $(\hat\Omega,\hat{\c F},\hat\P)$ of $(\Omega,\c F,\P)$.
\end{theorem}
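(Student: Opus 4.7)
The plan is to prove the equality $V^{\c R}(t,\xi,\alpha_t) = V(t,\xi)$ via two inequalities. For the bound $V^{\c R}(t,\xi,\alpha_t) \leq V(t,\xi)$, by \cref{proposition equivalence v v_t randomised value function} it suffices to take an arbitrary $\nu \in \c V_t$ and show $J^{\c R}(t,\xi,\alpha_t,\nu) \leq V(t,\xi)$. Since $\nu \in \c V_t$ only tilts $\mu^t$, under $\hat\P^\nu$ the joint law of $(\xi, W, B)$ and its independence from $\c G$ are preserved and $W, B$ remain independent Brownian motions. By \cref{corollary I given bar I is well defined}, $I^{t,\alpha_t}$ admits a càdlàg, $\b F^W\lor\c G\lor\b F^{\mu^t}$-progressive representative, so it is an admissible control for a mean-field control problem on the enlarged probability space $(\hat\Omega,\hat{\c F},\hat\P^\nu)$ whose common noise is the pair $(B,\mu^t)$. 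The law-invariance and filtration-independence of the value function asserted in \cref{proposition value function law invariant and independent of probabilistic setting} then identifies the value of this enlarged problem with $V(t,\xi)$, whence $J^{\c R}(t,\xi,\alpha_t,\nu) \leq V(t,\xi)$.

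For the reverse inequality $V^{\c R}(t,\xi,\alpha_t) \geq V(t,\xi)$, fix $\alpha \in \c A$ with reformulation $\hat\alpha \in \hat{\c A}$. The aim is to build a sequence $(\nu^n)_n \subset \c V$ such that, under $\hat\P^{\nu^n}$, the randomised control $I^{t,\alpha_t}$ converges to $\alpha$ in $d_{\c A}^{\hat\P^{\nu^n}}$, equivalently $\hat I^{t,\alpha_t}$ converges to $\hat\alpha$ in $d_{\hat{\c A}}^{\hat\P^{\nu^n}}$ by \cref{lemma alpha bar alpha uniqueness}. Combined with the continuity of $J^{\c R}$ in \cref{lemma J R continuous} and an analogous continuity statement for $J$, this yields $J^{\c R}(t,\xi,\alpha_t,\nu^n) \to J(t,\xi,\alpha)$, and taking the supremum over $\alpha \in \c A$ gives the inequality.

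To construct $\nu^n$, condition (C1) of \cref{assumptions lambda family} guarantees that $\hat\alpha_s$ lies in the topological support of $\lambda_s$. The plan is to select, in an $\b F^B$-predictable fashion, a shrinking family of open neighborhoods $U_n(s,\hat\omega) \subseteq \c A_s$ of $\hat\alpha_s(\hat\omega)$ with $\lambda_s(U_n(s,\hat\omega)) > 0$, and to set
\begin{equation*}
    \nu^n_s(\hat\omega,\beta) \coloneqq 1 + \frac{n}{\lambda_s(U_n(s,\hat\omega))}\,\1_{U_n(s,\hat\omega)}(\beta).
\end{equation*}
Under $\hat\P^{\nu^n}$, the intensity of $\mu$-jumps with mark in $U_n(s,\hat\omega)$ is of order $n$, so between jumps $\hat I^{t,\alpha_t}$ waits an $O(1/n)$ time before making a jump into $U_n$; a standard Poisson waiting-time estimate then yields $d_{\hat{\c A}}^{\hat\P^{\nu^n}}(\hat I^{t,\alpha_t},\hat\alpha) \to 0$ as $n \to \infty$ together with a suitable shrinking of the neighborhoods.

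The main obstacle is the measurable selection of the neighborhoods $U_n(s,\hat\omega)$ so that $\nu^n$ is $\b F^{B,\mu}$-predictable. This is a selection problem in the spirit of the arguments in \cref{section problem l2 formulation}, handled by exploiting the separability of $\c A_T$ together with Doob-Dynkin type representations of $\hat\alpha$. Once this measurability is established, the convergence of $\hat I^{t,\alpha_t}$ to $\hat\alpha$ and the continuity result of \cref{lemma J R continuous} complete the argument.
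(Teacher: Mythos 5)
Your high-level plan (prove the two inequalities $V^{\mathcal R}\leq V$ and $V\leq V^{\mathcal R}$) matches the paper's, but each direction contains a genuine gap.

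\textbf{The inequality $V^{\mathcal R}\leq V$.} You argue that, for $\nu\in\mathcal V_t$, the reward $J^{\mathcal R}(t,\xi,\alpha_t,\nu)$ is the payoff of an admissible control $I^{t,\alpha_t}$ in a MFC problem whose common noise filtration is enlarged from $\mathbb F^B$ to $\mathbb F^{B,\mu^t}$, and you invoke \cref{proposition value function law invariant and independent of probabilistic setting} to conclude that this enlarged problem still has value $V(t,\xi)$. But \cref{proposition value function law invariant and independent of probabilistic setting} only asserts invariance with respect to the choice of the probability space and the idiosyncratic $\sigma$-algebra $\mathcal G$; it does \emph{not} say that the problem is unchanged when the \emph{common noise} filtration is enlarged. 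Enlarging $\mathcal G$ leaves the conditional law process $\P^{\mathcal F^{B,\P}_s}_{X_s}$ unchanged (it is integrated out), whereas enlarging the common noise filtration from $\mathbb F^B$ to $\mathbb F^{B,\mu^t}$ changes the object one is conditioning on, and hence changes the coefficients $b,\sigma,\sigma^0,f,g$ themselves. That this enlargement is nonetheless harmless is exactly what the paper proves in the subsection establishing $V^{\mathcal E}\leq V$: it exploits the product structure $\hat\P=\P\otimes\P'$ to show the $\omega'$-sections of $X^{t,\xi,\alpha}$ solve the original SDE with control $\alpha^{\omega'}\in\mathcal A$, whence $J^{\mathcal E}(t,\xi,\alpha)=\int_{\Omega'}J(t,\xi,\alpha^{\omega'})\,\P'(d\omega')\leq V(t,\xi)$. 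This step is a real argument in the paper, not a citation, and your proposal skips it.

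\textbf{The inequality $V\leq V^{\mathcal R}$.} Your tilting construction $\nu^n_s(\hat\omega,\beta)=1+n\,\1_{U_n(s,\hat\omega)}(\beta)/\lambda_s(U_n(s,\hat\omega))$ is the right idea and essentially re-derives a variant of the Bandini--Cosso--Fuhrman--Pham approximation lemma that the paper cites (their Proposition A.1). However, the convergence argument that closes it is wrong as stated. You invoke \cref{lemma J R continuous} to pass to the limit, but that lemma gives continuity of $J^{\mathcal R}$ in $\nu$ for the $L^\infty$-norm on $\mathcal V$, and your sequence $\nu^n$ diverges to $+\infty$ near $\hat\alpha$ and so has no $L^\infty$-limit in $\mathcal V$; the lemma simply does not apply. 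In the paper the passage to the limit is instead done directly: from the $\mathcal W_2$-estimate on the conditional laws (exploiting the special structure $\mathcal F^{\mu^\delta,\hat\Q}_T\subseteq\mathcal F^{B,\hat\Q}_T\lor\mathcal F'\otimes\mathcal F''$ of the approximating point process), one gets $L^2$-convergence of the state processes and then uses dominated convergence on the rewards. You would need to carry out an analogous estimate on the conditional laws and state processes along your sequence $\nu^n$, and verify that under $\hat\P^{\nu^n}$ the conditional law $\hat\P^{\nu^n,\mathcal F^{B,\mu,\hat\P^{\nu^n}}_s}_{X^{t,\xi,\alpha_t}_s}$ converges (in $\mathcal W_2$) to $\P^{\mathcal F^{B,\P}_s}_{X^{t,\xi,\alpha}_s}$; this is precisely the place where the mismatch between conditioning on $\mathcal F^{B,\mu}$ versus $\mathcal F^B$ must be addressed. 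Finally, the measurable selection of the neighbourhoods $U_n(s,\hat\omega)$ so that $\nu^n$ is $\b F^{B,\mu}$-predictable is acknowledged but left open; since this is the part that makes the whole construction legitimate, it cannot be deferred. For general progressive $\hat\alpha$ (as opposed to step processes), one also needs a quantitative waiting-time argument controlling the time the chain spends outside the moving target $U_n(s,\hat\omega)$, which is not immediate and is the technical content of the cited Proposition A.1 of Bandini et al.
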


\begin{remark}
    By \cref{proposition value function law invariant and independent of probabilistic setting}, $V$ depends on $\xi$ only through its law $\P_\xi$ and is independent of the specific choice of $(\Omega,\c F,\P)$ and $\c G$. The equivalence result above thus implies that $V^{\c R}$ also depends only on $\P_\xi$ and remains the same for all spaces $(\hat\Omega,\hat{\c F},\hat\P)$ for which suitable $\c G,W,B,\mu$ exist, since every space is a trivial extension of itself.
\end{remark}

%%%%%%%%%%%%%%%%%%%%%%%%%%%%%%%%%%%%%%%%%%%%%
%%%%%%%%%%%%%%%%%%%%%%%%%%%%%%%%%%%%%%%%%%%%%
\section{Randomised dynamic programming}\label{section bsde characterisation and randomised dpp}

In this section, we establish a randomised dynamic programming principle to characterise the value function of our mean-field control problem. Following the randomisation approach, we will first derive a BSDE representation for the (randomised) value function through penalisation techniques. This representation, combined with the equivalence between the original and randomised formulations established in \cref{theorem equivalence original and randomised value function}, subsequently enables us to derive the (randomised) dynamic programming principle. 

\subsection{BSDE representation}\label{section bsde characterisation}

We start by deriving a BSDE representation for the value function $V$ using its randomised setting from \cref{theorem equivalence original and randomised value function}. 
For this, we first define the following spaces
\begin{itemize}
    \item $\c S^2_{[t,T]}(\b F^{B,\mu,\hat\P})$, the set of all càdlàg $\b F^{B,\mu,\hat\P}$-adapted processes $Y:[t,T]\times\hat\Omega\to\R$ processes $Y$ such that $\E^{\hat\P}[\int_t^T |Y_u|^2 du] < \infty$,
    \item $L^2_{W,[t,T]}(\b F^{B,\mu,\hat\P})$, the set of all $\b F^{B,\mu,\hat\P}$-predictable processes $Z:[t,T]\times\hat\Omega\to\R^n$ that satisfy $\E^{\hat\P}[\int_t^T |Z_u|^2 du] < \infty$,
    \item $L^2_{\lambda,[t,T]}(\b F^{B,\mu,\hat\P})$, the set of all $Pred(\b F^{B,\mu,\hat\P})\otimes\c B(A)$-measurable processes $U:[t,T]\times\hat\Omega\times\c A_T\to\R$ such that $\E^{\hat\P}[\int_t^T \int_{\c A_u} |U_u(\alpha)|^2 \lambda_u(d\alpha)du] < \infty$,
    \item $\c K^2_{[t,T]}(\b F^{B,\mu,\hat\P})\subseteq\c S^2_{[t,T]}(\b F^{B,\mu,\hat\P})$ the subset of all processes $K:[t,T]\times\hat\Omega\to\R$ that are additionally nondecreasing, $\b F^{B,\mu,\hat\P}$-predictable and satisfy $K_t = 0$.
\end{itemize}
As it is standard for the randomisation approach, we will characterise the value function as the minimal solution to a certain constrained BSDE. For this, our main tool will be \cite[Theorem 2.1]{kharroubi_feynmankac_2015} since it replaces the problem of characterising such a minimal solution by studying the limit of solutions to a class of penalised BSDEs. Hence, we will first study in the following \cref{lemma penalised bsde minimal solution existence} the penalised BSDE \eqref{eq penalised bsde value function} and show that it corresponds to the optimisation over the following restricted randomised control set $\c V^n\subseteq\c V$ of all $Pred(\b F^{B,\mu})\otimes\c B(\c A_T)$-measurable processes $\nu:[0,T]\times\hat\Omega\times\c A_T\to (0,n]$ with $\inf_{[0,T]\times\hat\Omega\times\c A_T} \nu > 0$, based on the methodology by \cite{kharroubi_feynmankac_2015,bayraktar_randomized_2018,bandini_backward_2018}. The proof is given in \cref{appendix proof lemma penalised bsde minimal solution existence}. 

\begin{lemma}\label{lemma penalised bsde minimal solution existence}
    For $n\in\N$, $t\in [0,T]$, $\alpha_t\in\c A_t\subseteq\c A_T$ and $\xi\in L^2(\Omega,\c G\lor\c F^W_t,\P;\R^d)$, there exists a unique solution
    \[
    (Y^{n,t,\xi,\alpha_t},Z^{n,t,\xi,\alpha_t},U^{n,t,\xi,\alpha_t})\in \c S^2_{[t,T]}(\b F^{B,\mu,\hat\P})\times L^2_{W,[t,T]}(\b F^{B,\mu,\hat\P})\times L^2_{\lambda,[t,T]}(\b F^{B,\mu,\hat\P})
    \]
    to following penalised BSDE,
    \begin{align}\label{eq penalised bsde value function}
    Y^{n,t,\xi,\alpha_t}_s &= \E^{\hat\P}[g(X^{t,\xi,\alpha_t}_T,\hat\P^{\c F^{B,\mu,\hat\P}_T}_{X^{t,\xi,\alpha_t}_T}) | \c F^{B,\mu,\hat\P}_T] + \int_s^T \E^{\hat\P}[f(r,\hat\P^{\c F^{B,\mu,\hat\P}_r}_{X^{t,\xi,\alpha_t}_r},X^{t,\xi,\alpha_t}_r,I^{t,\alpha_t}_r) | \c F^{B,\mu,\hat\P}_r] dr\\
    &\qquad - \int_s^T Z^{n,t,\xi,\alpha_t}_r dB_r + n \int_s^T \int_{\c A_r} (U^{n,t,\xi,\alpha_t}_r(\alpha))_+ \lambda_r(d\alpha) dr - \int_s^T \int_{\c A_r} U^{n,t,\xi,\alpha_t}_r(\alpha) \mu(dr,d\alpha).
    \end{align}
    Further, for any $s\in [t,T]$ and $r\in [s,T]$, we also have the following representation,
    \begin{align}\label{eq Y n t xi snell envelope formula}
    Y^{n,t,\xi,\alpha_t}_s
    &= \esssup_{\nu\in\c V^n} \E^{\hat\P^\nu}\Big[Y^{n,t,\xi,\alpha_t}_r + \int_s^r f(u,\hat\P^{\nu,\c F^{B,\mu,\hat\P^\nu}_u}_{X^{t,\xi,\alpha_t}_u},X^{t,\xi,\alpha_t}_u,I^{t,\alpha_t}_u) du\Big| \c F^{B,\mu,\hat\P^\nu}_s\Big],\qquad\hat\P\text{-a.s.},
    \end{align}
    and moreover for any $r\in [t,T]$, we have the following estimate
    \begin{align}\label{eq Y n t xi snell envelope upper bound at s=t}
        \E^{\hat\P}[Y^{n,t,\xi,\alpha_t}_t] \leq \sup_{\nu\in\c V^n} \E^{\hat\P^\nu}\Big[Y^{n,t,\xi,\alpha_t}_r + \int_t^r f(u,\hat\P^{\nu,\c F^{B,\mu,\hat\P^\nu}_u}_{X^{t,\xi,\alpha_t}_u},X^{t,\xi,\alpha_t}_u,I^{t,\alpha_t}_u) du\Big].
    \end{align}
\end{lemma}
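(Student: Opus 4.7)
The plan is to treat the three claims in turn. For existence and uniqueness, I would cast \eqref{eq penalised bsde value function} as a standard Lipschitz BSDE driven by $B$ and the compensated measure of $\mu$, with $(Y,Z,U)$-independent forcing $F_r \coloneqq \E^{\hat\P}[f(r, \hat\P^{\c F^{B,\mu,\hat\P}_r}_{X^{t,\xi,\alpha_t}_r}, X^{t,\xi,\alpha_t}_r, I^{t,\alpha_t}_r) | \c F^{B,\mu,\hat\P}_r]$ and penalty $u \mapsto n \int_{\c A_r} (u(\alpha))_+ \lambda_r(d\alpha)$ that is $n\sqrt{\lambda_r(\c A_r)}$-Lipschitz in $u \in L^2(\lambda_r)$ uniformly in $r$, thanks to \cref{assumptions lambda family}. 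The $L^2$-integrability of the terminal and forcing data follows from the uniform estimate \eqref{eq randomised state dynamics basic estimate} combined with \cref{assumptions reward functional}, so existence, uniqueness and the claimed $\c S^2 \times L^2_W \times L^2_\lambda$ regularity come from the standard contraction argument for BSDEs with jumps, cf.\ \cite{kharroubi_feynmankac_2015}.

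For the representation \eqref{eq Y n t xi snell envelope formula} I would apply Girsanov: for each $\nu \in \c V^n$ the compensated measure $\tilde\mu^\nu(dr, d\alpha) \coloneqq \mu(dr, d\alpha) - \nu_r(\alpha) \lambda_r(d\alpha) dr$ is an $\hat\P^\nu$-martingale measure, $B$ remains an $\hat\P^\nu$-Brownian motion, and $\hat\P \sim \hat\P^\nu$. Substituting $\mu = \tilde\mu^\nu + \nu \lambda dr$ in \eqref{eq penalised bsde value function} and taking $\E^{\hat\P^\nu}[\,\cdot\,|\c F^{B,\mu,\hat\P^\nu}_s]$ yields, for $s \leq r$,
\[
    Y^{n,t,\xi,\alpha_t}_s = \E^{\hat\P^\nu}\!\Big[Y^{n,t,\xi,\alpha_t}_r + \int_s^r F_u\,du + \int_s^r \int_{\c A_u}\!\bigl(n(U^{n,t,\xi,\alpha_t}_u(\alpha))_+ - U^{n,t,\xi,\alpha_t}_u(\alpha) \nu_u(\alpha)\bigr)\lambda_u(d\alpha)\,du \,\Big|\, \c F^{B,\mu,\hat\P^\nu}_s\Big].
\]
Since $n u_+ - u \nu = (n-\nu) u_+ + \nu u_- \geq 0$ for $\nu \in (0,n]$, this gives the $\geq$-direction; the reverse follows by choosing $\nu^\varepsilon \coloneqq n \1_{\{U \geq 0\}} + \varepsilon \1_{\{U < 0\}} \in \c V^n$, reducing the penalty to $\varepsilon |U| \1_{\{U<0\}}$, which vanishes in $L^1(\lambda\,du)$ as $\varepsilon \downarrow 0$. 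To replace $F_u$ by $f(u, \hat\P^{\nu,\c F^{B,\mu,\hat\P^\nu}_u}_{X^{t,\xi,\alpha_t}_u}, X^{t,\xi,\alpha_t}_u, I^{t,\alpha_t}_u)$ inside the $\hat\P^\nu$-expectation, I would observe that $L^\nu$ is a functional of $(B,\mu)$ alone, so the $\hat\P$-conditional law of $W$ given $\c F^{B,\mu}_u$ is preserved under $\hat\P^\nu$ and hence $\E^{\hat\P^\nu}[f(u,\ldots)|\c F^{B,\mu,\hat\P^\nu}_u] = F_u$; the tower property then collapses the nested conditional expectation, with the identification of conditional laws supplied by \cref{remark conditional distribution unchanged under measure change}.

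For the estimate \eqref{eq Y n t xi snell envelope upper bound at s=t} I would evaluate the representation at $s = t$ and exploit that the essential supremum runs over a family directed upwards: for $\nu_1,\nu_2 \in \c V^n$ and $A \coloneqq \{\E^{\hat\P^{\nu_1}}[\,\cdot\,|\c F^{B,\mu,\hat\P}_t] \geq \E^{\hat\P^{\nu_2}}[\,\cdot\,|\c F^{B,\mu,\hat\P}_t]\} \in \c F^{B,\mu,\hat\P}_t$, the pasting $\nu_3 \coloneqq \nu_1 \1_A + \nu_2 \1_{A^c}$ stays $\b F^{B,\mu}$-predictable and in $\c V^n$, and realises the pointwise maximum of the two conditional expectations. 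A sequence $(\nu^k)_k \subseteq \c V^n$ therefore satisfies $\E^{\hat\P^{\nu^k}}[\ldots|\c F^{B,\mu,\hat\P}_t] \uparrow Y^{n,t,\xi,\alpha_t}_t$ $\hat\P$-a.s., so by monotone/dominated convergence $\E^{\hat\P}[Y^{n,t,\xi,\alpha_t}_t] = \lim_k \E^{\hat\P}[\E^{\hat\P^{\nu^k}}[\ldots|\c F^{B,\mu,\hat\P}_t]]$. For any $\nu \in \c V^n$, the truncation $\tilde\nu \coloneqq \1_{[0,t]} + \nu \1_{(t,T]}$ still lies in $\c V^n$ and satisfies $L^{\tilde\nu}_T = L^\nu_T/L^\nu_t$, so Bayes' formula gives $\E^{\hat\P}[\E^{\hat\P^\nu}[X|\c F^{B,\mu,\hat\P}_t]] = \E^{\hat\P}[X L^\nu_T/L^\nu_t] = \E^{\hat\P^{\tilde\nu}}[X] \leq \sup_{\nu' \in \c V^n} \E^{\hat\P^{\nu'}}[X]$, which closes the bound. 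The main obstacle I anticipate is the replacement of $F_u$ by $f(u,\ldots)$ inside the $\hat\P^\nu$-expectation in the representation: this is not a routine BSDE manipulation but rests crucially on the $\b F^{B,\mu}$-predictability of $\nu$ combined with the $\hat\P$-independence of $W$ from $(B,\mu)$, ingredients that are specific to the randomised probability space built in \cref{section randomised problem}.
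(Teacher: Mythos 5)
The overall architecture of your argument matches the paper's: existence/uniqueness from a Lipschitz BSDE with jumps, the representation formula via Girsanov and a sign argument plus the specific control $\nu^\varepsilon = n\1_{\{U\geq 0\}} + \varepsilon\1_{\{U<0\}}$, and then the $s=t$ estimate. Your observation that the driver $F_u$ can be replaced by $f(u,\ldots)$ inside the $\hat\P^\nu$-expectation — because $L^\nu_T$ is $\c F^{B,\mu}_T$-measurable and $(\c G, W)$ stays $\hat\P^\nu$-independent of $(B,\mu)$ — is correct and worth making explicit, as the paper does this tacitly.

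However, the step that closes the $\leq$-direction of \eqref{eq Y n t xi snell envelope formula} has a genuine gap. After substituting $\nu^\varepsilon$, the residual term is
\[
\varepsilon\, \E^{\hat\P^{\nu^\varepsilon}}\Big[\int_s^r \int_{\c A_u} (U^{n,t,\xi,\alpha_t}_u(\alpha))_- \,\lambda_u(d\alpha)\,du \,\Big|\, \c F^{B,\mu,\hat\P^{\nu^\varepsilon}}_s\Big],
\]
and you argue that the integrand $\varepsilon |U|\1_{\{U<0\}}$ vanishes in $L^1(\lambda\,du)$. That much is true, but it does not suffice: the expectation is taken under $\hat\P^{\nu^\varepsilon}$, which \emph{also} depends on $\varepsilon$, and as $\varepsilon\downarrow 0$ the Radon–Nikodym ratio $L^{\nu^\varepsilon}_T/L^{\nu^\varepsilon}_s$ is not controlled pointwise. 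To send $\varepsilon\downarrow 0$ one needs the conditional expectation itself to stay a.s.\@ bounded uniformly in $\varepsilon$. The paper handles this by observing $0 < \nu^\varepsilon \leq \nu^{n,1}$ for all $\varepsilon\in(0,1]$, which yields a deterministic constant $C$ such that $L^{\nu^\varepsilon}_T/L^{\nu^\varepsilon}_s \leq C\, L^{\nu^{n,1}}_T/L^{\nu^{n,1}}_s$, and hence bounds the $\varepsilon$-dependent conditional expectation by $C$ times the corresponding $\hat\P^{\nu^{n,1}}$-conditional expectation, which is then shown to be $L^1(\hat\P^{\nu^{n,1}})$-integrable (using $U\in L^2_{\lambda,[t,T]}$ and \cref{lemma d P nu d P is square integrable}) and therefore a.s.\@ finite. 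Without such a comparison the limit argument does not go through.

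Your treatment of \eqref{eq Y n t xi snell envelope upper bound at s=t} takes a genuinely different route from the paper's. The paper takes $\E^{\hat\P^{\nu^{n,\varepsilon}}}$ of the pre-limit inequality at $s=t$ and uses that $L^{\nu^{n,\varepsilon}}_t\equiv 1$ (so that expectation coincides with $\E^{\hat\P}$), again sending $\varepsilon\downarrow 0$. You instead invoke the lattice (pasting) property of the family $\{\E^{\hat\P^\nu}[\cdot\,|\,\c F^{B,\mu,\hat\P}_t]\}_{\nu\in\c V^n}$, take an increasing sequence $(\nu^k)$ attaining the esssup, and then pass each $\nu^k$ through the truncated control $\tilde\nu^k = \1_{[0,t]} + \nu^k\1_{(t,T]}$ via Bayes. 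This is a valid and arguably cleaner derivation — it avoids re-doing the $\varepsilon\downarrow 0$ analysis — but note it depends on the representation \eqref{eq Y n t xi snell envelope formula} already being in place (with the gap above fixed), and you should remark that the pasted control, a priori only $Pred(\b F^{B,\mu,\hat\P})\otimes\c B(\c A_T)$-measurable, can be replaced by an indistinguishable $Pred(\b F^{B,\mu})\otimes\c B(\c A_T)$-measurable element of $\c V^n$.
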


Using the previous result on penalised BSDEs, we can now show the following BSDE formulation for our randomised control problem. Our approach is similar to \cite{bayraktar_randomized_2018,bandini_backward_2018} and based on \cite[Theorem 2.1]{kharroubi_feynmankac_2015} which connects penalised BSDEs like \eqref{eq penalised bsde value function} with constrained BSDEs of the form \eqref{eq constrained bsde value function}. The proof is given in \cref{appendix proof lemma bsde minimal solution existence}. 

\begin{theorem}\label{lemma bsde minimal solution existence}
    Let $(t,m)\in [0,T]\times\c P_2(\R^d)$. For any $\alpha_t\in\c A_t\subseteq\c A_T$ and $\xi\in L^2(\Omega,\c G\lor\c F^W_t,\P;\R^d)$ such that $\P_\xi = m$, there exists a unique minimal solution
    \[
    (Y^{t,\xi,\alpha_t},Z^{t,\xi,\alpha_t},U^{t,\xi,\alpha_t},K^{t,\xi,\alpha_t})\in \c S^2_{[t,T]}(\b F^{B,\mu,\hat\P})\times L^2_{W,[t,T]}(\b F^{B,\mu,\hat\P})\times L^2_{\lambda,[t,T]}(\b F^{B,\mu,\hat\P})\times \c K^2_{[t,T]}(\b F^{B,\mu,\hat\P}),
    \]
    in the sense that for any other solution $(\tilde Y^{t,\xi,\alpha_t},\tilde Z^{t,\xi,\alpha_t},\tilde U^{t,\xi,\alpha_t},\tilde K^{t,\xi,\alpha_t})$ we have $Y^{t,\xi,\alpha_t}\leq \tilde Y^{t,\xi,\alpha_t}$, to the following constrained BSDE
    \begin{align}\label{eq constrained bsde value function}
        Y^{t,\xi,\alpha_t}_s &= \E^{\hat\P}[g(X^{t,\xi,\alpha_t}_T,\hat\P^{\c F^{B,\mu,\hat\P}_T}_{X^{t,\xi,\alpha_t}_T}) | \c F^{B,\mu,\hat\P}_T] + \int_s^T \E^{\hat\P}[f(r,\hat\P^{\c F^{B,\mu,\hat\P}_r}_{X^{t,\xi,\alpha_t}_r},X^{t,\xi,\alpha_t}_r,I^{t,\alpha_t}_r) | \c F^{B,\mu,\hat\P}_r] dr\\
        &\qquad - \int_s^T Z^{t,\xi,\alpha_t}_r dB_r + K^{t,\xi,\alpha_t}_T - K^{t,\xi,\alpha_t}_s - \int_s^T \int_{\c A_r} U^{t,\xi,\alpha_t}_r(\alpha) \mu(dr,d\alpha),\\
        U^{t,\xi,\alpha_t}_s(\alpha)&\leq 0,\qquad \text{for }\lambda_s(d\alpha)\text{-a.e. }\alpha \in \c A_s,\text{ for }ds\text{-a.e. }s\in [t,T].
    \end{align}
    Further for any $s\in [t,T]$ and $r\in [s,T]$ we also have the following representation,
    \begin{align}\label{eq recursive snell envelope formula}
        Y^{t,\xi,\alpha_t}_s
        &= \esssup_{\nu\in\c V} \E^{\hat\P^\nu}\Big[Y^{t,\xi,\alpha_t}_r + \int_s^r f(u,\hat\P^{\nu,\c F^{B,\mu,\hat\P^\nu}_u}_{X^{t,\xi,\alpha_t}_u},X^{t,\xi,\alpha_t}_u,I^{t,\alpha_t}_u) du\Big| \c F^{B,\mu,\hat\P^\nu}_s\Big],\qquad \hat\P\text{-a.s}.
    \end{align}
   Finally, $Y^{t,\xi,\alpha_t}_t$ is $\hat\P$-a.s. constant, and satisfies for any $r\in [t,T]$ also
    \begin{align}\label{eq recursive snell envelope formula at s=t}
    Y^{t,\xi,\alpha_t}_t
    &= \sup_{\nu\in\c V} \E^{\hat\P^\nu}\Big[Y^{t,\xi,\alpha_t}_r + \int_t^r f(u,\hat\P^{\nu,\c F^{B,\mu,\hat\P^\nu}_u}_{X^{t,\xi,\alpha_t}_u},X^{t,\xi,\alpha_t}_u,I^{t,\alpha_t}_u) du\Big],\qquad\hat\P\text{-a.s.},
    \end{align}
    and thus $Y^{t,\xi,\alpha_t}_t = V(t,m)$, $\hat\P$-a.s.
\end{theorem}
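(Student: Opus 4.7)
The plan is to apply \cite[Theorem 2.1]{kharroubi_feynmankac_2015} to the penalised BSDEs \eqref{eq penalised bsde value function} constructed in \cref{lemma penalised bsde minimal solution existence}. Its hypotheses reduce, in our setting, to the Lipschitz property of the driver (with jump penalisation $n\int (u)_+\lambda_s(d\alpha)$) and the square-integrability of the terminal condition and generator, both of which follow from \cref{assumptions sde coefficients,assumptions reward functional} together with the estimate \eqref{eq randomised state dynamics basic estimate}. The cited theorem yields the monotone $\hat\P$-a.s.\@ convergence $Y^{n,t,\xi,\alpha_t}\uparrow Y^{t,\xi,\alpha_t}$, the corresponding weak/strong convergences of $Z^{n,t,\xi,\alpha_t}$ and $U^{n,t,\xi,\alpha_t}$, a nondecreasing predictable $K^{t,\xi,\alpha_t}\in\c K^2_{[t,T]}(\b F^{B,\mu,\hat\P})$ arising as the weak limit of $n\int_t^{\cdot}\int_{\c A_r}(U^{n,t,\xi,\alpha_t}_r(\alpha))_+\lambda_r(d\alpha)dr$, the jump constraint $U^{t,\xi,\alpha_t}\leq 0$ $\lambda_s(d\alpha)ds$-a.e., and the minimality of the quadruple $(Y,Z,U,K)$ as a solution to \eqref{eq constrained bsde value function}.

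For the representation \eqref{eq recursive snell envelope formula}, I would pass to the limit $n\to\infty$ in the penalised formula \eqref{eq Y n t xi snell envelope formula}, using that $\c V=\bigcup_n\c V^n$ is a nondecreasing union and that $Y^n_r\uparrow Y_r$ is uniformly bounded in $L^2(\hat\P)$; the inequality ``$\geq$'' follows by monotone convergence inside the conditional expectation under each $\hat\P^\nu\sim\hat\P$, while ``$\leq$'' combines $Y\geq Y^n$ with the stability of $\{\c V^n\}_n$ under countable maxima. To obtain the a.s.\@ constancy of $Y^{t,\xi,\alpha_t}_t$ and \eqref{eq recursive snell envelope formula at s=t}, I would first establish the analogue for $Y^{n,t,\xi,\alpha_t}_t$: at $s=t$, reduce the essential supremum in \eqref{eq Y n t xi snell envelope formula} to a genuine supremum over $\c V_t^n:=\c V^n\cap\c V_t$ via the freezing/decomposition argument from the proof of \cref{proposition equivalence v v_t randomised value function}. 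Namely, decompose a generic $\nu\in\c V^n$ as $\nu(\hat\omega)=\Upsilon(\hat\omega,\hat\omega)$ with $\Upsilon(\hat\omega,\cdot)\in\c V_t^n$, and exploit that $(X^{t,\xi,\alpha_t},I^{t,\alpha_t})$ is $\hat\P$-independent of $\c F^{B,\mu,\hat\P}_t$ because the underlying data $(\xi,W,B^t,\mu^t,\alpha_t)$ is so. Taking $n\to\infty$ then propagates the constancy to $Y^{t,\xi,\alpha_t}_t$ and delivers \eqref{eq recursive snell envelope formula at s=t}.

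To identify $Y^{t,\xi,\alpha_t}_t$ with $V(t,m)$ I would set $r=T$ in \eqref{eq recursive snell envelope formula at s=t}: the BSDE terminal datum gives $Y^{t,\xi,\alpha_t}_T=\E^{\hat\P}[g(X^{t,\xi,\alpha_t}_T,\hat\P^{\c F^{B,\mu,\hat\P}_T}_{X^{t,\xi,\alpha_t}_T})\mid\c F^{B,\mu,\hat\P}_T]$, and by \cref{remark conditional distribution unchanged under measure change} together with the tower property the right-hand side becomes $\sup_{\nu\in\c V}J^{\c R}(t,\xi,\alpha_t,\nu)=V^{\c R}(t,\xi,\alpha_t)$, which equals $V(t,m)$ by \cref{theorem equivalence original and randomised value function,proposition value function law invariant and independent of probabilistic setting}.

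The main obstacle I anticipate is the joint passage to the limit and freezing argument underpinning \eqref{eq recursive snell envelope formula,eq recursive snell envelope formula at s=t}: handling the essential supremum under the $\nu$-dependent measures $\hat\P^\nu$ while simultaneously decomposing $\nu\in\c V^n$ into an $\c F^{B,\mu,\hat\P}_t$-parameterised family in $\c V_t^n$ requires careful manipulation of the Radon--Nikodym densities $L^\nu_t$ and $L^\nu_T/L^\nu_t$, in the spirit of the proof of \cref{proposition equivalence v v_t randomised value function}, but now at the level of the (conditional) Snell envelope rather than in mean.
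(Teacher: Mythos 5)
Your proposal follows essentially the same route as the paper: invoke \cite[Theorem 2.1]{kharroubi_feynmankac_2015} to get the minimal solution as the increasing limit of the penalised solutions from \cref{lemma penalised bsde minimal solution existence}, obtain \eqref{eq recursive snell envelope formula} by monotone passage to the limit in \eqref{eq Y n t xi snell envelope formula} over the increasing union $\c V = \bigcup_n \c V^n$, and identify $Y^{t,\xi,\alpha_t}_t$ with $V(t,m)$ at $r=T$ via the decomposition/freezing argument together with \cref{proposition equivalence v v_t randomised value function,theorem equivalence original and randomised value function}. One detail you pass over: the hypotheses of the cited theorem do \emph{not} reduce merely to Lipschitz continuity and square-integrability of the data --- the paper points out that Assumption (H1) of \cite{kharroubi_feynmankac_2015} cannot be checked directly here and is instead replaced by a direct proof of the uniform a priori bound $\sup_n \E^{\hat\P}[\sup_{s}|Y^{n,t,\xi,\alpha_t}_s|^2]<\infty$ (via the penalised Snell-envelope representation and the arguments of \cite[Lemma 4.9]{fuhrman_randomized_2015}). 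Your ordering also differs slightly --- you propose reducing the esssup to a sup already at the penalised level and then passing to the limit, whereas the paper establishes constancy of $Y^{t,\xi,\alpha_t}_t$ only after the limit, using \eqref{eq recursive snell envelope formula} at $r=T$ and the separate estimate \eqref{eq Y n t xi snell envelope upper bound at s=t} for the ``$\leq$'' direction of \eqref{eq recursive snell envelope formula at s=t}; both routes work, but the paper's avoids carrying the freezing argument through the conditional Snell envelope at each fixed $n$.
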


%%%%%%%%%%%%%%%%%%%%%%%%%%%%%%%%%%%%%%%%%%%%%
%%%%%%%%%%%%%%%%%%%%%%%%%%%%%%%%%%%%%%%%%%%%%
\subsection{Randomised dynamic programming principle}\label{section randomised dpp}

Recalling that in \cref{section randomised problem} we have established an equivalence between the value function of the original and the randomised problem, we note that $V^{\c R} = V$ only depends on time and initial law of $X$. Thus the question about a dynamic programming principle is well-posed and in this section we want to use the randomised formulation to establish such a DPP for $V^{\c R}$ and thus also for $V$.

We start by studying the flow property of the randomised state dynamics \eqref{eq sde randomised problem}. For this, we will need to define solutions $X^{t,\zeta,\gamma_t}$ to \eqref{eq sde randomised problem} for more general initial conditions $\zeta\in L^2(\hat\Omega,\c G\lor\c F^{W,B,\mu}_t,\hat\P;\R^d)$ and $\gamma_t \in L^0(\hat\Omega,\c G\lor\c F^{W,B,\mu}_t,\hat\P;A)$. We note that as in \cref{section randomised problem} the existence and uniqueness of $X^{t,\zeta,\gamma_t}$ is standard since both the existence and uniqueness of $I^{t,\gamma_t}$ given $\hat I^{t,\gamma_t}$ by \cref{corollary I given bar I is well defined} also extend to such more general initial conditions. If moreover $\sup_{\nu\in\c V} \E^{\hat\P}[|\zeta|^2] < \infty$ holds, then similarly to the estimate in \eqref{eq randomised state dynamics basic estimate}, we obtain
\begin{align}
\sup_{\gamma_t}\sup_{\nu\in\c V} \E^{\hat\P^\nu}\Big[\sup_{s\in [t,T]} |X^{t,\zeta,\gamma_t}_s|^2 \Big] < \infty.
\label{eq extended randomised state dynamics basic estimate}
\end{align}
This now allows us to write down the following flow property for \eqref{eq sde randomised problem}.

\begin{lemma}\label{lemma flow property}
    Let $t\in [0,T]$ and $\xi\in L^2(\Omega,\c G\lor\c F^W_t,\P;\R^d)$. Then for all $s\in [t,T]$, the following flow property holds
    \[
    \big(X^{s,X^{t,\xi,\alpha_t}_s,I^{t,\alpha_t}_s}_u,I^{s,I^{t,\alpha_t}_s}_u,\hat\P^{\c F^{B,\mu,\hat\P}_u}_{X^{s,X^{t,\xi,\alpha_t}_s,I^{t,\alpha_t}_s}_u}\big)_{u\in [s,T]} = \big(X^{t,\xi,\alpha_t}_u,I^{t,\alpha_t}_u,\hat\P^{\c F^{B,\mu,\hat\P}_u}_{X^{t,\xi,\alpha_t}_u}\big)_{u\in [s,T]},\qquad\hat\P\text{-a.s}.
    \]
\end{lemma}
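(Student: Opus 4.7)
The plan is to reduce the flow property to two ingredients: first, a flow property for the randomising processes $\hat I^{t,\alpha_t}$ and $I^{t,\alpha_t}$, and second, a standard flow property for the McKean-Vlasov SDE \eqref{eq sde randomised problem} once the control process has been fixed. The natural order is to handle $\hat I$ (which is defined purely by an SDE driven by $\mu$), then transfer to $I$ via the uniqueness of identification provided by \cref{corollary I given bar I is well defined}, and finally deduce the statement for $X$ and its conditional law.

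For the first step, I would observe that \eqref{eq randomised poisson point process} defining $\hat I^{t,\alpha_t}$ is a pathwise jump equation that admits a unique solution given the initial value. Restarting at time $s\in[t,T]$ from $\hat I^{t,\alpha_t}_s\in\c A_s$ (which lies in $\c A_s$ since the jumps of $\mu$ on $(t,s]$ take values in $\c A_r\subseteq \c A_s$), pathwise uniqueness directly yields $\hat I^{s,\hat I^{t,\alpha_t}_s}_u=\hat I^{t,\alpha_t}_u$ for all $u\in[s,T]$, $\hat\P$-a.s.

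For the second step, I would apply the joint-law uniqueness in \cref{corollary I given bar I is well defined}: the càdlàg $\b F^{W,\hat I}\lor\c G$-progressive process $I^{t,\alpha_t}$ restricted to $[s,T]$ is itself an admissible candidate that $\b F^{\hat I^{t,\alpha_t}}$-identifies to $\hat I^{t,\alpha_t}|_{[s,T]}=\hat I^{s,\hat I^{t,\alpha_t}_s}$, so by uniqueness up to indistinguishability one obtains $I^{s,I^{t,\alpha_t}_s}_u=I^{t,\alpha_t}_u$ for all $u\in[s,T]$, $\hat\P$-a.s. The main care here is that $I^{t,\alpha_t}_s$ is $\c G\lor\c F^{W,\mu}_s$-measurable rather than $\c F^{B,\mu}_s$-measurable, but this is already accommodated by the extension of \cref{corollary I given bar I is well defined} to general initial conditions described right before \cref{lemma flow property}.

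With the flow property for $I$ in hand, both $X^{t,\xi,\alpha_t}|_{[s,T]}$ and $X^{s,X^{t,\xi,\alpha_t}_s,I^{t,\alpha_t}_s}$ satisfy \eqref{eq sde randomised problem} on $[s,T]$ with the same initial condition $X^{t,\xi,\alpha_t}_s$ and the same control $I^{t,\alpha_t}$. Strong uniqueness for McKean-Vlasov SDEs under \cref{assumptions sde coefficients} (a standard Gronwall-plus-BDG argument applied to the difference, exploiting Lipschitz continuity in the Wasserstein argument together with $\c W_2(\hat\P^{\c F^{B,\mu,\hat\P}_u}_{X^1_u},\hat\P^{\c F^{B,\mu,\hat\P}_u}_{X^2_u})^2\leq \E^{\hat\P}[|X^1_u-X^2_u|^2\mid \c F^{B,\mu,\hat\P}_u]$) then yields indistinguishability of the two processes on $[s,T]$. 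The equality of the conditional laws is then immediate from this pathwise equality, completing the proof. The main obstacle, which is resolved exactly by \cref{corollary I given bar I is well defined}, is ensuring that the restart of the $\c A_T$-valued Poisson process produces the same progressive $A$-valued control $I$, since the identification step is only unique up to modification and we need enough regularity to pass it through the McKean-Vlasov uniqueness argument.
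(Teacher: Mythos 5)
Your proof is correct and follows the same route as the paper's: it rests on (i) the flow property of $\hat I$ combined with the uniqueness of $I$ from \cref{corollary I given bar I is well defined}, and (ii) strong uniqueness of solutions to \eqref{eq sde randomised problem} on $[s,T]$. The paper simply asserts the indistinguishability of $I^{t,\alpha_t}$ and $I^{s,I^{t,\alpha_t}_s}$ on $[s,T]$ by citing \cref{corollary I given bar I is well defined}, whereas you make explicit the intermediate step---pathwise uniqueness of the jump equation \eqref{eq randomised poisson point process} gives $\hat I^{s,\hat I^{t,\alpha_t}_s}=\hat I^{t,\alpha_t}$ on $[s,T]$---that justifies that invocation.
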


\begin{proof}
We note that by \cref{corollary I given bar I is well defined}, $I^{t,\alpha_t}$ and $I^{s,I^{t,\alpha_t}}$ are indistinguishable on $[s,T]$. Thus $X^{t,\xi,\alpha_t}$ and $X^{s,X^{t,\xi,\alpha_t}_s,I^{t,\alpha_t}_s}$ both solve \eqref{eq sde randomised problem} on $[s,T]$, and hence the flow property follows from the strong uniqueness of solutions to \eqref{eq sde randomised problem}.
\end{proof}

Next let us take a closer look at such randomised settings with a randomised strategy $\nu\in\c V$ and general initial conditions $\zeta\in L^2(\hat\Omega,\c G\lor\c F^{W,B,\mu}_t,\hat\P;\R^d)$, $\gamma_t\in L^0(\hat\Omega,\c G\lor\c F^{W,B,\mu}_t,\hat\P;A)$. We want to decompose both $\zeta,\gamma_t$ and $\nu$ along the $\sigma$-algebra $\c F^{B,\mu,\hat\P}_t$, to be able to relate this to our original setting with $\xi\in L^2(\hat\Omega,\c G\lor\c F^W_t,\hat\P;\R^d)$ and $\alpha_t\in\c A_t$, which we studied in the previous sections. For this, we recall that we introduced $B^t\coloneqq (B_{\cdot\lor t} - B_t)$ and $\mu^t\coloneqq \mu(\cdot\cap ((t,T]\times\c A_T))$ in \cref{section randomised problem}.
We then note that by \cref{proposition decompose filtrations of predictable processes}, we can decompose both $\zeta,\gamma_t$ and $\nu$ into unique $\Xi\in L^2(\hat\Omega\times\hat\Omega,\c F^{B,\mu}_t\otimes(\c G\lor\c F^W_t),\hat\P\otimes\hat\P;\R^d)$, $C_t\in L^0(\hat\Omega\times\hat\Omega,\c F^{B,\mu}_t\otimes(\c G\lor\c F^W_t),\hat\P\otimes\hat\P;A)$ and an $\c F^{B,\mu}_t\otimes Pred(\b F^{B^t,\mu^t})\otimes\c B(\c A_T)$-measurable process $\Upsilon$ with $0<\inf_{[0,T]\times\hat\Omega\times\hat\Omega\times\c A_T} \Upsilon\leq \sup_{[0,T]\times\hat\Omega\times\hat\Omega\times\c A_T} \Upsilon < \infty$ such that $(\zeta,\gamma_t,\nu)(\hat\omega) = (\Xi,C_t,\Upsilon)(\hat\omega,\hat\omega)$, for $\hat\P$-a.a.\@ $\hat\omega\in\hat\Omega$. This decomposition is the motivation for the following lemma, which studies how this decomposition reflects in the reward functional.

\begin{lemma}\label{lemma generalised randomised payoff decomposition}    
    Let $t\in [0,T]$, $\zeta\in L^2(\hat\Omega,\c G\lor\c F^{W,B,\mu}_t,\hat\P;\R^d)$, $\gamma_t\in L^0(\hat\Omega,\c G\lor\c F^{W,B,\mu}_t,\hat\P;A)$ and $\nu\in\c V$,
    and let $\Xi\in L^2(\hat\Omega\times\hat\Omega,\c F^{B,\mu}_t\otimes(\c G\lor\c F^W_t),\hat\P\otimes\hat\P;\R^d)$, $C_t\in L^0(\hat\Omega\times\hat\Omega,\c F^{B,\mu}_t\otimes(\c G\lor\c F^W_t),\hat\P\otimes\hat\P;A)$ and $\Upsilon$ be an $\c F^{B,\mu}_t\otimes Pred(\b F^{B^t,\mu^t})\otimes\c B(\c A_T)$-measurable process such that $0<\inf_{[0,T]\times\hat\Omega\times\hat\Omega\times\c A_T} \Upsilon\leq \sup_{[0,T]\times\hat\Omega\times\hat\Omega\times\c A_T} \Upsilon < \infty$ (in particular, $\Upsilon(\hat\omega,\cdot)\in\c V_t$ for every $\hat\omega\in\hat\Omega$), such that $(\zeta,\gamma_t,\nu)(\hat\omega) = (\Xi,C_t,\Upsilon)(\hat\omega,\hat\omega)$, for $\hat\P$-a.a.\@ $\hat\omega\in\hat\Omega$. Assuming that $\sup_{\upsilon\in\c V} \E^{\hat\P^\upsilon}[|\zeta|^2]<\infty$, then it holds that
    \begin{align}
        &\E^{\hat\P^\nu}\Big[g(\hat\P^{\nu,\c F^{B,\mu,\hat\P^\nu}_T}_{X^{t,\zeta,\gamma_t}_T},X^{t,\zeta,\gamma_t}_T) + \int_t^T f(s,\hat\P^{\nu,\c F^{B,\mu,\hat\P^\nu}_s}_{X^{t,\zeta,\gamma_t}_s},X^{t,\zeta,\gamma_t}_s,I^{t,\gamma_t}_s) ds\Big| \c F^{B,\mu,\hat\P^\nu}_t\Big](\hat\omega)\\
        &= \E^{\hat\P^\upsilon}\Big[g(\hat\P^{\upsilon,\c F^{B,\mu,\hat\P^\upsilon}_T}_{X^{t,\xi,\alpha_t}_T},X^{t,\xi,\alpha_t}_T) + \int_t^T f(s,\hat\P^{\upsilon,\c F^{B,\mu,\hat\P^\upsilon}_s}_{X^{t,\xi,\alpha_t}_s},X^{t,\xi,\alpha_t}_s,I^{t,\alpha_t}_s) ds \Big]\bigg|_{\xi = \Xi(\hat\omega,\cdot),\alpha_t = C_t(\hat\omega,\cdot),\upsilon = \Upsilon(\hat\omega,\cdot)}\\
        &= J^{\c R}(t,\xi,\alpha_t,\upsilon)\Big|_{\xi = \Xi(\hat\omega,\cdot),\alpha_t = C_t(\hat\omega,\cdot), \upsilon = \Upsilon(\hat\omega,\cdot)},\qquad\text{ for }\hat\P\text{-a.a.\@ }\hat\omega\in\hat\Omega.
        \label{eq generalised randomised payoff decomposition}
    \end{align}
\end{lemma}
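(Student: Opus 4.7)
The plan is to combine Bayes' formula with a freezing argument, exploiting the $\hat\P$-independence of $\c F^{B,\mu,\hat\P}_t$ from $\c G\lor\b F^W\lor\b F^{B^t,\mu^t}$. Since $L^\nu_t$ is $\c F^{B,\mu,\hat\P}_t$-measurable and $\hat\P^\nu\sim\hat\P$, Bayes' formula gives
\begin{align}
\E^{\hat\P^\nu}[F \mid \c F^{B,\mu,\hat\P^\nu}_t] = \E^{\hat\P}\Big[\tfrac{L^\nu_T}{L^\nu_t} F \,\Big|\, \c F^{B,\mu,\hat\P}_t\Big],
\end{align}
where $F$ denotes the bracketed payoff on the left-hand side of \eqref{eq generalised randomised payoff decomposition}. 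By \cref{remark conditional distribution unchanged under measure change}, each $\hat\P^{\nu,\c F^{B,\mu,\hat\P^\nu}_s}_{X^{t,\zeta,\gamma_t}_s}$ appearing inside $F$ may be replaced by its $\nu$-independent version $\hat\P^{\c F^{B,\mu,\hat\P}_s}_{X^{t,\zeta,\gamma_t}_s}$.

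Next, I would express every $[t,T]$-time object as a measurable functional of the base data $(\zeta,\gamma_t,W,B^t,\mu^t,\nu|_{(t,T]})$: the control $I^{t,\gamma_t}$ via the canonical construction of \cref{corollary I given bar I is well defined} (which extends to these more general initial conditions as noted before the statement of the lemma); the state $X^{t,\zeta,\gamma_t}$ by strong existence and uniqueness of \eqref{eq sde randomised problem}; the $\hat\P$-a.s.\@ continuous conditional-law process $s\mapsto\hat\P^{\c F^{B,\mu,\hat\P}_s}_{X^{t,\zeta,\gamma_t}_s}$ analogously; and the Radon--Nikod\'ym density $L^\nu_T/L^\nu_t$ through its explicit expression in terms of $\nu|_{(t,T]}$ and $\mu^t$. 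Substituting the decompositions $(\zeta,\gamma_t,\nu)(\hat\omega) = (\Xi,C_t,\Upsilon)(\hat\omega,\hat\omega)$, each integrand in $F$ thereby factors into an $\c F^{B,\mu,\hat\P}_t$-measurable slot (through the first argument of $\Xi,C_t,\Upsilon$) and a $(\c G\lor\b F^W\lor\b F^{B^t,\mu^t})$-measurable slot (through the second argument together with the fresh noise $W,B^t,\mu^t$).

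Because these two $\sigma$-algebras are $\hat\P$-independent, the freezing lemma (as applied in the proof of \cref{proposition equivalence v v_t randomised value function}) yields that $\E^{\hat\P}[\cdot \mid \c F^{B,\mu,\hat\P}_t](\hat\omega)$ equals the unconditional $\hat\P$-expectation over the second slot with the first slot frozen at $\hat\omega$. By the defining properties of the decomposition, $\Xi(\hat\omega,\cdot)\in L^2(\Omega,\c G\lor\c F^W_t,\P;\R^d)$, $C_t(\hat\omega,\cdot)\in\c A_t$ and $\Upsilon(\hat\omega,\cdot)\in\c V_t$, so this frozen expectation is precisely $J^{\c R}(t,\Xi(\hat\omega,\cdot),C_t(\hat\omega,\cdot),\Upsilon(\hat\omega,\cdot))$, which is the right-hand side of \eqref{eq generalised randomised payoff decomposition}.

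The main technical obstacle will be the bookkeeping behind the functional representation of $X^{t,\zeta,\gamma_t}$ and of the conditional-law process: one has to verify that the $\c F^{B,\mu,\hat\P}_t$-dependence enters only through $(\zeta,\gamma_t,\nu|_{(t,T]})$ and never through the fresh noise $(W,B^t,\mu^t)$, so that the product structure required by the freezing lemma is in place. The integrability needed to justify the interchange of expectation and freezing follows from the extended estimate \eqref{eq extended randomised state dynamics basic estimate}, the boundedness of $\Upsilon$, and \cref{assumptions reward functional}.
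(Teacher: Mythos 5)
Your proposal takes a genuinely different route from the paper. The paper does not attempt a direct freezing argument. Instead, it first treats the case where $\hat\omega\mapsto(\Xi(\hat\omega,\cdot),C_t(\hat\omega,\cdot),\Upsilon(\hat\omega,\cdot))$ takes finitely many values, in which case $X^{t,\zeta,\gamma_t}$, $I^{t,\gamma_t}$ and $L^\nu$ decompose as finite sums $\sum_{k,l,m}(\cdot)\1_{Z^k\cap H^l\cap E^m}$ over $\c F^{B,\mu}_t$-measurable atoms, and the conditional expectation localises on each atom by strong uniqueness; it then extends to general $(\Xi,C_t,\Upsilon)$ by approximation, invoking the continuity of $J^{\c R}$ from \cref{lemma J R continuous} for the right-hand side and a dominated-convergence argument for the left-hand side.

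There is a genuine gap at the point where you "express every $[t,T]$-time object as a measurable functional of the base data $(\zeta,\gamma_t,W,B^t,\mu^t,\nu|_{(t,T]})$," in particular at the word \emph{analogously} applied to the conditional-law process. For the controlled state process, strong uniqueness does give $X^{t,\zeta,\gamma_t}$ as a pathwise functional once the conditional-law process is known. But the conditional-law process $s\mapsto\hat\P^{\c F^{B,\mu,\hat\P}_s}_{X^{t,\zeta,\gamma_t}_s}$ is not a pathwise functional of the noise: it is a $\hat\P$-a.s.-defined object obtained by conditioning, and its functional form depends on the full joint law of $(\zeta,\gamma_t,W,B^t,\mu^t)$, not just on the paths. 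What the freezing argument requires is precisely the identity
\begin{align}
\hat\P^{\c F^{B,\mu,\hat\P}_s}_{X^{t,\zeta,\gamma_t}_s}(\hat\omega) \;=\; \hat\P^{\c F^{B,\mu,\hat\P}_s}_{X^{t,\Xi(\hat\omega,\cdot),C_t(\hat\omega,\cdot)}_s}(\hat\omega),\qquad\text{for }\hat\P\text{-a.a.\@ }\hat\omega,
\end{align}
together with a jointly measurable version of the right-hand side in the frozen variable. You assert the existence of such a factorisation rather than establishing it, and it is not automatic: substituting a frozen value into an a.s.-defined conditional law needs a measurable-selection / joint-measurability argument, which is exactly the difficulty the paper's simple-function step is designed to discharge. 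In the finite-valued case the factorisation is immediate because conditioning localises on the $\c F^{B,\mu}_t$-measurable atoms $Z^k\cap H^l\cap E^m$, and the general case then follows by the continuity machinery already established. If you want to push your direct approach through, you would need to build the required jointly measurable version of the conditional-law process --- say by an argument along the lines of \cref{proposition decompose filtrations of predictable processes} and weak uniqueness of the McKean--Vlasov SDE --- which in the end amounts to the same amount of work as the paper's approximation route.
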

\begin{proof}
To prove the formula \eqref{eq generalised randomised payoff decomposition}, we will %follow a similar approach as \cite[Section 2.9.]{krylov_controlled_2008}, which is also used in \cite[Section 5]{bayraktar_randomized_2018}: we
first consider the case where $\hat\omega\to (\Xi(\hat\omega,\cdot),C_t(\hat\omega,\cdot),\Upsilon(\hat\omega,\cdot))$ takes finitely many values, and then use continuity of $J$ in $\xi,\alpha_t,\nu$ and of $(X,\P^{\c F^{B,\mu,\P}}_X)$ in $\Xi,C_t,\Upsilon$ to obtain the result for general $\Xi,C_t,\Upsilon$. % (note that we use here that $\hat\P\sim\hat\P^\nu\sim\hat\P^\upsilon$)
\begin{enumerate}[wide,label=(\roman*)]
%%%%%%%%%%%%%%%%%%%%%%%%%%%%%%%%%%%%%%%%%%%%%%%%%%%%%%%%%%%%%%%%%%%%%%%%%%%%%%
\item\label{lemma 7.2 step i} Let us assume that $\hat\omega\mapsto (\Xi(\hat\omega,\cdot),C_t(\hat\omega,\cdot),\Upsilon(\hat\omega,\cdot))$ only takes finitely many values (in the spaces $L^2(\hat\Omega,\c G\lor\c F^W_t,\hat\P;\R^d)$, $\c A_t$ and $\c V_t$), and thus are of the form
\begin{align}
&\Xi(\hat\omega^1,\hat\omega^2) = \sum_{k=1}^K \xi^k(\pi(\hat\omega^2))\1_{Z^k}(\hat\omega^1),\quad C_t(\hat\omega^1,\hat\omega^2) = \sum_{l=1}^L \alpha_t^l(\hat\omega^2)\1_{H^l}(\hat\omega^1),\\
&\Upsilon(\hat\omega^1,\hat\omega^2) = \sum_{m=1}^M \upsilon^m(\hat\omega^2)\1_{E^m}(\hat\omega^1) ,\quad\text{for }(\hat\omega^1,\hat\omega^2)\in\hat\Omega\times\hat\Omega,
\end{align}
where $(Z^k)_k,(H^l)_l,(E^m)_m$ are each disjoint partitions of $\hat\Omega$, and $\xi^k\in L^2(\Omega,\c G\lor\c F^W_t,\P;\R^d)$, $\alpha^l\in\c A_t$ and $\upsilon^m\in\c V_t$. Let us now consider the processes $\hat I^{t,\alpha^l_t}$ given by \eqref{eq randomised poisson point process}, and the corresponding $I^{t,\alpha^l_t}$ which $\b F^{\hat I^{t,\alpha^l_t}}$-identifies to $\hat I^{t,\alpha^l_t}$ from \cref{corollary I given bar I is well defined}. We see that then $\sum_{l=1}^L I^{t,\alpha^l_t}\1_{H^l}$ by construction $\b F^{\hat I^{t,\gamma_t}}$-identifies to 
$\hat I^{t,\gamma_t}$ and thus again by \cref{corollary I given bar I is well defined} is indistinguishable from $I^{t,\gamma_t}$ under $\hat\P\sim\hat\P^\nu$.
Similarly, denoting the corresponding unique solutions to \eqref{eq sde randomised problem} with starting value $\xi^k$ by $X^{s,\xi^k,\alpha^l_t}$, we note that by strong uniqueness of solutions to \eqref{eq sde randomised problem} the processes $X^{s,\zeta,\gamma_t}$ and $\sum_{k=1}^K \sum_{l=1}^L X^{t,\xi^k,\alpha^l_t} \1_{Z^k\cap H^l}$ are indistinguishable under $\hat\P\sim\hat\P^\nu$. Finally, we also define the tilted probability measures $\hat\P^{\upsilon^m}\sim\hat\P$ belonging to $\upsilon^m$ by $\frac{d\hat\P^{\upsilon^m}}{d\hat\P}\big|_{\c F^{B,\mu,\hat\P}_u} \coloneqq L^{\upsilon^m}_u$ for $u\in [0,T]$, see \eqref{eq randomised control girsanov formula}, and note that by construction $L^\nu = \sum_{m=1}^M L^{\upsilon^m} \1_{E^m}$. All together, this allows us to decompose the LHS of \eqref{eq generalised randomised payoff decomposition} as follows
\begin{align}
    &\E^{\hat\P^\nu}\Big[g(\hat\P^{\nu,\c F^{B,\mu,\hat\P^\nu}_T}_{X^{t,\zeta,\gamma_t}_T},X^{t,\zeta,\gamma_t}_T) + \int_t^T f(s,\hat\P^{\nu,\c F^{B,\mu,\hat\P^\nu}_s}_{X^{t,\zeta,\gamma_t}_s},X^{t,\zeta,\gamma_t}_s,I^{t,\gamma_t}_s) ds\Big| \c F^{B,\mu,\hat\P^\nu}_t\Big]\\
    &= \sum_{k=1}^K \sum_{l=1}^L \sum_{m=1}^M \1_{Z^k\cap H^l\cap E^m} \E^{\hat\P^{\upsilon^m}}\Big[g(\hat\P^{\upsilon^m,\c F^{B,\mu,\hat\P^{\upsilon^m}}_T}_{X^{t,\xi^k,\alpha_t^l}_T},X^{t,\xi^k,\alpha_t^l}_T)\\
    &\hspace{145pt} + \int_t^T f(s,\hat\P^{\upsilon^m,\c F^{B,\mu,\hat\P^{\upsilon^m}}_s}_{X^{t,\xi^k,\alpha_t^l}_s},X^{t,\xi^k,\alpha_t^l}_s,I^{t,\alpha_t^l}_s) ds\Big| \c F^{B,\mu,\hat\P^{\upsilon^m}}_t\Big].
    \label{eq lemma 7.2 decompose randomised payoff for simple functions}
\end{align}

For each summand, we note that since $\upsilon^m\in\c V_t$, by construction $\xi^k,\alpha^l_t,W,B^t,\mu^t$ are $\hat\P^{\upsilon^m}$ independent of $\c F_t^{B,\mu,\hat\P^{\upsilon^m}}$.
Thus from the strong uniqueness of solutions to \eqref{eq sde randomised problem} we can deduce that $X^{t,\xi^k,\alpha_t^l}$ and $(\hat\P^{\upsilon^m,\c F^{B,\mu,\hat\P^{\upsilon^m}}_s}_{X^{t,\xi^k,\alpha_t^l}_s})_{s\in [t,T]}$ are $\hat\P^{\upsilon^m}$-independent of $\c F^{B,\mu,\hat\P^{\upsilon^m}}_t$. For this, one can e.g.\@ note that the standard construction of a solution to \eqref{eq sde randomised problem} using a Picard iteration leads to a process $\hat\P^{\upsilon^m}$-independent of $\c F^{B,\mu,\hat\P^{\upsilon^m}}_t$ in every step and thus also in the limit. Therefore
\begin{align}
    &\E^{\hat\P^{\upsilon^m}}\Big[g(\hat\P^{\upsilon^m,\c F^{B,\mu,\hat\P^{\upsilon^m}}_T}_{X^{t,\xi^k,\alpha_t^l}_T},X^{t,\xi^k,\alpha_t^l}_T) + \int_t^T f(s,\hat\P^{\upsilon^m,\c F^{B,\mu,\hat\P^{\upsilon^m}}_s}_{X^{t,\xi^k,\alpha_t^l}_s},X^{t,\xi^k,\alpha_t^l}_s,I^{t,\alpha_t^l}_s) ds\Big| \c F^{B,\mu,\hat\P^{\upsilon^m}}_t\Big]\\
    &= \E^{\hat\P^{\upsilon^m}}\Big[g(\hat\P^{\upsilon^m,\c F^{B,\mu,\hat\P^{\upsilon^m}}_T}_{X^{t,\xi^k,\alpha_t^l}_T},X^{t,\xi^k,\alpha_t^l}_T) + \int_t^T f(s,\hat\P^{\upsilon^m,\c F^{B,\mu,\hat\P^{\upsilon^m}}_s}_{X^{t,\xi^k,\alpha_t^l}_s},X^{t,\xi^k,\alpha_t^l}_s,I^{t,\alpha_t^l}_s) ds\Big]\\
    &= J^{\c R}(s,\xi^k,\alpha_t^l,\upsilon^m),\quad \hat\P\text{-a.s.},
\end{align}
which together with \eqref{eq lemma 7.2 decompose randomised payoff for simple functions}, shows that the claim \eqref{eq generalised randomised payoff decomposition} holds for $\hat\omega\mapsto (\Xi(\hat\omega,\cdot),C_t(\hat\omega,\cdot),\Upsilon(\hat\omega,\cdot))$ only taking finitely many values.
%%%%%%%%%%%%%%%%%%%%%%%%%%%%%%%%%%%%%%%%%%%%%%%%%%%%%%%%%%%%%%%%%%%%%%%%%%%%%%
\item Let us now consider general $(\Xi,C_t,\Upsilon)$. We first note that due to $\rho < 1$, %we have $(\Xi,C_t)\in L^2(\hat\Omega\times\hat\Omega,\c F^{B,\mu}_t\otimes(\c G\otimes\c F^W_t),\hat\P\otimes\hat\P;\R^d\times A)$, and at the same time, 
and since $\Upsilon$ is essentially bounded, %we also have $\Upsilon\in L^\infty(\hat\Omega\times[0,T]\times\hat\Omega\times\c A_T;\c F^{B,\mu}_t\otimes Pred(\b F^{B^t,\mu^t})\otimes\c B(\c A_T);\hat\P\otimes ds\times\hat\P\otimes\lambda_s(d\alpha))$, and hence
we can find a sequence $(\Xi^n,C_t^n,\Upsilon^n)_n$ such that $(\Xi^n,C_t^n,\Upsilon^n) \to (\Xi,C_t,\Upsilon)$ in $L^2\times L^\infty\times L^\infty$ and a.s., such that $\sup_{\upsilon\in\c V}\E^{\hat\P^\upsilon\otimes\hat\P^\upsilon}[|\Xi^n|^2] < \infty$ and each $\hat\omega\mapsto (\Xi^n(\hat\omega,\cdot),C_t^n(\hat\omega,\cdot),\Upsilon^n(\hat\omega,\cdot))$ only takes finitely many values. 
%Note that in particular $\sup_n\sup_{[0,T]\times\hat\Omega\times\hat\Omega\times\c A_T} \Upsilon^n < \infty$.
Correspondingly we define $\zeta^n,\gamma_t^n,\nu^n$ by $(\zeta^n,\gamma_t^n,\nu^n)(\hat\omega)\coloneqq (\Xi^n,C_t^n,\Upsilon^n)(\hat\omega,\hat\omega)$, which by construction then satisfy $(\zeta^n,\gamma_t^n,\nu^n)\to (\zeta,\gamma_t,\nu)$ in $L^2\times L^\infty\times L^\infty$ and a.s., and $\sup_{\upsilon\in\c V}\E^{\hat\P^\upsilon}[|\zeta^n|^2] < \infty$. 
Thus by \cref{lemma 7.2 step i}, we already know that \eqref{eq generalised randomised payoff decomposition} holds for each $(\Xi^n,C_t^n,\Upsilon^n)$ and $(\zeta^n,\gamma_t^n,\nu^n)$, and our task now will be showing that \eqref{eq generalised randomised payoff decomposition} still holds in the limit, for which it is sufficient to show the convergence of the left- and right-hand sides as $n\to\infty$.

To this end, for the right-hand side of \eqref{eq generalised randomised payoff decomposition}, we note that $J^{\c R}:[0,T]\times L^2(\Omega,\c G\lor\c F^W_t,\P;\R^d)\times\c A_t\times\c V\to \R$ is continuous in $\xi,\alpha_t,\nu$ by \cref{lemma J R continuous}, and thus
\[
\lim_{n\to\infty} J^{\c R}(t,\Xi^n(\hat\omega,\cdot),C_t^n(\hat\omega,\cdot),\Upsilon^n(\hat\omega,\cdot))
= J^{\c R}(t,\Xi(\hat\omega,\cdot),C_t(\hat\omega,\cdot),\Upsilon(\hat\omega,\cdot)),
\qquad\text{for $\hat\P$-a.a.\@ }\hat\omega\in\hat\Omega.
\]
For the left-hand side of \eqref{eq generalised randomised payoff decomposition}, we introduce $\bar\nu\coloneqq \nu\lor\sup_n \nu^n \in\c V$. Then using the conditional Jensen's inequality, we obtain that
\begin{align}
&\E^{\hat\P^{\bar\nu}}\bigg[\bigg|\E^{\hat\P^{\nu^n}}\Big[g(\hat\P^{\nu^n,\c F^{B,\mu,\hat\P^{\nu^n}}_T}_{X^{t,\zeta^n,\gamma_t^n}_T},X^{t,\zeta^n,\gamma_t^n}_T) + \int_t^T f(s,\hat\P^{\nu^n,\c F^{B,\mu,\hat\P^{\nu^n}}_s}_{X^{t,\zeta^n,\gamma_t^n}_s},X^{t,\zeta^n,\gamma_t^n}_s,I^{t,\gamma_t^n}_s) ds\Big| \c F^{B,\mu,\hat\P^{\nu^n}}_t\Big]\\
&\qquad - \E^{\hat\P^{\nu}}\Big[g(\hat\P^{\nu,\c F^{B,\mu,\hat\P^{\nu}}_T}_{X^{t,\zeta,\gamma_t}_T},X^{t,\zeta,\gamma_t}_T) + \int_t^T f(s,\hat\P^{\nu,\c F^{B,\mu,\hat\P^{\nu}}_s}_{X^{t,\zeta,\gamma_t}_s},X^{t,\zeta,\gamma_t}_s,I^{t,\gamma_t}_s) ds\Big| \c F^{B,\mu,\hat\P^{\nu}}_t\Big]\bigg| \bigg]\\
&\leq \E^{\hat\P^{\bar\nu}}\bigg[\bigg|\frac{L^{\nu^n}_T}{L^{\bar\nu}_T} \Big(g(\hat\P^{\bar\nu,\c F^{B,\mu,\hat\P^{\bar\nu}}_T}_{X^{t,\zeta^n,\gamma_t^n}_T},X^{t,\zeta^n,\gamma_t^n}_T) + \int_t^T f(s,\hat\P^{\bar\nu,\c F^{B,\mu,\hat\P^{\bar\nu}}_s}_{X^{t,\zeta^n,\gamma_t^n}_s},X^{t,\zeta^n,\gamma_t^n}_s,I^{t,\gamma_t^n}_s) ds\Big)\\
&\qquad - \frac{L^\nu_T}{L^{\bar\nu}_T} \Big(g(\hat\P^{\bar\nu,\c F^{B,\mu,\hat\P^{\bar\nu}}_T}_{X^{t,\zeta,\gamma_t}_T},X^{t,\zeta,\gamma_t}_T) + \int_t^T f(s,\hat\P^{\bar\nu,\c F^{B,\mu,\hat\P^{\bar\nu}}_s}_{X^{t,\zeta,\gamma_t}_s},X^{t,\zeta,\gamma_t}_s,I^{t,\gamma_t}_s) ds\Big)\bigg|\bigg],
\end{align}
and thus repeating the arguments in the proof of \cref{lemma J R continuous} using the estimate \eqref{eq extended randomised state dynamics basic estimate}, we conclude that
\begin{align}
&\E^{\hat\P^{\nu^n}}\Big[g(\hat\P^{\nu^n,\c F^{B,\mu,\hat\P^{\nu^n}}_T}_{X^{t,\zeta^n,\gamma_t^n}_T},X^{t,\zeta^n,\gamma_t^n}_T) + \int_t^T f(s,\hat\P^{\nu^n,\c F^{B,\mu,\hat\P^{\nu^n}}_s}_{X^{t,\zeta^n,\gamma_t^n}_s},X^{t,\zeta^n,\gamma_t^n}_s,I^{t,\gamma_t^n}_s) ds\Big| \c F^{B,\mu,\hat\P^{\nu^n}}_t\Big]\\
&\to \E^{\hat\P^{\nu}}\Big[g(\hat\P^{\nu,\c F^{B,\mu,\hat\P^{\nu}}_T}_{X^{t,\zeta,\gamma_t}_T},X^{t,\zeta,\gamma_t}_T) + \int_t^T f(s,\hat\P^{\nu,\c F^{B,\mu,\hat\P^{\nu}}_s}_{X^{t,\zeta,\gamma_t}_s},X^{t,\zeta,\gamma_t}_s,I^{t,\gamma_t}_s) ds\Big| \c F^{B,\mu,\hat\P^{\nu}}_t\Big]\text{ in }L^1(\hat\P^{\bar\nu}).
\end{align}
Thus from the uniqueness of limits, we conclude that \eqref{eq generalised randomised payoff decomposition} also holds for general $(\Xi,C_t,\Upsilon)$, which completes the proof.
\end{enumerate}
\end{proof}

This allows us now to obtain the following more general connection between the mean-field control problem and the BSDE in \cref{theorem value function bsde characterisation}.
    
\begin{theorem}\label{theorem value function bsde characterisation}
    Let $t\in [0,T]$ and $\xi\in L^2(\Omega,\c G\lor\c F^W_t,\P;\R^d)$. Further let $\alpha_t\in\c A_t\subseteq \c A_T$ and let $(Y^{t,\xi,\alpha_t},Z^{t,\xi,\alpha_t},U^{t,\xi,\alpha_t},K^{t,\xi,\alpha_t})$ be the unique minimal solution to \eqref{eq constrained bsde value function} from \cref{lemma bsde minimal solution existence}. Then for any $s\in [t,T]$, it holds that
    \[
    Y^{t,\xi,\alpha_t}_s = V(s,\hat\P^{\nu,\c F^{B,\mu,\hat\P^\nu}_s}_{X^{t,\xi,\alpha_t}_s}),\qquad \hat\P\text{-a.s.}
    \]
\end{theorem}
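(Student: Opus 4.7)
The plan is to lift the identification $Y^{t,\xi,\alpha_t}_t = V(t,\P_\xi)$ established in \cref{lemma bsde minimal solution existence} to an arbitrary intermediate time $s\in[t,T]$ by combining the Snell-envelope representation of $Y^{t,\xi,\alpha_t}$ with the flow property \cref{lemma flow property}, the decomposition \cref{lemma generalised randomised payoff decomposition}, and the equivalence \cref{theorem equivalence original and randomised value function}.

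First I would apply the Snell-envelope formula \eqref{eq recursive snell envelope formula} at time $s$ with $r=T$, and invoke the flow property to replace $(X^{t,\xi,\alpha_t}, I^{t,\alpha_t})$ on $[s,T]$ by $(X^{s,\zeta,\gamma_s}, I^{s,\gamma_s})$ with initial data $\zeta\coloneqq X^{t,\xi,\alpha_t}_s$ and $\gamma_s\coloneqq I^{t,\alpha_t}_s$; the uniform estimate \eqref{eq randomised state dynamics basic estimate} ensures that $\sup_{\upsilon\in\c V}\E^{\hat\P^\upsilon}[|\zeta|^2]<\infty$, so the hypotheses of \cref{lemma generalised randomised payoff decomposition} hold at time $s$. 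Decomposing $(\zeta,\gamma_s,\nu)$ via \cref{proposition decompose filtrations of predictable processes} into $(\Xi,C_s,\Upsilon)$ with $\Xi$ on $\hat\Omega\times\hat\Omega$ measurable w.r.t.\ $\c F^{B,\mu}_s\otimes(\c G\lor\c F^W_s)$ and $\Upsilon(\hat\omega,\cdot)\in\c V_s$, \cref{lemma generalised randomised payoff decomposition} identifies each conditional expectation in \eqref{eq recursive snell envelope formula} with $J^{\c R}(s,\Xi(\hat\omega,\cdot),C_s(\hat\omega,\cdot),\Upsilon(\hat\omega,\cdot))$ for $\hat\P$-a.a.\ $\hat\omega$.

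Next I would pass to the essential supremum. Since $\Upsilon(\hat\omega,\cdot)\in\c V_s$, the LHS is $\hat\P$-a.s.\ bounded above by $\sup_{\upsilon\in\c V_s}J^{\c R}(s,\Xi(\hat\omega,\cdot),C_s(\hat\omega,\cdot),\upsilon)$; conversely, any $\upsilon\in\c V_s$ is itself an element of $\c V$ (constant in the $\c F^{B,\mu}_s$-variable), so plugging it in saturates the bound. By \cref{proposition equivalence v v_t randomised value function} this supremum equals $V^{\c R}(s,\Xi(\hat\omega,\cdot),C_s(\hat\omega,\cdot))$, which by \cref{theorem equivalence original and randomised value function} and the law invariance of \cref{proposition value function law invariant and independent of probabilistic setting} equals $V(s,\P^\P_{\Xi(\hat\omega,\cdot)})$. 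Finally, the change of measure $\hat\P\to\hat\P^\nu$ is driven by a $\b F^{B,\mu}$-predictable integrand in \eqref{eq randomised control girsanov formula} and hence preserves the $\hat\P$-independence between $\c F^{B,\mu}$ and $\c G\lor\c F^W$; combined with the product-form measurability of $\Xi$, this yields $\hat\P^{\nu,\c F^{B,\mu,\hat\P^\nu}_s}_{X^{t,\xi,\alpha_t}_s}(\hat\omega)=\P^\P_{\Xi(\hat\omega,\cdot)}$, $\hat\P$-a.s., which closes the argument.

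The main obstacle I expect is the essential-supremum step: one has to verify that the essential supremum over $\c V$ of the $\hat\omega$-indexed reward $J^{\c R}(s,\Xi(\hat\omega,\cdot),C_s(\hat\omega,\cdot),\Upsilon(\hat\omega,\cdot))$ coincides $\hat\P$-a.s.\ with the pointwise supremum $\sup_{\upsilon\in\c V_s}J^{\c R}(s,\Xi(\hat\omega,\cdot),C_s(\hat\omega,\cdot),\upsilon)$. The ``$\leq$'' direction is immediate from the pointwise bound $\Upsilon(\hat\omega,\cdot)\in\c V_s$, while ``$\geq$'' relies on the fact that every $\upsilon\in\c V_s$ is already an admissible $\nu\in\c V$, so that $\Upsilon(\hat\omega,\cdot)$ can be chosen to realise any prescribed element of $\c V_s$; the remaining book-keeping about null sets follows by taking a countable $\c V_s$-dense maximising family, which is available since the data are integrable under the dominating measure $\hat\P^{\bar\nu}$ as in the proof of \cref{lemma generalised randomised payoff decomposition}.
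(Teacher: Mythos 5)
Your proposal is correct and follows essentially the same route as the paper's proof: Snell-envelope formula \eqref{eq recursive snell envelope formula} with $r=T$, flow property \cref{lemma flow property}, decomposition via \cref{proposition decompose filtrations of predictable processes}, identification through \cref{lemma generalised randomised payoff decomposition}, and then \cref{proposition equivalence v v_t randomised value function}, \cref{theorem equivalence original and randomised value function} and law invariance to recover $V(s,\cdot)$. If anything, you are slightly more explicit than the paper about the essential-supremum versus pointwise-supremum step and about why the conditional law in the statement is independent of $\nu$ (which is \cref{remark conditional distribution unchanged under measure change}).
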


\begin{proof}
We start by decomposing $(X_s^{t,\xi,\alpha_t},I_s^{t,\alpha_t})$. Since $X_s^{t,\xi,\alpha_t}$ and $I_s^{t,\alpha_t}$ are both $\c G\lor\c F^{W,B,\mu,\hat\P}_s$-measurable, by e.g.\@ \cref{proposition decompose filtrations of predictable processes} there exist $\c F^{B,\mu}_s\otimes (\c G\lor\c F^W_s)$-measurable $\zeta:\hat\Omega\times\hat\Omega\to\R^d$ and $C_s:\hat\Omega\times\hat\Omega\to A$ such that
\[
(X^{t,\xi,\alpha_t}_s,I^{t,\alpha_t}_s)(\hat\omega) = (\zeta,C_s)(\hat\omega,\hat\omega), \quad \text{for $\hat\P$-a.a.\@ }\hat\omega\in\hat\Omega.
\]
Note that by construction $\hat\P^{\c F^{B,\mu,\hat\P}_s}_{X^{t,\xi,\alpha_t}_s}(\hat\omega) = \hat\P_{\zeta(\hat\omega,\cdot)}$, for $\hat\P$-a.a.\@ $\hat\omega\in\hat\Omega$, since $\c F^{B,\mu,\hat\P}_t$ and $\c G\lor\c F^{W,\hat\P}_t$ are independent under $\hat\P$.
Hence, from \cref{proposition equivalence v v_t randomised value function,theorem equivalence original and randomised value function}, we now see that
\[
V(s,\hat\P^{\c F^{B,\mu,\hat\P}_s}_{X^{t,\xi,\alpha_t}_s}(\hat\omega))
= V^{\c R}(s,\zeta(\hat\omega,\cdot),C_s(\hat\omega,\cdot))
= \sup_{\nu\in\c V_s} J^{\c R} (s,\zeta(\hat\omega,\cdot),C_s(\hat\omega,\cdot),\nu),
\ \text{for $\hat\P$-a.a.\@ }\hat\omega\in\hat\Omega,
\]

At the same time, we note that \cref{lemma bsde minimal solution existence} (for $r=T$) together with the flow property in \cref{lemma flow property} implies that $\hat\P$-a.s.
\begin{align}
    Y^{t,\xi,\alpha_t}_s &= \esssup_{\nu\in\c V} \E^{\hat\P^\nu}\Big[g(\hat\P^{\nu,\c F^{B,\mu,\hat\P^\nu}_T}_{X^{t,\xi,\alpha_t}_T},X^{t,\xi,\alpha_t}_T) + \int_s^T f(u,\hat\P^{\nu,\c F^{B,\mu,\hat\P^\nu}_u}_{X^{t,\xi,\alpha_t}_u},X^{t,\xi,\alpha_t}_u,I^{t,\alpha_t}_u) du\Big| \c F^{B,\mu,\hat\P^\nu}_s\Big]\\
    &= \esssup_{\nu\in\c V} \E^{\hat\P^\nu}\Big[g(\hat\P^{\nu,\c F^{B,\mu,\hat\P^\nu}_T}_{X^{s,X^{t,\xi,\alpha_t}_s,I^{t,\alpha_t}_s}_T},X^{s,X^{t,\xi,\alpha_t}_s,I^{t,\alpha_t}_s}_T)\\
    &\hspace{65pt}+ \int_s^T f(u,\hat\P^{\nu,\c F^{B,\mu,\hat\P^\nu}_u}_{X^{s,X^{t,\xi,\alpha_t}_s,I^{t,\alpha_t}_s}_u},X^{s,X^{t,\xi,\alpha_t}_s,I^{t,\alpha_t}_s}_u,I^{t,\alpha_t}_u) du\Big| \c F^{B,\mu,\hat\P^\nu}_s\Big].
\end{align}
In particular, the claim now follows if we can show that $\hat\P$-a.s.
\begin{align}
&\sup_{\nu\in\c V_s} J^{\c R} (s,\zeta(\hat\omega,\cdot),C_s(\hat\omega,\cdot),\nu)\\
&= \esssup_{\nu\in\c V} \E^{\hat\P^\nu}\Big[g(\hat\P^{\nu,\c F^{B,\mu,\hat\P^\nu}_T}_{X^{s,X^{t,\xi,\alpha_t}_s,I^{t,\alpha_t}_s}_T},X^{s,X^{t,\xi,\alpha_t}_s,I^{t,\alpha_t}_s}_T)\\
&\hspace{65pt}+ \int_s^T f(u,\hat\P^{\nu,\c F^{B,\mu,\hat\P^\nu}_u}_{X^{s,X^{t,\xi,\alpha_t}_s,I^{t,\alpha_t}_s}_u},X^{s,X^{t,\xi,\alpha_t}_s,I^{t,\alpha_t}_s}_u,I^{t,\alpha_t}_u) du\Big| \c F^{B,\mu,\hat\P^\nu}_s\Big].
\end{align}

To this end, we note that by the estimate \eqref{eq randomised state dynamics basic estimate}, we have $\sup_{\nu\in\c V}\E^{\hat\P^\nu}[|\zeta|^2] = \sup_{\nu\in\c V}\E^{\hat\P^\nu}[|X^{t,\xi,\alpha_t}_s|^2] < \infty$.
Hence "$\leq$" follows from \cref{lemma generalised randomised payoff decomposition} since $\c V_s\subseteq\c V$. On the other hand, for "$\geq$" it suffices to notice that we can decompose every $\nu\in\c V$ by \cref{proposition decompose filtrations of predictable processes} into an $\c F^{B,\mu}_s\otimes Pred(\b F^{B^s,\mu^s})\otimes\c B(\c A_T)$-measurable process $\Upsilon$ such that $\nu_r(\hat\omega,\alpha) = \Upsilon_r(\hat\omega,\hat\omega,\alpha)$ for all $\hat\omega\in\hat\Omega,\alpha\in\c A_T,r\in [0,T]$, as this direction now follows from \cref{lemma generalised randomised payoff decomposition} since $\Upsilon(\hat\omega,\cdot)\in\c V_s$ for $\hat\P$-a.a.\@ $\hat\omega\in\hat\Omega$.
\end{proof}

Finally, we can now conclude the following randomised dynamic programming principle (DPP).

\begin{theorem}\label{theorem randomised dpp}
    Let $(t,m) \in [0,T]\times \c P_2(\R^d)$. Then for any choice of $\alpha_t\in\c A_t\subseteq\c A_T$ and $\xi\in L^2(\Omega,\c G\lor\c F^W_t,\P;\R^d)$ with $\hat\P_\xi = m$, it holds for $s\in [t,T]$ that
    \begin{align}
    V(t,m) = \sup_{\nu\in \c V} \E^{\hat\P^\nu}\Big[ V(s,\hat\P^{\nu,\c F^{B,\mu,\hat\P^\nu}_s}_{X^{t,\xi,\alpha_t}_s}) + \int_t^s f(r,\hat\P^{\nu,\c F^{B,\mu,\hat\P^\nu}_r}_{X^{t,\xi,\alpha_t}_r},X^{t,\xi,\alpha_t}_r,I^{t,\alpha_t}_r) dr \Big].
    \label{eq randomised dpp}
    \end{align}
\end{theorem}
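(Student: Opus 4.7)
The proof plan is essentially a direct combination of the two preceding results, so the heavy lifting has already been done. The strategy is to start from the representation formula \eqref{eq recursive snell envelope formula at s=t} of \cref{lemma bsde minimal solution existence} taken at initial time $t$, and then plug in the identification of $Y^{t,\xi,\alpha_t}$ with the value function that we have just established in \cref{theorem value function bsde characterisation}.

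More concretely, I would fix $t\in [0,T]$, $m\in\c P_2(\R^d)$, $\xi\in L^2(\Omega,\c G\lor\c F^W_t,\P;\R^d)$ with $\P_\xi = m$, and $\alpha_t\in\c A_t$. Applying \eqref{eq recursive snell envelope formula at s=t} with $r=s$ and using that $Y^{t,\xi,\alpha_t}_t = V(t,m)$ $\hat\P$-a.s., I obtain
\begin{align}
V(t,m) = \sup_{\nu\in\c V} \E^{\hat\P^\nu}\Big[ Y^{t,\xi,\alpha_t}_s + \int_t^s f(r,\hat\P^{\nu,\c F^{B,\mu,\hat\P^\nu}_r}_{X^{t,\xi,\alpha_t}_r},X^{t,\xi,\alpha_t}_r,I^{t,\alpha_t}_r) dr \Big].
\end{align}
Then \cref{theorem value function bsde characterisation} yields $Y^{t,\xi,\alpha_t}_s = V(s,\hat\P^{\c F^{B,\mu,\hat\P}_s}_{X^{t,\xi,\alpha_t}_s})$ $\hat\P$-a.s., and by \cref{remark conditional distribution unchanged under measure change} the conditional law on the right-hand side can equivalently be written under $\hat\P^\nu$, namely $\hat\P^{\c F^{B,\mu,\hat\P}_s}_{X^{t,\xi,\alpha_t}_s} = \hat\P^{\nu,\c F^{B,\mu,\hat\P^\nu}_s}_{X^{t,\xi,\alpha_t}_s}$ $\hat\P\sim\hat\P^\nu$-a.s. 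Substituting this into the display above gives exactly \eqref{eq randomised dpp}.

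There is essentially no obstacle in the proof; the technical machinery has been fully absorbed into the two preceding theorems. The only subtle point worth making explicit is the interchangeability of $\hat\P^{\c F^{B,\mu,\hat\P}_s}_{X^{t,\xi,\alpha_t}_s}$ with $\hat\P^{\nu,\c F^{B,\mu,\hat\P^\nu}_s}_{X^{t,\xi,\alpha_t}_s}$, so that the integrand inside $\E^{\hat\P^\nu}$ and the value function appearing in \cref{theorem value function bsde characterisation} match the form \eqref{eq randomised dpp} asks for. This is justified by \cref{remark conditional distribution unchanged under measure change}, relying on the $\b F^{B,\mu}$-predictability of $\nu$ and the equivalence $\hat\P\sim\hat\P^\nu$.
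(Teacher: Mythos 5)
Your proposal is correct and follows exactly the same route as the paper, which simply cites \cref{lemma bsde minimal solution existence,theorem value function bsde characterisation}; you have filled in the short chain of substitutions (take $r=s$ in \eqref{eq recursive snell envelope formula at s=t}, replace $Y^{t,\xi,\alpha_t}_s$ using \cref{theorem value function bsde characterisation}, and invoke \cref{remark conditional distribution unchanged under measure change} together with $\hat\P\sim\hat\P^\nu$) in the intended way.
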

\begin{proof}
This follows directly from \cref{lemma bsde minimal solution existence,theorem value function bsde characterisation}.
\end{proof}

\begin{remark}
Assuming that the value function $V$ is sufficiently regular, this approach leads to the standard (non-randomised) DPP for mean-field control problems, which has been well-studied in the literature, with \cite{djete_mckeanvlasov_2022,djete_mckeanvlasov_2022-1} providing one of the most comprehensive studies. Specifically, we can interpret the right-hand side in \eqref{eq randomised dpp} as a new control problem with running reward $f$ and terminal reward $V$. Thus, as long as $V$ satisfies the assumptions on $g$ in \cref{assumptions reward functional}, we can apply the same theory developed in this paper again to this new problem. Consequently, by \cref{theorem equivalence original and randomised value function}, we then obtain
\begin{align}
V(t,m)
&= \sup_{\nu\in \c V} \E^{\hat\P^\nu}\Big[ V(s,\hat\P^{\nu,\c F^{B,\mu,\hat\P^\nu}_s}_{X^{t,\xi,\alpha_t}_s}) + \int_t^s f(r,\hat\P^{\nu,\c F^{B,\mu,\hat\P^\nu}_r}_{X^{t,\xi,\alpha_t}_r},X^{t,\xi,\alpha_t}_r,I^{t,\alpha_t}_r) dr \Big]\\
&= \sup_{\alpha\in\c A} \E^\P\Big[ V(s,\P^{\c F^{B,\P}_s}_{X^{t,\xi,\alpha}_s}) + \int_t^s f(r,\P^{\c F^{B,\P}_r}_{X^{t,\xi,\alpha}_r},X^{t,\xi,\alpha}_r,\alpha_r) dr \Big].
\end{align}
\end{remark}

%%%%%%%%%%%%%%%%%%%%%%%%%%%%%%%%%%%%%%%%%%%%%
%%%%%%%%%%%%%%%%%%%%%%%%%%%%%%%%%%%%%%%%%%%%%

\appendix

%%%%%%%%%%%%%%%%%%%%%%%%%%%%%%%%%%%%%%%%%%%%%
%%%%%%%%%%%%%%%%%%%%%%%%%%%%%%%%%%%%%%%%%%%%%
\section{Proof of \cref{theorem equivalence original and randomised value function}}
\label{section proof of equivalence randomised non randomised formulation}

Our goal in this section is proving that $V = V^{\c R}$. For this let us fix $t\in [0,T]$ and $\alpha_t\in\c A_t$. Furthermore, let us fix also the family $(\lambda_s)_{s\in [0,T]}$ satisfying \cref{assumptions lambda family}. As already outlined in \cref{theorem equivalence original and randomised value function}, we will obtain as a side result that $V^{\c R}$ is independent of specific choice of $\alpha_t\in\c A_t$, which we will come into play later again in the upcoming dynamic programming result in \cref{section randomised dpp}. In what follows, we will prove the two direction $V\leq V^{\c R}$ and $V^{\c R}\leq V$ separately. The general structure of the proofs is similar to \cite{bandini_backward_2018,bayraktar_randomized_2018}, albeit the detailed arguments are naturally different in our mean-field setting due to the presence of the mean-field interaction and the common noise.

A crucial tool will be the following \cref{proposition randomised setting is independent of the extension}, which shows that the randomised value function $V^{\c R}$ is independent of the specific choice of the extension of the probability space $(\Omega,\c F,\P)$. This will allow us to simplify the upcoming proofs by considering suitable extensions of $(\Omega,\c F,\P)$ as needed for each direction. 

\begin{lemma}\label{proposition randomised setting is independent of the extension}
    The randomised value function $V^{\c R}$ does not depend on the specific extension $(\hat\Omega,\hat{\c F},\hat\P)$ of $(\Omega,\c F,\P)$.
\end{lemma}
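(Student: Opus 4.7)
The plan is to prove $V^{\c R,1}(t,\xi,\alpha_t) = V^{\c R,2}(t,\xi,\alpha_t)$ for any two extensions $(\hat\Omega^i,\hat{\c F}^i,\hat\P^i)$, $i\in\{1,2\}$, each carrying a Poisson random measure $\mu^i$ with intensity $\lambda_s(d\alpha)\,ds$ independent of $\c G,W,B$. Since the two extensions play symmetric roles, it suffices to transfer an arbitrary admissible control from extension~1 to extension~2 while preserving the reward functional; the reverse inequality follows by relabelling.

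First, I would use \cref{proposition equivalence v v_t randomised value function} to restrict the supremum to $\c V_t^i$, i.e.\@ to $Pred(\b F^{B^t,\mu^{i,t}})\otimes\c B(\c A_T)$-measurable processes bounded away from $0$ and $\infty$. The Doob--Dynkin Lemma then yields, for any $\nu^1\in\c V_t^1$, a measurable functional $\phi$ on the canonical path-and-point-measure space with $\nu^1 = \phi(B^t,\mu^{1,t})$, and I set the transferred control $\nu^2\coloneqq \phi(B^t,\mu^{2,t})\in\c V_t^2$. Next I would argue that
\[
\c L^{\hat\P^1}(\pi|_{\c G},\xi,W,B,\mu^1,\hat I^{t,\alpha_t,1},I^{t,\alpha_t,1},\nu^1) = \c L^{\hat\P^2}(\pi|_{\c G},\xi,W,B,\mu^2,\hat I^{t,\alpha_t,2},I^{t,\alpha_t,2},\nu^2).
\]
The law of $(\pi|_{\c G},\xi,W,B,\mu^i)$ coincides on both extensions by construction, adjoining $\hat I^{t,\alpha_t,i}$ preserves this because it is a deterministic functional of $\alpha_t$ and $\mu^i$ via~\eqref{eq randomised poisson point process}, and adjoining $I^{t,\alpha_t,i}$ preserves this by the joint-law uniqueness clause of \cref{corollary I given bar I is well defined}. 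Finally, $\nu^i=\phi(B^t,\mu^{i,t})$ is the same deterministic functional on both sides, giving the full equality.

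Since \eqref{eq sde randomised problem} admits a unique strong solution under \cref{assumptions sde coefficients}, $X^{t,\xi,\alpha_t,i}$ is a measurable functional of $(\xi,W,B,I^{t,\alpha_t,i})$. Moreover, by \cite[Lemma A.1]{djete_mckeanvlasov_2022-1}, the $\b F^{B,\mu^i,\hat\P^i}$-optional, a.s.-continuous version of the conditional-law process $s\mapsto \c L^{\hat\P^i}(X^{t,\xi,\alpha_t,i}_s\mid\c F^{B,\mu^i,\hat\P^i}_s)$ is unique up to indistinguishability and is characterised by the joint law of $(\xi,W,B,\mu^i,X^{t,\xi,\alpha_t,i})$. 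Hence the previous joint law extends to include $X^{t,\xi,\alpha_t,i}$, the conditional-law trajectory, and the density $L^{\nu^i}_T$ from~\eqref{eq randomised control girsanov formula}, and remains identical across $i=1,2$. Rewriting the reward as $J^{\c R,i}(t,\xi,\alpha_t,\nu^i) = \E^{\hat\P^i}[L^{\nu^i}_T(g(\cdots)+\int_t^T f(\cdots)\,ds)]$ shows that its value depends only on this joint law, yielding $J^{\c R,1}(t,\xi,\alpha_t,\nu^1)=J^{\c R,2}(t,\xi,\alpha_t,\nu^2)$; taking suprema and symmetrising gives the claim.

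The main obstacle is to transfer the entire conditional-law trajectory, not merely its one-dimensional marginals, across the extensions. I would handle this by constructing $X^{t,\xi,\alpha_t,i}$ via the standard Picard iteration associated to~\eqref{eq sde randomised problem}, at each step expressing both the approximant and the continuous version of its conditional law as explicit measurable functionals of $(\xi,W,B,I^{t,\alpha_t,i})$; joint-law equality is then preserved along the iteration and, by uniqueness up to indistinguishability, passes to the limit.
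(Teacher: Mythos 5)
Your proposal follows the same overall strategy as the paper's proof: transfer an admissible control from one extension to the other via a canonical-space functional representation, argue that the joint law of the state process and its conditional-law trajectory is preserved, and conclude that the reward functional agrees. Your use of Picard iteration to transfer the conditional-law trajectory is a valid spelling-out of the paper's appeal to weak uniqueness.

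There are two genuine differences worth flagging. First, you restrict the supremum to $\c V_t^i$ via \cref{proposition equivalence v v_t randomised value function} before doing the transfer. The paper works with the full class $\c V^i$ and uses the jump-time/mark representation of \cite[Lemma 4.2]{bandini_backward_2018} (together with \cref{proposition canonical representation of predictable processes}) so that $\nu^1$ is written as a family of maps $\nu^{(k)}$ of the Brownian path $B^1$ and the first $k$ jump points of $\mu^1$. This buys nothing in the end --- your restriction is logically fine since \cref{proposition equivalence v v_t randomised value function} is proved independently of this lemma --- but it is an added step that the paper avoids.

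Second, and this is the one place where your sketch is imprecise: you invoke "the Doob--Dynkin Lemma" to produce a functional $\phi$ with $\nu^1=\phi(B^t,\mu^{1,t})$ and then set $\nu^2\coloneqq\phi(B^t,\mu^{2,t})$, claiming $\nu^2\in\c V_t^2$. The plain Doob--Dynkin Lemma yields a functional that is merely jointly measurable, and composing a jointly measurable functional of the whole path with $(B^t,\mu^{2,t})$ does not automatically give a $Pred(\b F^{B^t,\mu^{2,t}})\otimes\c B(\c A_T)$-measurable process. One needs a representation that is intrinsically predictable on the canonical space. This is precisely why the paper goes through \cite[Lemma 4.2]{bandini_backward_2018} (the explicit predictable form built from jump times and marks) combined with \cref{proposition canonical representation of predictable processes}, which is the process-level Doob--Dynkin at the predictable-$\sigma$-algebra level. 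Your argument can be repaired by citing one of these instead of the bare Doob--Dynkin Lemma, so the gap is easily fixable, but as written the predictability of $\nu^2$ is not justified.
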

\begin{proof}
Let $t\in [0,T]$, $\xi\in L^2(\Omega,\c G\lor\c F^W_t,\P;\R^d)$ and $\alpha_t\in\c A_t$. Further let $(\lambda_s)_{s\in [0,T]}$ satisfy \cref{assumptions lambda family}, and let $(\hat\Omega^i,\hat{\c F}^i,\hat\P^i)$, $i\in\{1,2\}$ be both extensions of $(\Omega,\c F,\P)$, each carrying a Poisson point measure $\mu^i$ with intensity $\lambda_s(d\alpha)ds$ independent of $\c G^i,W^i,B^i$.
We denote the respective canonical projections by $\pi^i:(\hat\Omega^i,\hat{\c F}^i)\to(\Omega,\c F)$. By the extension property, we have then that $\c L^{\hat\P^1}(\pi^1) = \c L^{\hat\P^2}(\pi^2)$. Therefore, by introducing $\pi^i|_{\c G^i}:(\hat\Omega^i,\hat{\c G}^i)\to (\Omega,\c G)$, $i\in \{1,2\}$, we observe that also $\c L^{\hat\P^1}(\pi^1|_{\c G^1},W^1,B^1) = \c L^{\hat\P^2}(\pi^2|_{\c G^2},W^2,B^2)$.
Finally, using the independence of $\c G^i,W^i,B^i$ from $\mu^i$ and thus also from the corresponding $\hat I^{t,\alpha_t,i}$ defined by \eqref{eq randomised poisson point process}, together with $\c L^{\hat\P^1}(\mu^1) = \c L^{\hat\P^2}(\mu^2)$ this shows that
\[
\c L^{\hat\P^1}(\pi^1|_{\c G^1},W^1,B^1,\hat I^{t,\alpha_t,1},\mu^1) = \c L^{\hat\P^2}(\pi^1|_{\c G^2},W^2,B^2,\hat I^{t,\alpha_t,2},\mu^2).
\]
Then \cref{corollary I given bar I is well defined} shows that the corresponding càdlàg, $\b F^{W^i}\lor\c G^i\lor\c F^{\mu^{t,i}}_T$-predictable and $\b F^{W^i,\mu^{t,i}}\lor\c G^i$-progressive $I^{t,\alpha_t,i}$ which $\b F^{\hat I^{t,\alpha_t,i}}$-identify to $\hat I^{t,\alpha_t,i}$, $i\in\{1,2\}$ are unique (up to indistinguishability) and moreover satisfy
\[
\c L^{\hat\P^1}(\pi^1|_{\c G^1},W^1,B^1,\hat I^{t,\alpha_t,1},I^{t,\alpha_t,1},\mu^1) = \c L^{\hat\P^2}(\pi^2|_{\c G^2},W^2,B^2,\hat I^{t,\alpha_t,2},I^{t,\alpha_t,2},\mu^2).
\]
% The weak uniqueness follows from the explicit construction of the unique strong solution via a Picard-type iteration.
Using the weak uniqueness of solutions to the SDE \eqref{eq sde randomised problem}, we hence obtain for the corresponding unique solutions $X^{t,\xi^i,\alpha_t,i}$ to \eqref{eq sde randomised problem}, $i\in\{1,2\}$, that
\begin{align}
\c L^{\hat\P^1}(X^{t,\xi^1,\alpha_t,1},\mu^1,I^{t,\alpha_t,1},(\hat\P^{1,\c F^{B^1,\mu^1,\hat\P^1}_s}_{X^{t,\xi^1,\alpha_t,1}_s})_s) = \c L^{\hat\P^2}(X^{t,\xi^2,\alpha_t,2},\mu^2,I^{t,\alpha_t,2},(\hat\P^{2,\c F^{B^2,\mu^2,\hat\P^2}_s}_{X^{t,\xi^2,\alpha_t,2}_s})_s).
\label{eq proposition 4.9 law equivalence of state processes}
\end{align}

Now let $\nu^1\in\c V^1$ be a randomised control and $\hat\P^{1,\nu^1}\sim\hat\P^1$ be the corresponding tilted probability measure. Then using \cite[Lemma 4.2]{bandini_backward_2018} and \cref{proposition canonical representation of predictable processes} we can represent it in the following form, for all $(s,\hat\omega^1,a)\in [0,T]\times\hat\Omega^1\times\c A_T$,
\begin{align}
\label{eq proposition 4.9 representation nu 1}
    \nu^1_s(\hat\omega^1,a) &= \nu^{(0)}_s(B^1(\hat\omega^1),a) \ \1_{[0,\tau^1_1(\hat\omega^1)]}(s)\\
    &\quad+ \sum_{k\geq 1} \nu^{(k)}_s(B^1(\hat\omega^1),(\tau^1_1(\hat\omega^1),\alpha^1_1(\hat\omega^1)),\dots,(\tau^1_k(\hat\omega^1),\alpha^1_k(\hat\omega^1)),a) \ \1_{(\tau^1_k(\hat\omega^1), \tau^1_{k+1}(\hat\omega^1)]}(s),
\end{align}
for $Pred(\mathbf C^n)\otimes \c B(([0,\infty)\times \c A_T)^k)\otimes\c B(\c A_T)$-$\c B((0,\infty))$-measurable processes $\nu^{(k)}:C^n\times ([0,\infty)\times \c A_T)^k\times \c A_T\to (0,\infty)$, for all $k\geq 0$, where $(\tau^1_k,\alpha^1_k)_{k\geq 1}$ are the events of $\mu^1$. This representation now allows us to transfer the control $\nu^1$ to $(\hat\Omega^2,\hat{\c F}^2,\hat\P^2)$ by defining, for all $(s,\hat\omega^1,a)\in [0,T]\times\hat\Omega^1\times\c A_T$,
\begin{align}
\label{eq proposition 4.9 representation nu 2}
    \nu^2_s(\hat\omega^2,a) &= \nu^{(0)}_s(B^2(\hat\omega^2),a) \ \1_{[0,\tau^2_1(\hat\omega^2)]}(s)\\
    &\quad + \sum_{k\geq 1} \nu^{(k)}_s(B^2(\hat\omega^2),(\tau^2_1(\hat\omega^2),\alpha^2_1(\hat\omega^2)),\dots,(\tau^2_k(\hat\omega^2),\alpha^2_k(\hat\omega^2)),a) \ \1_{(\tau^2_k(\hat\omega^1), \tau^2_{k+1}(\hat\omega^2)]}(s),
\end{align}
where $(\tau^2_k,\alpha^2_k)_{k\geq 1}$ are the events of $\mu^2$. We note that by \cref{proposition canonical representation of predictable processes}, $\nu^2$ is $Pred(\b F^{B^2,\mu^2})\otimes\c B(\c A_T)$-measurable and thus $\nu^2\in\c V^2$. Furthermore, the representations \eqref{eq proposition 4.9 representation nu 1,eq proposition 4.9 representation nu 2} together with \eqref{eq proposition 4.9 law equivalence of state processes} show that
\begin{align}
    \c L^{\hat\P^1}(X^{t,\xi^1,\alpha_t,1},\mu^1,I^{t,\alpha_t,1},(\hat\P^{1,\c F^{B^1,\mu^1,\hat\P^1}_s}_{X^{t,\xi^1,\alpha_t,1}_s})_s,\nu^1) = \c L^{\hat\P^2}(X^{t,\xi^2,\alpha_t,2},\mu^2,I^{t,\alpha_t,2},(\hat\P^{2,\c F^{B^2,\mu^2,\hat\P^2}_s}_{X^{t,\xi^2,\alpha_t,2}_s})_s,\nu^2).
\end{align}
Finally, this allows us to conclude that
\begin{align}
    J^{\c R,1}(t,\xi,\alpha_t,\nu^1) &= \E^{\hat\P^1}\Big[L^{\nu^1}_T g(\hat\P_{1,X^{t,\xi^1,\alpha_t,1}_T}^{\c F^{B^1,\mu^1,\hat\P^1}_T},X^{t,\xi^1,\alpha_t,1}_T) + \int_t^T f(s,\hat\P_{1,X^{t,\xi^1,\alpha_t,1}_s}^{\c F^{B^1,\mu^1,\hat\P^1}_s},X^{t,\xi,\alpha_t,1}_s,I^{t,\alpha_t,1}_s) ds \Big]\\
    &= \E^{\hat\P^2}\Big[L^{\nu^2}_T g(\hat\P_{2,X^{t,\xi^1,\alpha_t,2}_T}^{\c F^{B^2,\mu^2,\hat\P^2}_T},X^{t,\xi^2,\alpha_t,2}_T) + \int_t^T f(s,\hat\P_{2,X^{t,\xi^2,\alpha_t,2}_s}^{\c F^{B^2,\mu^2,\hat\P^2}_s},X^{t,\xi,\alpha_t,2}_s,I^{t,\alpha_t,2}_s) ds \Big]\\
    &= J^{\c R,2}(t,\xi,\alpha_t,\nu^2),
\end{align}
where $L^{\nu^i}_T$ is given by \eqref{eq randomised control girsanov formula}.
Since $\nu^1\in\c V^1$ was arbitrary, this shows that
\[
V^{\c R,1}(t,\xi,\alpha_t) = \sup_{\nu^1\in{\c V}^1} J^{\c R,1}(t,\xi,\alpha_t,\nu^1) \leq \sup_{\nu^2\in{\c V}^2} J^{\c R,2}(t,\xi,\alpha_t,\nu^2) = V^{\c R,2}(t,\xi,\alpha_t).
\]
By the same arguments we can also show that $V^{\c R,2}(t,\xi,\alpha_t)\leq V^{\c R,1}(t,\xi,\alpha_t)$, and thus we conclude that $V^{\c R,1} \equiv V^{\c R,2}$.
\end{proof}

\subsection{Proof of $V\leq V^{\c R}$}\label{subsection V leq V^R}

This direction relies on the result in \cite[Proposition A.1]{bandini_randomisation_2016_extended_arxiv_version}\footnote{\cite{bandini_randomisation_2016_extended_arxiv_version} is the extended version of the paper \cite{bandini_backward_2018}, and \cite[Proposition A.1]{bandini_randomisation_2016_extended_arxiv_version} is a more general version of \cite[Proposition 4.1]{bandini_backward_2018}.} which allows us to approximate general progressive controls with suitable point processes having both full support and a bounded intensity. Note that this requires taking the view point of the admissible control set as the set of $\b F^B$-predictable processes taking values in $\c A_T$ as described in \cref{section problem l2 formulation}, since this allows us to view the mean-field dynamics as a standard (albeit infinite-dimensional) control problem, for which we are now able to use the above approximation result.

Let us fix $t\in [0,T]$, $\xi\in L^2(\Omega,\c G\lor\c F^W_t,\P;\R^d)$ and some $\alpha_t\in\c A_t$. Our goal is now to show that for any control $\alpha\in\c A$, it holds that $J(t,\xi,\alpha) \leq V^{\c R}(t,\xi,\alpha_0)$. To this end, let us fix some arbitrary $\alpha\in\c A$, then by \cref{proposition bar alpha given alpha well defined} there exists a $\hat\alpha\in\hat{\c A}$ which is $\b F^B$-identifying to $\alpha$, and furthermore by \cref{lemma alpha bar alpha uniqueness} this $\hat\alpha\in\hat{\c A}$ is unique up to modification.

Our first step will be constructing approximating $\c A_T$-valued Poisson point processes for the $\c A_T$-valued control process $\hat\alpha\in\hat{\c A}$.
We recall that these processes are supposed to take the role of $\hat I^{t,\alpha_t}$ in the randomised setting which we are going to construct, and thus we want them to start in $\alpha_t$ at time $t$. For this reason, we will first establish an approximation on $[t,T]$, and afterwards extend the underlying Poisson random measure to $(0,T]$.
We note that $(\hat\alpha_s)_{s\in [t,T]}$ is a $\b F^B$-progressive (in fact by the construction in \cref{proposition bar alpha given alpha well defined} it is even $\b F^B$-predictable), and that $\hat\alpha_s\in\c A_s\subseteq\c A_T$ for all $s\in [t,T]$. Thus, by \cite[Proposition A.1]{bandini_randomisation_2016_extended_arxiv_version}, there exists a suitable extension $(\Omega\times\Omega',(\c F\otimes\c F')^{\P\otimes\P'},\P\otimes\P')$ of our probability space $(\Omega,\c F,\P)$, 
such that for any $\delta > 0$ there exists a marked point process $(\tau^\delta_k,\alpha^\delta_k)_{k\geq 1}$ with the corresponding Poisson random measure $\mu^\delta = \sum_{k\geq 1} \delta_{(\tau^\delta_k,\alpha^\delta_k)}$ on $(t,T]\times\c A_T$ such that
\begin{enumerate}[label=(\roman*)]
    \item $\c F^{\mu^\delta,\P\otimes\P'}_T\subseteq \c F^{B,\P\otimes\P'}_T\lor\c F'$,%
\footnote{With slight abuse of notation, we identify $\c F'$ with its canonical extension to $\Omega\times\Omega'$.}
    so in particular $\mu^\delta$ is independent of $\c G$ and $W$ under $\P\otimes\P'$,%
\footnote{The independence follows from the explicit construction in \cite[Proposition A.1]{bandini_randomisation_2016_extended_arxiv_version} which uses only $\b F^{B,\P\otimes\P'}$ together with a fixed family of random variables defined on $(\Omega',\c F',\P')$, which is independent of $\delta$.}
    \item the process $\hat I^{t,\alpha_t,\delta}$ defined by
    \[
    \hat I^{t,\alpha_t,\delta}_s \coloneqq \alpha_t \1_{[t,\tau^\delta_1)} + \sum_{k\geq 1} \alpha^\delta_k \1_{[\tau^\delta_k,\tau^\delta_{k+1})} = \alpha_t + \int_{(t,s]}\int_{\c A_T} (\alpha - I^{t,\alpha_t,\delta}_{r-}) \mu^\delta(dr,d\alpha),\qquad s\in [t,T],
    \]
    satisfies $d_{\hat{\c A},[t,T]}^{\P\otimes\P'}(\hat\alpha,\hat I^{t,\alpha_t,\delta}) < \delta$,
    \item the $\b F^{B,\mu^\delta,\P\otimes\P'}$-compensator of $\mu^\delta$ is absolutely continuous with respect to $\lambda_s(d\alpha)ds$ and can be written as
    \[
    \nu^\delta_s(\alpha)\lambda_s(d\alpha)ds,
    \]
    where $\nu^\delta$ is $Pred(\b F^{B,\mu^\delta})\otimes\c B(\c A_T)$-measurable and satisfies
    \[
    0 < \inf_{[t,T]\times\Omega\times\Omega'\times\c A_T} \nu^\delta \leq \sup_{[t,T]\times\Omega\times\Omega'\times\c A_T} \nu^\delta < \infty,
    \]
    \item $\alpha^\delta_k\in\c A_{\tau^\delta_k}$, for all $k\geq 1$.
\end{enumerate}
Note that this statement is slightly different than \cite[Proposition A.1]{bandini_randomisation_2016_extended_arxiv_version}. At first glance, property (i) may seem new, but it directly follows from the explicit construction in \cite[Proposition A.1]{bandini_randomisation_2016_extended_arxiv_version} constructing $\mu^\delta$ from $\b F^{B,\P\otimes\P'}$ and a family of random variables defined on $(\Omega',\c F',\P')$, which is independent of $\delta$.
The more crucial difference, however, is that while the action set $A$ in \cite{bandini_randomisation_2016_extended_arxiv_version} is constant over time, our action sets $\c A_s$ vary with $s\in [t,T]$, which now leads to the adapted form of the intensity process in (iii) and the additional property (iv), necessary for consistency with the control randomisation introduced in \cref{section randomised problem}.
Fortunately, we can extend its proof to cover our use case with minimal changes. Our admissible controls $\hat\alpha\in\hat{\c A}$ satisfy additionally $\hat\alpha_s\in\c A_s$ for all $s\in [0,T]$, which allows us to choose the step function approximation $\bar\alpha = \sum_{n=0}^\infty \alpha_n \1_{[t_n,t_{n+1})}$ on \cite[pp.\@ 41-42]{bandini_randomisation_2016_extended_arxiv_version} such that each $\hat\alpha_n\in\c A_{t_n}\subseteq\c A_T$. We then replace the kernel $q^m$ on \cite[p.\@ 42]{bandini_randomisation_2016_extended_arxiv_version} with a family of kernels $(q^m_{s})_{s\in [0,T]}$, where each $q^m_{s}$ is constructed from $\lambda_{s}$ instead of $\lambda$ and thus supported on $\c A_s$ instead of $\c A_T$. The definition of $\beta^m_n$ on \cite[p.\@ 42]{bandini_randomisation_2016_extended_arxiv_version} is also modified to include the time, so $\beta^m_n\coloneqq q^m_{R^m_n}(\alpha_n,U^m_n)$. The estimates on \cite[pp.\@ 42-43]{bandini_randomisation_2016_extended_arxiv_version} remain valid, and we obtain a marked point process $\kappa^m$ with the compensator $\tilde\kappa^m(dt,d\alpha) = \sum_{n\geq 1} \1_{(S^m_n,R^m_n]}(t) q^m_t(\alpha_n,d\alpha)\lambda_{nm} dt$, replacing $q^m$ by $q^m_t$ in \cite[Lemma A.1]{bandini_randomisation_2016_extended_arxiv_version} and its proof. Finally, by replacing $\lambda(d\alpha)dt$ by $\lambda_t(d\alpha)dt$ in the intensity of the additional "small" intensity Poisson process $\pi^k$ on \cite[p.\@ 45]{bandini_randomisation_2016_extended_arxiv_version}, we can obtain the desired point measure $\mu^\delta = \kappa^m + \pi^k$ without further modifications.

To extend the Poisson random measure $\mu^\delta$ to $[0,T]\times\c A_T$, we will extend the probability space again by a probability space $(\Omega'',\c F'',\P'')$ on which a Poisson random measure $\mu$ on $(0,t]\times\c A_T$ with $\b F^{\mu^\delta}$-intensity $\lambda_s(d\alpha)ds$ lives. Hence we obtain $(\hat\Omega,\hat{\c F},\hat\Q) \coloneqq (\Omega\times\Omega'\times\Omega'',(\c F\otimes\c F'\otimes\c F'')^{\P\otimes\P'\otimes\P''},\P\otimes\P'\otimes\P'')$, such that $\mu^\delta \coloneqq \mu(\cdot \cap ((0,t]\times\c A_T)) + \mu^\delta(\cdot\cap((t,T]\times\c A_T))$ has the $\b F^{B,\mu^\delta}$-intensity $\nu^\delta_s(\alpha)\lambda_s(d\alpha)ds$, where we extend $\nu^\delta$ to $[0,t]\times\c A_T$ with $\nu^\delta|_{[0,t]\times\c A_T}\equiv 1$.
As before, with slight abuse of notation, we identify each sigma algebra $\c F$ with its canonical extension to the product space. 
Note that property (i) also extends to $\mu^\delta$ on $[0,T]$, that is, %with slight abuse of notation, 
$\c F^{\mu^\delta,\hat\Q}_T \subseteq \c F^{B,\hat\Q}_T\lor\c F'\otimes\c F''$, and in particular $\mu^\delta$ on $[0,T]$ is again independent of $\c G$ and $W$ under $\hat\Q$.

Having defined approximations $(\hat I^{t,\alpha_t,\delta})_\delta$ to $\hat\alpha$, we will now also show that the corresponding state processes converge. We first note that by \cref{corollary I given bar I is well defined} for every $\delta > 0$, there exist an $\b F^{W,\mu^\delta}\lor\c G$-progressive, càdlàg $I^{t,\alpha_t,\delta}$ that $\b F^{I^{t,\alpha_t,\delta}}$-identifies to $\hat I^{t,\alpha_t,\delta}$. Moreover, by \cref{lemma alpha bar alpha uniqueness}, we have
\begin{align}
    d_{\c A,[t,T]}^{\hat\Q}(\alpha,I^{t,\alpha_t,\delta}) = d_{\hat{\c A},[t,T]}^{\hat\Q}(\hat\alpha,\hat I^{t,\alpha_t,\delta}) = d_{\hat{\c A},[t,T]}^{\P\otimes\P'}(\hat\alpha,\hat I^{t,\alpha_t,\delta}) < \delta,\quad\text{for all }\delta>0.
    \label{eq proof V leq V^R alpha I delta close}
\end{align}
Correspondingly, we now define $X^{t,\xi,\alpha_t,\delta}$ as the unique solution to the SDE
\begin{align}
dX^{t,\xi,\alpha_t,\delta}_s
&= b(s,\hat\Q^{\c F^{B,\mu^\delta,\hat\Q}_s}_{X^{t,\xi,\alpha_t,\delta}_s},X^{t,\xi,\alpha_t,\delta}_s,I^{t,\alpha_t,\delta}_s) ds + \sigma(s,\hat\Q^{\c F^{B,\mu^\delta,\hat\Q}_s}_{X^{t,\xi,\alpha_t,\delta}_s},X^{t,\xi,\alpha_t,\delta}_s,I^{t,\alpha_t,\delta}_s) dW_s\\
&\qquad+ \sigma^0(s,\hat\Q^{\c F^{B,\mu^\delta,\hat\Q}_s}_{X^{t,\xi,\alpha_t,\delta}_s},X^{t,\xi,\alpha_t,\delta}_s,I^{t,\alpha_t,\delta}_s) dB_s,\qquad X^{t,\xi,\alpha_t,\delta}_t = \xi.
\label{eq randomised state dynamics X t xi alpha_t delta}
\end{align}
We next show that $X^{t,\xi,\alpha_t,\delta}$ converges to the state process $X^{t,\xi,\alpha}$ for our control $\alpha\in\c A$ in the original control problem, as $\delta\to 0$. To this end, we note that we can rewrite the conditional distributions occurring in \eqref{eq mfc state dynamics,eq randomised state dynamics X t xi alpha_t delta} as follows: Since $X^{t,\xi,\alpha}$ is $\c G\lor\b F^{W,B,\hat\Q}$-progressive and by construction $\c G,W,B$ and $\c F'\otimes\c F''$ are independent under $\hat\Q$, we see that $\hat\Q^{\c F^{B,\hat\Q}_s}_{X^{t,\xi,\alpha}_s} = \hat\Q^{\c F^{B,\hat\Q}_T\lor\c F'\otimes\c F''}_{X^{t,\xi,\alpha}_s}$, $\hat\Q$-a.s., for all $s\in [t,T]$. For the $\c G\lor\b F^{W,B,\mu^\delta,\hat\Q}$-progressive process $X^{t,\xi,\alpha_t,\delta}$, we note that by property (i) of $\mu^\delta$, we have $\c F^{\mu^\delta,\hat\Q}_T\subseteq\c F^{B,\hat\Q}_T\lor\c F'\otimes\c F''$, which together with the independence of $\c G,W$ from $B,\c F'\otimes\c F''$ under $\hat\Q$ shows that $\hat\Q^{\c F^{B,\mu^\delta,\hat\Q}_s}_{X^{t,\xi,\alpha_t,\delta}_s} = \hat\Q^{\c F^{B,\hat\Q}_T\lor\c F'\otimes\c F''}_{X^{t,\xi,\alpha_t,\delta}_s}$, $\hat\Q$-a.s., for all $s\in [t,T]$. 
Therefore we can estimate, for all $s\in [t,T]$, $\hat\Q$-a.s.,
\[
\c W_2(\hat\Q^{\c F^{B,\mu^\delta,\hat\Q}_s}_{X^{t,\xi,\alpha_t,\delta}_s}, \hat\Q^{\c F^{B,\hat\Q}_s}_{X^{t,\xi,\alpha}_s})^2 = \c W_2(\hat\Q^{\c F^{B,\hat\Q}_T\lor\c F'\otimes\c F''}_{X^{t,\xi,\alpha_t,\delta}_s},\hat\Q^{\c F^{B,\hat\Q}_T\lor\c F'\otimes\c F''}_{X^{t,\xi,\alpha}_s})^2 \leq \E^{\hat\Q}\big[|X^{t,\xi,\alpha_t,\delta}_s - X^{t,\xi,\alpha}_s|^2 \,\big|\,\c F^{B,\hat\Q}_T\lor\c F'\otimes\c F''\big],
\]
which implies that
\[
\sup_{s\in[t,T]} \c W_2(\hat\Q^{\c F^{B,\mu^\delta,\hat\Q}_s}_{X^{t,\xi,\alpha_t,\delta}_s},\hat\Q^{\c F^{B,\hat\Q}_s}_{X^{t,\xi,\alpha}_s})^2
\leq \E^{\hat\Q}\Big[\sup_{s\in[t,T]} |X^{t,\xi,\alpha_t,\delta}_s - X^{t,\xi,\alpha}_s|^2 \,\Big|\,\c F^{B,\hat\Q}_T\lor\c F'\otimes\c F'' \Big].
\]
Now, from \eqref{eq proof V leq V^R alpha I delta close}, and using \cref{assumptions sde coefficients}, standard arguments involving the BDG-inequality and Gronwall's Lemma lead to the following estimate
\[
\E^{\hat\Q}\Big[\sup_{s\in[t,T]} \Big(|X^{t,\xi,\alpha_t,\delta}_s - X^{t,\xi,\alpha}_s|^2 + \c W_2(\hat\Q^{\c F^{B,\mu^\delta,\hat\Q}_s}_{X^{t,\xi,\alpha_t,\delta}_s},\hat\Q^{\c F^{B,\hat\Q}_s}_{X^{t,\xi,\alpha}_s})^2 \Big) \Big] \to 0,
\]
and thus by \cref{assumptions reward functional} that
\begin{align}
&\E^{\hat\Q}\Big[g(\hat\Q^{\c F^{B,\mu^\delta,\hat\Q}_T}_{X^{t,\xi,\alpha_t,\delta}_T},X^{t,\xi,\alpha_t,\delta}_T) + \int_t^T f(s,\hat\Q^{\c F^{B,\mu^\delta,\hat\Q}_s}_{X^{t,\xi,\alpha_t,\delta}_s},X^{t,\xi,\alpha_t,\delta}_s,I^{t,\alpha_t,\delta}_s) ds \Big]\\
&\to \E^{\hat\Q}\Big[g(\hat\Q^{\c F^{B,\hat\Q}_T}_{X^{t,\xi,\alpha}_T},X^{t,\xi,\alpha}_T) + \int_t^T f(s,\hat\Q^{\c F^{B,\hat\Q}_s}_{X^{t,\xi,\alpha}_s},X^{t,\xi,\alpha}_s,\alpha_s) ds \Big] = J(t,\xi,\alpha).
\end{align}

Therefore, the claim follows if we prove that
\[
\E^{\hat\Q}\Big[g(\hat\Q^{\c F^{B,\mu^\delta,\hat\Q}_T}_{X^{t,\xi,\alpha_t,\delta}_T},X^{t,\xi,\alpha_t,\delta}_T) + \int_t^T f(s,\hat\Q^{\c F^{B,\mu^\delta,\hat\Q}_s}_{X^{t,\xi,\alpha_t,\delta}_s},X^{t,\xi,\alpha_t,\delta}_s,I^{t,\alpha_t,\delta}_s) ds \Big] \leq V^{\c R}(t,\xi,\alpha_t),\qquad\text{for all }\delta > 0.
\]
For this, we will express the left-hand side as the pay-off of a randomised control, using that by \cref{proposition randomised setting is independent of the extension} we can freely choose our randomised probability setting. We will define a new probability measure $\hat\P^{(\delta)}\sim\hat\Q$ on $(\hat\Omega,\hat{\c F})$ via
\[
\frac{d\hat\P^{(\delta)}}{d\hat\Q}\Big|_{\c F^{B,\mu^\delta,\hat\Q}_s} \coloneqq L^{\frac 1 {\nu^\delta}}_s = \exp\Big(\int_{(0,s]} \int_{\c A_r} \log \frac 1 {\nu^\delta_r(\alpha)} \ \mu^\delta(dr,d\alpha) - \int_0^s \int_{\c A_r} \Big(\frac 1 {\nu^\delta_r(\alpha)} - 1\Big) \lambda_r(d\alpha)dr \Big),\quad s\in [0,T].
\]
By \cref{lemma d P nu d P is square integrable} and Girsanov's Theorem, see \cite[Proposition 14.4.I]{daley_introduction_2008}, we note that $\mu^\delta$ has under $\hat\P^{(\delta)}$ the intensity $\lambda_s(d\alpha)ds$, since by property (iii) the process $\nu^\delta$ and thus also $\frac 1 {\nu^\delta}$ are strictly bounded away from $0$ and $\infty$. Further, following the same arguments as in \cite[page 2155]{fuhrman_randomized_2015} shows that $\mu^\delta$ is independent of $W,B$ and also $B$ resp. $W$ are still $\b F^{B,\mu^\delta,\hat\P^{(\delta)}}$- resp. $\b F^{W,\mu^\delta,\hat\P^{(\delta)}}$-Brownian motions under $\hat\P^{(\delta)}$. Finally, since $\nu^\delta$ is $Pred(\b F^{B,\mu^\delta})\otimes\c B(\c A_T)$-measurable and thus independent of $\c G$ under $\hat\Q$, this shows that $\mu^\delta$ stays independent of $\c G$ under $\hat\P^{(\delta)}$.

Next, we observe that the Radon-Nikodym derivative given by $\frac{d\hat\P^{(\delta)}}{d\hat\Q}|_{\c F^{B,\mu^\delta,\hat\Q}_\cdot} = L^{\frac 1 {\nu^\delta}}$ is by construction $\b F^{B,\mu^\delta}$-progressive and that $\E^{\P'\otimes\P''}[\frac{d\hat\P^{(\delta)}}{d\hat\Q}|_{\c F^{B,\mu^\delta,\hat\Q}_T}(\omega,\cdot)] = \E^{\P'\otimes\P''}[L^{\frac 1 {\nu^\delta}}_T(\omega,\cdot)] \equiv 1$, for $\P$-a.e.\@ $\omega\in\Omega$, which shows that $(\hat\Omega,\hat{\c F},\hat\P^{(\delta)})$ is an extension of $(\Omega,\c F,\P)$.
Finally, as outlined in \cref{remark conditional distribution unchanged under measure change}, we see that $X^{t,\xi,\alpha_t,\delta}$ satisfies the (uncontrolled) randomised state equation \eqref{eq sde randomised problem} under $\hat\P^{(\delta)}$, and thus $(\hat\Omega,\hat{\c F},\hat\P^{(\delta)})$ forms a randomised setting as defined in \cref{section randomised problem}. 

From the properties (iii) and (iv) of $\mu^\delta$, we note that $\nu^\delta\in\c V$ is an admissible randomised control to this randomised setting.
Further, by construction $\hat\Q = (\P^{(\delta)})^{\nu^\delta}$ is the probability measure belonging to the randomised control $\nu^\delta\in\c V$, and thus we conclude by \cref{proposition randomised setting is independent of the extension} that
\[
\E^{\hat\Q}\Big[g(\hat\Q^{\c F^{B,\mu^\delta,\hat\Q}_T}_{X^{t,\xi,\alpha_t,\delta}_T},X^{t,\xi,\alpha_t,\delta}_T) + \int_t^T f(s,\hat\Q^{\c F^{B,\mu^\delta,\hat\Q}_s}_{X^{t,\xi,\alpha_t,\delta}_s},X^{t,\xi,\alpha_t,\delta}_s,I^{t,\alpha_t,\delta}_s) ds \Big] = J^{\c R}_{(\hat\Omega,\hat{\c F},\hat\P^{(\delta)})}(t,\xi,\alpha_t,\nu^\delta) \leq V^{\c R}(t,\xi,\alpha_t).
\]
Finally, since $\alpha\in\c A$ was arbitrary, we thus conclude that $V(t,\xi) = \sup_{\alpha\in\c A} J(t,\xi,\alpha) \leq V^{\c R}(t,\xi,\alpha_t)$ for all $\alpha_t\in\c A_t$.

\subsection{Proof of $V^{\c R}\leq V$}

Based on the approach in \cite{bandini_backward_2018,bayraktar_randomized_2018}, we will consider a canonical extension (in some sense) $(\hat\Omega,\hat{\c F},\hat\P)$ of $(\Omega,\c F,\P)$, using that by \cref{proposition randomised setting is independent of the extension} we are free to choose the extension we want to work on. Together with $(\hat\Omega,\hat{\c F},\hat\P)$, we will in \cref{subsection extended auxiliary problem} define an auxiliary problem, which essentially extends the original control problem to this extended space. Denoting its value function by $V^{\c E}$, our goal for this section will be proving that $V^{\c R}\leq V^{\c E}\leq V$.

\subsubsection{The canonical extended space and an auxiliary problem}\label{subsection extended auxiliary problem}

We will start by construction the space $(\hat\Omega,\hat{\c F},\hat\P)$, which we will need for the later proofs in this section. We recall from \cref{section problem l2 formulation} that $(\lambda_s)_{s\in [0,T]}$ is a family of measures on $\c A_T$ such that each $\lambda_s$ is fully supported on $\c A_s$, and $\lambda_s\ll\lambda_r$ for $0\leq s\leq r\leq T$. Now following essentially the construction in \cite{bandini_backward_2018}, we can find a surjective, measurable map $\pi_{\c A_T}:\R\to\c A_T$ and a family of measures $(\lambda'_s)_{s\in [0,T]}$ on $\R$ such that $\lambda_s = \lambda'_s\circ\pi_{\c A_T}^{-1}$ and $\lambda'_s\ll\lambda'_r$ for $0\leq s\leq r\leq T$. % split $A$ into $A_{nc}$ and $A_c$ according to $\lambda_T$, then note that this split also works for $\lambda_s$, $s\in [0,T]$
This allows us to consider the canonical space $(\Omega',\c F',\P')$ of a nonexplosive Poisson point process on $(0,\infty)\times\R$ with intensity $\lambda'_s(dr)ds$, which is constructed as follows: Let $\Omega'$ be the set of all sequences $\omega' = (\tau_k,r_k)_{k\geq 1}\subseteq (0,\infty)\times\R$ with $\tau_k < \tau_{k+1} < \infty$ and $\tau_k\to\infty$. Next we denote the canonical process by $(T_k,R_k)_{k\geq 1}$ and let $\mu' \coloneqq \sum_{k\geq 1} \delta_{(T_k,R_k)}$ be the corresponding point measure. Letting $\P'$ be the (unique) probability measure, under which $\mu'$ is a Poisson random measure with intensity $\lambda'_s(dr)ds$, (recall that $\lambda_s$ and thus $\lambda'_s$ are bounded), we define $\c F'\coloneqq \sigma(T_k,R_k|k\geq 1)\lor\c N^{\P'}$, where $\c N^{\P'}$ denotes all $\P'$-null sets. We note that by construction $\c F' = \c F^{\mu,\P'}_\infty$.
Finally, we define $A_k\coloneqq \pi_{\c A_T}(R_k)\in\c A_T$ for each $k\geq 1$ and note that by construction $A_k\in\c A_{T_k}$, $\P'$-a.s. Therefore we define the random point measure $\mu \coloneqq \sum_{k\geq 1} \delta_{(T_k,A_k)}$, which by construction has intensity $\lambda_s(d\alpha)ds$ under $\P'$.
In the following we will denote by $(\hat\Omega,\hat{\c F},\hat\P) \coloneqq (\Omega\times\Omega',(\c F\otimes\c F')^{\P\otimes\P'},\P\otimes\P')$ the extension of $\Omega$ with $\Omega'$. As usual, we denote the canonical extension of $\c G,\xi,W,B,\c F'$ to $(\hat\Omega,\hat{\c F},\hat\P)$ with the same symbol.

Next we will define an auxiliary problem on this extended space. For this, we define $\c A^{\c E}$ as the set of all $\b F^{W,B}\lor\c G\lor\c F'$-predictable processes $\alpha$. Then for each $t\in [0,T]$, $\xi\in L^2(\Omega,\c G\lor\c F^W_t,\P;\R^n)$ and $\alpha\in\c A^{\c E}$, we denote by $X^{t,\xi,\alpha}$ the unique $\b F^{W,B,\hat\P}\lor\c G\lor\c F'$-progressive process solving the state dynamics
\begin{align}
    dX_s^{t,\xi,\alpha}
    &= b(s, \hat\P_{X^{t,\xi,\alpha}_s}^{\c F^{B,\hat\P}_s\lor\c F'},X^{t,\xi,\alpha}_s,\alpha_s)ds + \sigma(s,\hat\P_{X^{t,\xi,\alpha}_s}^{\c F^{B,\hat\P}_s\lor\c F'},X^{t,\xi,\alpha}_s,\alpha_s) dW_s\\
    &\quad + \sigma^0(s,\hat\P_{X^{t,\xi,\alpha}_s}^{\c F^{B,\hat\P}_s\lor\c F'},X^{t,\xi,\alpha}_s,\alpha_s) dB_s,\qquad X^{t,\xi,\alpha}_t = \xi.
    \label{eq extended problem state dynamics}
\end{align}
As before, here $(\hat\P^{\c F^{B,\hat\P}_s\lor\c F'}_{X^{t,\xi,\alpha}_s})_{s\in [t,T]}$ shall denote the $\b F^{B,\hat\P}\lor\c F'$-predictable and $\hat\P$-a.s.\@ continuous version of $(\c L(X^{t,\xi,\alpha}_s|\c F^{B,\hat\P}_s\lor\c F'))_{s\in [t,T]}$, that is $\hat\P^{\c F^{B,\hat\P}_s\lor\c F'}_{X^{t,\xi,\alpha}_s} = \c L(X^{t,\xi,\alpha}_s|\c F^{B,\hat\P}_s\lor\c F')$, $\hat\P$-a.s., for all $s\in [t,T]$. We recall that the existence of such a version is ensured by \cite[Lemma A.1]{djete_mckeanvlasov_2022-1}, since $(t,\omega,\omega')\mapsto (B_t(\omega),\omega')$ is a continuous Hunt process and thus by \cite[Chapter 3, Section 2.3, pp. 30–32]{chung_introduction_1990}, $O(\b F^{B,\hat\P}\lor\c F') = Pred(\b F^{B,\hat\P}\lor\c F')$.
Again we note that by standard results, the existence and uniqueness of $X^{t,\xi,\alpha}$ follows from \cref{assumptions sde coefficients}, as well as the basic estimate
\begin{align}
    \sup_{\alpha\in\c A^{\c E}} \E^{\hat\P}\Big[\sup_{s\in [t,T]} |X^{t,\xi,\alpha}_s|^2 \Big] < \infty.
\end{align}
Finally, by \cref{assumptions reward functional} the following reward functional
\[
J^{\c E}(t,\xi,\alpha) \coloneqq \E^{\hat\P}\Big[ g(\hat\P_{X^{t,\xi,\alpha}_T}^{{\c F}^{B,\hat\P}_T\lor\c F'},X^{t,\xi,\alpha}_T) + \int_t^T f(s,\hat\P_{X^{t,\xi,\alpha}_s}^{{\c F}^{B,\hat\P}_s\lor\c F'},X^{t,\xi,\alpha}_s,\alpha_s) ds \Big]
\]
is again well-defined, and we can introduce the corresponding value function as
\[
V^{\c E}(t,\xi) \coloneqq \sup_{\alpha\in\c A^{\c E}} J^{\c E}(t,\xi,\alpha) < \infty.
\]

\subsubsection{Proof of $V^{\c E}\leq V$}\label{subsection V^E leq V}

Let $(\hat\Omega,\hat{\c F},\hat\P)$ be the canonical extended space constructed in \cref{subsection extended auxiliary problem}, and 
let us fix $t\in [0,T]$ and $\xi\in L^2(\Omega,\c G\lor\c F^W_t,\P;\R^d)$. 
Further, let $\alpha\in\c A^{\c E}$ be an admissible control to the extended problem and $X^{t,\xi,\alpha}$ the corresponding state process satisfying \eqref{eq extended problem state dynamics}. Since by definition $\alpha:[0,T]\times\Omega\times\Omega'\to A$ is $(\b F^{W,B}\lor\c G)\otimes\c F'$-predictable, we see that for all $\omega'\in\Omega'$, the process $\alpha^{\omega'}\coloneqq \alpha(\cdot,\omega')$ is $\b F^{W,B}\lor\c G$-predictable and in particular $\alpha^{\omega'}\in\c A$. This motivates us to consider for each $\omega'\in\Omega'$ the corresponding state process $X^{t,\xi,\alpha^{\omega'}}$ to $\alpha^{\omega'}$ satisfying the state dynamics \eqref{eq mfc state dynamics}, and our next goal is showing that
\[
X^{t,\xi,\alpha}(\cdot,\omega') = X^{t,\xi,\alpha^{\omega'}},\ \P\text{-a.s.},\quad \text{for }\P'\text{-a.a. }\omega'\in\Omega'.
\]
To this end, we note that since $(\hat\P^{\c F^{B,\hat\P}_s\lor\c F'}_{X^{t,\xi,\alpha}_s})_{s\in [t,T]}$ is $\b F^{B,\hat\P}\lor\c F'$-predictable and for our canonical extended space $\b F^{B,\hat\P}\lor\c F' = (\b F^B\otimes\c F')^{\hat\P}$, it follows that for $\P'$-a.a.\@ $\omega'\in\Omega'$, the process $(\hat\P^{\c F^{B,\hat\P}_s\lor\c F'}_{X^{t,\xi,\alpha}_s}(\cdot,\omega'))_{s\in [t,T]}$ is $\b F^{B,\P}$-predictable. At the same time, we see that since $\hat\P = \P\otimes\P'$, for all $s\in [t,T]$ and for $\P'$-a.a.\@ $\omega'\in\Omega'$,
\[
\hat\P^{\c F^{B,\hat\P}_s\lor\c F'}_{X^{t,\xi,\alpha}_s}(\cdot,\omega') = \c L^{\hat\P}(X^{t,\xi,\alpha}_s | (\c F^B_s\otimes\c F')^{\hat\P})(\cdot,\omega') = \c L^{\P}(X^{t,\xi,\alpha}_s(\cdot,\omega') | \c F^{B,\P}_s),\quad\P\text{-a.s},
\]
and thus $(\hat\P^{\c F^{B,\hat\P}_s\lor\c F'}_{X^{t,\xi,\alpha}_s}(\cdot,\omega'))_{s\in [t,T]}$ is a suitable $\P$-a.s.\@ continuous and $\b F^{B,\P}$-predictable version of the conditional law process $(\c L^{\P}(X^{t,\xi,\alpha}_s(\cdot,\omega') | \c F^{B,\P}_s))_{s\in [t,T]}$, for $\P'$-a.a.\@ $\omega'\in\Omega'$. Finally, this shows that for $\P'$-a.a.\@ $\omega'\in\Omega'$, the process $X^{t,\xi,\alpha}(\cdot,\omega')$ also satisfies the SDE \eqref{eq mfc state dynamics} with the control $\alpha^{\omega'} = \alpha(\cdot,\omega')$, and thus by the uniqueness of strong solutions to \eqref{eq mfc state dynamics}, we deduce that $X^{t,\xi,\alpha}(\cdot,\omega') = X^{t,\xi,\alpha^{\omega'}}$, $\P$-a.s.

This now allows us to rewrite the extended reward functional $J^{\c E}$ in terms of $J$ as follows,
\begin{align}
J^{\c E}(t,\xi,\alpha)
&= \int_{\Omega'} \E\Big[g(\hat\P^{\c F^{B,\hat\P}_T\lor\c F'}_{X^{t,\xi,\alpha}_T}(\cdot,\omega'),X^{t,\xi,\alpha}_T(\cdot,\omega'))\\
&\qquad + \int_t^T f(s, \hat\P^{\c F^{B,\hat\P}_s\lor\c F'}_{X^{t,\xi,\alpha}_s}(\cdot,\omega'),X^{t,\xi,\alpha}_s(\cdot,\omega'),\alpha_s(\cdot,\omega')) ds\Big] \P'(d\omega')\\
&= \int_{\Omega'} J(t,\xi,\alpha^{\omega'}) \P'(d\omega')
\leq V(t,\xi),
\end{align}
and since $\alpha\in\c A^{\c E}$ was chosen arbitrary, this shows that $V^{\c E}(t,\xi) = \sup_{\alpha\in\c A^{\c E}} J^{\c E}(t,\xi,\alpha)\leq V(t,\xi)$.

\subsubsection{Proof of $V^{\c R}\leq V^{\c E}$}\label{subsection V^R leq V^E}

Recalling that by \cref{proposition randomised setting is independent of the extension} we are free to choose our randomised probability space, we will consider the canonical extended space $(\hat\Omega,\hat{\c F},\hat\P) = (\Omega \times \Omega',(\c F\otimes\c F')^{\P\otimes\P'},\P\otimes\P')$ from \cref{subsection extended auxiliary problem}. Let us further fix $t\in [0,T]$, $\xi\in L^2(\Omega,\c G\lor\c F^W_t,\P;\R^d)$ and $\alpha_t\in\c A_t$. We now need to show that for any $\nu\in\c V$, we have $J^{\c R}(t,\xi,\alpha_t,\nu) \leq V^{\c E}(t,\xi)$.

To this end, let us fix $\nu\in\c V$ and let $\hat\P^\nu\sim\hat\P$ be the corresponding tilted probability measure under which $\mu$ has the intensity $\nu_s(\alpha)\lambda_s(d\alpha)ds$, defined in \eqref{eq randomised control girsanov formula}.
Following the approach by \cite{bandini_backward_2018}, we will construct a piece-wise constant process $\hat\alpha^\nu\in\c A^{\c E}$ that has under $\hat\P$ the same distribution as the Poisson point process $\hat I^{t,\alpha_t}$ under $\hat\P^\nu$ using \cite[Lemma 4.3]{bandini_backward_2018}.
To set this up, we define for each $\omega\in\Omega$ the process $\nu^\omega:[0,T]\times\Omega'\times\c A_T\to (0,\infty)$ by $\nu^\omega_s(\omega',\alpha)\coloneqq \nu_s(\omega,\omega',\alpha)$. This $\nu^\omega$ is then $Pred(\b F^\mu)\otimes\c B(\c A_T)$-measurable and satisfies $0\leq \inf_{[0,T]\times\Omega'\times\c A_T} \nu^\omega\leq \sup_{[0,T]\times\Omega'\times\c A_T} \nu^\omega < \infty$, see also \cite[Lemma 4.2]{bandini_backward_2018}. Correspondingly we define probability measures ${\P'}^{\nu^\omega}\sim\P'$ on $(\Omega',{\c F}')$ via $\frac{d{\P'}^{\nu^\omega}}{d\P'}|_{\c F^\mu_T} = L^{\nu^\omega}_T$, where $L^{\nu^\omega}$ is given by \eqref{eq randomised control girsanov formula}. Using an analogous result to \cref{lemma d P nu d P is square integrable}, we note that ${\P'}^{\nu^\omega}$ is indeed well-defined, and moreover we observe that by construction $\hat\P^\nu(d\omega,d\omega') = {\P'}^{\nu^\omega}(d\omega')\P(d\omega)$.

% We get by \cite[Lemma 4.3 (iv)]{bandini_backward_2018} that $\c L^{\P'}((T_k(\omega,\cdot),A_k(\omega,\cdot))_{k\geq 1}) = \c L^{\P^\omega}((\tau'^\nu_k,\alpha'^\nu_k)_{k\geq 1})$ for $\P$-almost all $\omega\in\Omega$, where $(T_k,A_k)_{k\geq 1}$ are the events of $\mu$ constructed in \cref{subsection extended auxiliary problem}. So since (denoting by $(\tau_k,\alpha_k)_{k\geq 1}$ the events of $\mu$ on $[t,T]\times\c A_T$) $\hat I^{t,\alpha_t} = \alpha_t \1_{[t,\tau_1)} + \sum_{k\geq 1} \alpha_k\ \1_{[\tau_k,\tau_{k+1})}$, which is a finite sum, it follows that $\c L^{\P'}(\hat I^{t,\alpha_t}(\omega,\cdot)) = \c L^{\P^\omega}(\hat\alpha'^\nu)$ for $\P$-almost all $\omega\in\Omega$.
Next we apply \cite[Lemma 4.3]{bandini_backward_2018} and obtain a process
\[
\hat\alpha'^\nu =  \alpha_t \1_{[t,\tau'^\nu_1)} + \sum_{k\geq 1} \alpha'^\nu_k \1_{[\tau'^\nu_k,\tau'^\nu_{k+1})},
\]
where $\tau'^\nu_k\geq t$ are $\b F^{B,\hat\P}\lor\c F'$-stopping times and $\alpha'^\nu_k$ are $\c F^{B,\hat\P}_{\tau^\nu_k}\lor\c F'$-measurable, $k\geq 1$, such that
\[
\c L^{\P'}(\hat\alpha'^\nu(\omega,\cdot)) = \c L^{{\P'}^{\nu^\omega}}(\hat I^{t,\alpha_t}),\qquad\text{for }\P\text{-a.a. }\omega\in\Omega.
\]
Now we note that $(t,\omega,\omega')\mapsto (B_t(\omega),\omega')$ is a continuous Hunt process and thus by \cite[Chapter 3, Section 2.3, pp. 30–32]{chung_introduction_1990} every $\b F^{B,\hat\P}\lor\c F'$-optional time is also $\b F^{B,\hat\P}\lor\c F'$-predictable. % Recall that every stopping time is also an optional time
This allows us to use the arguments in \cite[Lemma 6]{claisse_pseudo-markov_2016} to conclude that there exist $\b F^B\lor\c F'$-predictable stopping times $\tau^\nu_k$ and $\c F^B_{\tau^\nu_k}\lor\c F'$-measurable $\alpha^\nu_k$ such that $\tau^\nu_k = \tau'^\nu_k$ and $\alpha^\nu_k = \alpha'^\nu_k$, $\hat\P$-a.s. Therefore, the process
\[
\hat\alpha^\nu \coloneqq \alpha_t \1_{[t,\tau^\nu_1)} + \sum_{k\geq 1} \alpha^\nu_k \1_{[\tau^\nu_k,\tau^\nu_{k+1})}
\]
is $\b F^B\lor\c F'$-predictable and indistinguishable from $\hat\alpha'^\nu$, and thus we have
\[
\c L^{\P'}(\hat\alpha^\nu(\omega,\cdot)) = \c L^{\P'}(\hat\alpha'^\nu(\omega,\cdot)) = \c L^{{\P'}^{\nu^\omega}}(\hat I^{t,\alpha_t}),\qquad\text{for }\P\text{-a.a. }\omega\in\Omega.
\]

We note that this implies that
\begin{align}
\c L^{\hat\P}(\pi|_{\c G},\xi,W,B,\hat\alpha^\nu) = \c L^{\hat\P^\nu}(\pi|_{\c G},\xi,W,B,\hat I^{t,\alpha_t}),
\label{eq proof V R <= V E law equality}
\end{align}
where $\pi|_{\c G}:(\hat\Omega,\c G)\to (\Omega,\c G)$ is the canonical $\c G$-measurable projection, since for all bounded, measurable functions $\phi:(\Omega\times\R^d\times\R^m\times\R^n,\c G\times\c B(\R^d\times\R^m\times\R^n))\to\R$ and $\psi:(\c A_T,\c B(\c A_T))\to\R$ we have
\begin{align}
    &\E^{\hat\P^\nu}[\phi(\pi|_{\c G},\xi,W,B) \psi(\hat I^{t,\alpha_t})] \\
    &= \int_{\Omega\times\Omega'} \phi(\omega,\xi(\omega),W(\omega),B(\omega))  \psi(\hat I^{t,\alpha_t}(\omega,\omega')) \hat\P^\nu(d\omega,d\omega')\\
    &= \int_\Omega \phi(\omega,\xi(\omega),W(\omega),B(\omega)) \int_{\Omega'} \psi(\hat I^{t,\alpha_t}(\omega,\omega')) {\P'}^{\nu^\omega}(d\omega') \P(d\omega)\\
    &= \int_\Omega \phi(\omega,\xi(\omega),W(\omega),B(\omega)) \int_{\Omega'} \psi(\hat\alpha^\nu(\omega,\omega')) \P'(d\omega') \P(d\omega)\\
    &= \E^{\hat\P}[\phi(\pi|_{\c G},\xi,W,B) \psi(\hat\alpha^\nu)].
\end{align}

As a next step, we recall that by \cref{corollary I given bar I is well defined} there is an unique (up to indistinguishability) càdlàg, $\b F^{W}\lor\c G\lor\c F^{\hat I^{t,\alpha_t}}_T$-predictable and $\b F^{W,\hat I^{t,\alpha_t}}\lor\c G$-progressive process $I^{t,\alpha_t}$ $\b F^{\hat I^{t,\alpha_t}}$-identifying to $\hat I^{t,\alpha_t}$. Similarly, \cref{lemma existence N bar N,lemma alpha bar alpha uniqueness} shows that there is an unique (up to modification) $\b F^{W}\lor\c G\lor\c F^{\hat\alpha^\nu}_T$-predictable and $\b F^{W,\hat\alpha^\nu}\lor\c G$-progressive process $\alpha^\nu$ which $\b F^{\hat\alpha^\nu}$-identifies to $\hat\alpha^\nu$, and moreover together with \eqref{eq proof V R <= V E law equality}, we obtain by \cref{lemma joint law N bar N map} that
\begin{align}
\c L^{\hat\P}(\pi|_{\c G},\xi,W,B,\hat\alpha^\nu,\alpha^\nu) = \c L^{\hat\P^\nu}(\pi|_{\c G},\xi,W,B,\hat I^{t,\alpha_t},I^{t,\alpha_t}).
\label{eq proof V R <= V E second law equality}
\end{align}
We note that $\alpha^\nu$ is $\b F^{W,B}\lor\c G\lor\c F'$-predictable and hence $\alpha^\nu\in\c A^{\c E}$.

Since $I^{t,\alpha_t}$ is $\b F^{W,\hat I^{t,\alpha_t}}\lor\c G$-progressive, there exists a unique $\b F^{W,B^t,\hat I^{t,\alpha_t},\hat\P}\lor\c G$-progressive process $X^{t,\xi,\alpha_t}$ solving the randomised state dynamics \eqref{eq sde randomised problem}, and similar since $\alpha^\nu$ is $\b F^{W,\hat\alpha^\nu}\lor\c G$-progressive, there exists a unique $\b F^{W,B,\hat\P}\lor\c G\lor\c F'$-progressive $X^{t,\xi,\alpha^\nu}$ process solving \eqref{eq extended problem state dynamics}. % This uses the strong uniqueness property and can be proven via a Picard-type iteration to construct the solution explicitly.
This implies that the corresponding conditional law process $(\P^{\nu,\c F^{B,\mu,\hat\P^\nu}_s}_{X^{t,\xi,\alpha_t}_s})_{s\in [t,T]}$ is $\b F^{B,\hat I^{t,\alpha_t},\hat\P^\nu}$-progressive, since $\c G,W$ are $\hat\P^\nu$-independent of $B,\mu$ and similar that $(\P^{\c F^{B,\hat\P}_s\lor\c F'}_{X^{t,\xi,\alpha^\nu}_s})_{s\in [t,T]}$ is $\b F^{B,\hat\alpha^\nu,\hat\P}$-progressive, since $\c G,W$ are $\hat\P$-independent of $B,\c F'$. This implies that
\[
\hat\P^{\nu,\c F^{B,\mu,\hat\P^\nu}_s}_{X^{t,\xi,\alpha_t}_s} = \hat\P^{\nu,\c F^{B,\hat I^{t,\alpha_t},\hat\P^\nu}_s}_{X^{t,\xi,\alpha_t}_s},
\qquad \hat\P^{\c F^{B,\hat\P}_s\lor\c F'}_{X^{t,\xi,\alpha^\nu}_s} = \hat\P^{\c F^{B,\hat\alpha^\nu,\hat\P}_s}_{X^{t,\xi,\alpha^\nu}_s},\qquad\hat\P\text{-a.s.},
\]
and therefore both $X^{t,\xi,\alpha_t}$ and $X^{t,\xi,\alpha^\nu}$ satisfy the following equation
\begin{align}
    dX_s &= b(s,\hat\P^{\c F^{B,\hat\alpha,\hat\P}_s}_{X_s},X_s,\alpha_s) ds + \sigma(s,\hat\P^{\c F^{B,\hat\alpha,\hat\P}_s}_{X_s},X_s,\alpha_s) dW_s + \sigma^0(s,\hat\P^{\c F^{B,\hat\alpha,\hat\P}_s}_{X_s},X_s,\alpha_s) dB_s,\  X_t = \xi,
    \label{eq proof V R <= V E auxiliary SDE}
\end{align}
with the corresponding control $\hat\alpha = \hat I^{t,\alpha_t}$ respectively $\hat\alpha = \hat\alpha^\nu$. Consequently, together with \eqref{eq proof V R <= V E second law equality} the weak uniqueness of solutions to \eqref{eq proof V R <= V E auxiliary SDE} now shows that
\[
\c L^{\hat\P}((\hat\P^{\c F^{B,\hat\P}_s\lor\c F'}_{X^{t,\xi,\alpha^\nu}_s})_{s\in [t,T]}, X^{t,\xi,\alpha^\nu},\alpha^\nu)
= \c L^{\hat\P^\nu}((\hat\P^{\nu,\c F^{B,\mu,\hat\P^\nu}_s}_{X^{t,\xi,\alpha_t}_s})_{s\in [t,T]}, X^{t,\xi,\alpha_t},I^{t,\alpha_t}).
\]

Finally, this implies that
\begin{align}
    J^{\c R}(t,\xi,\alpha_t,\nu)
    &= \E^{\hat\P^\nu}\Big[ g(\hat\P_{X^{t,\xi,\alpha_t}_T}^{\c F^{B,\mu,\hat\P}_T},X^{t,\xi,\alpha_t}_T) + \int_t^T f(s,\hat\P_{X^{t,\xi,\alpha_t}_s}^{\c F^{B,\mu,\hat\P}_s},X^{t,\xi,\alpha_t}_s,I^{t,\alpha_t}_s) ds \Big]\\
    &= \E^{\hat\P}\Big[g(X^{t,\xi,\alpha^\nu}_T, \hat\P_{X^{t,\xi,\alpha^\nu}_T}^{{\c F}^{B,\hat\P}_T\lor\c F'}) + \int_t^T f(s,X^{t,\xi,\alpha^\nu}_s,\hat\P_{X^{t,\xi,\alpha^\nu}_s}^{{\c F}^{B,\hat\P}_s\lor\c F'},\alpha^\nu_s) ds \Big]
    = J^{\c E}(t,\xi,\alpha),
\end{align}
and since $\nu\in\c V$ was arbitrary, this shows that $V^{\c R}(t,\xi,\alpha_t) = \sup_{\nu\in\c V} J^{\c R}(t,\xi,\alpha_t,\nu) \leq V^{\c E}(t,\xi)$.

%%%%%%%%%%%%%%%%%%%%%%%%%%%%%%%%%%%%%%%%%%%%%
%%%%%%%%%%%%%%%%%%%%%%%%%%%%%%%%%%%%%%%%%%%%%

\section{Proof of \cref{lemma penalised bsde minimal solution existence}}
\label{appendix proof lemma penalised bsde minimal solution existence}

\begin{proof}
\begin{enumerate}[wide,label=(\roman*)]
\item 
We first note that we can view the penalised BSDE \eqref{eq penalised bsde value function} as BSDE on $[0,T]$ with the driver
\[
F(s,\hat\omega,u) \coloneqq \1_{[t,T]}(s) \Big(H_{s-}(\hat\omega) + n \int_{\c A_s} (u(\alpha))_+ \lambda_s(d\alpha)\Big),
\]
where $H_r \coloneqq \E^{\hat\P}[f(r,\hat\P^{\c F^{B,\mu,\hat\P}_r}_{X^{t,\xi,\alpha_t}_r},X^{t,\xi,\alpha_t}_r,I^{t,\alpha_t}_r) | \c F^{B,\mu,\hat\P}_r]$, and terminal value
\[
G \coloneqq \E^{\hat\P}[g(X^{t,\xi,\alpha_t}_T,\hat\P^{\c F^{B,\mu,\hat\P}_T}_{X^{t,\xi,\alpha_t}_T}) | \c F^{B,\mu,\hat\P}_T].
\]
Then by \cite[Lemma 2.4]{tang_necessary_1994}, there exists a unique solution $(Y^{n,t,\xi,\alpha_t},Z^{n,t,\xi,\alpha_t},U^{n,t,\xi,\alpha_t}) \in \c S^2_{[t,T]}(\b F^{B,\mu,\hat\P})\times L^2_{W,[t,T]}(\b F^{B,\mu,\hat\P})\times L^2_{\lambda,[t,T]}(\b F^{B,\mu,\hat\P})$ to \eqref{eq penalised bsde value function}, noting that while boundedness of the driver $F$ in \cite[(2.15)]{tang_necessary_1994} is not fulfilled by our driver $F$, this assumption is only needed to show that for each $\bar U\in L^2_{\lambda,[t,T]}(\b F^{B,\mu,\hat\P})$,% (to use \cite[Lemma 2.3]{tang_necessary_1994} since they need a square-integrable martingale)
\begin{align}
\E^{\hat\P}\Big[\Big|H + \int_0^T F(s,\bar U_s) ds \Big|^2\Big] < \infty,
\end{align}
which in our setting holds since by \cref{assumptions reward functional} together with \eqref{eq randomised state dynamics basic estimate}, we have
\begin{align}
&\E^{\hat\P}\Big[\Big|H + \int_0^T F(s,\bar U_s) ds \Big|^2\Big]\\
&\leq % 2\E^{\hat\P}\Big[|H|^2 + T \int_0^T |F(s,\bar U_s)|^2 ds\Big]\\
% &=
2\E^{\hat\P}\Big[|\E^{\hat\P}[g(X^{t,\xi,\alpha_t}_T,\hat\P^{\c F^{B,\mu,\hat\P}_T}_{X^{t,\xi,\alpha_t}_T}) | \c F^{B,\mu,\hat\P}_T]|^2\\
&\qquad + T \int_t^T \Big|\E^{\hat\P}[f(s,\hat\P^{\c F^{B,\mu,\hat\P}_s}_{X^{t,\xi,\alpha_t}_s},X^{t,\xi,\alpha_t}_s,I^{t,\alpha_t}_s) | \c F^{B,\mu,\hat\P}_s] + n \int_{\c A_s} (\bar U_s(\alpha))_+ \lambda_s(d\alpha) \Big|^2 ds\Big]\\
&\leq 2\E^{\hat\P}\Big[|g(X^{t,\xi,\alpha_t}_T,\hat\P^{\c F^{B,\mu,\hat\P}_T}_{X^{t,\xi,\alpha_t}_T})|^2 + T \int_t^T |f(s,\hat\P^{\c F^{B,\mu,\hat\P}_s}_{X^{t,\xi,\alpha_t}_s},X^{t,\xi,\alpha_t}_s,I^{t,\alpha_t}_s)|^2 ds\\
&\qquad + n T \sup_{s\in [0,T]} \lambda_s(\c A_s) \int_t^T \int_{\c A_s} |\bar U_s(\alpha)|^2 \lambda_s(d\alpha) ds \Big] < \infty.
\end{align}

\item\label{proof lemma 6.1 step 2}
Next we will prove the representation \eqref{eq Y n t xi snell envelope formula}. From the BSDE \eqref{eq penalised bsde value function}, for any $\nu\in\c V$, by taking the conditional expectation under $\hat\P^\nu$ given $\c F^{B,\mu,\hat\P^\nu}_s$, we obtain for all $s\in [t,T]$ and $r\in [s,T]$,
\begin{align}
    Y^{n,t,\xi,\alpha_t}_s
    &= \E^{\hat\P^\nu}\Big[Y^{n,t,\xi,\alpha_t}_r + \int_s^r f(u,\hat\P^{\nu,\c F^{B,\mu,\hat\P^\nu}_u}_{X^{t,\xi,\alpha_t}_u},X^{t,\xi,\alpha_t}_u,I^{t,\alpha_t}_u) du\\
    &\qquad + \int_s^r\int_{\c A_u} \big(n (U^{n,t,\xi,\alpha_t}_u(\alpha))_+ - \nu_u(\alpha) U^{n,t,\xi,\alpha_t}_u(\alpha)\big) \lambda_u(d\alpha) du\Big| \c F^{B,\mu,\hat\P^\nu}_s\Big].
    \label{eq Y n t xi expectation}
\end{align}
Noting that for $\nu\in\c V^n$ we have $0 < \nu\leq n$ and thus $n(U^{n,t,\xi,\alpha_t})_+ - \nu U^{n,t,\xi,\alpha_t} \geq 0$, we see that \eqref{eq Y n t xi expectation} shows that for all $\nu\in\c V^n$, $s\in [t,T]$ and $r\in [s,T]$,
\[
Y^{n,t,\xi,\alpha_t}_s
\geq \E^{\hat\P^\nu}\Big[Y^{n,t,\xi,\alpha_t}_r + \int_s^r f(u,\hat\P^{\nu,\c F^{B,\mu,\hat\P^\nu}_u}_{X^{t,\xi,\alpha_t}_u},X^{t,\xi,\alpha_t}_u,I^{t,\alpha_t}_u) du \Big| \c F^{B,\mu,\hat\P^\nu}_s\Big],
\]
and hence
\[
Y^{n,t,\xi,\alpha_t}_s
\geq \esssup_{\nu\in\c V^n} \E^{\hat\P^\nu}\Big[Y^{n,t,\xi,\alpha_t}_r + \int_s^r f(u,\hat\P^{\nu,\c F^{B,\mu,\hat\P^\nu}_u}_{X^{t,\xi,\alpha_t}_u},X^{t,\xi,\alpha_t}_u,I^{t,\alpha_t}_u) du \Big| \c F^{B,\mu,\hat\P^\nu}_s\Big].
\]

% The reason for not switching to $\c V_{\geq 0}$ for next part (which was done in \cite{bayraktar_randomized_2018}) is that for $\c V$ we know that $\hat\P\sim\hat\P^\nu$ and thus the null sets and and hence the completed filtrations are the same! (Note that using $\c V_{\geq 0}$ would however simplify the estimate of the last term in \eqref{eq Y n t xi upper bound}.)
For the other direction, we define for each $\varepsilon\in (0,1]$ the randomised control $\nu^{n,\varepsilon}\in\c V^n$ via
\begin{align}
\nu^{n,\varepsilon}_s(\alpha) \coloneqq \begin{cases*}
n\1_{\{U^{n,t,\xi,\alpha_t}_s(\alpha) \geq 0\}} + \varepsilon\1_{\{U^{n,t,\xi,\alpha_t}_s(\alpha) < 0\}}&, on $[t,T]\times\c A_T$\\
1&, on $[0,t)\times\c A_T$.
\end{cases*}
\label{eq Y n t xi definition of nu n varepsilon}
\end{align}
Correspondingly, we define the probability measures $\hat\P^{\nu^{n,\varepsilon}}\sim\hat\P$ via $\frac{d\hat\P^{\nu^{n,\varepsilon}}}{d\hat\P}\big|_{\c F^{B,\mu,\hat\P}_u} \coloneqq L^{n,\varepsilon}_u$ for all $u\in [0,T]$, see \eqref{eq randomised control girsanov formula}. Thus from \eqref{eq Y n t xi expectation} we obtain for all $s\in [t,T]$ and $r\in [s,T]$,
\begin{align}
Y^{n,t,\xi,\alpha_t}_s
&\leq \E^{\hat\P^{\nu^{n,\varepsilon}}}\Big[Y^{n,t,\xi,\alpha_t}_r + \int_s^r f(u,\hat\P^{\nu^{n,\varepsilon},\c F^{B,\mu,\hat\P^{\nu^{n,\varepsilon}}}_u}_{X^{t,\xi,\alpha_t}_u},X^{t,\xi,\alpha_t}_u,I^{t,\alpha_t}_u) du \Big| \c F^{B,\mu,\hat\P^{\nu^{n,\varepsilon}}}_s\Big]\\
&\qquad + \E^{\hat\P^{\nu^{n,\varepsilon}}}\Big[\varepsilon\int_s^r \int_{\c A_u} (U^{n,t,\xi,\alpha_t}_u(\alpha))_- \lambda_u(d\alpha) du \Big|\c F^{B,\mu,\hat\P^{\nu^{n,\varepsilon}}}_s \Big]\\
&\leq \esssup_{\nu\in\c V^n} \E^{\hat\P^\nu}\Big[Y^{n,t,\xi,\alpha_t}_r + \int_s^r f(u,\hat\P^{\nu,\c F^{B,\mu,\hat\P^\nu}_u}_{X^{t,\xi,\alpha_t}_u},X^{t,\xi,\alpha_t}_u,I^{t,\alpha_t}_u) du \Big| \c F^{B,\mu,\hat\P^\nu}_s\Big]\\
&\qquad + \varepsilon \E^{\hat\P^{\nu^{n,\varepsilon}}}\Big[\int_s^r \int_{\c A_u} (U^{n,t,\xi,\alpha_t}_u(\alpha))_- \lambda_u(d\alpha) du \Big|\c F^{B,\mu,\hat\P^{\nu^{n,\varepsilon}}}_s \Big].
\label{eq Y n t xi upper bound}
\end{align}
Therefore the desired result follows by letting $\varepsilon\downarrow 0$, if we can show that the last term vanishes in the limit. 
To estimate the last term, we observe that since by construction $0 < \nu^{n,\varepsilon} \leq \nu^{n,1}$ for all $\varepsilon\in (0,1]$, we have that for all $s\in [t,T]$,
\begin{align}
0\leq \frac{L^{\nu^{n,\varepsilon}}_T}{L^{\nu^{n,\varepsilon}}_s} \frac{L^{\nu^{n,1}}_s}{L^{\nu^{n,1}}_T}
&= \exp\Big(\int_{[s,T]}\int_{\c A_r} \log \frac{\nu^{n,\varepsilon}_r(\alpha)}{\nu^{n,1}_r(\alpha)} \mu(dr,d\alpha) - \int_0^T \int_{\c A_r} (\nu^{n,\varepsilon}_r(\alpha) - \nu^{n,1}_r(\alpha)) \lambda_r(d\alpha)dr \Big)\\
&\leq \exp\Big(\int_0^T \int_{\c A_r} \lambda_r(d\alpha)dr\Big)^{\norm{\nu^{n,1}}_\infty} \eqqcolon C < \infty.
\end{align}
This allows us to bound the last term in \eqref{eq Y n t xi upper bound} as follows
\begin{align}
&\E^{\hat\P^{\nu^{n,\varepsilon}}}\Big[\int_s^r \int_{\c A_u} (U^{n,t,\xi,\alpha_t}_u(\alpha))_- \lambda_u(d\alpha) du \Big|\c F^{B,\mu,\hat\P^{\nu^{n,\varepsilon}}}_s \Big]\\
&= \E^{\hat\P}\Big[\frac{L^{\nu^{n,\varepsilon}}_T}{L^{\nu^{n,\varepsilon}}_s} \int_s^r \int_{\c A_u} (U^{n,t,\xi,\alpha_t}_u(\alpha))_- \lambda_u(d\alpha) du \Big|\c F^{B,\mu,\hat\P}_s \Big]\\
&\leq C \E^{\hat\P}\Big[\frac{L^{\nu^{n,1}}_T}{L^{\nu^{n,1}}_s} \int_s^r \int_{\c A_u} (U^{n,t,\xi,\alpha_t}_u(\alpha))_- \lambda_u(d\alpha) du \Big|\c F^{B,\mu,\hat\P}_s \Big]\\
&= C \E^{\hat\P^{\nu^{n,1}}}\Big[ \int_s^r \int_{\c A_u} (U^{n,t,\xi,\alpha_t}_u(\alpha))_- \lambda_u(d\alpha) du \Big|\c F^{B,\mu,\hat\P^{\nu^{n,1}}}_s \Big].
\end{align}
This term is now $L^1(\hat\P^{\nu^{n,1}})$-integrable, and due to $\hat\P^{\nu^{n,1}}\sim\hat\P$ also $\hat\P$-a.s.\@ finite, since
\begin{align}
&\E^{\hat\P^{\nu^{n,1}}}\Big[ \E^{\hat\P^{\nu^{n,1}}}\Big[ \int_s^r \int_{\c A_u} (U^{n,t,\xi,\alpha_t}_u(\alpha))_- \lambda_u(d\alpha) du \Big|\c F^{B,\mu,\hat\P^{\nu^{n,1}}}_s \Big] \Big]\\
&\leq \bigg(\E^{\hat\P}[ |L^{\nu^{n,1}}_T|^2 ] \E^{\hat\P}\Big[\Big(\int_s^r \int_{\c A_u} (U^{n,t,\xi,\alpha_t}_u(\alpha))_- \lambda_u(d\alpha) du\Big)^2\Big] \bigg)^{\frac 1 2}\\
&\leq \bigg(\E^{\hat\P}[|L^{\nu^{n,1}}_T|^2 ] \E^{\hat\P}\Big[\int_s^r \int_{\c A_u} |U^{n,t,\xi,\alpha_t}_u(\alpha)|^2 \lambda_u(d\alpha) du \Big] \int_s^r \lambda_u(\c A_u) du \bigg)^{\frac 1 2} < \infty,
\end{align}
since $U^{n,t,\xi,\alpha_t}\in L^2_{\lambda,[t,T]}(\b F^{B,\mu,\hat\P})$ and $L^{\nu^{n,1}}_T \in L^2(\hat\Omega,\c F^{B,\mu}_T,\hat\P;\R)$ by \cref{lemma d P nu d P is square integrable}.

\item Finally, let us turn to \eqref{eq Y n t xi snell envelope upper bound at s=t}. By taking the expectation with respect to $\E^{\hat\P^{\nu^{n,\varepsilon}}}$ in \eqref{eq Y n t xi upper bound} for $s=t$, we obtain hat for all $r\in [t,T]$ and $\varepsilon\in (0,1]$,
\begin{align}
\E^{\hat\P^{\nu^{n,\varepsilon}}}[Y^{n,t,\xi,\alpha_t}_t]
&\leq \E^{\hat\P^{\nu^{n,\varepsilon}}}\Big[Y^{n,t,\xi,\alpha_t}_r + \int_t^r f(u,\hat\P^{\nu^{n,\varepsilon},\c F^{B,\mu,\hat\P^{\nu^{n,\varepsilon}}}_u}_{X^{t,\xi,\alpha_t}_u},X^{t,\xi,\alpha_t}_u,I^{t,\alpha_t}_u) du \Big]\\
&\qquad + \varepsilon \E^{\hat\P^{\nu^{n,\varepsilon}}}\Big[\int_t^r \int_{\c A_u} (U^{n,t,\xi,\alpha_t}_u(\alpha))_- \lambda_u(d\alpha) du \Big].
\end{align}
Since by construction $\frac{d\hat\P^{\nu^{n,\varepsilon}}}{d\hat\P}|_{\c F^{B,\mu,\hat\P}_t} = L^{\nu^{n,\varepsilon}}_t \equiv 1$, we see that $\E^{\hat\P^{\nu^{n,\varepsilon}}}[Y^{n,t,\xi,\alpha_t}_t] = \E^{\hat\P}[Y^{n,t,\xi,\alpha_t}_t]$, and thus
\begin{align}
\E^{\hat\P}[Y^{n,t,\xi,\alpha_t}_t] = \E^{\hat\P^{\nu^{n,\varepsilon}}}[Y^{n,t,\xi,\alpha_t}_t]
&\leq \sup_{\nu\in\c V^n} \E^{\hat\P^{\nu}}\Big[Y^{n,t,\xi,\alpha_t}_r + \int_t^r f(u,\hat\P^{\nu,\c F^{B,\mu,\hat\P^{\nu}}_u}_{X^{t,\xi,\alpha_t}_u},X^{t,\xi,\alpha_t}_u,I^{t,\alpha_t}_u) du \Big]\\
&\qquad + \varepsilon \E^{\hat\P^{\nu^{n,\varepsilon}}}\Big[\int_t^r \int_{\c A_u} (U^{n,t,\xi,\alpha_t}_u(\alpha))_- \lambda_u(d\alpha) du \Big].
\end{align}
Finally, by letting $\varepsilon\downarrow 0$, we obtain \eqref{eq Y n t xi snell envelope upper bound at s=t}, recalling that we have already shown in \cref{proof lemma 6.1 step 2} that the last term will vanish.
\end{enumerate}
\end{proof}

%%%%%%%%%%%%%%%%%%%%%%%%%%%%%%%%%%%%%%%%%%%%%
%%%%%%%%%%%%%%%%%%%%%%%%%%%%%%%%%%%%%%%%%%%%%

\section{Proof of \cref{lemma bsde minimal solution existence}}
\label{appendix proof lemma bsde minimal solution existence}

\begin{proof}
\begin{enumerate}[wide,label=(\roman*)]
\item\label{proof theorem 6.2 part i} We will start by proving the existence and uniqueness of such a minimal solution %(Y^{t,\xi,\alpha_t}, Z^{t,\xi,\alpha_t}, U^{t,\xi,\alpha_t}, K^{t,\xi,\alpha_t})$
to \eqref{eq constrained bsde value function} using \cite[Theorem 2.1]{kharroubi_feynmankac_2015}, which tells us that this unique minimal solution to \eqref{eq constrained bsde value function} should be the increasing limit of the solutions to the penalised BSDEs \eqref{eq penalised bsde value function} obtained from \cref{lemma penalised bsde minimal solution existence}. For this we view \eqref{eq constrained bsde value function} and the penalised versions \eqref{eq penalised bsde value function} as a BSDEs on $[0,T]$ with the driver $F(s,\hat\omega) \coloneqq \1_{[t,T]}(s) H_{s-}(\hat\omega)$, where $H_r \coloneqq \E^{\hat\P}[f(r,\hat\P^{\c F^{B,\mu,\hat\P}_r}_{X^{t,\xi,\alpha_t}_r},X^{t,\xi,\alpha_t}_r,I^{t,\alpha_t}_r) | \c F^{B,\mu,\hat\P}_r]$, and the terminal value $G \coloneqq \E^{\hat\P}[g(X^{t,\xi,\alpha_t}_T,\hat\P^{\c F^{B,\mu,\hat\P}_T}_{X^{t,\xi,\alpha_t}_T}) | \c F^{B,\mu,\hat\P}_T]$. Then we note that \cite[Assumption (H0)]{kharroubi_feynmankac_2015} holds: (ii) and (iii) are naturally satisfied as the generator $F$ does not depend on $Y,Z,U$, and (i) holds since $\xi\in L^2(\hat\Omega,\c G\lor\c F^W_t,\hat\P;\R^d)$ together with\cref{assumptions reward functional} and \eqref{eq randomised state dynamics basic estimate} show that
\begin{align}
%&
\E^{\hat\P}\Big[|G|^2 + \int_0^T |F(s)|^2 ds\Big] %\\
%&= \E^{\hat\P}\Big[|\E^{\hat\P}[g(X^{t,\xi,\alpha_t}_T,\hat\P^{\c F^{B,\mu,\hat\P}_T}_{X^{t,\xi,\alpha_t}_T}) | \c F^{B,\mu,\hat\P}_T]|^2 + \int_t^T |\E^{\hat\P}[f(s,\hat\P^{\c F^{B,\mu,\hat\P}_s}_{X^{t,\xi,\alpha_t}_s},X^{t,\xi,\alpha_t}_s,I^{t,\alpha_t}_s)|\c F^{B,\mu,\hat\P}_s]|^2 ds\Big]\\
&\leq \E^{\hat\P}\Big[|g(X^{t,\xi,\alpha_t}_T,\hat\P^{\c F^{B,\mu,\hat\P}_T}_{X^{t,\xi,\alpha_t}_T})|^2 + \int_t^T |f(s,\hat\P^{\c F^{B,\mu,\hat\P}_s}_{X^{t,\xi,\alpha_t}_s},X^{t,\xi,\alpha_t}_s,I^{t,\alpha_t}_s)|^2 ds\Big] < \infty.
\end{align}
We remark that \cite[Assumption (H1)]{kharroubi_feynmankac_2015} is only needed to show the following estimate
\begin{align}
\sup_n \E^{\hat\P}\Big[\sup_{s\in [t,T]} |Y^{n,t,\xi,\alpha_t}_s|^2\Big] < \infty.
\label{eq theorem 6.2 H1 estimate}
\end{align}
Thus, instead of verifying \cite[Assumption (H1)]{kharroubi_feynmankac_2015}, we will directly show that \eqref{eq theorem 6.2 H1 estimate} holds. For this, we note that following the same arguments as \cite[Lemma 4.9]{fuhrman_randomized_2015}, from the representation \eqref{eq Y n t xi snell envelope formula} in \cref{lemma penalised bsde minimal solution existence} together with \cref{assumptions sde coefficients,assumptions reward functional}, we can show that
\[
\sup_{s\in [t,T]} |Y^{n,t,\xi,\alpha_t}_s|^2 \leq C\Big( 1 + \E^{\hat\P}\Big[ \sup_{s\in [t,T]} |X^{t,\xi,\alpha_t}_s|^2 \Big|\c F^{B,\mu,\hat\P}_T\Big]\Big),\quad\hat\P\text{-a.s.},
\]
for a constant $C$ independent of $n$, which together with \eqref{eq randomised state dynamics basic estimate} implies \eqref{eq theorem 6.2 H1 estimate}.
Thus by \cite[Theorem 2.1]{kharroubi_feynmankac_2015}, the BSDE \eqref{eq constrained bsde value function} has a unique minimal solution
\[
(Y^{t,\xi,\alpha_t},Z^{t,\xi,\alpha_t},U^{t,\xi,\alpha_t},K^{t,\xi,\alpha_t})\in \c S^2_{[0,T]}(\b F^{B,\mu,\hat\P})\times L^2_{W,[0,T]}(\b F^{B,\mu,\hat\P})\times L^2_{\lambda,[0,T]}(\b F^{B,\mu,\hat\P})\times \c K^2_{[0,T]}(\b F^{B,\mu,\hat\P}),
\]
when viewed as BSDE on the whole time interval $[0,T]$, and which satisfies $Y^{n,t,\xi,\alpha_t}_s\uparrow Y^{t,\xi,\alpha_t}_s$, for all $s\in [t,T]$, $\hat\P$-a.s.\@ and in $L^2(ds\otimes\hat\P)$. Therefore, as a last step we need to show that also $K^{t,\xi,\alpha_t}_t = 0$ and thus $K^{t,\xi,\alpha_t}\in \c K^2_{[t,T]}(\b F^{B,\mu,\hat\P})$, which we will postpone to \cref{proof theorem 6.2 part iii} of the proof as it will follow from showing that $Y^{t,\xi,\alpha_t}_t$ is $\hat\P$-a.s.\@ constant.

%%%%%%%%%%%%%%%%%%%%%%%%%%%%%%%%%%%%%%%%%%%%%%%%%%%%%%%%%%%%%%%%%%%%%%%%%%%

\item Regarding the representation \eqref{eq recursive snell envelope formula}, we first note that by \cref{lemma penalised bsde minimal solution existence}, we have the formula \eqref{eq Y n t xi snell envelope formula} for $Y^{n,t,\xi,\alpha_t}$ for all $n\in\N$. Thus, using that $\c V_n\subseteq\c V_{n+1}\subseteq\c V$ and $\bigcup_{n\in\N} \c V_n = \c V$, we conclude from $Y^{n,t,\xi,\alpha_t}_s\uparrow Y^{t,\xi,\alpha_t}_s$, for all $s\in [t,T]$, $\hat\P$-a.s., by monotone convergence that \eqref{eq recursive snell envelope formula} holds for $Y^{t,\xi,\alpha_t}$.

%%%%%%%%%%%%%%%%%%%%%%%%%%%%%%%%%%%%%%%%%%%%%%%%%%%%%%%%%%%%%%%%%%%%%%%%%%%

\item\label{proof theorem 6.2 part iii} Next we will prove that $Y^{t,\xi,\alpha_t}_t$ is $\hat\P$-a.s.\@ constant, and with it that $K^{t,\xi,\alpha_t}_t = 0$ and hence $K^{t,\xi,\alpha_t}\in\c K^2_{[t,T]}(\b F^{B,\mu,\hat\P})$, which also completes the construction in \cref{proof theorem 6.2 part i}. For this, we will show that $Y^{t,\xi,\alpha_t}_t = V(t,m)$, $\hat\P$-a.s.

First, we note that \eqref{eq recursive snell envelope formula} together with \eqref{eq constrained bsde value function} implies that
\begin{align}
    Y^{t,\xi,\alpha_t}_t
    &= \esssup_{\nu\in\c V} 
    \E^{\hat\P^\nu}\Big[g(\hat\P^{\nu,\c F^{B,\mu,\hat\P^\nu}_T}_{X^{t,\xi,\alpha_t}_T},X^{t,\xi,\alpha_t}_T) + \int_t^T f(s,\hat\P^{\nu,\c F^{B,\mu,\hat\P^\nu}_s}_{X^{t,\xi,\alpha_t}_s},X^{t,\xi,\alpha_t}_s,I^{t,\alpha_t}_s) ds\Big| \c F^{B,\mu,\hat\P^\nu}_t\Big],\ \hat\P\text{-a.s.}
    \label{eq Y t xi reward representation}
\end{align}
Next, we observe that by \cref{proposition decompose filtrations of predictable processes} we can decompose each $\nu\in\c V$ into an $\c F^{B,\mu}_t\otimes Pred(\b F^{B^t,\mu^t})\otimes\c B(\c A_T)$-measurable process $\Upsilon$ with $0<\inf_{[0,T]\times\hat\Omega\times\hat\Omega\times\c A_T} \Upsilon\leq \sup_{[0,T]\times\hat\Omega\times\hat\Omega\times\c A_T} \Upsilon < \infty$ such that $\nu(\hat\omega) = \Upsilon(\hat\omega,\hat\omega)$, for all $\hat\omega\in\hat\Omega$. 
In particular, we have $\Upsilon(\hat\omega,\cdot)\in\c V_t$ for every $\hat\omega\in\hat\Omega$.
Now recalling that $X^{t,\xi,\alpha_t}$ and $I^{t,\alpha_t}$ are %$\b F^{W,B^t,\mu^t,\hat\P}$- resp. $\b F^{B^t,\mu^t,\hat\P}$-progressive and thus 
$\hat\P$-independent of $\c F^{B,\mu,\hat\P}_t$, we can apply the freezing lemma, see e.g.\@ \cite[Lemma 4.1]{baldi_stochastic_2017}, % actually Lemma 8 in the Appendix of 10.48550/arXiv.2007.03937 fits even better here
to rewrite the conditional expectation in \eqref{eq Y t xi reward representation} as follows,
\begin{align}
    &\E^{\hat\P^\nu}\Big[g(\hat\P^{\nu,\c F^{B,\mu,\hat\P^\nu}_T}_{X^{t,\xi,\alpha_t}_T},X^{t,\xi,\alpha_t}_T) + \int_t^T f(s,\hat\P^{\nu,\c F^{B,\mu,\hat\P^\nu}_s}_{X^{t,\xi,\alpha_t}_s},X^{t,\xi,\alpha_t}_s,I^{t,\alpha_t}_s) ds\Big| \c F^{B,\mu,\hat\P^\nu}_t\Big]\\
    % &= \E^{\hat\P}\Big[\frac{L^\nu_T}{\E^{\hat\P}[L^\nu_T|\c F^{B,\mu,\hat\P^\nu}_t]} \Big(g(\hat\P^{\nu,\c F^{B,\mu,\hat\P^\nu}_T}_{X^{t,\xi,\alpha_t}_T},X^{t,\xi,\alpha_t}_T) + \int_t^T f(s,\hat\P^{\nu,\c F^{B,\mu,\hat\P^\nu}_s}_{X^{t,\xi,\alpha_t}_s},X^{t,\xi,\alpha_t}_s,I^{t,\alpha_t}_s) ds\Big)\Big| \c F^{B,\mu,\hat\P^\nu}_t\Big]\\
    &= \E^{\hat\P}\Big[\frac{L^\nu_T}{L^\nu_t} \Big(g(\hat\P^{\c F^{B,\mu,\hat\P}_T}_{X^{t,\xi,\alpha_t}_T},X^{t,\xi,\alpha_t}_T) + \int_t^T f(s,\hat\P^{\c F^{B,\mu,\hat\P}_s}_{X^{t,\xi,\alpha_t}_s},X^{t,\xi,\alpha_t}_s,I^{t,\alpha_t}_s) ds\Big)\Big| \c F^{B,\mu,\hat\P}_t\Big]\\
    &= \E^{\hat\P}\Big[\frac{L^\upsilon_T}{L^\upsilon_t} \Big(g(\hat\P^{\c F^{B,\mu,\hat\P}_T}_{X^{t,\xi,\alpha_t}_T},X^{t,\xi,\alpha_t}_T) + \int_t^T f(s,\hat\P^{\c F^{B,\mu,\hat\P}_s}_{X^{t,\xi,\alpha_t}_s},X^{t,\xi,\alpha_t}_s,I^{t,\alpha_t}_s) ds\Big)\Big]\bigg|_{\upsilon = \Upsilon(\hat\omega,\cdot)}\\
    &= \E^{\hat\P}\Big[L^\upsilon_T \Big(g(\hat\P^{\c F^{B,\mu,\hat\P}_T}_{X^{t,\xi,\alpha_t}_T},X^{t,\xi,\alpha_t}_T) + \int_t^T f(s,\hat\P^{\c F^{B,\mu,\hat\P}_s}_{X^{t,\xi,\alpha_t}_s},X^{t,\xi,\alpha_t}_s,I^{t,\alpha_t}_s) ds\Big)\Big]\bigg|_{\upsilon = \Upsilon(\hat\omega,\cdot)}\\
    &= J^{\c R}(t,\xi,\alpha_t,\upsilon)\big|_{\upsilon = \Upsilon(\hat\omega,\cdot)},\qquad\hat\P\text{-a.s.},
\end{align}
since for every $\hat\omega\in\hat\Omega$ we have $\upsilon = \Upsilon(\hat\omega,\cdot)\in\c V_t$ and thus $\frac{L^\upsilon_T}{L^\upsilon_t}$ is $\hat\P$-independent of $\c F^{B,\mu,\hat\P}_t$ together with $\E^{\hat\P}[L^{\upsilon}_t] = 1$ and $L^\upsilon_t$ being $\c F^{B,\mu,\hat\P}_t$-measurable. Therefore by using \eqref{eq Y t xi reward representation} together with \cref{proposition equivalence v v_t randomised value function,theorem equivalence original and randomised value function} we obtain one the one hand
\begin{align}
    V(t,m) &= \sup_{\nu\in\c V_t} J^{\c R}(t,\xi,\alpha_t,\nu)\\
    &= \sup_{\nu\in\c V_t} \E^{\hat\P^\nu}\Big[g(\hat\P^{\nu,\c F^{B,\mu,\hat\P^\nu}_T}_{X^{t,\xi,\alpha_t}_T},X^{t,\xi,\alpha_t}_T) + \int_t^T f(s,\hat\P^{\nu,\c F^{B,\mu,\hat\P^\nu}_s}_{X^{t,\xi,\alpha_t}_s},X^{t,\xi,\alpha_t}_s,I^{t,\alpha_t}_s) ds\Big| \c F^{B,\mu,\hat\P^\nu}_t\Big]
    \leq 
    % \sup_{\nu\in\c V} \E^{\hat\P^\nu}\Big[g(\hat\P^{\nu,\c F^{B,\mu,\hat\P^\nu}_T}_{X^{t,\xi,\alpha_t}_T},X^{t,\xi,\alpha_t}_T) + \int_t^T f(s,\hat\P^{\nu,\c F^{B,\mu,\hat\P^\nu}_s}_{X^{t,\xi,\alpha_t}_s},X^{t,\xi,\alpha_t}_s,I^{t,\alpha_t}_s) ds\Big| \c F^{B,\mu,\hat\P^\nu}_t\Big] =
    Y^{t,\xi,\alpha_t}_t,
\end{align}
and on the other hand also
\begin{align}
    Y^{t,\xi,\alpha_t}_t &= \sup_{\nu\in\c V} J^{\c R}(t,\xi,\alpha_t,\upsilon)\big|_{\upsilon = \Upsilon(\hat\omega,\cdot)} \leq \sup_{\upsilon\in\c V_t} J^{\c R}(t,\xi,\alpha_t,\upsilon) = V(t,m).
\end{align}
This shows that $Y^{t,\xi,\alpha_t}_t = V(t,m)$, $\hat\P$-a.s., and in particular that $Y^{t,\xi,\alpha_t}_t$ is $\hat\P$-a.s.\@ constant. Finally, we note that this also implies $K^{t,\xi,\alpha_t}_t = 0$, which shows that $K^{t,\xi,\alpha_t}\in\c K^2_{[t,T]}(\b F^{B,\mu,\hat\P})$.

%%%%%%%%%%%%%%%%%%%%%%%%%%%%%%%%%%%%%%%%%%%%%%%%%%%%%%%%%%%%%%%%%%%%%%%%%%%

\item
Finally, let us prove the representation \eqref{eq recursive snell envelope formula at s=t}.
From the BSDE \eqref{eq constrained bsde value function}, by taking the conditional expectation under $\hat\P^\nu$, we obtain for all $r\in [t,T]$ and $\nu\in\c V$,
\[
Y^{t,\xi,\alpha_t}_t \geq \E^{\hat\P^\nu}[Y^{t,\xi,\alpha_t}_t] \geq \E^{\hat\P^\nu}\Big[Y^{t,\xi,\alpha_t}_r + \int_t^r f(u,\hat\P^{\nu,\c F^{B,\mu,\hat\P^\nu}_u}_{X^{t,\xi,\alpha_t}_u},X^{t,\xi,\alpha_t}_u,I^{t,\alpha_t}_u) du\Big],
\]
since $U^{t,\xi,\alpha_t}$ is nonnegative and $K^{t,\xi,\alpha_t}$ is nondecreasing, and $Y^{t,\xi,\alpha_t}_t$ is $\hat\P$-a.s.\@ constant. Taking the supremum over all $\nu\in\c V$ then shows that
\[
Y^{t,\xi,\alpha_t}_t \geq \sup_{\nu\in\c V} \E^{\hat\P^\nu}\Big[Y^{t,\xi,\alpha_t}_r + \int_t^r f(u,\hat\P^{\nu,\c F^{B,\mu,\hat\P^\nu}_u}_{X^{t,\xi,\alpha_t}_u},X^{t,\xi,\alpha_t}_u,I^{t,\alpha_t}_u) du\Big].
\]

For the other direction, we recall that by \cref{lemma penalised bsde minimal solution existence}, we have the upper estimate \eqref{eq Y n t xi snell envelope upper bound at s=t} for $Y^{n,t,\xi,\alpha_t}$, for all $n\in\N$. Thus, using that $\c V_n\subseteq\c V_{n+1}\subseteq\c V$ and $\bigcup_{n\in\N} \c V_n = \c V$, we deduce from $Y^{n,t,\xi,\alpha_t}_s\uparrow Y^{t,\xi,\alpha_t}_s$, for all $s\in [t,T]$, $\hat\P$-a.s., by monotone convergence that, for all $r\in [t,T]$,
\[
Y^{t,\xi,\alpha_t}_t = \E^{\hat\P}[Y^{t,\xi,\alpha_t}_t]
\leq \sup_{\nu\in\c V} \E^{\hat\P^\nu}\Big[Y^{t,\xi,\alpha_t}_r + \int_t^r f(u,\hat\P^{\nu,\c F^{B,\mu,\hat\P^\nu}_u}_{X^{t,\xi,\alpha_t}_u},X^{t,\xi,\alpha_t}_u,I^{t,\alpha_t}_u) du\Big].
\]
\end{enumerate}
\end{proof}

%%%%%%%%%%%%%%%%%%%%%%%%%%%%%%%%%%%%%%%%%%%%%
%%%%%%%%%%%%%%%%%%%%%%%%%%%%%%%%%%%%%%%%%%%%%

\let\c\oldc % restore the old meaning of \c for the bibliography
%\printbibliography

\bibliographystyle{plain}
\bibliography{bibliography}

\end{document}